\documentclass[11pt]{article}
\usepackage{declare-style-article}
\usepackage{declare-maths-article}
\usepackage{declare-text-article}
\usepackage{declare-theorems-article}
\hypersetup{
	pdfauthor 
		= {Nikita Nikolaev},
	pdftitle 
		= {Exact Solutions for the Singularly Perturbed Riccati Equation and Exact WKB Analysis},
	pdfsubject
		= {},
	pdfcreator
		= {},
	pdfproducer
		= {},
	pdfkeywords
		= {exact perturbation theory, singular perturbation theory, Borel summation, Borel-Laplace theory, asymptotic analysis, Gevrey asymptotics, resurgence, exact WKB analysis, exact WKB method, nonlinear ODEs, Riccati equation, Schrödinger equation},
}

\DeclareSymbolFont{bbold}{U}{bbold}{m}{n}
\DeclareSymbolFontAlphabet{\mathbbold}{bbold}

\usepackage{appendix}
\usepackage[font={footnotesize}]{caption}
\usepackage[utf8]{inputenc}
\linespread{1.15}


\fancypagestyle{frontpage}{

\cfoot{}
\lfoot{\footnotesize 
First appeared: 14 August 2020\\
Contact: \href{mailto:n.nikolaev@sheffield.ac.uk}{n.nikolaev@sheffield.ac.uk}}
}

\usepackage{titletoc}
\usepackage{subcaption}
\usepackage{changepage}

\titlecontents*{subsubsection}
  [5em]
  {\scriptsize\itshape}
  {}
  {}
  {}
  [\:\|\:]
  []
  

\begin{document}


\title{Exact Solutions for the\\ Singularly Perturbed Riccati Equation \\ and Exact WKB Analysis}

\author{Nikita Nikolaev}

\affil{\small School of Mathematics and Statistics, University of Sheffield, United Kingdom
\\
Section of Mathematics, University of Geneva, Switzerland
}

\date{21 November 2021}

\maketitle
\thispagestyle{frontpage}

\begin{abstract}
The singularly perturbed Riccati equation is the first-order nonlinear ODE $\hbar \del_x f = af^2 + bf + c$ in the complex domain where $\hbar$ is a small complex parameter.
We prove an existence and uniqueness theorem for exact solutions with prescribed asymptotics as $\hbar \to 0$ in a halfplane.
These exact solutions are constructed using the Borel-Laplace method; i.e., they are Borel summations of the formal divergent $\hbar$-power series solutions.
As an application, we prove existence and uniqueness of exact WKB solutions for the complex one-dimensional Schrödinger equation with a rational potential.
\end{abstract}

{\small
\textbf{Keywords:}
exact perturbation theory, singular perturbation theory, Borel summation, Borel-Laplace theory, asymptotic analysis, Gevrey asymptotics, resurgence, exact WKB analysis, exact WKB method, nonlinear ODEs, Riccati equation, Schrödinger equation

\textbf{2020 MSC:} 
	\href{https://zbmath.org/classification/?q=cc%3A34M60}{34M60} (primary); 
	\href{https://zbmath.org/classification/?q=cc%3A34E10}{34E10},
	\href{https://zbmath.org/classification/?q=cc%3A34E20}{34E20},
	\href{https://zbmath.org/classification/?q=cc%3A40G10}{40G10},
}


\enlargethispage{1cm}
{\begin{spacing}{0.9}
\small
\setcounter{tocdepth}{3}
\tableofcontents
\end{spacing}
}

\section{Introduction}

The purpose of this article is to analyse the \dfn{singularly perturbed Riccati equation}
\eqntag{\label{200225120958}
	\hbar \del_x f = af^2 + b f + c
\fullstop{,}
}
where $x$ is a complex variable and $\hbar$ is a small complex perturbation parameter, and where the coefficients $a,b,c$ are holomorphic functions of $(x, \hbar)$ which admit asymptotic expansions as $\hbar \to 0$.
The main problem we pose here is to construct \textit{canonical exact solutions}; i.e., solutions that are holomorphic in both variables and which are uniquely determined by their prescribed asymptotics as $\hbar \to 0$.
This is a quintessential problem in singular perturbation theory.

\paragraph{Motivation.}
Existence and uniqueness theory for first-order ODEs is obviously a very well developed subject which can also be analysed in the presence of a parameter like $\hbar$.
However, it gives no information about the asymptotic behaviour of solutions as ${\hbar \to 0}$.
Attempting to solve an equation like \eqref{200225120958} by expanding it in powers of $\hbar$ generically leads to divergent power series solutions.

Of course, the subject of Riccati equations is vast with an exceptionally long history, appearing in a very wide variety of contexts (see, for example, \cite{MR0357936}).
Our motivation has two primary sources.

One is the exact WKB analysis of Schrödinger equations in the complex domain \cite{MR729194,MR819680,MR1232828,MR1209700,MR1232828,MR2182990,MR3280000}.
This very powerful approximation technique was popularised in the early days of quantum mechanics and goes back to as early as Liouville.
However, the natural question of existence of \textit{exact} solutions with prescribed asymptotic behaviour as $\hbar \to 0$ (often called \textit{exact WKB solutions}) has remained open in general (though in the course of finishing a draft of this paper, we became aware of the recent work of Nemes \cite{MR4226390}).
Our main result can be used to give a positive answer to this question in a large class of problems (generalising in particular the recent results of Nemes).
This is briefly described in a special case in \autoref{211119204333} and a full description is given in \cite{MY210623112236}.

Another interesting problem serving as motivation for this paper is encountered in the analysis of singularly perturbed differential systems and more generally meromorphic connections on holomorphic vector bundles over Riemann surfaces.
Given a singularly perturbed differential system with a singular point, the question is that of constructing a filtration by growth rates on the vector space of local solutions which is holomorphically varying in $\hbar$ and has a well-defined limit as $\hbar \to 0$.
For a large class of systems, the main result in this article can be used to construct such filtrations and furthermore show that they converge to the eigendecomposition of the unperturbed system as $\hbar \to 0$ as well as to the eigendecomposition of the principal part of the system as $x$ tends to the singular point; see \cite{nikolaev2019triangularisation}.

\enlargethispage{0.5cm}
\paragraph{Setting and Overview of Main Results.}
Take a domain $X \subset \Complex_x$, a sector $S \subset \Complex_\hbar$ at the origin, and consider a Riccati equation \eqref{200225120958} whose coefficients $a,b,c$ are holomorphic functions of $(x,\hbar) \in X \times S$ which admit locally uniform asymptotic expansions $\hat{a}, \hat{b}, \hat{c} \in \cal{O} (U) \bbrac{\hbar}$ as $\hbar \to 0$ in $S$.
More details are presented in \autoref{200723174748}, but for the purposes of this introduction, let us focus on the most ubiquitous scenario where $a,b,c$ are in fact polynomials in $\hbar$.
The leading-order part in $\hbar$ of the Riccati equation \eqref{200225120958} is the quadratic equation $a_0 f_0^2 + b_0 f_0 + c_0 = 0$ which generically has two distinct local holomorphic solutions $f^\pm_0$ away from \textit{turning points} (i.e., the zeros of the discriminant $\DD_0 \coleq b_0^2 - 4a_0 c_0$).

Let $U \subset X$ be a domain free of turning points that supports a univalued square-root branch $\sqrt{ \DD_0 }$.
Then it is well-known (see \autoref{200118111737}) that \eqref{200225120958} has precisely two formal solutions $\hat{f}_\pm \in \cal{O} (U) \bbrac{\hbar}$ which are uniquely determined by the leading-order solutions $f^\pm_0$ via a recursion on the coefficients $f^\pm_k$.
The main goal of this paper is to promote --- in a \textit{canonical} way --- the formal solutions $\hat{f}_\pm$ to \textit{exact solutions} $f_\pm$ (formally defined in \autoref{200723174748}); i.e., holomorphic solutions defined on $U_0 \times S_0$ where $U_0 \subset U$ and $S_0 \subset S$ is some sectorial domain such that $f_\pm \sim \hat{f}_\pm$ as $\hbar \to 0$ in $S_0$.

Although existence of exact solutions is a classical fact in the theory of singularly perturbed differential equations (see, e.g., \cite[Theorem 26.1]{MR0460820}) they are inherently non-unique due to the problem of missing exponential corrections in asymptotic expansions.
Part of the issue is that classical techniques in general give no control on the size of the opening of the sectorial domain $S_0$ (e.g., see the remark in \cite[p.144]{MR0460820}, immediately following Theorem 26.1).
In particular, it is impossible in general to `identify' a given exact solution with its asymptotic formal solution.

In this paper, we develop a general procedure applicable to a large class of problems to obtain \textit{canonical} exact solutions which indeed can be identified in a precise sense with their corresponding asymptotic formal solutions.
In order to achieve this, the opening angle $|A|$ of $S$ must be at least $\pi$, the most fundamental case being $|A| = \pi$.
For the purposes of this introduction, let us assume that $A = (-\tfrac{\pi}{2}, +\tfrac{\pi}{2})$.

A basepoint $x_0 \in X$ that is not a turning point, choose a local square-root branch $\sqrt{\DD_0}$, and consider the \textit{Liouville transformation}
\begin{equation}
\label{210821173420}
	z = \Phi (x) \coleq \int\nolimits_{x_0}^x \sqrt{ \DD_0 (t) } \dd{t}	
\fullstop
\end{equation}
Suppose that $x_0$ has a neighbourhood $W \subset X$ which is mapped by $\Phi$ to a horizontal strip $H = \set{z ~\big|~ -r < \Im (z) < r}$ of some width $r > 0$.
Suppose furthermore that the $\hbar$-polynomial coefficients $a_k, b_k, c_k$ of $a,b,c$ are bounded on $W$ by $\sqrt{\DD_0}$.
Then, under these assumptions, the main results of this paper can be summarised as follows.

\begin{thm}{210822125944}
The Riccati equation \eqref{200225120958} has a pair of canonical exact solutions $f_\pm$ near $x_0 \in X$ which are asymptotic to the formal solutions $\hat{f}_\pm$ as $\hbar \to 0$ in the right halfplane.
Namely, there is a neighbourhood $U_0 \subset X$ of $x_0$ and a sectorial subdomain $S_0 \subset S$ with the same opening $A$ such that the Riccati equation \eqref{200225120958} has a unique pair of holomorphic solutions $f_\pm$ on $U_0 \times S_0$ which are Gevrey asymptotic to $\hat{f}_\pm$ as $\hbar \to 0$ along the closed arc $\bar{A}$ uniformly for all $x \in U_0$:
\begin{equation}
\label{210823102331}
	f_\pm \simeq \hat{f}_\pm
\qqqquad
	\text{as $\hbar \to 0$ along $\bar{A}$, unif. $\forall x \in U_0$\fullstop}
\end{equation}
Moreover, $f_\pm$ is the uniform Borel resummation of the formal solution $\hat{f}_\pm$:
\eqntag{
	f_\pm = \cal{S} \big[ \, \hat{f}_\pm \, \big]
\fullstop
}
\end{thm}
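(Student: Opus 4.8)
\medskip
\noindent\textbf{Proof plan.}
The plan is to reduce \eqref{200225120958} to a normalized Riccati equation on the Liouville strip, pass to the Borel plane in $\hbar$ where it becomes a convolution equation, solve the latter by a contraction argument in a scale of weighted Banach spaces of holomorphic functions on the product of the strip with a half-plane Borel domain, and then Laplace-transform the solution back and verify its properties. To begin the reduction, I would substitute $f = f_0^\pm + g$: because $a_0(f_0^\pm)^2 + b_0 f_0^\pm + c_0 = 0$, the equation for $g$ becomes $\hbar\del_x g = a\,g^2 + (2a f_0^\pm + b)\,g + r$ with remainder $r \coleq a(f_0^\pm)^2 + b f_0^\pm + c - \hbar\del_x f_0^\pm = O(\hbar)$ and with linear coefficient whose leading part is $2a_0 f_0^\pm + b_0 = \pm\sqrt{\DD_0}$, nonvanishing on $U$. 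Applying the Liouville transformation \eqref{210821173420} and invoking the hypotheses that $\Phi$ maps $W$ onto the strip $H = \set{z \mid -r < \Im z < r}$ and that the $\hbar$-coefficients $a_k, b_k, c_k$ are dominated on $W$ by $\sqrt{\DD_0}$, one is left with a Riccati equation
\[ \hbar\del_z g = \alpha\,g^2 + \beta\,g + \gamma \qquad \text{on } H \times S, \]
with $\alpha,\beta,\gamma$ bounded holomorphic, $\beta = 1 + O(\hbar)$ (the $-$ branch being entirely symmetric), and $\gamma = O(\hbar)$. The nonvanishing of the leading linear coefficient is exactly the nondegeneracy underlying uniqueness of the formal solution $\hat g_\pm$ from \autoref{200118111737}; it also forces $\hat g_\pm = O(\hbar)$ and, as is checked next, Gevrey-$1$ growth.

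Writing $\hat g_\pm = \sum_{k \geq 1} g_k(z)\hbar^k$, the recursion $\beta_0 g_k = \del_z g_{k-1} - \gamma_k - \sum_{i \geq 1}\beta_i g_{k-i} - \sum_{i+j+l=k}\alpha_i g_j g_l$ (finite sums, $a,b,c$ being polynomial in $\hbar$), together with the lower bound on $|\beta_0|$ and Cauchy estimates on a slightly narrower strip $H'$, yields $|g_k(z)| \leq C M^k\, k!$ there by the standard Gevrey induction. Hence the Borel transform $\varphi(z,\xi) \coleq \sum_{k \geq 1}\tfrac{g_k(z)}{(k-1)!}\,\xi^{k-1}$ is holomorphic for $|\xi| < 1/M$, and Borel-transforming the normalized equation shows that $\varphi$ solves a convolution equation of the schematic shape
\[ \mathsf{T}\varphi \;=\; \Gamma + \mathsf{N}[\varphi], \]
where $\mathsf{T}$ is a constant-coefficient transport operator in $(z,\xi)$, $\Gamma$ is a fixed holomorphic function, and the semilinear operator $\mathsf{N}$ is assembled from bounded multiplications in $z$, finitely many convolutions in $\xi$ against holomorphic kernels, and the quadratic term $\alpha_0\,\varphi \ast \varphi$ (with $\ast$ convolution in $\xi$).

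Inverting $\mathsf{T}$ by integrating along its characteristics (which reach into $H'$) turns the convolution equation into a fixed-point equation $\varphi = \cal{T}[\varphi]$, which I would solve by contraction on the space of holomorphic functions on $H' \times \Xi$ --- $H' \subset H$ a subordinate strip and $\Xi = \set{\Re\xi > -\delta}$ a left-translate of the right half-plane, so that $\Xi$ contains the closed sector $\set{|\arg\xi| \leq \tfrac{\pi}{2}}$ --- equipped with the weighted norm $\|\varphi\| \coleq \sup\,|\varphi(z,\xi)|\,e^{-\tau|\xi|}$. For $\tau$ large, the convolution estimate $\|\varphi_1 \ast \varphi_2\| \leq \tau^{-1}\|\varphi_1\|\,\|\varphi_2\|$ and the exponential weight make $\cal{T}$ a contraction on a small ball, the strip and domination hypotheses ensuring that all $z$-direction estimates are uniform. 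The fixed point $\varphi$ agrees with the Borel transform above near $\xi = 0$ by uniqueness of $\hat g_\pm$, is holomorphic on $H' \times \Xi$, and has exponential type in $\xi$. Setting $f_\pm \coleq f_0^\pm + \int_0^{e^{i\vartheta}\infty}\varphi(z,\xi)\,e^{-\xi/\hbar}\dd{\xi}$ along $\arg\hbar = \vartheta \in \bar A$, the half-plane domain and exponential bound of $\varphi$ make this Laplace integral converge uniformly in $\vartheta \in \bar A$ and $z \in H'$, producing a holomorphic function on some $U_0 \times S_0$ with $S_0$ of opening $A$. That $f_\pm$ solves \eqref{200225120958} is the Laplace transform of the convolution equation; that $f_\pm \simeq \hat f_\pm$ Gevrey-uniformly along $\bar A$, uniformly in $x \in U_0$, follows from the exponential bound on $\varphi$ via a Watson--Nevanlinna estimate; and $f_\pm = \cal{S}\big[\,\hat f_\pm\,\big]$ holds by the definition of the uniform Borel resummation. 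For uniqueness, a sectorial domain of opening $\geq \pi$ contains a Nevanlinna disk $\set{\Re(1/\hbar) > 1/\rho}$, so any holomorphic solution of \eqref{200225120958} that is Gevrey-$1$ asymptotic to $\hat f_\pm$ along $\bar A$ uniformly in $x$ is --- restricted to such a disk and by the uniqueness clause of the Nevanlinna--Sokal theorem --- the Borel sum of $\hat f_\pm$, hence equals $f_\pm$ there and, by analytic continuation in $(x,\hbar)$, on all of $U_0 \times S_0$; this is precisely where $|A| \geq \pi$ is indispensable.

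The step I expect to be the main obstacle is the Borel-plane contraction: closing the weighted convolution estimates \emph{simultaneously} uniformly in $x$ across the whole strip $H'$ and up to the closed boundary rays of the half-plane of summation directions. The closed arc of opening exactly $\pi$ is the critical regime in which the naive Borel--Laplace estimates degenerate on the boundary, and controlling it is exactly what forces the domination-by-$\sqrt{\DD_0}$ and strip-width hypotheses; arranging the Borel domain $\Xi$ to be a genuine half-plane (rather than a proper sector) and verifying that the fixed point $\varphi$ extends there with exponential type is the technical heart of the argument.
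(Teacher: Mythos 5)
Your overall route is the same as the paper's: normalise by subtracting the leading-order solution, straighten the geometry with the Liouville transformation, pass to the Borel plane where the equation becomes a transport--convolution equation, invert the transport operator along characteristics to get a fixed-point problem, solve it, Laplace-transform back, and get uniqueness from the Nevanlinna--Sokal theorem on a Borel disc. The only methodological difference is that you close the fixed-point problem by a contraction in an exponentially weighted sup-norm, whereas the paper runs successive approximations and bounds the iterates by $\MM_n |\xi|^n/n!\, e^{\LL|\xi|}$, controlling the $\MM_n$ through a generating-function/implicit-function-theorem argument; these are interchangeable, though note that with the plain weight $\sup|\varphi|e^{-\tau|\xi|}$ the convolution bound you quote is not quite right as stated (one gets a factor $|\xi|$, not $\tau^{-1}$; the standard fixes are either a $(1+|\xi|)^{2}$ correction to the weight or measuring one factor in a strictly smaller exponential weight).

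There is, however, one genuine gap, and it sits exactly where you predicted the ``technical heart'' would be: the choice of Borel domain $\Xi=\set{\Re\xi>-\delta}$. First, the fixed-point equation cannot even be posed there. Inverting the transport operator produces the integral $\int_0^{\xi}(\cdots)(z+t,\xi-t)\dd{t}$, whose characteristic $t\mapsto z+t$ must remain inside the strip $H'$ for all $t$ on the segment $[0,\xi]$; since $H'$ has bounded width, this fails as soon as $|\Im\xi|$ exceeds that width, so no half-plane's worth of $\xi$ is available. Second, even where the operator makes sense, the coefficients' Borel transforms are only guaranteed (by Nevanlinna applied to the hypotheses on $a,b,c$) to be holomorphic with exponential growth on a thin tubular neighbourhood of the single ray $e^{i\theta}\Real_+$, not on a half-plane, and Borel singularities off that ray but inside the right half-plane are generic in this setting. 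The repair is that the half-plane is not needed at all: you introduced it in order to rotate the Laplace contour with $\arg\hbar$ over the closed arc $\bar A$, but the closed-arc Gevrey asymptotics of opening exactly $\pi$ are precisely what Nevanlinna's theorem delivers from a \emph{single} Laplace direction, given holomorphy and uniform exponential bounds of $\varphi$ on a tube of thickness $\epsilon<r-r_0$ around $e^{i\theta}\Real_+$ (at the cost of shrinking the strip from $H$ to $H'$ so that $z+\xi$ stays in $H$). Once $\Xi$ is replaced by that tube, your contraction closes and the rest of the argument goes through as you describe.
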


This is a special case of \autoref{211026151811} and \autoref{211027140644} which are the two main results of this paper.

\paragraph{Discussion and method.}
We construct the canonical exact solutions $f_\pm$ by employing relatively basic and classical techniques from complex analysis which form the basis for the more modern and sophisticated theory of resurgent asymptotic analysis.
Namely, we use the \textit{Borel-Laplace method}, also known as the theory of \textit{Borel-Laplace summability}.
We stress that the Borel-Laplace method ``is nothing other than the theory of Laplace transforms, written in slightly different variables'', echoing the words of Alan Sokal \cite{MR558468}.
As such, we have tried to keep our presentation very hands-on and self-contained, so the knowledge of basic complex analysis should be sufficient to follow.

An additional significant benefit of our approach is that we obtain uniqueness of the solution in the same sector where the initial data is specified.
This feature does not hold for other less explicit approaches, such as for example \cite[Theorem 26.1]{MR0460820} where an existence theorem is proved only on a smaller subsector and there is no hope of uniqueness.

Finally, we want to take the opportunity to acknowledge the unpublished work of Koike and Schäfke on the Borel summability of WKB solutions of Schrödinger equations with polynomial potentials.
See \cite[\S3.1]{takei2017wkb} for a brief account of their work.
Their ideas (which were kindly explained to the author in a private communication from Kohei Iwaki) provided the initial inspiration for the more general strategy of the proof pursued in this article.

\paragraph*{Acknowledgements.}
I want to express special gratitude to Marco Gualtieri, Marta Mazzocco, and Jörg Teschner for encouraging me to press on with finishing this project and writing this paper.
I also want to very much thank Anton Alekseev for his patience during the long time that this project took.
I am also really grateful to Kohei Iwaki for sharing his personal notes which so clearly explain the arguments of Koike and Schäfke.
In addition, I want to thank Dylan Allegretti, Marco Gualtieri, Kohei Iwaki, Andrew Neitzke, and Shinji Sasaki for helpful discussions.
Finally, I want to thank Marco Gualtieri and Marta Mazzocco for really useful suggestions for improving an earlier draft of this paper.
This work was supported by the NCCR SwissMAP of the SNSF.

\section{Singularly Perturbed Riccati Equations}
\label{200723174748}

\paragraph{Background assumptions.}
\label{210730141800}
Throughout the paper, we fix a complex plane $\Complex_x$ with coordinate $x$ and another complex plane $\Complex_\hbar$ with coordinate $\hbar$.
Let $X$ be a domain in $\Complex_x$ or indeed a coordinate chart on a Riemann surface.
Let $S \subset \Complex_\hbar$ be \hyperref[210217114252]{sectorial domain} at the origin with opening arc $A$.
We assume that $0 < |A| \leq 2\pi$.

Consider the Riccati equation
\begin{equation}\label{211121182754}
	\hbar \del_x f = a f^2 + b f + c
\end{equation}
whose coefficients $a,b,c$ are holomorphic functions of $(x,\hbar) \in X \times S$ admitting locally uniform asymptotic expansions with holomorphic coefficients as $\hbar \to 0$ along $A$:
\eqntag{\label{200306083101}
\begin{aligned}
	a (x, \hbar) &\sim \hat{a} (x, \hbar) \coleq \sum_{n = 0}^\infty a_n (x) \hbar^n
\fullstop{,}
\\
	b (x, \hbar) &\sim \hat{b} (x, \hbar) \coleq \sum_{n = 0}^\infty b_n (x) \hbar^n
\fullstop{,}
\\
	c (x, \hbar) &\sim \hat{c} (x, \hbar) \coleq \sum_{n = 0}^\infty c_n (x) \hbar^n
\fullstop
\end{aligned}
\qqquad
\text{as $\hbar \to 0$ along $A$, loc.unif. $\forall x \in X$\fullstop}
}
In symbols, $a,b,c \in \cal{A}_\textup{loc} (X;A)$ and $\hat{a}, \hat{b}, \hat{c} \in \cal{O} (X) \bbrac{\hbar}$.
Basic notions from asymptotic analysis as well as our notation and conventions are summarised in \autoref{210714114437}.
The main problem we pose in this article is to find canonical \textit{exact solutions} of the Riccati equation \eqref{211121182754} in the following sense.

\begin{defn}{210826135801}
Fix any phase $\theta \in A$.
A \dfn{local $\theta$-exact solution} of the Riccati equation \eqref{211121182754} near a point $x_0 \in X$ is a holomorphic solution $f = f(x,\hbar)$, defined on a domain $U_0 \times S_0$ where $U_0 \subset X$ is a neighbourhood of $x_0$ and $S_0 \subset S$ is a sectorial subdomain with opening $A_0 \subset A$ containing $\theta$, such that $f$ admits an asymptotic expansion with holomorphic coefficients as $\hbar \to 0$ along $A_0$ uniformly for all $x \in U_0$.

A \dfn{$\theta$-exact solution} on a domain $U \subset X$ is a holomorphic solution $f = f(x,\hbar)$ which is a local $\theta$-exact solution near every point in $U$.
That is, $f$ is a holomorphic solution defined on a domain $\UUU \subset U \times S$ with the following property: for every $x_0 \in X$, there is a domain neighbourhood $U_0 \subset U$ of $x_0$ and a sectorial domain $S_0 \subset S$ with opening $A_0 \subset A$ containing $\theta$ such that $f$ admits an asymptotic expansion with holomorphic coefficients as $\hbar \to 0$ along $A_0$ uniformly for all $x \in U_0$.
\end{defn}

\paragraph{Examples.}
The following is a list, included here for illustrative purposes only, containing a few explicit examples of Riccati equations to which the main results in this paper can be applied.

The most typical situation is one where the coefficients $a,b,c$ of the Riccati equation \eqref{211121182754} are polynomials in $\hbar$ with coefficients which are rational functions of $x$.
In this case, $X$ is the complement of the poles in $\Complex_x$, and the sectorial domain $S$ can be taken to be the whole open right halfplane $\set{\Re (\hbar) > 0}$.
The simplest example is:
\begin{enumerate}
\item \DS{\hbar \del_x f = f^2 - x}.
\end{enumerate}
This Riccati equation is examined in great detail in \autoref{211027171318}.
It arises in the exact WKB analysis of the Airy equation $\hbar^2 \del_x^2 \psi (x, \hbar) = x \psi (x, \hbar)$ (see \cite{MY210623112236}).
In this case, $X = \Complex_x$ and the sectorial domain $S$ is the open right halfplane $\set{\Re (\hbar) > 0}$.
If $U$ is any of the three sectorial domains in $\Complex_x$ given by $\set{0 < \arg (x) < +4\pi/3}$, or $\set{+2\pi/3 < \arg (x) < +2\pi}$, or $\set{-2\pi/3 < \arg (x) < +2\pi/3}$, then on each of these domains the main existence and uniqueness result in this paper produces a pair of canonical exact solutions.
More generally:
\begin{enumerate}
\setcounter{enumi}{1}
\item  \DS{\hbar \del_x f = f^2 + q(x)}
\end{enumerate}
where $q(x)$ is any polynomial or a rational function with poles of order $2$ or higher.
In this case, $S$ can again be arranged to be the right halfplane, and $U$ is a sectorial domain near a pole of $q(x)$ of order $2$ or higher.

Many Riccati equations arise from the WKB analysis of classical second-order differential equation.
For example, the following Riccati equation appears in the WKB analysis of the Gauss hypergeometric equation:
\begin{enumerate}
\setcounter{enumi}{2}
\item \DS{\hbar \del_x f = f^2 + \frac{\gamma - (\alpha + \beta + 1) x}{x(x-1)} f + \frac{\alpha \beta}{x (x-1)}} for any $\alpha, \beta, \gamma \in \Complex^\ast$.
\end{enumerate}

Riccati equations also arise in the analysis of singularly perturbed second-order systems.
For example, the Riccati equation
\begin{enumerate}
\setcounter{enumi}{3}
\item \DS{\hbar \del_x f = \hbar f^2 + (1-x) f + x \hbar}
\end{enumerate}
\vspace{-0.5\baselineskip}
arises in the analysis of the system $\hbar \del_x \psi + \mtx{ 1 & -\hbar \\ x\hbar & x} \psi = 0$.
See \cite{nikolaev2019triangularisation}.

Our methods also apply to the following nontrivial deformation of example (1):
\begin{enumerate}
\setcounter{enumi}{4}
\item \DS{\hbar \del_x f = f^2 - x + x \EE (x, \hbar)
\qtext{where}
	\EE (x, \hbar)
		\coleq \int_0^{+\infty} \frac{e^{- \xi/\hbar}}{x + \xi} \dd{\xi}}.
\end{enumerate}
The sectorial domain $S$ in this case is the open right halfplane.
The function $\EE$ is holomorphic in $\hbar \in S$ and it admits a locally-uniform asymptotic expansion as $\hbar \to 0$ in the right halfplane (see \autoref{200708191952} for more details).
Notice, however, that $\EE$ is not holomorphic at $\hbar = 0$, and it also has non-isolated singularities along the negative real axis in $\Complex_x$.
Nevertheless, if $U$ is the domain given by $\set{-2\pi/3 < \arg (x) < +2\pi/3}$ or by $\set{0 < \arg (x) < +\pi}$ or $\set{+\pi < \arg (x) < +2\pi}$, then our method yields canonical exact solutions on $U$.

\section{Formal Perturbation Theory}
\label{210714152619}

In this section, we analyse the Riccati equation from a purely formal perspective whereby we ignore all analytic considerations in the $\hbar$-variable.

\paragraph{}
Thus, we consider the \dfn{formal Riccati equation}
\begin{equation}\label{211003140338}
	\hbar \del_x \hat{f} = \hat{a} \hat{f}^2 + \hat{b} \hat{f} + \hat{c}
\fullstop{,}
\end{equation}
where $\hat{a}, \hat{b}, \hat{c}$ are arbitrary formal power series in $\hbar$ with holomorphic coefficients on some domain $X$ in $\Complex_x$.
In symbols, $\hat{a}, \hat{b}, \hat{c} \in \cal{O} (X) \bbrac{\hbar}$.
By definition, a \dfn{formal solution} of \eqref{211003140338} on a domain $U \subset X$ is any formal power series with holomorphic coefficients 
\eqntag{\label{200219162841}
	\hat{f} = \hat{f} (x, \hbar) = \sum_{k=0}^\infty f_k (x) \hbar^k
	~\in~ \cal{O} (U) \bbrac{\hbar}
}
that satisfies the formal equation \eqref{211003140338}.
Here, the derivative $\del_x \hat{f}$ is interpreted as term-by-term differentiation.

\paragraph{}
A formal Riccati equation \eqref{211003140338} arises from an analytic Riccati equation \eqref{211121182754} by replacing the coefficients $a,b,c$ with their asymptotic power series $\hat{a}, \hat{b}, \hat{c}$ from \eqref{200306083101}.
Solutions of \eqref{211003140338} are regarded as formal solutions of \eqref{211121182754} as opposed to `true' solutions that are meant to have some analytic meaning in the variable $\hbar$.
Note also that if the coefficients $a,b,c$ are polynomials in $\hbar$, then $\hat{a} = a, \hat{b} = b, \hat{c} = c$ so the formal Riccati equation \eqref{211003140338} is exactly the same as the analytic Riccati equation \eqref{211121182754}.

\subsection{Leading-Order Solutions}
\label{200601224547}

\paragraph{}
Consider the \dfn{leading-order equation} corresponding to \eqref{211003140338}:
\eqntag{\label{200521115322}
	a_0 f_0^2 + b_0 f_0 + c_0 = 0
\fullstop
}
It is a quadratic equation in the unknown variable $f_0$, and we refer to its solutions as \dfn{leading-order solutions} of the Riccati equation.
Generically, they are locally holomorphic but may have poles and branch-point singularities.

\paragraph{}
The discriminant of \eqref{200521115322},
\eqntag{\label{200219161938}
	\DD_0 \coleq b_0^2 - 4 a_0 c_0
\fullstop{,}
}
which we call the \dfn{leading-order discriminant} of the Riccati equation, is a holomorphic function on $X$. 
We always assume that $\DD_0$ is not identically zero.
The zeros of $\DD_0$ are called \dfn{turning points} of the Riccati equation, and all other points in $X$ are called \dfn{regular points}.

Locally, away from turning points, the Riccati equation always has at least one holomorphic leading-order solution.
For reference, we state the following elementary lemma (whose proof is presented in \autoref{200614173443} for completeness).

\begin{lem}[\textbf{holomorphic leading-order solutions}]{200614161900}
Let $U \subset X$ be any domain free of turning points such that a univalued square-root branch $\sqrt{ \DD_0 }$ of $\DD_0$ can be chosen on $U$.
Then the leading-order equation \eqref{200521115322} has at least one holomorphic solution on $U$.
In addition, if $a_0$ is nonvanishing on $U$, then \eqref{200521115322} has two holomorphic solutions.
Moreover, any holomorphic solution is bounded on $U$ whenever the coefficients $a_0, b_0, c_0$ are bounded by $\sqrt{\DD_0}$ on $U$.
\end{lem}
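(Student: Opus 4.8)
The plan is to rewrite the leading-order equation \eqref{200521115322} by completing the square and then to split into cases according to the zero set of the quadratic coefficient $a_0$ on $U$. The degenerate case $a_0 \equiv 0$ is immediate: then \eqref{200521115322} is the linear equation $b_0 f_0 + c_0 = 0$, and since $\DD_0 = b_0^2$ has no zeros on $U$ the coefficient $b_0$ is nowhere zero, so $f_0 = -c_0/b_0 \in \cal{O}(U)$ is the unique holomorphic solution, bounded whenever $c_0$ is bounded by $\sqrt{\DD_0} = \pm b_0$. So assume $a_0 \not\equiv 0$ from now on; completing the square (multiply \eqref{200521115322} by $4a_0$) turns it into the identity $\big(2 a_0 f_0 + b_0\big)^2 = \DD_0$, valid even at zeros of $a_0$.

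If $a_0$ is nowhere zero on $U$, then because $U$ supports the single-valued branch $\sqrt{\DD_0}$ this identity has exactly the two holomorphic solutions $f_0^\pm \coleq \big(-b_0 \pm \sqrt{\DD_0}\big)/2a_0 \in \cal{O}(U)$, which settles the existence statement and the ``in addition'' clause at once. For boundedness I would rationalise, multiplying numerator and denominator by the conjugate, to the equivalent form $f_0^\pm = -2 c_0 / \big(b_0 \pm \sqrt{\DD_0}\big)$; dividing through by $\sqrt{\DD_0}$ and invoking the hypothesis that $a_0, b_0, c_0$ are bounded by $\sqrt{\DD_0}$, the elementary inequality $\big|b_0 + \sqrt{\DD_0}\big| + \big|b_0 - \sqrt{\DD_0}\big| \ge 2\big|\sqrt{\DD_0}\big|$ shows that for at least one sign the denominator dominates $\big|\sqrt{\DD_0}\big|$, which gives the bound.

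The substantive case is $a_0 \not\equiv 0$ with a nonempty discrete zero set $Z \subset U$. The two functions $f_0^\pm$ are still holomorphic on $U \setminus Z$, and the completed-square identity shows that every holomorphic solution of \eqref{200521115322} on $U$ restricts on $U \setminus Z$ to $f_0^+$ or to $f_0^-$ with a single choice of sign: $2 a_0 f_0 + b_0$ is a fixed holomorphic function whose square equals $\DD_0 = \big(\sqrt{\DD_0}\big)^2$, so one of $2a_0 f_0 + b_0 \mp \sqrt{\DD_0}$ vanishes identically on the connected set $U \setminus Z$. I would then check that at least one of $f_0^\pm$ extends holomorphically across each $x_\ast \in Z$: since $x_\ast$ is a regular point, $\DD_0(x_\ast) = b_0(x_\ast)^2 \neq 0$, so $b_0(x_\ast) \neq 0$ and $\sqrt{\DD_0}(x_\ast) = \pm b_0(x_\ast)$; in the rationalised form $f_0^\pm = -2 c_0/\big(b_0 \pm \sqrt{\DD_0}\big)$ exactly one of the two denominators is nonzero at $x_\ast$, so the corresponding $f_0^\pm$ is holomorphic there (with value $-c_0(x_\ast)/b_0(x_\ast)$), while the other has a genuine pole. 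Near $Z$ the holomorphic root is thus the one with the nonvanishing denominator, which by the triangle-inequality estimate above is the bounded one.

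The step I expect to be the main obstacle is upgrading this local picture to the claim on all of $U$: one must check that the sign $\pm$ for which $f_0^\pm$ extends holomorphically is the same at every point of $Z$, so that one really obtains a single holomorphic solution defined on all of $U$ rather than two functions each holomorphic only on a different punctured subdomain. This is precisely where the single-valuedness of $\sqrt{\DD_0}$ on $U$ (and, implicitly, that $U$ is taken small enough — consistent with the local phrasing in the paragraph preceding the lemma) must be used, via a connectedness argument on $U \setminus Z$ built on the completed-square identity; the same bookkeeping then promotes the pointwise estimate to a uniform bound on $U$ for whichever of $f_0^\pm$ is the holomorphic solution, completing the boundedness claim.
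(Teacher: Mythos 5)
There are two genuine gaps here. First, the boundedness argument in the case where $a_0$ is nonvanishing does not close: the inequality $|b_0+\sqrt{\DD_0}|+|b_0-\sqrt{\DD_0}|\ge 2|\sqrt{\DD_0}|$ only shows that \emph{at each point} at least one of the two denominators dominates $|\sqrt{\DD_0}|$, and the favourable sign may change from point to point, so you get a uniform bound on neither $f_0^+$ nor $f_0^-$. Nor can both roots be bounded in general: take $b_0=1$, $c_0=0$ and $a_0$ nonvanishing but small, so that $\DD_0=1$, all three coefficients are bounded by $\sqrt{\DD_0}$, yet $f_0^-=-1/a_0$ is unbounded. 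The paper's proof instead fixes the root $f_0^+$ and partitions $U$ by a point-independent criterion on $\tilde a_0\coleq a_0/\sqrt{\DD_0}$: where $|\tilde a_0|$ is bounded below, the unrationalised formula $f_0^+=(1-\tilde b_0)/(2\tilde a_0)$ is directly bounded; where $|\tilde a_0|$ is small it uses your rationalised form $f_0^+=-2\tilde c_0/(\tilde b_0+1)$ when $|\tilde b_0|<\tfrac12$ (so $|\tilde b_0+1|\ge\tfrac12$), and a binomial expansion of $\sqrt{1-4a_0c_0/b_0^2}$ when $|\tilde b_0|\ge\tfrac12$ (there $|a_0c_0/b_0^2|$ is forced small). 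Some such decomposition, tied to a single chosen root, is unavoidable.

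Second, the step you yourself flag as ``the main obstacle'' --- that the sign for which $f_0^\pm$ extends holomorphically is the same at every zero of $a_0$ --- is left open, and the connectedness argument you appeal to does not exist: the value $\sqrt{\DD_0}(x_\ast)/b_0(x_\ast)\in\{\pm1\}$ at distinct zeros $x_\ast$ of $a_0$ is not constrained by connectedness of $U\setminus Z$ or by single-valuedness of $\sqrt{\DD_0}$. For instance $a_0=x^2-1$, $b_0=x$, $c_0=\tfrac14$ gives $\DD_0\equiv 1$ on all of $\Complex_x$, yet $f_0^+=-1/(2(x+1))$ and $f_0^-=-1/(2(x-1))$ are each holomorphic at exactly one of the two zeros of $a_0$, so neither root is holomorphic on a domain containing both. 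Your local analysis (exactly one of the denominators $b_0\pm\sqrt{\DD_0}$ vanishes at each point of $Z$) is correct and is the right starting point, but the global consistency cannot be derived; it has to be imposed by shrinking $U$, consistent with the local usage in \autoref{211102183205}. Note that the paper's own appendix proof sidesteps this entirely: it only establishes the uniform bound on $f_0^+$ and treats the holomorphy statements as part of the labelling conventions, so on this point your proposal is at least more explicit about where the difficulty lies.
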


\paragraph{}
\label{211102183205}
We will always label the leading-order solutions as follows:
\begin{subequations}\label{210715120210}
\eqnstag{\label{200702084051}
	f_0^\pm \coleq \frac{-b_0 \pm \sqrt{\DD_0}}{2 a_0} 
	&\qquad\text{if $a_0 \not\equiv 0$;}
\\\label{200702084052}
	f_0^+ \coleq - c_0 / b_0
	&\qquad\text{if $a_0 \equiv 0$.}
}
\end{subequations}
This choice of labels yields the following relations:
\eqntag{\label{210715115850}
	\pm \sqrt{\DD_0} = 2a_0 f_0^\pm + b_0
\qqtext{and}
	\sqrt{\DD_0} = a_0 \big( f_0^+ - f_0^- \big)
\fullstop
}
Thus, if $a_0$ is nonvanishing, then both $f_0^\pm$ from \eqref{200702084051} are holomorphic functions on $U$.
If $a_0$ has zeros in $U$, then $f_0^+$ from \eqref{200702084051} remains holomorphic on $U$, but $f_0^-$ has poles where $a_0$ has zeros.
If $a_0 \equiv 0$, then $f_0^+$ from \eqref{200702084052} is a holomorphic function on $U$.

\begin{rem}{210714162921}
If $a_0 \equiv 0$, then $b_0$ is the preferred square root of $\DD_0$.
This choice makes the labels in \eqref{200702084051} consistent with \eqref{200702084052}, because in this case the solution $f_0^+$ from \eqref{200702084051} is necessarily holomorphic on $U$ (whether or not $a_0$ has zeros in $U$) and converges to $-c_0/b_0$ as $a_0 \to 0$ uniformly in $x$ (whilst the solution $f_0^-$ has no limit).
So, according to our notation, $f_0^+$ is always holomorphic away from turning points.
\end{rem}

\subsection{Existence and Uniqueness of Formal Solutions}
\label{200521121035}

The following elementary theorem says that a formal Riccati equation \eqref{211003140338} always has at least one local solution away from turning points, and it is uniquely specified in the leading-order.

\begin{thm}[\textbf{Formal Existence and Uniqueness Theorem}]{200118111737}
Consider the formal Riccati equation \eqref{211003140338}.
Assume its leading-order discriminant $\DD_0$ is not identically zero.
Let $U \subset X$ be a domain free of turning points that supports a univalued square-root branch $\sqrt{\DD_0}$.
\begin{enumerate}
\item If $a_0 \equiv 0$, then \eqref{211003140338} has a unique formal solution $\hat{f}_+ \in \cal{O} (U) \bbrac{\hbar}$.
Its leading-order term is $f_0^+$ from \eqref{200702084052}.
\item If $a_0 \not\equiv 0$ and nonvanishing on $U$, then \eqref{211003140338} has exactly two distinct formal solutions $\hat{f}_\pm \in \cal{O} (U) \bbrac{\hbar}$.
Their leading-order terms $f^\pm_0$ are given by \eqref{200702084051}.
\item If $a_0 \not\equiv 0$ but has zeros in $U$, then \eqref{211003140338} has a unique formal solution $\hat{f}_+ \in \cal{O} (U) \bbrac{\hbar}$.
Its leading-order term $f_0^+$ is the unique holomorphic leading-order solution on $U$ given by \eqref{200702084051}.
\end{enumerate}
Moreover, the coefficients $f_k^\pm$ of $\hat{f}_\pm$ for $k \geq 1$ are given by the following recursive formula:
\eqntag{\label{191207215212}
	f_k^\pm = \pm \frac{1}{\sqrt{ \DD_0 }} \del_x f_{k-1}^\pm \mp \frac{1}{\sqrt{ \DD_0 }}
		\left(
					\sum_{k_1 + k_2 + k_3 = k}^{k_2, k_3 \neq k} a_{k_1} f_{k_2}^\pm f_{k_3}^\pm
					+ \sum_{k_1 + k_2 = k}^{k_2 \neq k} b_{k_1} f_{k_2}^\pm
					+ c_k
	\right)
\fullstop
}
\end{thm}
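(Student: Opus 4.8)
The plan is to substitute the ansatz $\hat{f} = \sum_{k \geq 0} f_k \hbar^k$ into the formal equation \eqref{211003140338} and compare coefficients of equal powers of $\hbar$, handling the right-hand side with the Cauchy product. Since $\hbar \del_x \hat{f} = \sum_{k \geq 1} (\del_x f_{k-1}) \hbar^k$ contributes nothing at order $\hbar^0$, the constant term of the equation reduces to the leading-order equation \eqref{200521115322}, so $f_0$ is necessarily a leading-order solution. For every $k \geq 1$, comparing order-$\hbar^k$ terms gives the relation $\del_x f_{k-1} = \sum_{k_1+k_2+k_3=k} a_{k_1} f_{k_2} f_{k_3} + \sum_{k_1+k_2=k} b_{k_1} f_{k_2} + c_k$.

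The next step is to extract the leading unknown $f_k$ from this relation. In the triple sum, the only summands involving $f_k$ are those with $k_2 = k$ or $k_3 = k$; each forces the two other indices to vanish, so together they contribute $2 a_0 f_0 f_k$. In the double sum, the only summand involving $f_k$ is $b_0 f_k$. Hence the order-$\hbar^k$ equation takes the form $\del_x f_{k-1} = (2 a_0 f_0 + b_0) f_k + R_k$, where $R_k$ is a polynomial expression in the coefficients $a_j, b_j, c_j$ and the already-determined $f_0, \ldots, f_{k-1}$. Now I would invoke the identity $2 a_0 f_0^\pm + b_0 = \pm \sqrt{\DD_0}$ from \eqref{210715115850} together with the hypothesis that $U$ contains no turning points: the factor $2 a_0 f_0^\pm + b_0$ is then a nonvanishing holomorphic function on $U$, so dividing through yields exactly the recursion \eqref{191207215212}. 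A routine induction on $k$ then shows each $f_k^\pm$ is holomorphic on $U$: the base case $f_0^\pm$ is provided by \autoref{200614161900}, and if $f_0^\pm, \ldots, f_{k-1}^\pm$ are holomorphic then the numerator in \eqref{191207215212} is holomorphic while the denominator $\sqrt{\DD_0}$ is holomorphic and nonvanishing.

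Finally I would deduce the three cases from the classification of holomorphic leading-order solutions. By \autoref{200614161900}, if $a_0 \equiv 0$ or $a_0 \not\equiv 0$ with zeros in $U$ there is a single holomorphic leading-order solution, namely $f_0^+$ from \eqref{200702084052} or \eqref{200702084051} respectively; if $a_0$ is nonvanishing there are two, $f_0^\pm$, and they are distinct since $\sqrt{\DD_0} = a_0 (f_0^+ - f_0^-) \not\equiv 0$. Because the recursion \eqref{191207215212} uniquely determines all higher coefficients once the leading term has been chosen, each holomorphic leading-order solution extends to precisely one formal solution in $\cal{O}(U)\bbrac{\hbar}$, and conversely the leading term of any formal solution with holomorphic coefficients must itself be a holomorphic leading-order solution. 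This yields the counts asserted in parts (1)--(3).

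The argument is entirely elementary, and I do not expect a genuine obstacle. The two points requiring care are the combinatorial bookkeeping that isolates the $f_k$-contribution in the Cauchy products --- this is the origin of the side conditions $k_2, k_3 \neq k$ in \eqref{191207215212} --- and the observation that the turning-point-free hypothesis is precisely what makes $2 a_0 f_0^\pm + b_0$ invertible, which is what converts the infinite hierarchy of order-by-order relations into a well-posed recursion.
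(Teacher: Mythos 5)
Your proposal is correct and follows essentially the same route as the paper: expand the equation order-by-order in $\hbar$, isolate the factor $2a_0 f_0^\pm + b_0 = \pm\sqrt{\DD_0}$ multiplying $f_k^\pm$, and use the turning-point-free hypothesis to divide through and obtain the recursion \eqref{191207215212}. The only difference is that you spell out the Cauchy-product bookkeeping, the holomorphicity induction, and the case analysis, which the paper compresses into ``the other statements readily follow.''
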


\begin{proof}
We expand the formal Riccati equation \eqref{211003140338} order-by-order in $\hbar$:
\eqnstag{\label{191207213609}
	\hbar^0 ~\big|~ ~ &
		0 = a_0 f_0^2 + b_0 f_0 + c_0
\fullstop{;}
\\\label{191207212036}
	\hbar^1 ~\big|~ ~ &
		\del_x f_0 = (2a_0 f_0 + b_0) f_1 + a_1 f_0^2 + b_1 f_0 + c_1
\fullstop{;}
\\\label{191207212042}
	\hbar^2 ~\big|~ ~ &
		\del_x f_1 = (2a_0 f_0 + b_0) f_2 + a_0 f_1^2 + 2 a_1 f_0 f_1 + a_2 f_0^2 + b_1 f_1 + b_2 f_0 + c_2
\fullstop{;}
\\\nonumber
	\vdots \phantom{~~\big|~ ~}
\\[-0.75\baselineskip]\label{191208144433}
	\hbar^k ~\big|~ ~ & 
		\del_x f_{k-1} = (2a_0 f_0 + b_0) f_k
					+ \sum_{k_1 + k_2 + k_3 = k}^{k_2, k_3 \neq k} a_{k_1} f_{k_2} f_{k_3} 
					+ \sum_{k_1 + k_2 = k}^{k_2 \neq k} b_{k_1} f_{k_2} 
					+ c_k
\fullstop
\\[-0.75\baselineskip]\nonumber
	\vdots \phantom{~~\big|~ ~}
}
Observe that these are no longer differential equations because the derivative term at each order depends only on the solutions from previous orders.
If we fix a leading-order solution $f_0^\pm$, then the expression $(2 a_0 f_0^\pm + b_0)$, appearing as a factor in front of $f^\pm_k$ in each equation \eqref{191208144433}, is simply $\pm \sqrt{\DD_0}$.
From the assumption that $\DD_0 \not\equiv 0$ it follows that at each order in $\hbar$ we can uniquely solve for $f_k^\pm$.
This establishes the formula \eqref{191207215212}, from which the other statements readily follow.
\end{proof}

\begin{rem}{210714163859}
In part (3) of \autoref{200118111737}, the Riccati equation \eqref{211003140338} also has a \textit{singular} formal solution $\hat{f}_-$ on $U$ whose leading-order term is the singular leading-order solution $f_0^-$ on $U$.
The singularities of the coefficients of $\hat{f}_-$ are poles occurring at the zeros of $a_0$.
We will examine in more detail the singularities of formal (and exact) solutions in a forthcoming paper.
\end{rem}

\begin{rem}[formal discriminant]{211028174846}
Since in the generic situation the Riccati equation has precisely two formal solutions $\hat{f}_+, \hat{f}_-$, we can introduce a notion of discriminant for the Riccati equation analogous to the discriminant of a quadratic equation by simply mimicking the formula.

Thus, let $U \subset X$ be a domain free of turning points that supports a univalued square-root branch $\sqrt{\DD_0}$, and suppose that $a_0$ is nonvanishing on $U$.
We define the \dfn{formal discriminant} of the Riccati equation \eqref{211003140338} by the following formula:
\begin{equation}\label{211028183358}
	\hat{\DD} (x, \hbar) 
		\coleq \hat{a}^2 \big( \hat{f}_+ - \hat{f}_- \big) \big( \hat{f}_- - \hat{f}_+ \big)
\fullstop
\end{equation}
It is a formal power series with holomorphic coefficients on $U$ (i.e., $\hat{\DD} \in \cal{O} (U) \bbrac{\hbar}$), and its leading-order term is precisely the leading-order discriminant $\DD_0$.
This quantity plays an important role in addressing global questions in the WKB analysis that will be studied elsewhere.
\end{rem}

\subsection{Gevrey Regularity of Formal Solutions}
\label{200811170816}

In this subsection, we prove the following general result about the regularity of formal solutions, which generalises Proposition A.1.1 in \cite[p.19]{MR1166808} (see also \cite[p.252]{MR729194}), where it is assumed that $\hat{a} = -1, \hat{b} = 0$, and $\hat{c}$ is an entire holomorphic function of $x$ only (i.e., $\hat{c} (x, \hbar) = c_0 (x)$).

\begin{prop}[\textbf{Local Gevrey Regularity of Formal Solutions}]{200118123730}
Consider a formal Riccati equation \eqref{211003140338} on $X$ with leading-order discriminant $\DD_0 \not\equiv 0$.
Let $U \subset X$ be any domain free of turning points that supports a univalued square-root branch $\sqrt{\DD_0}$, and let $\hat{f}$ be a formal solution on $U$.

If the coefficients $\hat{a},\hat{b},\hat{c}$ are locally uniformly Gevrey series on $U$, then so is $\hat{f}$.
In particular, the formal Borel transform $\hat{\phi} (x, \xi) \coleq \hat{\Borel} [ \, \hat{f} \, ] (x, \xi)$ of $\hat{f}$ is a locally uniformly convergent power series in $\xi$.
\end{prop}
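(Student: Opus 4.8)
The plan is to run a method of majorants directly on the recursion \eqref{191207215212}, using Nagumo-type norms to absorb the loss of one $x$-derivative incurred at each order in $\hbar$. Since local uniform Gevrey regularity is a local condition, it suffices to fix an arbitrary point $x_\ast \in U$, choose $R \leq 1$ small enough that the closed disc $\bar{D}_R(x_\ast)$ is contained in $U$ and free of turning points, and prove a Gevrey-$1$ estimate $\sup_{|x - x_\ast| \leq R/2} |f_k(x)| \leq C M^k k!$ for constants $C, M > 0$ depending only on that disc (the argument being identical for either formal solution and in each case of \autoref{200118111737}). Given such an estimate, the assertion about $\hat{\phi} = \hat{\Borel}[\hat{f}\,]$ is immediate, since a locally uniformly Gevrey-$1$ series has a Borel transform which is locally uniformly convergent in $\xi$. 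On the open disc $D \coleq D_R(x_\ast)$ I will use, for $p \geq 0$, the Nagumo norm $\|g\|_p \coleq \sup_{z \in D} |g(z)|\, \delta(z)^p$ with $\delta(z) \coleq R - |z - x_\ast|$, and rely on its three standard properties: $\|gh\|_{p+q} \leq \|g\|_p \|h\|_q$; the derivative estimate $\|\del_x g\|_{p+1} \leq e(p+1)\|g\|_p$; and, because $R \leq 1$, the fact that $\|g\|_p$ is nonincreasing in $p$.

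Set $N_0 \coleq \|f_0\|_0 = \sup_D |f_0| < \infty$ (finite since $f_0$ is holomorphic on $U$ by \autoref{200614161900}), $N_k \coleq \|f_k\|_k$ for $k \geq 1$, and $\tilde{N}_k \coleq N_k / k!$. Since $\bar{D}_R(x_\ast)$ avoids turning points, $\Lambda \coleq \sup_D |\DD_0|^{-1/2} < \infty$; and since $\hat{a}, \hat{b}, \hat{c}$ are locally uniformly Gevrey on $U$, there are $C', M' > 0$ with $\|a_n\|_0, \|b_n\|_0, \|c_n\|_0 \leq C' M'^n n!$ for all $n$. Rewriting \eqref{191207215212} as $f_k = (\pm\sqrt{\DD_0})^{-1}\big(\del_x f_{k-1} - \sum a_{k_1} f_{k_2} f_{k_3} - \sum b_{k_1} f_{k_2} - c_k\big)$ (with the index conventions of \eqref{191207215212}), applying $\|\cdot\|_k$, and using submultiplicativity on each product, the derivative estimate on $\del_x f_{k-1}$, and monotonicity to replace $\|a_{k_1}\|_{k_1}$ by $\|a_{k_1}\|_0$ and so on, together with $k_1!k_2!k_3! \leq k!$ and the observation that the constraints $k_2, k_3 \neq k$ force $k_2, k_3 \leq k-1$, I obtain for every $k \geq 1$
\[
	\tilde{N}_k \;\leq\; \Lambda e\, \tilde{N}_{k-1} \;+\; \Lambda C' \!\!\sum_{\substack{k_1 + k_2 + k_3 = k \\ k_2,\, k_3 \leq k-1}}\!\! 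M'^{k_1}\, \tilde{N}_{k_2} \tilde{N}_{k_3} \;+\; \Lambda C' \!\!\sum_{\substack{k_1 + k_2 = k \\ k_1 \geq 1}}\!\! M'^{k_1}\, \tilde{N}_{k_2} \;+\; \Lambda C' M'^k ,
\]
while $\tilde{N}_0 = N_0$. The essential point is that the right-hand side involves only $\tilde{N}_0, \dots, \tilde{N}_{k-1}$.

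The final step is to majorize. With $A(\xi) \coleq (1 - M'\xi)^{-1} = \sum_{n \geq 0} M'^n \xi^n$, let $w(\xi) = \sum_k w_k \xi^k$ be the unique holomorphic germ at $\xi = 0$ solving
\[
	w \;=\; N_0 \;+\; \Lambda e\, \xi w \;+\; \Lambda C'(A - 1) w^2 \;+\; \Lambda C'(w - N_0)^2 \;+\; \Lambda C'(A - 1) w \;+\; \Lambda C'(A - 1) ,
\]
which exists by the implicit function theorem, the equation being satisfied at $(\xi, w) = (0, N_0)$ with the $w$-partial of its defining function equal to $1$ there. Its Taylor coefficients are nonnegative, $w_0 = N_0$, and — splitting the quadratic sum above according to whether $k_1 \geq 1$ (matched by $(A-1)w^2$) or $k_1 = 0$ with $k_2, k_3 \geq 1$ (matched by $(w - N_0)^2$) — a routine strong induction on $k$ gives $\tilde{N}_k \leq w_k$ for all $k$. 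Since $w$ is holomorphic near $\xi = 0$ one has $w_k \leq C \rho^{-k}$ for suitable $C, \rho > 0$, hence $N_k = k!\, \tilde{N}_k \leq C \rho^{-k} k!$, so that $\sup_{|x - x_\ast| \leq R/2} |f_k| \leq (R/2)^{-k} N_k \leq C(2/\rho R)^k k!$, which is the sought bound; letting $x_\ast$ range over $U$ and recalling the reduction of the first paragraph completes the proof.

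I expect the main obstacle to be the term $\del_x f_{k-1}$ in the recursion: estimated crudely it costs a factor proportional to the reciprocal of the distance to the boundary at every order, which is fatal for a Gevrey bound. The whole purpose of the Nagumo norm is that its derivative estimate converts this into the harmless factor $e(p+1)$, which is precisely what produces the single extra power of $\xi$ in the term $\Lambda e\, \xi w$ of the majorant equation and keeps that equation solvable at the origin. A secondary but genuinely necessary point is the bookkeeping ensuring that the scalar inequality for $\tilde{N}_k$ really involves only strictly earlier coefficients — this is why the quadratic contribution must be split as above, so that the majorant can be taken to be an honest analytic fixed point rather than something circular.
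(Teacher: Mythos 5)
Your proof is correct and is essentially the paper's own argument in different clothing: the Nagumo norm $\|\cdot\|_p$ and its derivative estimate $\|\del_x g\|_{p+1}\leq e(p+1)\|g\|_p$ encode exactly the paper's family of bounds $|f_k|\leq \CC\MM_k\delta^{-k}k!$ over all concentric subdiscs together with its Sub-Claim (the Cauchy estimate on the intermediate radius $r_k$), and both proofs conclude identically by majorizing the resulting scalar recursion with a power series satisfying an algebraic equation and invoking the holomorphic implicit function theorem. The only substantive difference is organizational, so no further comparison is needed.
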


Let us make a few remarks before presenting the proof.
In symbols, this proposition says that if $\hat{a},\hat{b},\hat{c} \in \cal{G}_\textup{loc} (U) \bbrac{\hbar}$, then $\hat{f} \in \cal{G}_\textup{loc} (U) \bbrac{\hbar}$ and $\hat{\phi} \in \cal{O}_\textup{loc} (U) \set{\xi}$.
The latter means that, for any compactly contained subset $U_0 \Subset U$, there is a disc $\Disc_0 \subset \Complex_\xi$ around the origin such that $\hat{\phi} (x, \xi)$ is a holomorphic function on $U_0 \times \Disc_0$.

Concretely, \autoref{200118123730} says that if the coefficients $a_k, b_k, c_k$ of the power series $\hat{a},\hat{b},\hat{c}$ grow at most like $k!$, then the coefficients $f_k$ of any formal solution $\hat{f}$ likewise grow at most like $k!$.
This is made precise in the following corollary.

\begin{cor}[\textbf{at most factorial growth}]{211028172730}
Consider a formal Riccati equation \eqref{211003140338} on $X$ with leading-order discriminant $\DD_0 \not\equiv 0$.
Let $U \subset X$ be any domain free of turning points that supports a univalued square-root branch $\sqrt{\DD_0}$, and let $\hat{f}$ be a formal solution on $U$.

Take any pair of nested compactly contained subsets $U_0 \Subset U_1 \Subset U$, and suppose that there are real constants $\AA, \BB > 0$ such that
\eqntag{\label{211028173208}
	\big| a_k (x) \big|,
	\big| b_k (x) \big|,
	\big| c_k (x) \big|
	\leq \AA \BB^{k} k!
\qqquad
	(\forall k \geq 0, \forall x \in U_1)
\fullstop
}
Then there are real constants $\CC, \MM > 0$ such that
\eqntag{\label{211028173423}
	\big| f_k (x) \big|
	\leq \CC \MM^{k} k!
\qqqqqqqqquad
	(\forall k \geq 0, \forall x \in U_0)
\fullstop
}
\end{cor}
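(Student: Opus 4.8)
The plan is to obtain the corollary as a quantitative restatement of \autoref{200118123730}, from which it follows with essentially no additional work once the hypotheses are read correctly. The first step is to observe that hypothesis \eqref{211028173208} asserts precisely that the coefficients $a_k, b_k, c_k$ are bounded by $\AA \BB^k k!$ \emph{uniformly} on $U_1$, so that $\hat{a}, \hat{b}, \hat{c}$ are in particular locally uniformly Gevrey series on the open set $U_1$ (uniform bounds on $U_1$ are a fortiori locally uniform bounds, with the same constants $\AA, \BB$ serving on every compactly contained subset); in symbols, $\hat{a}, \hat{b}, \hat{c} \in \cal{G}_\textup{loc} (U_1) \bbrac{\hbar}$.

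The second step is to check that \autoref{200118123730} applies with $U_1$ in the role of $U$: since $U_1 \Subset U$, the set $U_1$ is free of turning points and supports the univalued branch $\sqrt{\DD_0}$ inherited from $U$, the leading-order discriminant $\DD_0$ is holomorphic and nonvanishing on $U$ hence not identically zero on the nonempty open set $U_1$, and $\hat{f}$ restricts to a formal solution of \eqref{211003140338} on $U_1$. (The statement is local in $x$, so we may freely take $U_0 \Subset U_1 \Subset U$ to be discs and sidestep any worry about connectedness.) \autoref{200118123730} then gives $\hat{f} \in \cal{G}_\textup{loc} (U_1) \bbrac{\hbar}$. The third and final step is to unwind the definition of $\cal{G}_\textup{loc}$ applied to the compactly contained subset $U_0 \Subset U_1$: this yields constants $\CC, \MM > 0$ such that $\big| f_k (x) \big| \leq \CC \MM^k k!$ for all $k \geq 0$ and all $x \in U_0$, which is \eqref{211028173423}.

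The reason one routes through \autoref{200118123730} rather than inducting directly on the recursion \eqref{191207215212} --- and the point which is the genuine obstacle --- is the quadratic nonlinearity. The derivative term $\del_x f_{k-1}$ is harmless: it is controlled by Cauchy estimates on subdiscs shrinking towards the boundary of $U_1$, conveniently repackaged as weighted (Nagumo-type) norms on $U_1$, and this is the only reason one needs genuine room between $U_0$ and $U_1$. The convolution term $\sum a_{k_1} f_{k_2} f_{k_3}$ with $k_2, k_3 \geq 1$, however, contributes --- when one feeds in the inductive hypothesis $|f_k| \leq \CC \MM^k k!$ --- a quantity scaling like $\CC^2$ against a target scaling like $\CC$, so no single pair $(\CC, \MM)$ can be propagated through the recursion; and the familiar trick of starting the induction beyond some threshold $k_0$ does not help, because here $k_0$ would itself grow linearly in $\CC$. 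The standard way around this obstruction is to pass to the Borel transform $\hat{\phi} \coleq \hat{\Borel}[\hat{f}]$, where the sought Gevrey-$1$ estimate on the $f_k$ turns into a geometric bound on the Taylor coefficients of $\hat{\phi}$ on a disc $|\xi| < r$ and the freedom to shrink $r$ provides the small parameter that tames the quadratic term in a fixed-point (contraction) argument --- and this is exactly what is carried out in the proof of \autoref{200118123730}, so it need not be repeated here.
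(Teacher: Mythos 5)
Your proof is correct and matches the paper's: the corollary is presented there as an immediate quantitative unwinding of \autoref{200118123730} (uniform bounds on $U_1$ give $\hat{a},\hat{b},\hat{c}\in\cal{G}_\textup{loc}(U_1)\bbrac{\hbar}$, the proposition gives $\hat{f}\in\cal{G}_\textup{loc}(U_1)\bbrac{\hbar}$, and compactness of $\bar{U}_0\Subset U_1$ yields uniform constants $\CC,\MM$), which is exactly your argument. One small correction to your closing commentary: the proof of \autoref{200118123730} does not pass to the Borel transform to tame the quadratic term --- it runs the induction directly on the recursion with a flexible sequence $\MM_k$ and then bounds $\MM_k\le\MM^k$ via the generating series $p(t)=\sum\MM_k t^k$ and the holomorphic implicit function theorem; this does not affect the validity of your derivation of the corollary.
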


\begin{rem}{210726151552}
In general it is not the case that a formal solution $\hat{f}$ is a convergent power series even if the coefficients $\hat{a},\hat{b},\hat{c}$ are convergent.
In other words, if the class $\cal{G}_\textup{loc} (U) \bbrac{\hbar}$ in \autoref{200118123730} is replaced with the class $\cal{O}_\textup{loc} (U) \set{\hbar}$ or $\cal{O} (U) \set{\hbar}$ in either or both the hypothesis and the conclusion, then the assertion becomes false.
This is typical of singular perturbation theory.
The assertion also becomes false if $\cal{G}_\textup{loc} (U) \bbrac{\hbar}$ is replaced with $\cal{G} (U) \bbrac{\hbar}$ in both the hypothesis and the conclusion.
This is because the estimates on the coefficients $f_k$ necessarily involve Cauchy estimates on the derivatives of lower orders.
\end{rem}

\begin{proof}[Proof of \autoref{200118123730}.]
Let $\Disc_\RR \subset U$ be any sufficiently small disc of some radius $\RR > 0$ on which $\hat{a}, \hat{b}, \hat{c}$ are uniformly Gevrey and $\sqrt{\DD_0}$ is bounded both above and below by a nonzero constant.
Thus, there are real constants $\AA, \BB > 0$ which give the following uniform bounds:
\eqntag{\label{191207222837}
	\big| a_k (x) \big|,
	\big| b_k (x) \big|,
	\big| c_k (x) \big|
	\leq \AA \BB^{k} k!
\qqtext{and}
	\AA^{-1} \leq \big| \sqrt{\DD_0 (x)} \big| \leq \AA
}
for all integers $k \geq 0$ and all $x \in \Disc_\RR$.
It will be convenient for us to assume without loss of generality that $\AA \geq 3$ and $\RR < 1$.
We will prove that the solution $\hat{f}$ is a uniformly Gevrey power series on any compactly contained subset of $\Disc_\RR$.
In fact, we will prove something a little bit stronger as follows.
For any $r \in (0, \RR)$, denote by $\Disc_r \subset \Disc_\RR$ the concentric subdisc of radius $r$.
Then \autoref{200118123730} follows from the following claim.

\begin{claim*}{200120105124}
There exist real constants $\CC, \MM > 0$ such that, for any $r \in (0,\RR)$,
\eqntag{\label{191211212938}
	\big| f_k (x) \big| \leq \CC \MM^k \delta^{-k} k!
}
for all integers $k \geq 0$ and uniformly for all $x \in \Disc_r$, where $\delta \coleq \RR - r$.
(The constants $\CC, \MM$ are independent of $r, x, k$, but may depend on $\RR, \AA, \BB$.)
In particular, for any $r \in (0,\RR)$, the power series $\hat{f}$ is Gevrey uniformly for all $x \in \Disc_r$.
\end{claim*}

\textsc{Proof.}
First, it is easy to find a constant $\CC > 0$ (independent of $r$) such that
\eqntag{\label{191209170218}
	\big| f_0 (x) \big| \leq \CC
}
uniformly for all $x \in \Disc_\RR$ (see \autoref{200614161900}).
Without loss of generality, assume that 
\eqntag{\label{211112130258}
	\CC \geq \AA \geq 3
\fullstop
}
Then the bound \eqref{191211212938} will be demonstrated in two main steps.
First, we will recursively construct a sequence $(\MM_k)_{k=0}^\infty$ of positive real numbers such that, for all $k \geq 0$ and all $r \in (0,\RR)$, we have the following uniform bound for all $x \in \Disc_r$:
\eqntag{\label{190317144208}
		\big| f_k (x) \big| \leq \CC \MM_k \delta^{-k} k!
\fullstop
}
Then we will show that there is a constant $\MM > 0$ (independent of $r$) such that $\MM_k \leq \MM^k$ for all $k$.

\textsc{Construction of $(\MM_k)_{k = 0}^\infty$.}
The bound \eqref{190317144208} for $k = 0$ is just the bound \eqref{191209170218} if we put $\MM_0 \coleq 1$.
Now we use induction on $k$ and formula \eqref{191207215212}.
Assume that we have already constructed positive real numbers $\MM_0, \ldots, \MM_{k-1}$ such that, for all $i = 0, \ldots, k-1$, all $r \in (0,\RR)$, and all $x \in \Disc_r$, we have the bound
\eqntag{\label{191212164433}
	\big| f_i (x) \big| \leq \CC \MM_i \delta^{-i} i!
}
In order to derive an estimate for $f_k$, we first need to estimate the derivative term $\del_x f_{k-1}$, for which we use Cauchy estimates as follows.

\textsc{Sub-Claim.}
\textit{
For all $r \in (0,\RR)$ and all $x \in \Disc_r$,
\eqntag{\label{191212164536}
	\big| \del_x f_{k-1} (x) \big| \leq \CC^2 \MM_{k-1} \delta^{-k} k!
\fullstop
}}%
\begin{subproof}[Proof of Sub-Claim.]
For every $r \in (0,\RR)$, define
\eqn{
	\delta_k \coleq
		\delta \frac{k}{k+1}
\qqtext{and}
	r_k \coleq \RR - \delta_k
\fullstop
}
Inequality \eqref{191212164433} holds in particular for $i = k-1$, $r = r_k$.
So for all $x \in \Disc_{r_k}$, we find:
\eqns{
	\big| f_{k-1} (x) \big|
		&\leq \CC \MM_{k-1}  \delta_k^{1-k} (k-1)!
		= \CC \MM_{k-1} \delta^{1-k} \frac{k}{k+1}
			\left( \tfrac{k+1}{k} \right)^{k} (k-1)!
\\		&\leq \CC^2 \MM_{k-1} \delta^{-k} k! \tfrac{\delta}{k+1}
\fullstop
}
Here, we have used the estimate $( 1 + 1/k )^{k-1} \leq e \leq \CC$.
Finally, notice that for every $x \in \Disc_r$, the closed disc around $x$ of radius $r_k - r = \delta - \delta_k = \frac{\delta}{k+1}$ is contained inside the disc $\Disc_{r_k}$.
Therefore, Cauchy estimates imply \eqref{191212164536}.
\end{subproof}

Using \eqref{191207222837}, \eqref{211112130258}, \eqref{191212164433}, \eqref{191212164536}, and the fact that $\delta < 1$, we can now estimate $f_k$:
\eqns{
	\big| f_k \big|
		&\leq \CC
			\left(
		 	\big| \del_x f_{k-1} \big|
			+ \sum_{k_1 + k_2 + k_3 = k}^{k_2, k_3 \neq k} \big| a_{k_1} \big| \cdot \big| f_{k_2} \big| \cdot \big| f_{k_3} \big|
			+ \sum_{k_1 + k_2 = k}^{k_2 \neq k} \big| b_{k_1} \big| \cdot \big| f_{k_2} \big| 
			+ \big| c_k \big|
			\right)
\\		&\leq \CC
			\left(
				\CC^2 \MM_{k-1} \delta^{-k} k!
			+ \delta^{-k} \CC^3 k! \sum_{k_1 + k_2 + k_3 = k}^{k_2, k_3 \neq k} \BB^{k_1} \MM_{k_2} \MM_{k_3}
			\right.
\\		&\hspace{3.65cm}
			\left.
			+ \delta^{-k} \CC^2 k! \sum_{k_1 + k_2 = k}^{k_2 \neq k} \BB^{k_1} \MM_{k_2}
			+ \CC \BB^k k!
			\right)
\\		&\leq \CC^4 \left( 
				\MM_{k-1}
					+ \!\!\!
						\sum_{k_1 + k_2 + k_3 = k}^{k_2, k_3 \neq k} \BB^{k_1}
						\MM_{k_2} \MM_{k_3}
					+ \sum_{k_1 + k_2 = k}^{k_2 \neq k} \BB^{k_1} \MM_{k_2} 
					+ \BB^k 
			\right) \delta^{-k} k!
}
Here, we used the inequality $k_1! k_2! \leq (k_1 + k_2)!$.
We can therefore define, for $k \geq 1$,
\eqntag{\label{191212173415}
	\MM_k \coleq 
			\CC^3 \left( 
				\MM_{k-1}
					+ \!\!\!\! 
						\sum_{k_1 + k_2 + k_3 = k}^{k_2, k_3 \neq k} \BB^{k_1}
						\MM_{k_2} \MM_{k_3}
					+ \sum_{k_1 + k_2 = k}^{k_2 \neq k} \BB^{k_1} \MM_{k_2} 
					+ \BB^k 
			\right)
\fullstop
}

\textsc{Construction of $\MM$.}
To see that $\MM_k \leq \MM^k$ for some $\MM > 0$, we argue as follows.
Consider the following power series in an abstract variable $t$:
\eqn{
	p (t) \coleq \sum_{k=0}^\infty \MM_k t^k
\qqtext{and}
	q (t) \coleq \sum_{k=1}^\infty \BB^k t^k
\quad \in \quad \Complex \bbrac{t}
\fullstop
}
Note that $p(0) = \MM_0 = 1$ and $q (0) = 0$, and notice that $q (t)$ is convergent.
We will show that $p(t)$ is also convergent.
The key is the observation that they satisfy the following algebraic equation:
\eqntag{\label{191209122442}
	p(t) - 1 = \CC^3 \Big( t p(t) + q(t) p(t)^2 + q(t) p(t) + q(t) \Big)
\fullstop
}
This equation was found by trial and error, and it is straightforward to verify directly by substituting the power series $p(t), q(t)$ and comparing the coefficients of $t^k$ using the defining formula \eqref{191212173415} for $\MM_k$.
Now, consider the following holomorphic function in two complex variables $(t,p)$: 
\eqn{
	\FF (t,p) \coleq -p + 1 + \CC^3 \Big( t p + q(t) p^2 + q(t) p + q(t) \Big)
\fullstop
}
It has the following properties:
\eqn{
	\FF (0, 1) = 0
\qqtext{and}
	\evat{\frac{\del \FF}{\del p}}{(t,p) = (0,1)} = -1 \neq 0
\fullstop
}
Thus, by the Holomorphic Implicit Function Theorem, there exists a unique holomorphic function $\PP (t)$ near $t = 0$ such that $\PP (0) = 1$ and $\FF \big(\PP (t), t) \big) = 0$.
Thus, $p(t)$ must be its Taylor series expansion at $t = 0$, so $p (t) \in \Complex \set{t}$ and its coefficients grow at most exponentially: there is a constant $\MM > 0$ such that $\MM_k \leq \MM^k$.
This completes the proof of the Claim and hence of \autoref{200118123730}.	
\end{proof}

\section{WKB Geometry}
\label{211028202110}

In this intermediate section, we introduce a coordinate transformation which plays a central role in the construction of exact solutions in \autoref{210823153207}.
It is used to determine regions in $\Complex_x$ where the Borel-Laplace method can be applied to the Riccati equation.

The material of this section can essentially be found in \cite[\S9-11]{MR743423} (see also \cite[\S3.4]{MR3349833}).
These references use the language of foliations given by quadratic differentials on Riemann surfaces.
The relevant quadratic differential is $\DD_0 (x) \dd{x}^2$.
The reader may be more familiar with the set of critical leaves of this foliation which is encountered in the literature under various names including \textit{Stokes curves}, \textit{Stokes graph}, \textit{spectral network}, \textit{geodesics}, and \textit{critical trajectories} \cite{MR3003931, MR2182990, MR1209700, MR3115984, 1902.03384}.

To keep the discussion a little more elementary, we state the relevant definitions and facts by appealing directly to explicit formulas using the \textit{Liouville transformation} (defined below) commonly used in the WKB analysis of Schrödinger equations.

\subsection{The Liouville Transformation}

\paragraph{}
Throughout this section, we remain in the background setting of \autoref{210730141800}.
Recall the leading-order discriminant $\DD_0 = b_0^2 - 4 a_0 c_0$ which is a holomorphic function on $X$, assumed not identically zero.
Fix a phase $\theta \in \Real / 2\pi \Integer$, a basepoint $x_0 \in X$ and a univalued square-root branch $\sqrt{\DD_0}$ near $x_0$ (i.e., either in a disc or a sectorial neighbourhood of $x_0$).
Consider the following local coordinate transformation near $x_0$, called the \dfn{Liouville transformation}:
\eqntag{\label{211026152238}
	z = \Phi (x)
		\coleq \int_{x_0}^x \sqrt{ \DD_0 (t) } \dd{t}
\fullstop
}
Let $U \subset X$ be any domain which is free of turning points, supports a univalued square-root branch $\sqrt{\DD_0}$ (e.g., $U$ is simply connected), and contains $x_0$ in the interior or on the boundary.
Then the Liouville transformation defines a (possibly multivalued) local biholomorphism $\Phi: U \too \Complex_z$.
Notice that turning points are precisely the locations in $X$ where $\Phi$ fails to be conformal.

\begin{rem}{211026191634}
The basepoint of integration $x_0$ can in principle be chosen even on the boundary of $X$ or at infinity in $\Complex_x$ provided that the integral is well-defined.
Liouville transformations such as \eqref{211026152238} are encountered in the analysis of the Schrödinger equation $\hbar^2 \del_x^2 \psi - \QQ (x) \psi = 0$ as described for example in Olver's textbook \cite[\S6.1]{MR1429619}.
However, note that our formula \eqref{211026152238} in the special case of the Schrödinger equation reads
\eqntag{
	\Phi (x) = \int\nolimits_{x_0}^x \sqrt{\DD_0 (t)} \dd{t} = 2 \int\nolimits_{x_0}^x \sqrt{\QQ (t)} \dd{t}
\fullstop{,}
}
which differs from formula (1.05) in \cite[\S6.1]{MR1429619} by a factor of $2$.	
\end{rem}

\begin{rem}{211026191712}
The main utility of the Liouville transformation \eqref{211026152238} is that it transforms the differential operator $\frac{1}{\sqrt{\DD_0}} \del_x$ (which has already appeared prominently in formula \eqref{191207215212} for the formal solutions) into the constant-coefficient differential operator $\del_z$.
In the language of differential geometry, $\Phi_\ast : \frac{1}{\sqrt{\DD_0}} \del_x \mapsto \del_z$.
This property is obviously independent of the chosen basepoint $x_0$.
This straightening-out of the local geometry using the Liouville transformation is heavily exploited in our construction of exact solutions.
\end{rem}

\subsection{WKB Trajectories}

\paragraph{}
Let $x_0 \in X$ be a regular point and consider the Liouville transformation \eqref{211026152238}.
A \dfn{WKB $\theta$-trajectory} through $x_0$ is the real $1$-dimensional smooth curve $\Gamma_\theta$ on $X$ locally determined by the following equation:
\eqntag{\label{211109193109}
	\Gamma_\theta = \Gamma_\theta (x_0) \quad:\quad
	\Im \big( e^{-i\theta} \Phi (x) \big) = 0
\fullstop
}
A \dfn{WKB $\theta$-trajectory $\pm$-ray} (or simply a \dfn{WKB ray} if the context is clear) emanating from $x_0$ is the component $\Gamma^\pm_\theta$ of $\Gamma_\theta$ given respectively by
\eqntag{\label{211109193325}
	\Gamma^\pm_\theta = \Gamma_\theta^\pm (x_0) \quad:\quad
	\pm \Re \big( e^{-i\theta} \Phi (x) \big) \geq 0
\fullstop{,}
}
WKB trajectories are regarded by definition as being maximal under inclusion.
Explicitly, \eqref{211109193109} and \eqref{211109193325} read
\eqntag{\label{211109193534}
	\Im \left( \: e^{-i\theta} \int\nolimits_{x_0}^x \sqrt{\DD_0 (t)} \dd{t} \right)
	= 0
\qtext{and}
	\pm \Re \left( \: e^{-i\theta} \int\nolimits_{x_0}^x \sqrt{\DD_0 (t)} \dd{t} \right)
	\geq 0
\fullstop
}

\paragraph{}
The Liouville transformation $\Phi$ with basepoint $x_0$ maps the WKB $\theta$-trajectory $\Gamma_\theta (x_0)$ to a possibly infinite straight line segment $(\tau_{-} e^{i\theta}, \tau_{+} e^{i\theta}) \subset e^{i\theta} \Real \subset \Complex_z$ containing the origin $0 = \Phi (x_0)$; i.e., with $\tau_- < 0 < \tau_+$.
Maximality means that this line segment is the largest possible image.
The image of the WKB $\theta$-trajectory $\pm$-ray emanating from $x_0$ is then respectively the line segment $[0, \tau_{+}e^{i\theta})$ or $(\tau_-e^{i\theta}, 0]$.

All other nearby WKB $\theta$-trajectories can be locally described by an equation of the form $\Im \big( e^{-i\theta} \Phi (x) \big) = c$ for some $c \in \Real$.
That is, if $U_0 \subset X$ is a simply connected neighbourhood of $x_0$ free of turning points, then any WKB $\theta$-trajectory $\Gamma'_\theta$ intersecting $U_0$ is locally given by this equation with $c = \Im e^{-i\theta} \Phi (x'_0)$ for some $x'_0 \in U_0$.
Its image in $\Complex_z$ under $\Phi$ is an interval on the parallel line containing $z'_0 \coleq \Phi (x'_0)$:
\eqn{
	z'_0 + e^{i\theta} \Real \coleq \set{z = z'_0 + \xi ~\big|~ \xi \in e^{i\theta}\Real}
\fullstop
}

\paragraph{}
Our primary focus is \dfn{infinite WKB rays} $\Gamma_\theta^\pm$, defined as having $|\tau_\pm| = \infty$, respectively.
An \dfn{infinite WKB trajectory} is one with at least one infinite ray.
A \dfn{generic WKB trajectory} is one with both rays being infinite.

An infinite WKB trajectory may be a \dfn{closed WKB trajectory} if it is a simple closed curve in the complement of the turning points.
A closed WKB $\theta$-trajectory has the property that there is a nonzero time $\omega \in \Real$ such that $\Phi^{-1} (e^{i\theta}\omega) = \Phi^{-1} (0)$, \cite[\S9.2]{MR743423}.
This only happens when the Liouville transformation is analytically continued along the trajectory to a multivalued function.
We refer to the smallest possible positive such $\omega \in \Real_+$ as the \dfn{WKB trajectory period}.
It follows from general considerations (see \cite[\S9]{MR743423}) that if the WKB $\theta$-trajectory through $x_0$ is a closed trajectory, then all nearby WKB $\theta$-trajectories are also closed with the same period.

\paragraph{}
A nonclosed infinite WKB ray may tend to a single point, limit to a dense subset of $X$, or escape $X$ altogether.
Formally, the \dfn{limit} of an infinite WKB ray $\Gamma_\theta^\pm$ by definition respectively is the limit set
\eqn{
	\bar{\Phi^{-1} \Big( [\tau e^{i\theta},+\infty \cdot e^{i\theta}) \Big)}
\qtext{as}
	\tau \to +\infty
\qtext{or}
	\bar{\Phi^{-1} \Big( (-\infty \cdot e^{i\theta},\tau e^{i\theta}] \Big)}
\qtext{as}
	\tau \to -\infty
\fullstop
}
Obviously, this definition is independent of the chosen basepoint $x_0$ along the trajectory.
If the limit is a single point $x_\infty \in \Complex_x$, then this point (sometimes called an \textit{infinite critical point}) is necessarily a pole of $\DD_0$ of order $m \geq 2$, \cite[\S10.2]{MR743423}.
Given $\alpha \in \set{+, -}$, it also follows from general considerations that if the WKB $\theta$-trajectory $\alpha$-ray emanating from $x_0$ tends to an infinite critical point, then $x_0$ has a disc neighbourhood $U_0$ such that every WKB $\theta$-trajectory $\alpha$-ray emanating from $U_0$ tends to the same infinite critical point.

\paragraph{}
\dfn{Finite WKB rays} --- those with finite $\tau_+$ or $\tau_-$ --- are inadmissible for our construction of exact solutions in \autoref{210823153207}.
As $\tau$ approaches $\tau_{+}$ or $\tau_{-}$ respectively, such a WKB trajectory either tends to a turning point or escapes to the boundary of $X$ in finite time.
If it tends to a single point on the boundary of $X$, this point is either a turning point or a simple pole of the discriminant $\DD_0$, \cite[\S10.2]{MR743423}.
For this reason, turning points and simple poles are sometimes collectively referred to as \textit{finite critical points}.
A \dfn{singular WKB ray} is one that approaches a finite critical point.
They are important in the global analysis of exact solutions which will be discussed in detail elsewhere.

\paragraph{}
A \dfn{WKB $\theta$-strip domain} containing $x_0$ is any domain neighbourhood of $x_0$ which is swept out by generic WKB $\theta$-trajectories.
It necessarily has the form
\eqntag{\label{211102172208}
\begin{aligned}
	W_\theta
		&= W_\theta (x_1, r)
		\coleq \Phi^{-1} \big( H_\theta \big) \subset \Complex_x
\\ \text{where}\quad
	H_\theta &= H_\theta (z_1, r) \coleq \set{z ~\Big|~ \op{dist} (z, z_1 + e^{i\theta} \Real) < r } \subset \Complex_z
\fullstop
\end{aligned}
}
for some $r > 0$ and some $x_1 = \Phi^{-1} (z_1) \in X$.
Similarly, a \dfn{WKB $(\theta,\pm)$-halfstrip domain} containing $x_0$ is any domain neighbourhood of $x_0$ which is swept out by infinite WKB $\theta$-trajectory $\pm$-rays.
It necessarily has the form
\eqntag{\label{211102172208}
\begin{aligned}
	W^\pm_\theta
		&= W^\pm_\theta (x_1, r)
		\coleq \Phi^{-1} \big( H^\pm_\theta \big) \subset \Complex_x
\\ \text{where}\quad
	H^\pm_\theta &= H^\pm_\theta (z_1, r) \coleq \set{z ~\Big|~ \op{dist} (z, z_1 + e^{i\theta} \Real_\pm) < r } \subset \Complex_z
\fullstop
\end{aligned}
}
Note that we obviously have $W_\theta = W_\theta^- \cup W_\theta^+$.
The intersection $W_\theta^- \cap W_\theta^+ = \Phi^{-1} \big( \set{ |z - z_1| < r } \big)$ may be called a \dfn{WKB disc} around $x_1$, and it is clearly independent of $\theta$.

If the WKB $\theta$-trajectory through $x_0$ is not closed, then WKB $\theta$-strip $W_\theta$ is a simply connected domain conformally equivalent to the infinite strip $H_\theta$ via the Liouville transformation $\Phi : W_\theta \iso H_\theta$.

On the other hand, if the WKB $\theta$-trajectory through $x_0$ is closed, then $W_\theta$ is swept out by closed WKB $\theta$-trajectories, so $W_\theta$ has the topology of an annulus.
In this case, $W_\theta$ is sometimes called a \dfn{WKB $\theta$-ring domain}.
The Liouville transformation $\Phi$ is a multivalued holomorphic function on $W_\theta$, but the inverse $\Phi^{-1} : H_\theta \to W_\theta$ is still necessarily a local biholomorphism.

\newpage
\section{Exact Perturbation Theory}
\label{210823153207}

We can now state and prove our main results.
Throughout this section, we remain in the background setting of \autoref{210730141800}.
Namely, $X$ is a domain in $\Complex_x$ and $S \subset \Complex_\hbar$ is a sectorial domain with opening $A$.
In addition, we assume that $|A| = \pi$ so that $A = A_\theta \coleq (\theta -\tfrac{\pi}{2}, \theta + \tfrac{\pi}{2})$ for some $\theta \in \Real$.

\subsection{Existence and Uniqueness of Local Exact Solutions}
\label{211107195800}
The main result of this paper is the following theorem.

\begin{thm}[\textbf{Main Exact Existence and Uniqueness Theorem}]{211026151811}
\mbox{}\newline
Consider the Riccati equation
\begin{equation}\label{211118123850}
	\hbar \del_x f = a f^2 + b f + c
\end{equation}
whose coefficients $a,b,c$ are holomorphic functions of $(x,\hbar) \in X \times S$ admitting locally uniform asymptotic expansions $\hat{a}, \hat{b}, \hat{c} \in \cal{O} (X) \bbrac{\hbar}$ as $\hbar \to 0$ along $A$.
Assume that the leading-order discriminant $\DD_0 = b_0^2 - 4 a_0 c_0$ is not identically zero.
Fix a regular point $x_0 \in X$, a square-root branch $\sqrt{\DD_0}$ near $x_0$, and a sign $\alpha \in \set{+, -}$.
In addition, we assume the following hypotheses:
\begin{enumerate}
\item there is a WKB $(\theta, \alpha)$-halfstrip domain $W = W_\theta^\alpha \subset X$ containing $x_0$; assume in addition that $a_0$ is nonvanishing on $W$ if $\alpha = -$;
\item the asymptotic expansions of the coefficients $a,b,c$ are valid with Gevrey bounds as $\hbar \to 0$ along the closed arc $\bar{A}_\theta = [\theta - \tfrac{\pi}{2}, \theta + \tfrac{\pi}{2}]$, with respect to the asymptotic scale $\sqrt{\DD_0}$, uniformly for all $x \in W$:
\begin{equation}
\label{211028195400}
	a \simeq \hat{a}, \quad
	b \simeq \hat{b}, \quad
	c \simeq \hat{c}
\qquad
	\text{as $\hbar \to 0$ along $\bar{A}_\theta$, wrt $\sqrt{\DD_0}$, unif. $\forall x \in W$\fullstop}
\end{equation}
\end{enumerate}

Then the Riccati equation has a canonical local exact solution $f^\theta_\alpha$ near $x_0$ which is asymptotic to the formal solution $\hat{f}_\alpha$ as $\hbar \to 0$ in the direction $\theta$.
Namely, for any compactly contained domain $U_0 \Subset W$, there is a sectorial domain $S_0 \subset S$ with the same opening $A_\theta$ such that the Riccati equation has a unique holomorphic solution $f^\theta_\alpha$ on $U_0 \times S_0$ which is Gevrey asymptotic to $\hat{f}_\alpha$ as $\hbar \to 0$ along the closed arc $\bar{A}_\theta$ uniformly for all $x \in U_0$:
\eqntag{\label{211102180713}
	f^\theta_\alpha \simeq \hat{f}_\alpha
\qqqquad
	\text{as $\hbar \to 0$ along $\bar{A}_\theta$, unif. $\forall x \in U_0$\fullstop}
}
\end{thm}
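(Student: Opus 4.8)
The plan is to reduce the nonlinear Riccati equation to a fixed-point problem for a Borel-transformed unknown and then solve that fixed-point problem in a suitable Banach space of holomorphic functions. First I would perform the standard substitution $f = \hat f_\alpha^{\,\leq N} + g$ for a suitable truncation, or more cleanly subtract the formal solution order by order; since by \autoref{200118123730} (and \autoref{211028172730}) the formal solution $\hat f_\alpha$ is a locally uniformly Gevrey series whose Borel transform $\hat\phi_\alpha$ converges near $\xi=0$, the remainder after subtracting $\hat f_\alpha$ satisfies a Riccati-type equation whose inhomogeneous term is itself Borel summable. Writing $f = f_\alpha^{(0)} + h$, where $f_\alpha^{(0)}$ is any rough exact solution candidate carrying the correct leading behaviour, linearising produces $\hbar\del_x h = \big(2a f_\alpha^{(0)} + b\big) h + a h^2 + (\text{small inhomogeneity})$, and the coefficient $2a_0 f_0^\alpha + b_0 = \alpha\sqrt{\DD_0}$ is exactly what the Liouville transformation of \autoref{211026191712} straightens into $\del_z$.

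The heart of the argument is then the Borel-plane analysis. Applying the Liouville coordinate $z=\Phi(x)$ from \eqref{211026152238}, the operator $\tfrac1{\sqrt{\DD_0}}\del_x$ becomes $\del_z$, and on the WKB $(\theta,\alpha)$-halfstrip $W=W_\theta^\alpha$ the image $\Phi(W)$ is a genuine halfstrip $H_\theta^\alpha$ along the ray $e^{i\theta}\Real_\alpha$. Taking the Borel transform in $\hbar$ (with dual variable $\xi$) converts $\hbar\del_x$ into a convolution/integration operator in $\xi$, and the quadratic term $af^2$ becomes a convolution product $\ast_\xi$. The equation for the Borel transform $\psi(z,\xi)$ of $h$ thus takes the schematic form $\psi = \mathcal{K}[\psi] := (\text{linear integral operator in }\xi,\ \text{integration in }z\text{ along the ray}) + (\text{convolution quadratic term}) + \psi_0$, where $\psi_0$ is the convergent Borel transform of the inhomogeneity. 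I would set this up on the Banach space of functions holomorphic on $\Phi(U_0)\times(\text{a neighbourhood of the ray }e^{i\theta}\Real_\alpha)$ with weighted sup-norm $\|\psi\| = \sup |\psi(z,\xi)| e^{-\beta|\xi|}$ for a large rate $\beta>0$ — the classic trick that makes convolution a contraction (it turns $\ast$ into something with small norm because $\int_0^{|\xi|} e^{\beta t}e^{\beta(|\xi|-t)}\,dt = |\xi|e^{\beta|\xi|}$, and the extra $|\xi|$ is absorbed). The Gevrey hypothesis \eqref{211028195400} with respect to the scale $\sqrt{\DD_0}$ is precisely what guarantees $\psi_0$ and the Borel transforms of $a,b,c$ lie in this space with controlled norms, so for $\|\psi_0\|$ small (achieved by shrinking $U_0$ and choosing $\beta$ large) the Banach fixed-point theorem gives a unique solution $\psi$.

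Having solved the Borel-plane equation, I would apply the Laplace transform $\int_0^{e^{i\vartheta}\infty} e^{-\xi/\hbar}\psi(z,\xi)\,d\xi$ for each $\vartheta$ in the open arc $A_\theta$; the exponential-type bound built into the norm guarantees convergence for $\Re(e^{-i\vartheta}/\hbar)$ large, i.e. on a sectorial domain $S_0$ of opening $A_\theta$ (after possibly shrinking $U_0$ once more to absorb the $z$-dependence of the Laplace abscissa). Standard Borel-Laplace theory then yields that the resulting function $f_\alpha^\theta$ is holomorphic on $U_0\times S_0$, solves the Riccati equation (Laplace transform intertwines convolution with product and $\del_\xi^{-1}$ with $\hbar\,\cdot$), and is Gevrey asymptotic to $\hat f_\alpha$ along the \emph{closed} arc $\bar A_\theta$ uniformly in $x$, which is \eqref{211102180713}; moreover by construction $f_\alpha^\theta = \mathcal{S}[\hat f_\alpha]$. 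Uniqueness follows because any local exact solution asymptotic to $\hat f_\alpha$ along $\bar A_\theta$ must, by a Phragmén–Lindelöf/Watson-type lemma on the two-sided sector of opening $\pi$, have a Borel transform solving the same fixed-point equation in the same Banach space, where the solution is unique. The main obstacle I anticipate is not the contraction estimate itself but bookkeeping the interplay between the $x$-variable (where one integrates along WKB rays, and must keep $\Phi(U_0)$ inside the halfstrip with room to spare) and the $\xi$-variable (where the Laplace abscissa and the exponential rate $\beta$ must be chosen uniformly): making the halfstrip geometry of \autoref{211028202110} do exactly the work needed so that the $z$-integration operator is bounded with the right (in fact contractive, after the weight) norm is the crux, and is where the hypothesis that $W$ is a genuine \emph{halfstrip} rather than an arbitrary turning-point-free domain is essential.
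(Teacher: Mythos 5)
Your overall architecture coincides with the paper's: subtract the first two orders of the formal solution, use the Liouville transformation to turn $\tfrac{1}{\sqrt{\DD_0}}\del_x$ into $\del_z$ on a genuine halfstrip, Borel-transform the resulting standard-form equation into an integral equation with convolution products, solve it, and Laplace-transform back, with uniqueness supplied by the Nevanlinna/Watson uniqueness clause for Gevrey asymptotics along a closed halfplane arc. The one place where you genuinely diverge is the solution of the Borel-plane integral equation: you propose a Banach contraction in the weighted sup-norm $\sup|\psi(z,\xi)|e^{-\beta|\xi|}$, whereas the paper's \hyperref[210714172405]{Main Technical Lemma} runs the method of successive approximations and proves factorial majorant bounds $|\varphi_n|\leq \MM_n\,|\xi|^n e^{\LL|\xi|}/n!$, controlling the sequence $(\MM_n)$ by showing that its generating function satisfies an algebraic equation and invoking the implicit function theorem.

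The gap is in your contraction estimate. On the unbounded tubular neighbourhood of the ray $e^{i\theta}\Real_+$, two functions of unit norm satisfy only $|\psi\ast\psi(\xi)|\leq|\xi|e^{\beta|\xi|}$, and the subsequent outer integration $\II_+$ contributes a factor $1/\beta$ but does not remove the factor $|\xi|$: one gets $\big|\II_+[a_2\,\psi\ast\psi]\big|\leq \tfrac{C}{\beta}\,|\xi|\,e^{\beta|\xi|}$, so the quadratic term is not even a bounded map of the unit ball into the space, no matter how large $\beta$ is. The step ``the extra $|\xi|$ is absorbed'' does not close inside a single fixed Banach space, since absorbing $|\xi|e^{\beta|\xi|}$ into $e^{\beta'|\xi|}$ with $\beta'>\beta$ changes the space at every iteration. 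This is fixable: either replace the weight by a sub-multiplicative one such as $(1+|\xi|)^{-2}e^{\beta|\xi|}$ (under which convolution is genuinely bounded with a $\beta$-independent constant, so the $1/\beta$ from $\II_+$ supplies the contraction), or abandon the contraction in favour of the paper's majorant scheme, where the polynomial growth in $|\xi|$ is built into the ansatz $|\varphi_n|\leq \MM_n|\xi|^n/n!$ and resummed to $e^{\MM|\xi|}$ only at the end. As written, however, the fixed-point argument does not go through. The remaining ingredients of your proposal --- the role of the halfstrip in keeping the shifted points $z+t$ inside the domain, the single-ray Laplace transform yielding Gevrey asymptotics along the closed arc $\bar{A}_\theta$, and the uniqueness argument --- are sound and match the paper.
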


We will prove this theorem in \autoref{211117174240}.
First, let us make some remarks.


\begin{rems}{211027171620}
\textbf{(1)}
For the reader's convenience, we recall here that hypothesis (2) in \autoref{211026151811} explicitly means that there are real constants $\AA, \BB > 0$ such that for all $n \geq 0$, all $x \in W$, and all sufficiently small $\hbar \in S$,
\begin{equation}
\label{210731112258}
	\left| a (x, \hbar) - \sum_{k=0}^{n-1} a_k (x) \hbar^k \right|
		\leq \Big| \sqrt{\DD_0 (x)} \Big| \AA \BB^n n! |\hbar|^n
\fullstop{,}
\end{equation}
and similarly for $b$ and $c$.
See \autoref{210714114437} for more details.
Likewise, the asymptotic condition \eqref{211102180713} reads explicitly as follows: there are real constants $\CC, \MM > 0$ such that for all $n \geq 0$, all $x \in U_0$, and all sufficiently small $\hbar \in S_0$,
\begin{equation}
\label{211102180942}
	\left| f_\alpha^\theta (x, \hbar) - \sum_{k=0}^{n-1} f^\alpha_k (x) \hbar^k \right|
		\leq \CC \MM^n n! |\hbar|^n
\fullstop
\end{equation}

\textbf{(2)}
If all the hypotheses of \autoref{211026151811} are satisfied for both signs $\alpha = +, -$, then we obviously obtain a pair of distinct canonical exact solutions $f^\theta_+, f^\theta_-$ near $x_0$.
Let us also note that the solution $f_\alpha^\theta$ obviously does not depend in any serious way on the chosen basepoint $x_0$.

\textbf{(3)}
In many applications, including the exact WKB analysis of Schrödinger, the coefficients of the Riccati equation \eqref{211118123850} do not satisfy hypothesis (2) in \autoref{211026151811} on the nose and we have to do an additional transformation in order to apply our theorem.
This is discussed in \autoref{211103172432}.

\textbf{(4)}
The somewhat abstract general hypotheses of this theorem get significantly simplified in many notable situations which are discussed in \autoref{211107122319}.

\textbf{(5)}
The sectorial domain $S_0$ in the conclusion of \autoref{211026151811} can be chosen to be a Borel disc $\set{\hbar ~\big|~ \Re (e^{i\theta}/\hbar) > 1/d_0}$ bisected by the direction $\theta$ of sufficiently small diameter $d_0 > 0$.
\end{rems}

We have the following immediate corollary of the uniqueness property of canonical exact solutions.

\begin{cor}[\textbf{extension to larger domains}]{211102175415}
Let $U \subset X$ be a domain free of turning points that supports a univalued square-root branch $\sqrt{\DD_0}$.
Fix a sign $\alpha \in \set{+, -}$ such that the leading-order solution $f_0^\alpha$ is holomorphic on $U$.
In addition, assume that hypotheses (1-3) in \autoref{211026151811} are satisfied for every point $x_0 \in U$.

Then the Riccati equation \eqref{211118123850} has a canonical exact solution $f^\theta_\alpha$ on $U$ asymptotic to the formal solution $\hat{f}_\alpha$ as $\hbar \to 0$ in the direction $\theta$.
Namely, there is a domain $\mathbb{U} \subset U \times S$ and a holomorphic solution $f^\theta_\alpha$ defined on $\mathbb{U}$ with the following property: for every point $x_0 \in U$, there is a neighbourhood $U_0 \subset U$ of $x_0$ and a sectorial domain $S_0 \subset S$ with the same opening $A_\theta$ such that $U_0 \times S_0 \subset \mathbb{U}$ and $f_\alpha^\theta$ is the unique holomorphic solution on $U_0 \times S_0$ satisfying \eqref{211102180713}.
In particular, $f_\alpha^\theta$ is the unique solution on $\UUU$ with the following locally uniform Gevrey asymptotics:
\eqntag{\label{211117171819}
	f^\theta_\alpha \simeq \hat{f}_\alpha
\qqqquad
	\text{as $\hbar \to 0$ along $\bar{A}_\theta$, loc.unif. $\forall x \in U$\fullstop}
}
\end{cor}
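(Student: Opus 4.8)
The plan is to glue together the local exact solutions produced by Theorem~\ref{211026151811} and to use the uniqueness clause of that theorem to check the gluing is consistent. First I would observe that the hypotheses of the corollary guarantee that Theorem~\ref{211026151811} applies at \emph{every} point $x_0 \in U$: for each such $x_0$ we obtain a neighbourhood $U_0(x_0) \Subset W(x_0)$, a sectorial domain $S_0(x_0) \subset S$ with opening $A_\theta$, and a unique holomorphic solution $f_{x_0}$ on $U_0(x_0) \times S_0(x_0)$ satisfying \eqref{211102180713}. The candidate domain is then $\mathbb{U} \coleq \bigcup_{x_0 \in U} U_0(x_0) \times S_0(x_0)$, and the candidate solution $f^\theta_\alpha$ is defined on $\mathbb{U}$ by declaring it to equal $f_{x_0}$ on each chart $U_0(x_0) \times S_0(x_0)$.

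The key step is \emph{consistency of the definition}: if two charts $U_0(x_1) \times S_0(x_1)$ and $U_0(x_2) \times S_0(x_2)$ overlap, I must show $f_{x_1} = f_{x_2}$ on the overlap. On the intersection, both are holomorphic solutions of the Riccati equation defined on a product $V \times T$ where $V \subset X$ is open and $T \subset S$ is a nonempty open sector; shrinking if necessary, $V \times T$ contains a product $U_0' \times S_0'$ of the shape appearing in Theorem~\ref{211026151811} (a small neighbourhood of any chosen point of $V$ times a Borel disc of opening $A_\theta$, as in Remark~\ref{211027171620}(5)). Both $f_{x_1}$ and $f_{x_2}$ are Gevrey asymptotic to the same formal solution $\hat f_\alpha$ along $\bar A_\theta$ uniformly on $U_0'$ --- note $\hat f_\alpha$ is globally well-defined on $U$ by the Formal Existence and Uniqueness Theorem~\ref{200118111737}, since $f_0^\alpha$ is assumed holomorphic on $U$ --- so by the uniqueness assertion of Theorem~\ref{211026151811} they coincide on $U_0' \times S_0'$. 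Since $f_{x_1}$ and $f_{x_2}$ are both holomorphic on the connected set $V \times T$ and agree on the open subset $U_0' \times S_0'$, the identity theorem gives $f_{x_1} = f_{x_2}$ on all of $V \times T$, hence on the overlap. Therefore $f^\theta_\alpha$ is well-defined and holomorphic on $\mathbb{U}$, and by construction it satisfies the Riccati equation and the local uniqueness property claimed in the statement.

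It remains to upgrade the local uniform asymptotics to the global statement \eqref{211117171819} and to note uniqueness of $f^\theta_\alpha$ on $\mathbb{U}$. The locally uniform Gevrey asymptotics $f^\theta_\alpha \simeq \hat f_\alpha$ as $\hbar \to 0$ along $\bar A_\theta$ hold because the property is local in $x$ and was verified on each chart $U_0(x_0)$, which cover $U$. For global uniqueness, suppose $g$ is another holomorphic solution on $\mathbb{U}$ with the same locally uniform asymptotics; then near any $x_0 \in U$ the restriction of $g$ to a chart satisfies the hypotheses characterising $f_{x_0}$ uniquely in Theorem~\ref{211026151811}, so $g = f_{x_0} = f^\theta_\alpha$ there, and since this holds near every point of $U$ and $\mathbb{U}$ is connected (or, working component by component), $g = f^\theta_\alpha$ on $\mathbb{U}$.

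The main obstacle is the overlap-consistency step, and specifically the point that two a~priori different sectorial domains $S_0(x_1), S_0(x_2)$ --- possibly of different radii --- still have an intersection large enough to contain a Borel disc of full opening $A_\theta$ on which Theorem~\ref{211026151811}'s uniqueness can be invoked; this is where Remark~\ref{211027171620}(5) (that $S_0$ may be taken to be an arbitrarily small Borel disc bisected by $\theta$) does the real work, together with the fact that the formal solution $\hat f_\alpha$ is the \emph{same} object on both charts. Everything else is a routine application of the identity theorem and the local theorem.
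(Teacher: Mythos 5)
Your proposal is correct and is essentially the argument the paper has in mind: the corollary is stated as an immediate consequence of the uniqueness clause of Theorem \ref{211026151811}, and your gluing-by-uniqueness construction (with the observation that any two of the sectorial domains contain a common Borel disc of full opening $A_\theta$, and that $\hat{f}_\alpha$ is a single globally defined formal solution on $U$) is precisely the routine verification the paper omits.
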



In particular, the domain $U \subset X$ in \autoref{211102175415} can be a union of WKB halfstrips.
In fact, examining the proof of \autoref{211026151811} more closely, it is readily seen that on any WKB halfstrip we can state the asymptotic property of canonical exact solutions more precisely as follows.

\begin{prop}[\textbf{asymptotics on WKB halfstrips}]{211118140510}
Assume all the hypotheses of \autoref{211026151811}.
Then the canonical local exact solution $f_\alpha^\theta$ uniquely extends to an exact solution on $W$ with the following locally uniform Gevrey asymptotics:
\eqntag{\label{211113104236}
	f^\theta_\alpha \simeq \hat{f}_\alpha
\qqqquad
	\text{as $\hbar \to 0$ along $\bar{A}_\theta$, loc.unif. $\forall x \in W$\fullstop}
}
In fact, even more is true.
Let $r > 0$ be such that $W = W_\theta^\alpha (x'_0, r)$.
For any $r_0 \in (0, r)$, let $W_0 \coleq W_\theta^\alpha (x'_0, r_0)$.
Then there is a sectorial domain $S_0 \subset S$ with the same opening $A_\theta$ such that the canonical exact solution $f_\alpha^\theta$ extends to a holomorphic solution on $W_0 \times S_0$ with the following uniform Gevrey asymptotic property:
\eqntag{\label{211113110026}
	f^\theta_\alpha \simeq \hat{f}_\alpha
\qqqquad
	\text{as $\hbar \to 0$ along $\bar{A}_\theta$, unif. $\forall x \in W_0$\fullstop}
}
Thus, $f_\alpha^\theta$ is a well-defined holomorphic solution on a domain of the form
\eqn{
	\WWW \coleq \Cup_{r_0 \in (0, r)} W_0 \times S_0 ~ \subset W \times S
\fullstop
}
\end{prop}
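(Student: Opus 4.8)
The strategy is to revisit the proof of \autoref{211026151811} and extract from it a sharper conclusion; the point is that, once the Liouville transformation has straightened the geometry, the Borel-plane estimates carried out there are invariant under translation along the direction $e^{i\theta}$, and this is exactly what promotes the locally uniform statement on compactly contained $U_0 \Subset W$ to a uniform statement on each sub-halfstrip $W_0$. Recall that $f_\alpha^\theta$ is produced by Borel resummation in the direction $\theta$: one forms the formal Borel transform $\hat{\phi} = \hat{\Borel}\big[\,\hat{f}_\alpha - f_0^\alpha\,\big]$, convergent locally uniformly near $\xi = 0$ by \autoref{200118123730}; one continues it to a holomorphic function $\phi = \phi(x,\xi)$ on a domain containing $W_0 \times \Xi_\theta$, where $\Xi_\theta \subset \Complex_\xi$ is a thin sectorial neighbourhood of the ray $e^{i\theta}\Real_{\geq 0}$, subject to a uniform exponential bound $\big|\phi(x,\xi)\big| \leq \CC\, e^{K|\xi|}$; and one applies the Laplace transform in the direction $\theta$, so that $f_\alpha^\theta = f_0^\alpha + \int_0^{+\infty\cdot e^{i\theta}} e^{-\xi/\hbar}\,\phi(x,\xi)\,\dd{\xi}$. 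The analytic heart is the existence of $\phi$ with these properties, obtained by Borel-transforming the Riccati equation \eqref{211118123850} into a linear convolution integral equation in $\xi$ and solving it by iteration along paths parallel to $e^{i\theta}\Real_\alpha$ in the Liouville coordinate $z = \Phi(x)$, using that $\Phi_\ast$ carries $\tfrac{1}{\sqrt{\DD_0}}\del_x$ to the constant-coefficient operator $\del_z$ (\autoref{211026191712}).

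Concretely, I would argue in four steps. First, pass to the $z$-plane, where the halfstrip $W = W_\theta^\alpha(x'_0, r)$ corresponds to $H = H_\theta^\alpha(z'_0, r)$ with $z'_0 = \Phi(x'_0)$, and $W_0 = W_\theta^\alpha(x'_0, r_0)$ to $H_0 = H_\theta^\alpha(z'_0, r_0)$ for $r_0 \in (0,r)$; when the underlying WKB trajectory is not closed, $\Phi$ identifies $W$ with $H$ conformally, and otherwise one works downstairs via the local biholomorphism $\Phi^{-1}$. Second, re-run the fixed-point argument for the Borel-plane integral equation while tracking constants: because the normalised equation has constant principal part $\del_z$, and because hypothesis~(2) of \autoref{211026151811} supplies Gevrey bounds on $a,b,c$ uniformly over all of $W$ with the weight $\sqrt{\DD_0}$ --- exactly the weight that renders the estimates $z$-translation-invariant --- the Neumann iteration closes with constants that do not degrade as the base point escapes along the $e^{i\theta}$-direction, while the $\del_z$-term is absorbed via Cauchy estimates at the cost of shrinking the width from $r$ to $r_0$ (as in the proof of \autoref{200118123730}). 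This yields $\phi$ holomorphic on $W_0 \times \Xi_\theta$ with an exponential bound uniform in $x \in W_0$. Third, for a Borel disc $S_0 = \set{\hbar ~\big|~ \Re(e^{i\theta}/\hbar) > 1/d_0}$ of small enough diameter $d_0 < 1/K$, the Laplace integral above converges and defines a holomorphic solution of \eqref{211118123850} on $W_0 \times S_0$; the standard Borel--Laplace (Watson-type) theorem then gives $f_\alpha^\theta \simeq \hat{f}_\alpha$ as $\hbar \to 0$ along $\bar{A}_\theta$, uniformly for $x \in W_0$, which is \eqref{211113110026}. Fourth, glue: by the uniqueness clause of \autoref{211026151811}, the solutions obtained for different $r_0 \in (0,r)$ restrict compatibly and assemble into a single holomorphic solution on $\WWW = \Cup_{r_0 \in (0,r)} W_0 \times S_0$, agreeing near $x_0$ with the local exact solution of \autoref{211026151811}; since every point of $W$ lies in some $W_0$, and uniform asymptotics on $W_0$ is in particular locally uniform there, we obtain \eqref{211113104236} together with the asserted unique extension to an exact solution on $W$.

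The only step requiring genuine work is the second: proving that the iteration for the Borel-transformed Riccati equation converges with constants that remain bounded as the base point runs off to infinity along the trajectory, so that the exponential estimate on $\phi$ is truly uniform over the \emph{non-compact} domain $W_0$. This rests on the interplay of three ingredients --- the constant-coefficient principal part secured by the Liouville transformation, the $\sqrt{\DD_0}$-weighted uniform Gevrey hypothesis on the coefficients, and a translation-invariant way of estimating the convolution products and the $\del_z$-term in the Borel plane --- and it is here that the loss of width from $r$ to $r_0$ is forced, for the same Cauchy-estimate reason already present in \autoref{200118123730}. Everything downstream --- the passage through the Laplace transform, the Watson-type asymptotic conclusion, and the gluing via uniqueness --- is then routine.
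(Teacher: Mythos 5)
Your proposal is correct and follows essentially the same route as the paper: the paper's own proof of this proposition is a one-line appeal to the proof of \autoref{211026151811}, whose core --- the \hyperref[210714172405]{Main Technical Lemma \ref*{210714172405}} --- already establishes existence, the uniform Gevrey asymptotics, and the uniform exponential Borel-plane bound on the full non-compact halfstrip $H^+_0$ of any smaller width $r_0 < r$, exactly as you re-derive, with the uniqueness clause gluing the pieces over varying $r_0$. Two small corrections of detail: the Borel-transformed equation is quadratic (not linear) in $\varphi$ because of the terms $a_2\,\varphi \ast \varphi$ and $\alpha_2 \ast \varphi \ast \varphi$, and in the paper the loss of width from $r$ to $r_0$ comes from requiring the characteristic segments $z + t$, $t \in [0,\xi]$, to remain in $H_+$ (forcing $\epsilon < r - r_0$), not from a Cauchy estimate on a $\del_z$-term, which has already been eliminated by the normalisation $f = f_0^\alpha + \hbar (f_1^\alpha + \tilde{f})$ together with the Liouville transformation.
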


\begin{proof}
The fact that $f_\alpha^\theta$ extends to any $W_0$ as stated follows immediately from the proof of \autoref{211026151811}.
The uniqueness part of the construction guarantees that all these extensions coincide.
\end{proof}

\subsubsection{Riccati Equations on Horizontal Halfstrips}

The strategy of the proof of \autoref{211026151811} is to use the Liouville transformation $\Phi_\alpha$ to transform the Riccati equation into one in standard form over a horizontal halfstrip in the $z$-space, and then apply the Borel-Laplace method.
First, we give a general description of this standard form of the Riccati equation and prove the corresponding version of the exact existence and uniqueness theorem (\autoref{210714172405}).
Then we will show that any Riccati equation satisfying the assumptions of \autoref{211026151811} can be put into this standard form thereby deducing our claims.

\paragraph{}
Let $H_+ \subset \Complex_z$ be a horizontal halfstrip around the positive real axis $\Real_+ \subset \Complex_z$ of some radius $r > 0$ and let $S_+ \subset \Complex_\hbar$ a Borel disc bisected by the positive real axis of some diameter $d > 0$:
\eqntag{\label{211103174239}
	H_+ \coleq \set{z ~\big|~ \op{dist} (z, \Real_+) < r}
\qtext{and}
	S_+ \coleq \set{ \hbar ~\big|~ \Re (1/\hbar) > 1/d}
\fullstop
}
Recall that the opening of $S_+$ is the semicircular arc $A_+ \coleq (-\frac{\pi}{2},+\frac{\pi}{2})$.
Consider the following singularly perturbed Riccati equation on $H_+ \times S_+$:
\eqntag{
\label{210714172412}
	\hbar \del_z \FF = \FF + \hbar \big( \AA_2 \FF^2 + \AA_1 \FF + \AA_0 \big)
\fullstop{,}
}
where $\AA_i$ are holomorphic functions of $(z, \hbar) \in H_+ \times S_+$ which admit uniform Gevrey asymptotic expansions as $\hbar \to 0$ along the closed arc $\bar{A}_+$:
\eqntag{
	\AA_i \simeq \hat{\AA}_i,
\qquad
\text{as $\hbar \to 0$ along $\bar{A}_+$, unif. $\forall z \in H_+$\fullstop}
}
Denote their leading-order parts by $a_i = a_i (z)$.
In symbols, $\AA_i \in \cal{G} \big(H_+; \bar{A}_+ \big)$.

The corresponding leading-order equation is simply $\FF_0 = 0$.
By \autoref{200118111737} part (1), this Riccati equation has a unique formal solution $\hat{\FF}_+ \in \cal{O} \big( H_+ \big) \bbrac{\hbar}$, and its leading order part is $\FF_0^+ = 0$ and its next-to-leading order part is $\FF_1^+ = - a_0$.

\newpage
\begin{lem}[\textbf{Main Technical Lemma}]{210714172405}
For every $r_0 \in (0, r)$, there is $d_0 \in (0, d]$ such that the Riccati equation \eqref{210714172412} has a canonical exact solution $\FF_+$ defined on
\eqntag{\label{210522170909}
	H^+_0 \times S^+_0
	\coleq 
	\set{z ~\big|~ \op{dist} (z, \Real_+) < r_0} 
		\times \set{ \hbar ~\big|~ \Re (1/\hbar) > 1/d_0}
	\subset
	H_+ \times S_+
\fullstop
}
Namely, $\FF_+$ is the unique holomorphic solution on $H^+_0 \times S^+_0$ which admits the formal solution $\hat{\FF}_+$ as its uniform Gevrey asymptotic expansion along $\bar{A}_+$:
\eqntag{\label{210522173054}
	\FF_+ (z, \hbar) \simeq \hat{\FF}_+ (z, \hbar)
\quad
\text{as $\hbar \to 0$ along $\bar{A}_+$, unif. $\forall z \in H^+_0$\fullstop}
}
In symbols, $\FF_+ \in \cal{G} (H^+_0; \bar{A}_+)$.
Moreover, it has the following properties.
\begin{enumerate}
\item [\textup{(P1)}]
The formal Borel transform
\eqn{
	\hat{\phi}_+ (z, \xi) \coleq \hat{\Borel} [\, \hat{\FF}_+ \,] (z, \xi)
}
converges uniformly on $H^+_0$.
In symbols, $\hat{\phi}_+ \in \cal{O} (H^+_0) \set{\xi}$ and $\hat{\FF}_+ \in \cal{G} (H^+_0) \bbrac{\hbar}$.
\item [\textup{(P2)}]
For any $\epsilon \in (0, r - r_0)$, let $\Xi_+ \coleq \set{ \xi ~\big|~ \op{dist} (\xi, \Real_+) < \epsilon}$.
Then the analytic Borel transform
\eqn{
	\phi_+ (z, \xi) \coleq \Borel_+ [\, \FF_+ \,] (z, \xi)
}
is uniformly convergent for all $(z, \xi) \in H^+_0 \times \Xi_+$.
\item [\textup{(P3)}]
The Laplace transform $\Laplace_+ \big[ \, \phi_+ \, \big] (z, \hbar)$ is uniformly convergent for all $(z, \hbar) \in H^+_0 \times S^+_0$ and satisfies
\eqntag{\label{210714174600}
	\FF_+ (z, \hbar) 
		= \Laplace_+ \big[ \, \phi_+ \, \big] (z, \hbar)
\fullstop
}
\item [\textup{(P4)}]
Therefore, $\FF_+$ is the uniform Borel resummation of its asymptotic power series $\hat{\FF}_+$: for all $(z, \hbar) \in H^+_0 \times S^+_0$,
\eqntag{
	\FF_+ (z, \hbar) = \cal{S}_+ \big[ \, \hat{\FF}_+ \, \big] (z, \hbar)
\fullstop
}
\item [\textup{(P5)}]
If the coefficients $\AA_0, \AA_1, \AA_2$ are periodic in $z$ with period $\omega \in \Complex$, then so is $\FF_+$.
\end{enumerate}
\end{lem}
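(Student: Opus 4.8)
The plan is to work in the Borel plane in the variable $\hbar$: I would show that the formal Borel transform $\hat{\phi}_+ \coleq \hat{\Borel}[\hat{\FF}_+]$ of the formal solution $\hat{\FF}_+$ (from \autoref{200118111737}) extends from a convergent germ at $\xi = 0$ to a holomorphic function $\phi_+$ on $H^+_0 \times \Xi_+$ with at most exponential growth in $\xi$, uniformly in $z$, and then recover $\FF_+$ as its Laplace transform along $\Real_+$. There are four steps: (i) rewrite the Riccati equation \eqref{210714172412} as a fixed-point (``Borel--Riccati'') integral equation for $\phi_+$; (ii) solve that equation by a contraction argument in a weighted-supremum-norm space of holomorphic functions, which yields (P1) and (P2); (iii) set $\FF_+ \coleq \Laplace_+[\phi_+]$ and verify it solves \eqref{210714172412}, which gives (P3); and (iv) read off the Gevrey asymptotics \eqref{210522173054}, the resummation identity (P4), uniqueness, and periodicity (P5) from the standard Borel--Laplace theory recalled in \autoref{210714114437} (cf. \cite{MR558468}).

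For step (i): by hypothesis each coefficient $\AA_i$ has uniform Gevrey-$1$ asymptotics on $H_+$ along $\bar{A}_+$, so by Sokal's theorem it is the uniform Borel resummation of $\hat{\AA}_i$; writing $\hat{\Borel}[\AA_i] = a_i \delta + \alpha_i$ with $\alpha_i$ the regular part, the function $\alpha_i$ extends holomorphically to $H_+ \times \Xi'$, bounded there by $C e^{L|\xi|}$ for some neighbourhood $\Xi' \supset \Real_+$ and constants $C, L > 0$, and $a_i + \Laplace_+[\alpha_i] = \AA_i$. Applying $\hat{\Borel}$ to \eqref{210714172412} and using the dictionary ``multiplication by $\hbar \mapsto \int_0^\xi$, product $\mapsto$ convolution $\ast$, $\del_z \mapsto \del_z$'' turns the Riccati equation into the fixed-point equation
\eqntag{\label{plan-borel-riccati}
	\phi_+ (z, \xi) = - a_0 (z) + \int_0^\xi \Big(
		\del_z \phi_+ - \alpha_0 - a_1 \phi_+ - \alpha_1 \ast \phi_+ - a_2 (\phi_+ \ast \phi_+) - \alpha_2 \ast (\phi_+ \ast \phi_+)
	\Big) (z, \eta) \dd{\eta}
\fullstop
}
Evaluating at $\xi = 0$ gives $\phi_+ (z, 0) = - a_0 (z) = \FF_1^+ (z)$, as it must. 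Expanding \eqref{plan-borel-riccati} in powers of $\xi$ reproduces the Borel transform of the recursion for the $\FF_k^+$, so its unique formal-power-series-in-$\xi$ solution is precisely $\hat{\phi}_+$; hence any holomorphic solution of \eqref{plan-borel-riccati} restricts near $\xi = 0$ to $\hat{\phi}_+$, which is how (P1) will follow once (P2) is in hand.

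Step (ii) is the heart of the matter and the step I expect to be the main obstacle. Choosing $\epsilon \in (0, r - r_0)$ small enough that $\Xi_+ \coleq \set{\xi ~\big|~ \op{dist}(\xi, \Real_+) < \epsilon} \subset \Xi'$, one works in the Banach space of holomorphic functions $\phi$ on $H^+_0 \times \Xi_+$ with finite norm $\sup_{(z,\xi)} |\phi (z, \xi)| e^{-\nu |\xi|}$ and shows that, for $\nu$ large enough (depending on $r - r_0$, on $\sup_{H_+} |a_i|$, and on $C, L$), the right-hand side of \eqref{plan-borel-riccati} is a contraction on a suitable ball: the term $\int_0^\xi \del_z \phi_+$ is controlled by a Cauchy estimate on the disc of radius $r - r_0$ about $z$ followed by $\int_0^{|\xi|} e^{\nu t} \dd{t} \leq \nu^{-1} e^{\nu |\xi|}$, contributing a factor of order $\big((r - r_0)\nu\big)^{-1}$; the linear convolution terms contribute factors of order $\nu^{-1}$ by the standard convolution estimate (valid for $\nu > L$); and the quadratic terms are handled the same way after absorbing the polynomial-in-$\xi$ factor from the iterated convolution into a slightly larger weight. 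The essential subtlety is that all these bounds must be \emph{uniform over the non-compact halfstrip $H^+_0$}: it is precisely the horizontal-halfstrip WKB geometry that keeps the Cauchy-estimate disc and the integration segment $[0, \xi]$ inside the relevant domains with $z$-independent constants (this is also what forces the diameter $d_0$ to depend on $r_0$). The fixed point $\phi_+$ is then holomorphic on $H^+_0 \times \Xi_+$ with $|\phi_+ (z, \xi)| \leq C_0 e^{\nu |\xi|}$ for some $C_0 > 0$, which is (P2); restricting to a disc at $\xi = 0$ gives $\hat{\phi}_+ \in \cal{O} (H^+_0) \set{\xi}$, hence (P1) and $\hat{\FF}_+ \in \cal{G} (H^+_0) \bbrac{\hbar}$ (consistently with \autoref{200118123730}).

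Finally, for step (iii) I would set $\FF_+ \coleq \Laplace_+[\phi_+]$; the bound $|\phi_+| \leq C_0 e^{\nu |\xi|}$ makes this Laplace integral converge uniformly on $H^+_0 \times S^+_0$ for the Borel disc $S^+_0$ of any sufficiently small diameter $d_0 \leq d$ (namely with $d_0 < 1/\nu$), and since $\Laplace_+$ inverts $\hat{\Borel}$ --- taking convolution back to product, $\int_0^\xi$ back to multiplication by $\hbar$, and $\alpha_i$ back to $\AA_i - a_i$, with Fubini justified by the exponential bound --- the function $\FF_+$ solves \eqref{210714172412} and $\Borel_+[\FF_+] = \phi_+$; this is (P3) together with the stated convergence. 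For step (iv): $\phi_+$ is holomorphic on a neighbourhood of $\Real_+$ containing a disc at the origin, has at most exponential growth there, and its germ at $0$ sums the Gevrey series $\hat{\FF}_+$, so by the standard Borel--Laplace / Nevanlinna--Sokal results of \autoref{210714114437} the function $\FF_+ = \Laplace_+[\phi_+]$ is uniformly Gevrey asymptotic to $\hat{\FF}_+$ along the closed arc $\bar{A}_+$, uniformly for $z \in H^+_0$ --- this is \eqref{210522173054}, i.e. $\FF_+ \in \cal{G} (H^+_0; \bar{A}_+)$ --- and $\FF_+ = \cal{S}_+[\hat{\FF}_+]$ by the definition of uniform Borel resummation, which is (P4). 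Uniqueness of a holomorphic solution on $H^+_0 \times S^+_0$ with these asymptotics follows from the injectivity of Borel summation: a function uniformly Gevrey asymptotic to $0$ along $\bar{A}_+$ on a Borel disc vanishes identically. For (P5), if the $\AA_i$ are $\omega$-periodic in $z$ then \eqref{plan-borel-riccati} is invariant under $z \mapsto z + \omega$, so by uniqueness of its solution $\phi_+ (z + \omega, \xi) = \phi_+ (z, \xi)$, whence $\FF_+ = \Laplace_+[\phi_+]$ is $\omega$-periodic.
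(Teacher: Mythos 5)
There is a genuine gap at the heart of your step (ii), and it is exactly the point your fixed-point formulation was supposed to handle. Your integral equation keeps the term $\int_0^\xi \del_z\phi_+(z,\eta)\,\dd{\eta}$ inside the operator: you integrate the Borel-transformed equation in $\xi$ at fixed $z$ and leave the $z$-derivative of the unknown in the integrand. But $\phi\mapsto\del_z\phi$ is not a bounded operator on (exponentially weighted) holomorphic functions on the fixed domain $H^+_0\times\Xi_+$, so the right-hand side of your equation is not even a self-map of the Banach space you propose, let alone a contraction. The Cauchy estimate you invoke, on a disc of radius $r-r_0$ about $z\in H^+_0$, requires control of $\phi$ on the larger strip $H_+$; the output of one application of the operator is then only controlled on $H^+_0$, so the iteration does not close. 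This is the classical loss-of-derivative obstruction, and it cannot be cured by ``taking $\nu$ large'': repairing the argument along your lines would require a scale of Banach spaces indexed by the strip width (an abstract Cauchy--Kowalevskaya / Ovsyannikov scheme), which is a substantially more delicate argument than the one you sketch.

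The paper avoids the problem entirely by integrating along the characteristics of the transport operator $\del_z-\del_\xi$ rather than in $\xi$ at fixed $z$: after the change of variables $(z,\xi)\mapsto(z+\xi,\xi)$ the principal part becomes $-\del_t$, and the resulting integral equation \eqref{190312202516}, built from the operator $\II_+[\alpha](z,\xi)=\int_0^\xi\alpha(z+t,\xi-t)\,\dd{t}$ of \eqref{200223171526}, contains no derivative of the unknown at all --- only multiplications and convolutions, which are handled by elementary estimates. This is also precisely where the halfstrip geometry enters (one needs $z+\xi\in H_+$ along the integration segment, guaranteed by $\epsilon<r-r_0$). Your steps (i), (iii) and (iv) are fine in outline and match the paper --- the paper uses successive approximations controlled by a generating function and the implicit function theorem rather than a contraction, but that is a cosmetic difference --- so the fix you need is to replace the fixed-$z$ integration by integration along the characteristic, so that $\del_z\phi_+$ never appears in the fixed-point operator.
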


\begin{proof}
We use the Borel-Laplace method to construct the exact solution $\FF_+$.
Namely, we first apply the Borel transform to obtain a first-order nonlinear PDE which is easy to rewrite as an integral equation.
Then most of the heavy-lifting is constrained to solving this integral equation, which we do via the method of successive approximations.
The desired solution $\FF_+$ is then obtained by applying the Laplace transform.

\paragraph*{Uniqueness.}
Suppose $\FF_+,\FF'_+$ are two such exact solutions defined on $H^+_0 \times S^+_0$.
Their difference $\FF_+ - \FF'_+$ is a holomorphic function on $H^+_0 \times S^+_0$ which is uniformly Gevrey asymptotic to $0$ as $\hbar \to 0$ along $\bar{A}_+$.
By the uniqueness claim in Nevanlinna's Theorem (\autoref{210617120300}), there can be only one holomorphic function on $S^+_0$ (namely, the constant function $0$) which is Gevrey asymptotic to $0$ as $\hbar \to 0$ along $\bar{A}_+$.
Thus, $\FF_+ - \FF'_+$ is must be the zero function.

\paragraph*{Step 1: The analytic Borel transform.}
Since $\AA_i \in \cal{G} \big(H_+; \bar{A}_+ \big)$, it follows from Nevanlinna's Theorem (\autoref{210617120300}) that there is some tubular neighbourhood 
\eqntag{\label{210726172442}
	\Xi_+ \coleq \set{ \xi ~\big|~ \op{dist} (\xi, \Real_+) < \epsilon}
}
for some $\epsilon > 0$ such that the analytic Borel transforms
\eqntag{\label{210726172655}
	\alpha_i (z, \xi) \coleq \Borel_+ [ \: \AA_i \: ] (z, \xi)
}
are holomorphic functions on $H_+ \times \Xi_+ \subset \Complex^2_{z\xi}$ with uniformly at most exponential growth as $|\xi| \to + \infty$.
Moreover, for all $(z,\xi) \in H_+ \times \Xi_+$,
\eqntag{\label{210726173018}
	\AA_i (z, \hbar) = a_i (z) + \Laplace_+ [ \: \alpha_i \: ]
\fullstop
}

Dividing \eqref{210714172412} through by $\hbar$ and applying the analytic Borel transform $\Borel_+$, we obtain the following PDE with convolution product:
\eqntag{\label{200117154820}
	\del_z \varphi - \del_\xi \varphi
		= 	\alpha_0
			+ a_1 \varphi
			+ \alpha_1 \ast \varphi
			+ a_2 \varphi \ast \varphi
			+ \alpha_2 \ast \varphi \ast \varphi
\fullstop{,}
}
where the unknown variables $\varphi$ and $\FF$ are related by 
\eqn{
	\varphi = \Borel_+ [\, \FF \,]
\qqtext{and}
	\FF = \Laplace_+ [\, \varphi \,]
\fullstop
}
We have thus reformulated the problem of solving the Riccati equation \eqref{210714172412} as the problem of solving the PDE \eqref{200117154820}: if $\varphi$ is a solution of \eqref{200117154820}, then its Laplace transform $\FF = \Laplace_+ [\, \varphi \,]$ is a solution of \eqref{210714172412}, provided that $\varphi$ has at most exponential growth at infinity in $\xi$ uniformly in $z$.
See \autoref{200223091621} for more details.

\paragraph*{Step 2: The integral equation.}
The principal part of the PDE \eqref{200117154820} has constant coefficients, so it is easy to rewrite it as an equivalent integral equation as follows.
Consider the holomorphic change of variables
\eqn{
	(z, \xi) \overset{\TT}{\mapstoo} (w, t) \coleq (z + \xi, \xi)
\qtext{and its inverse}
	(w, t) \overset{\TT^{-1}}{\mapstoo} (z, \xi) = (w - t, t)
\fullstop
}
For any function $\alpha = \alpha (z, \xi)$ of two variables, introduce the following notation:
\eqn{
	\TT^\ast \alpha (z, \xi) \coleq \alpha \big( \TT (z, \xi) \big) = \alpha (z + \xi, \xi)
\qtext{and}
	\TT_\ast \alpha (w, t) \coleq \alpha \big( \TT^{-1} (w, t) \big) = \alpha (w - t, t)
\fullstop
}
Note that $\TT^\ast \TT_\ast \alpha = \alpha$.
Under this change of coordinates, the differential operator $\del_z - \del_\xi$ transforms into $- \del_t$, and so the lefthand side of the \eqref{200117154820} becomes $- \del_t \big( \TT_\ast \varphi)$.
Integrating from $0$ to $t$, and imposing the initial condition
\eqntag{\label{210726174317}
	\varphi (z, 0)
		= \varphi_0 (z) \coleq a_0 (z)
\fullstop{,}
}
the lefthand side of the PDE \eqref{200117154820} becomes $-\TT_\ast \varphi$.
Applying $\TT^\ast$, we therefore obtain the following integral equation for $\varphi = \varphi (z, \xi)$:
\eqntag{\label{190312202445}
	\varphi
		= \varphi_0 - \TT^\ast \int_0^t \TT_\ast
			\Big( \alpha_0
			+ a_1 \varphi
			+ \alpha_1 \ast \varphi
			+ a_2 \varphi \ast \varphi
			+ \alpha_2 \ast \varphi \ast \varphi
			\Big) \dd{u}
\fullstop
}
Explicitly, this integral equation reads as follows:
{\scriptsize
\eqns{
	\varphi (z, \xi)
		&= a_0 (z) - \int_0^\xi
			\Bigg( 
				\alpha_0 (z + \xi - u, u)
				~+~ a_1 (z + \xi - u) \cdot \varphi (z + \xi - u, u)
				~+~ (\alpha_1 \ast \varphi) (z + \xi - u, u)
		\\		&\phantom{=}\:\qqqqqqqqqquad
				~+~ a_2 (z + \xi - u) \cdot (\varphi \ast \varphi) (z + \xi - u, u)
				~+~ (\alpha_2 \ast \varphi \ast \varphi) (z + \xi - u, u)
					\Bigg) \dd{u}
\fullstop
}
}%
Here, the integration is done along a straight line segment from $0$ to $\xi$.
Note also that the convolution products are with respect to the second argument; i.e.,
\eqns{
	(\alpha_1 \ast \varphi) (t_1, t_2)
		&= \int_0^{t_2} \alpha_1 (t_1, t_2 - y) \varphi (t_1, y) \dd{y}
\fullstop{,}
\\
	(\alpha_2 \ast \varphi \ast \varphi) (t_1, t_2)
		&= \int_0^{t_2} \alpha_2 (t_1, t_2 - y) \int_0^{y} \varphi (t_1, y - y') \varphi (t_1, y') \dd{y'} \dd{y}
\fullstop
}
Introduce the following notation: for any function $\alpha = \alpha (z, \xi)$ of two variables,
\eqntag{\label{200223171526}
\small
	\II_+ \big[ \alpha \big] (z, \xi) 
		\coleq - \TT^\ast \int_0^t \TT_\ast \alpha \dd{u}
		= - \int_0^\xi	\alpha (z + \xi - u, u) 	\dd{u}
		= \int_0^{\xi} \alpha (z + t, \xi - t) \dd{t}
\fullstop{,}
}
where the integration path is again the straight line segment connecting $0$ to $\xi$.
Then the integral equation \eqref{190312202445} can be written more succinctly as:
\eqntag{\label{190312202516}
	\varphi = \varphi_0 + \II_+ \Big[ \, \alpha_0
					+ a_1 \varphi
					+ \alpha_1 \ast \varphi
					+ a_2 \varphi \ast \varphi
					+ \alpha_2 \ast \varphi \ast \varphi \, \Big]
\fullstop
}

\paragraph*{Step 3: Method of successive approximations.}
To solve \eqref{190312202516}, we use the method of successive approximations.
Consider a sequence of functions $\set{\varphi_n}_{n=0}^\infty$ defined recursively by
\eqn{
	\varphi_0 = a_0
\qqtext{and}
	\varphi_1 \coleq \II_+ \big[ \alpha_0 + a_1 \varphi_0 \big]
\fullstop{,}
}
and for $n \geq 2$ by the following formula:
\eqntag{\label{180824203055}
	\varphi_n 
		\coleq \II_+ \left[ 
			a_1 \varphi_{n-1} 
			+ \alpha_1 \ast \varphi_{n-2} 
			+ \sum_{\substack{i,j \geq 0 \\ i + j = n-2}}
				a_2 \varphi_i \ast \varphi_j
			+ \sum_{\substack{i,j \geq 0 \\ i + j = n-3}}
				\alpha_2 \ast \varphi_i \ast \varphi_j
			\right]
\fullstop
}
Explicitly, the first few members of the sequence $\set{\varphi_n}_n$ are:
{\small
\eqns{
	\varphi_2 &= \II_+ \big[ a_1 \varphi_1 + \alpha_1 \ast \varphi_0 + a_2 \varphi_0 \ast \varphi_0 \big]
\fullstop{,}
\\	\varphi_3 &= \II_+ \big[ a_1 \varphi_2 + \alpha_1 \ast \varphi_1 + a_2 \varphi_1 \ast \varphi_0 + a_2 \varphi_0 \ast \varphi_1 + \alpha_2 \ast \varphi_0 \ast \varphi_0 \big]
\fullstop{,}
\\	\varphi_4 &= \II_+ \big[ a_1 \varphi_3 + \alpha_1 \ast \varphi_2 + a_2 \varphi_2 \ast \varphi_0 + a_2 \varphi_1 \ast \varphi_1 + a_2 \varphi_0 \ast \varphi_2 + \alpha_2 \ast \varphi_1 \ast \varphi_0 + \alpha_2 \ast \varphi_0 \ast \varphi_1 \big]
\fullstop
}}%
Of course, $\varphi_0$ is independent of $\xi$, so $\varphi_0 \ast \varphi_0 = \varphi_0^2 \xi$.

\paragraph*{Main Technical Claim.}
\textit{Let $\epsilon$ be so small that $\epsilon < r - r_0$. Then the infinite series}
\eqntag{\label{190306115025}
	\varphi_+ (z, \xi) \coleq \sum_{n=0}^\infty \varphi_n (z, \xi)
}
\textit{defines a holomorphic solution of the integral equation \eqref{190312202516} on the domain
\eqn{
	\mathbf{H}_+ \coleq \set{ (z, \xi) \in H_+ \times \Xi_+ ~\big|~ z + \xi \in H_+}
}
with at most exponential growth at infinity in $\xi$; more precisely, it satisfies the following uniform exponential bound: there are real constants $\AA, \KK > 0$ such that}
\eqntag{\label{200119155725}
	\big| \varphi_+ (z, \xi) \big| \leq \AA e^{\KK |\xi|}
\qqquad
	\text{$\forall (z, \xi) \in \mathbf{H}_+$}
\fullstop
}
\textit{In particular, $\varphi_+$ is a well-defined holomorphic solution on $H^+_0 \times \Xi_+ \subset \mathbf{H}_+$ where it satisfies the exponential estimate above.}

Assuming this claim, only one step remains in order to complete the proof of the \hyperlink{210714172405}{Main Technical Lemma}, which is to take the Laplace transform of $\varphi_+$.

\paragraph*{Step 4: The Laplace transform.}
Let
\eqntag{\label{200603190110}
	\FF_+ (z, \hbar) 
		\coleq \Laplace_+ \big[ \varphi_+ \big] (z, \hbar)
		= \int_{0}^{+\infty} e^{- \xi / \hbar} \varphi (z, \xi) \dd{\xi}
\fullstop
}
This integral is uniformly convergent for all $z \in H^+_0$ provided that ${\Re (\hbar^{-1}) > \CC_2}$.
Thus, if we take $d_0 \in (0, d]$ strictly smaller than $1/\CC_2$, then formula \eqref{200603190110} defines a holomorphic solution of the Riccati equation \eqref{210714172412} on the domain $H^+_0 \times S^+_0$ where $S^+_0 \coleq \set{ \hbar ~\big|~ \Re (\hbar^{-1}) > 1 / d_0 }$.
Furthermore, Nevanlinna's Theorem (\autoref{210617120300}) implies that $\FF_+$ admits a uniform Gevrey asymptotic expansion on $H^+_0$ as $\hbar \to 0$ along $\bar{A}_+$, and this asymptotic expansion is necessarily the formal solution $\hat{\FF}_+$.

\enlargethispage{1cm}
\paragraph*{Proof of the Main Technical Claim.}
If we assume for the moment that the series \eqref{190306115025} is uniformly convergent on $\mathbf{H}_+$, it is easy to check by direct substitution that it satisfies the integral equation \eqref{190312202516} (see \autoref{190907145933} for full details).
To demonstrate uniform convergence of the series $\varphi_+$, we first note the following exponential estimates on the coefficients of \eqref{190312202516}: there are constants $\CC, \LL > 0$ such that for each $i = 0,1,2$ and for all $(z, \xi) \in \mathbf{H}_+$,
\eqntag{\label{190907144006}
	|a_i| \leq \CC
	\qqtext{and}
	|\alpha_i| \leq \CC e^{\LL |\xi|}
\fullstop
}
We prove the Main Technical Claim by showing that there are constants $\AA, \MM > 0$ such that for all $n$ and for all $(z, \xi) \in \mathbf{H}_+$,
\eqntag{\label{200220115129}
	\big| \varphi_n (z, \xi) \big| \leq \AA \MM^n \frac{|\xi|^n}{n!} e^{\LL |\xi|}
\fullstop
}
This is enough to deduce the uniform convergence of the infinite series $\varphi_+$ as well as the exponential estimate \eqref{200119155725} by taking $\KK \coleq \MM + \LL$ because
\eqn{
	\big| \varphi_+ (z, \xi) \big|
		\leq \sum_{n=0}^\infty |\varphi_n|
		\leq \sum_{n=0}^\infty \AA \MM^n \frac{|\xi|^n}{n!} e^{\LL |\xi|}
		= \AA e^{ (\MM + \LL) |\xi|}
\fullstop
}

To show \eqref{200220115129}, we will first recursively construct a sequence of positive real numbers $(\MM_n)_{n=0}^\infty$ such that for all $n$ and for all $(z, \xi) \in \mathbf{H}_+$,
\eqntag{\label{180824195140}
	\big| \varphi_n (z, \xi) \big| \leq \MM_n \frac{|\xi|^n}{n!} e^{\LL |\xi|}
\fullstop
}
We will then show that there are $\AA, \MM > 0$ such that $\MM_n \leq \AA \MM^n$ for all $n$.

\textsc{Construction of $\MM_0, \MM_1$.}
We can take $\MM_0 \coleq \CC$ because $\big| \varphi_0 (z, \xi) \big| = \big| a_0 (z) \big| \leq \CC$.
For clarity, let us also say that we can take $\MM_1 \coleq \CC ( 1 + \CC)$ because \autoref{180824194855} gives the estimate
\eqn{
	\big| \varphi_1 (z, \xi) \big|
		\leq \int_0^{\xi} |\alpha_0| |\dd{u}| + \int_0^{\xi} |a_1| |\varphi_0| |\dd{u}|
		\leq \CC ( 1 + \CC) \int_0^{|\xi|} e^{\LL r} \dd{r}
		\leq \CC ( 1 + \CC) |\xi| e^{\LL |\xi|}
\fullstop
}

\textsc{Construction of $\MM_n$ for $n \geq 2$.}
We assume that the estimate \eqref{180824195140} holds for $\varphi_0, \ldots, \varphi_{n-1}$ and derive an estimate for $\varphi_n$.
Using \autoref{180824194855}, \ref{180824192144}, and \ref{180824195940}, we obtain the following bounds on the terms in the recursive formula \eqref{180824203055}:
\eqns{
	\Big| \II_+ \big[ a_1 \varphi_{n-1} \big] \Big|
		&\leq \CC \MM_{n-1} \frac{|\xi|^n}{n!} e^{\LL |\xi|}
\fullstop{,}
\\	\Big| \II_+ \big[ \alpha_1 \ast \varphi_{n-2} \big] \Big|
		&\leq \CC \MM_{n-2} \frac{|\xi|^n}{n!} e^{\LL |\xi|}
\fullstop{,}
\\	\Big| \II_+ \big[ a_2 \varphi_i \ast \varphi_j \big] \Big|
		&\leq \CC \MM_i \MM_j \frac{|\xi|^n}{n!} e^{\LL |\xi|}
\quad \text{if $i + j = n-2$ \fullstop{,}}
\\	\Big| \II_+ \big[ \alpha_2 \ast \varphi_i \ast \varphi_j \big]  \Big|
		&\leq \CC \MM_i \MM_j \frac{|\xi|^n}{n!} e^{\LL |\xi|}
\quad \text{if $i + j = n-3$ \fullstop}
}
Using these estimates in \eqref{180824203055}, we find:
\eqns{
	\big| \varphi_n \big|
		&\leq 
		\Big| \II_+ \big[ a_1 \varphi_{n-1} \big] \Big|
		+ \Big| \II_+ \big[ \alpha_1 \ast \varphi_{n-2} \big] \Big|
\\		&\phantom{\leq \Big| \II \big[ a_1 \varphi_{n-1} \big] \Big|~}
		+ \sum_{\substack{i,j \geq 0 \\ i + j = n-2}}
				\Big| \II_+ \big[ a_2 \varphi_i \ast \varphi_j \big] \Big|
		+ \sum_{\substack{i,j \geq 0 \\ i + j = n-3}}
				\Big| \II_+ \big[ \alpha_2 \ast \varphi_i \ast \varphi_j \big] \Big|
\\
		&\leq \CC 
			\left( \MM_{n-1} + \MM_{n-2}
				+ \sum_{\substack{i,j \geq 0 \\ i + j = n-2}} 
					\MM_i \MM_j
				+ \sum_{\substack{i,j \geq 0 \\ i + j = n-3}}
					\MM_i \MM_j
			\right)
			\frac{|\xi|^n}{n!} e^{\LL |\xi|}
\fullstop
}
We can therefore define, for all $n \geq 2$,
\eqntag{\label{180824204834}
	\MM_n \coleq 
			\CC 
			\left( \MM_{n-1} + \MM_{n-2}
				+ \sum_{\substack{i,j \geq 0 \\ i + j = n-2}} 
					\MM_i \MM_j
				+ \sum_{\substack{i,j \geq 0 \\ i + j = n-3}}
					\MM_i \MM_j
			\right)
\fullstop
}
Thus, for example, 
$\MM_2 = \CC ( \MM_1 + \MM_0 + \MM_0^2 )$,
$\MM_3 = \CC ( \MM_2 + \MM_1 + 2 \MM_0 \MM_1 + \MM_0^2)$, 
$\MM_4 = \CC ( \MM_3 + \MM_2 + 2 \MM_0 \MM_2 + \MM_1^2 + 2 \MM_0 \MM_1)$,
and so on.

\textsc{Bounds on $\MM_n$.}
Consider the following power series in an abstract variable $t$:
\eqn{
	\hat{p} (t) \coleq \sum_{n=0}^\infty \MM_n t^n \in \Complex \bbrac{t}
\fullstop
}
We will show that $\hat{p} (t)$ is in fact a convergent power series.
First, we observe that $\hat{p} (0) = \MM_0 = \CC$ and that $\hat{p} (t)$ satisfies the following algebraic equation:
\eqntag{\label{180824205336}
	\hat{p} = \CC \Big( 1 + t + \hat{p} t + \hat{p} t^2 + \hat{p}^2 t^2 + \hat{p}^2 t^3 \Big)
\fullstop{,}
}
which can be seen by expanding and comparing the coefficients using the defining formula \eqref{180824204834} for $\MM_n$.
Consider the holomorphic function $\GG = \GG (p,t)$ of two variables, defined by
\eqn{
	\GG (p,t) \coleq - p + \CC \Big( 1 + t + pt + pt^2 + p^2t^2 + p^2t^3 \Big)
\fullstop
}
It has the following properties:
\eqn{
	\GG (\CC,0) = 0,
\qquad
	\evat{\frac{\del \GG}{\del p}}{(p,t) = (\CC,0)} = -1 \neq 0
\fullstop
}
Thus, by the Holomorphic Implicit Function Theorem, there exists a function $p (t)$, holomorphic at $t = 0$, satisfying $p (0) = \CC$ and $\GG \big(p (t), t \big) = 0$ for all $t$ sufficiently close to $0$.
Since $\hat{p} (0) = \CC$ and $\GG \big(\hat{p} (t), t \big) = 0$, the power series $\hat{p} (t)$ must be the Taylor expansion of $p (t)$ at $t = 0$.
As a result, $\hat{p} (t)$ is in fact a convergent power series, which means its coefficients grow at most exponentially: there are constants $\AA, \MM > 0$ such that $\MM_n \leq \AA \MM^n$ for all $n$.
This completes the proof of the Main Technical Claim and therefore of the \hyperlink{210714172405}{Main Technical Lemma}.
\end{proof}

\subsubsection{Proof of \autoref*{211026151811}}
\label{211117174240}

We can now finish the proof of the main result in this paper.

\begin{proof}[Proof of \autoref{211026151811}.]
We immediately restrict attention to a Borel disc in the $\hbar$-plane of some diameter $d > 0$ bisected by the direction $\theta$; i.e., without loss of generality, assume that $S = \set{ \hbar ~\big|~ \Re (e^{i\theta}/\hbar) > 1/ d}$.
Note that the rotation $\hbar \mapsto e^{-i\theta} \hbar$ sends $S$ to the Borel disc $S_+ = \set{ \hbar ~\big|~ \Re (1/\hbar) > 1/ d}$ from \eqref{211103174239}.

Next, let $r > 0$ be such that $W \coleq W^\alpha_\theta = W^\alpha_\theta (x_0; r) = \Phi^{-1} ( H )$ is the WKB halfstrip from the hypothesis, where $H \coleq H^\alpha_\theta = H^\alpha_\theta (0, r) = \set{z ~\big|~ \op{dist} (z, e^{i\theta} \Real_\alpha) < r }$.
Recall that $\Phi^{-1} : H \to W$ is a local biholomorphism.
Put $H_+ \coleq \set{z ~\big|~ \op{dist} (z, \Real_+) < r }$ and $\Phi_\alpha^\theta \coleq \varepsilon_\alpha e^{-i \theta} \Phi$, so that $W = (\Phi_\alpha^\theta)^{-1} (H_+)$.
Furthermore, $\Phi_\alpha^\theta$ transforms the differential operator $\frac{\varepsilon_\alpha}{\sqrt{\DD_0}} \del_x$ into $e^{-i\theta} \del_z$.

Consider now holomorphic functions of $(x, \hbar) \in W \times S$ denoted by $a_\ast, a_{\ast\ast}$, $b_\ast, b_{\ast\ast}$, $c_\ast, c_{\ast\ast}$, obtained from $a,b,c$ by removing the leading and the next-to-leading order terms respectively; i.e., they are defined by the following relations:
\eqntag{\label{200525200550}
\begin{aligned}
	a 	&= a_0 + \hbar a_\ast
&\text{and}\qquad
	a_\ast &= a_1 + \hbar a_{\ast\ast}
\fullstop{,}
\\
	b 	&= b_0 + \hbar b_\ast
&\text{and}\qquad
	b_\ast &= b_1 + \hbar b_{\ast\ast}
\fullstop{,}
\\
	c 	&= c_0 + \hbar c_\ast
&\text{and}\qquad
	c_\ast &= c_1 + \hbar c_{\ast\ast}
\fullstop
\end{aligned}
}
Recall that the leading and the next-to-leading orders $f_0^\alpha, f_1^\alpha$ of the formal solution $\hat{f}_\alpha$ are holomorphic functions on $W$ that satisfy the following identities:
\eqntag{\label{200522162901}
\begin{gathered}
	a_0 (f_0^\alpha)^2 + b_0 f_0^\alpha + c_0 = 0
\qqtext{and}
	\varepsilon_\alpha \sqrt{\DD_0} = 2a_0 f_0^\alpha + b_0
\fullstop{,}
\\
	\del_x f_0^\alpha = \varepsilon_\alpha \sqrt{\DD_0} f_1^\alpha + a_1 (f_0^\alpha)^2 + b_1 f_0^\alpha + c_1
\fullstop{,}
\end{gathered}
}
where $\varepsilon_\pm = \pm 1$.
Using these expressions, a straightforward calculation shows that the change of the unknown variable $f \mapsto \tilde{f}$ given by
\eqntag{\label{200522163534}
	f = f_0^\alpha + \hbar (f_1^\alpha + \tilde{f})
}
transforms the Riccati equation \eqref{211118123850} into the following Riccati equation on $W \times S$:
\eqntag{\label{200526092203}
	\varepsilon_\alpha \frac{\hbar}{\sqrt{\DD_0}} \del_x \tilde{f}
		- \tilde{f}
	= \hbar \big( \tilde{a} \tilde{f}^2 + \tilde{b} \tilde{f} + \tilde{c} \big)
\fullstop{,}
}
where
\eqntag{\label{200529110135}
\begin{gathered}
	\tilde{a} \coleq \frac{\varepsilon_\alpha}{\sqrt{\DD_0}} a
\fullstop{,}
\qqquad
	\tilde{b} \coleq \frac{\varepsilon_\alpha}{\sqrt{\DD_0}} \Big( b_\ast +  2a f_1^\alpha + 2 a_\ast f_0^\alpha \Big)
\fullstop{,}
\\
	\tilde{c} \coleq \frac{\varepsilon_\alpha}{\sqrt{\DD_0}} \bigg(
		- \del_x f_1^\alpha
		+ a (f_1^\alpha)^2 + \big(2 a_{\ast} f_0^\alpha + b_{\ast} \big) f_1^\alpha + \big( a_{\ast\ast} (f_0^\alpha)^2 + b_{\ast\ast} f_0^\alpha + c_{\ast\ast} \big)
		\bigg)
\fullstop
\end{gathered}
}

Finally, transforming equation \eqref{200526092203} by $\Phi_\alpha^\theta$ and applying a rotation $\hbar \mapsto e^{-i\theta} \hbar$, we obtain a Riccati equation on $H_+ \times S_+$ of the form \eqref{210714172412} where the coefficients $\AA_0, \AA_1, \AA_2$ are given by
\eqntag{
	\AA_2 (z, \hbar) \coleq \tilde{a} \big( x(z), e^{i\theta} \hbar \big),
\quad
	\AA_1 (z, \hbar) \coleq \tilde{b} \big( x(z), e^{i\theta} \hbar \big),
\quad
	\AA_0 (z, \hbar) \coleq \tilde{c} \big( x(z), e^{i\theta} \hbar \big),
}
where $x(z) = (\Phi^\theta_\alpha)^{-1} (z)$ and the unknown variables $\tilde{f}$ and $\FF$ are related by
\eqntag{
	\FF (z, \hbar) = \tilde{f} \big( x(z), e^{i\theta} \hbar \big)
\fullstop
}
\autoref{211026151811} now follows from the \hyperref[210714172405]{Main Technical Lemma \ref*{210714172405}}.
\end{proof}

\subsection{Borel Summability of Formal Solutions}

In this subsection, we translate \autoref{211026151811} and its method of proof into the language of Borel-Laplace theory, the basics of which are briefly recalled in \autoref{210616130753}.
Namely, it follows directly from our construction that the canonical exact solutions are the Borel resummation of the corresponding formal solutions.
Let us make this statement precise and explicit.
The following theorem is a direct consequence of the proof of \autoref{211026151811}.

\begin{thm}[\textbf{Borel summability of formal solutions}]{211027140644}
Assume all the hypotheses of \autoref{211026151811}.
Then the local formal solution $\hat{f}_\alpha$ is Borel summable in the direction $\theta$ uniformly near $x_0$.
Namely, the canonical local exact solution $f_\alpha^\theta$ is the uniform Borel resummation of $\hat{f}_\alpha$ in the direction $\theta$: for all $\hbar \in S_0$ and uniformly for all $x \in U_0$,
\begin{equation}\label{211027165527}
	f_\alpha^\theta (x, \hbar) 
		= f^\alpha_0 (x) + \cal{S}_\theta \big[ \: \hat{f}_\alpha \: \big] (x, \hbar)
\fullstop
\end{equation}
\end{thm}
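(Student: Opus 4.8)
The plan is to extract this statement directly from the proof of \autoref{211026151811}, where the canonical exact solution $f_\alpha^\theta$ was in fact \emph{built} by a Borel--Laplace construction; the only work is to track the Borel and Laplace data through the reductions used there and to reconcile them with the definition $\cal{S}_\theta = \Laplace_\theta \circ \hat{\Borel}_\theta$. First I would recall that $\hat{\Borel}_\theta$ discards the constant term of a series in $\hbar$, which is precisely why the leading term $f_0^\alpha$ must be reinstated by hand on the righthand side of \eqref{211027165527}. The reductions in the proof of \autoref{211026151811} are three: (i) the rotation $\hbar \mapsto e^{-i\theta}\hbar$ carrying $S$ to the standard Borel disc; (ii) the Liouville transformation $\Phi_\alpha^\theta$, which acts on the $x$-variable alone; and (iii) the affine change of unknown \eqref{200522163534}, $f = f_0^\alpha + \hbar(f_1^\alpha + \tilde{f})$, after which $\FF(z,\hbar) = \tilde{f}\big(x(z), e^{i\theta}\hbar\big)$ solves the standard-form equation \eqref{210714172412}; recall that the associated formal solution $\hat{\tilde{f}}$ has vanishing constant term.

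Next I would invoke properties (P1)--(P4) of the \hyperref[210714172405]{Main Technical Lemma}, which jointly assert that $\FF_+ = \cal{S}_+[\hat{\FF}_+]$ uniformly on $H_+^0$: the formal Borel transform $\hat{\phi}_+$ of $\hat{\FF}_+$ converges, it coincides there with the analytic Borel transform $\phi_+ = \Borel_+[\FF_+]$ on a tube about $\Real_+$ with at most exponential growth, and $\FF_+ = \Laplace_+[\phi_+]$. Thus the theorem holds in the standardised coordinates, and it remains to transport the three assertions --- convergence of the Borel germ at the origin, analytic continuation along the ray of argument $\theta$ with controlled growth, and recovery by the Laplace integral --- back along (i)--(iii). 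Reduction (ii) commutes verbatim with $\hat{\Borel}_\theta$, $\Borel_\theta$ and $\Laplace_\theta$, since these operate in $\hbar$ with the base variable a mere holomorphic parameter; this is also where the asserted uniformity in $x$ over $U_0$ comes from, $U_0$ being compactly contained in the halfstrip. Reduction (i) intertwines $\cal{S}_+$ with $\cal{S}_\theta$ by the familiar rotation of the Borel plane, $\phi(\xi) \mapsto e^{i\theta}\phi(e^{i\theta}\xi)$, which preserves convergence at $\xi = 0$, continuability in the direction $\theta$, and exponential type. For reduction (iii), since $\hat{\tilde{f}}$ has no constant term, multiplication by $\hbar$ corresponds on the Borel side to the primitive $\phi \mapsto \int_0^\xi \phi$, with $\Laplace_\theta\big[\int_0^\xi \phi\big] = \hbar\Laplace_\theta[\phi]$, while the monomial $\hbar f_1^\alpha$ contributes the $\xi$-constant $f_1^\alpha$ to the Borel transform and $\hbar f_1^\alpha$ to the Laplace transform. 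Putting this together, $\hat{\Borel}_\theta[\hat{f}_\alpha](x,\xi) = f_1^\alpha(x) + \int_0^\xi \psi(x,\xi')\dd{\xi'}$ with $\psi$ the direction-$\theta$ Borel transform of $\hat{\tilde{f}}$ obtained from $\phi_+$ by (i)--(ii); by (P1) it converges near $\xi = 0$ uniformly for $x \in U_0$, by (P2) it continues along $e^{i\theta}\Real_+$ with at most exponential growth, and by (P3) its direction-$\theta$ Laplace transform is $\hbar f_1^\alpha + \hbar\tilde{f} = f_\alpha^\theta - f_0^\alpha$. Hence $\hat{f}_\alpha$ is Borel summable in the direction $\theta$ uniformly near $x_0$ and \eqref{211027165527} follows.

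I expect the single point needing genuine care --- rather than routine functoriality --- to be the identification of the \emph{formal} Borel transform of $\hat{f}_\alpha$ (the germ at $\xi = 0$ whose convergence is furnished by the Gevrey regularity of \autoref{200118123730}) with the \emph{analytic} Borel transform (the function on the tube with controlled growth produced by the successive-approximations estimate): one must check that the germ is the restriction of that analytic function, and that the affine reduction (iii) pairs up the $\hbar^0$ and $\hbar^1$ contributions correctly so that nothing is double-counted or lost. But this identification is already present inside properties (P1)--(P3) of the \hyperref[210714172405]{Main Technical Lemma}, so in the end this last step amounts to careful bookkeeping rather than new analysis.
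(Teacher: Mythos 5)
Your proposal is correct and follows essentially the same route as the paper: the paper presents \autoref{211027140644} as a direct consequence of the proof of \autoref{211026151811}, precisely because properties (P1)--(P4) of the \hyperref[210714172405]{Main Technical Lemma} already assert $\FF_+ = \cal{S}_+[\hat{\FF}_+]$ in the standardised coordinates, and the transport back through the rotation, the Liouville transformation, and the affine change of unknown is exactly the bookkeeping carried out in \autoref{211027165927} and \autoref{200623141629} (where, as you predict, $\phi_\alpha^\theta(x,\xi) = f_1^\alpha(x) + \int_0^\xi \sigma_\alpha^\theta(x,t)\,\dd{t}$). The one point you flag as needing care --- identifying the formal Borel germ with the analytic Borel transform on the tube --- is indeed the content of (P1)--(P3) together with Nevanlinna's Theorem, so nothing further is required.
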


A lot of information is packed into \autoref{211027140644}.
Let us unpack it into the following series of explicit statements, all of which are deduced immediately from the proof of \autoref{211026151811}.

\begin{lem}{211027165927}
Assume all the hypotheses of \autoref{211026151811} and let $f_\alpha^\theta$ be the canonical exact solution defined on $U_0 \times S_0$.
\begin{enumerate}
\item The formal Borel transform $\hat{\phi}_\alpha$ of $\hat{f}_\alpha$, given by
\eqntag{
\mbox{}\hspace{-10pt}
	\hat{\phi}_\alpha (x, \xi) =
	\hat{\Borel} [ \, \hat{f}_\alpha \, ] (x, \xi)
		= \sum_{n=0}^\infty \phi_n^\alpha (x) \xi^n
\qtext{where}
	\phi_k^\alpha (x) \coleq \tfrac{1}{k!} f_{k+1}^\alpha (x)
\fullstop{,}
}
is a uniformly convergent power series in $\xi$.
In other words, there is an open disc $\Disc_0 \subset \Complex_\xi$ centred at the origin such that $\hat{\phi}_\alpha$ defines a holomorphic function on $U_0 \times \Disc_0$.
In symbols, $\hat{\phi}_\alpha \in \cal{O} (U_0) \set{\xi}$.
\item In particular, the power series coefficients of the formal solution $\hat{f}_\alpha$ grow at most factorially in $k$: there are real constants $\CC, \MM > 0$ such that
\eqntag{\label{211104172301}
	\big| f_k^\alpha (x) \big|
	\leq \CC \MM^{k} k!
\qqqqqqquad
	(\forall k \geq 0, \forall x \in U_0)
\fullstop
}
\item There is some $\epsilon > 0$ such that the analytic Borel transform $\phi_\alpha^\theta$ of $f^\theta_\alpha$ in the direction $\theta$, given by
\eqntag{
	\phi_\alpha^\theta (x, \xi) = 
	\Borel_\theta [\, f^\theta_\alpha \,] (x, \xi)
		= \frac{1}{2\pi i} \oint\nolimits_\theta f^\theta_\alpha (x, \hbar) e^{\xi / \hbar} \frac{\dd{\hbar}}{\hbar^2}
\fullstop{,}
}
is uniformly convergent for all $(x, \xi) \in U_0 \times \Xi_\theta$ where 
\eqn{
	\Xi_\theta \coleq \set{ \xi \in \Complex_\xi ~\Big|~ \op{dist} (\xi, e^{i\theta} \Real_+) < \epsilon }
\fullstop
}
Furthermore, $\phi_\alpha^\theta$ defines the analytic continuation of the formal Borel transform $\hat{\phi}_\alpha$ along the ray $e^{i\theta} \Real_+ \subset \Complex_\xi$.
In particular, there are no singularities in the Borel plane $\Complex_\xi$ along the ray $e^{i\theta} \Real_+$.
\item The Laplace transform of $\phi_\alpha^\theta$ in the direction $\theta$, given by
\eqn{
	\Laplace_\theta \big[\: \phi_\alpha^\theta \:\big] (x, \hbar)
		= \int\nolimits_{e^{i\theta} \Real_+} e^{-\xi/\hbar} \phi_\alpha^\theta (x, \xi) \dd{\xi}
\fullstop{,}
}
is uniformly convergent for all $(x, \hbar) \in U_0 \times S_0$ and satisfies the following identity:
\begin{equation}\label{211027181734}
	f_\alpha^\theta (x, \hbar)
		= f_0^\alpha (x) + \Laplace_\theta \big[\: \phi_\alpha^\theta \:\big] (x, \hbar)
\fullstop
\end{equation}
\end{enumerate}
\end{lem}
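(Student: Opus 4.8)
The plan is to read off all four statements directly from the proof of \autoref{211026151811}, by tracking how the Borel and Laplace transforms behave under the three operations performed there: the rotation $\hbar \mapsto e^{i\theta}\hbar$, the affine change of unknown $f = f_0^\alpha + \hbar(f_1^\alpha + \tilde{f})$ of \eqref{200522163534}, and the Liouville transformation $x \mapsto z = \Phi_\alpha^\theta(x)$. Under this chain the Riccati equation \eqref{211118123850} becomes the standard-form equation \eqref{210714172412} on $H_+ \times S_+$, whose canonical exact solution $\FF_+$, its formal counterpart $\hat{\FF}_+$, and the associated Borel transforms $\hat{\phi}_+$ and $\phi_+$ are furnished --- together with all their analytic properties (P1)--(P4) --- by the Main Technical Lemma (\autoref{210714172405}). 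The substance of the argument is then simply the transport of (P1)--(P4) back through this chain.

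For parts (1) and (2), I would observe that statement (1) is precisely the ``in particular'' clause of \autoref{200118123730} applied to the formal solution $\hat{f}_\alpha$ --- whose hypothesis holds because $\hat{a}, \hat{b}, \hat{c}$ are locally uniformly Gevrey on $W$ by \eqref{211028195400} --- and statement (2) is then \autoref{211028172730}. Equivalently, both follow from (P1) by transport: the formal analogue of the substitution chain expresses $\hat{\phi}_\alpha = \hat{\Borel}[\hat{f}_\alpha]$ in terms of $\hat{\phi}_+$ through a rescaling $\xi \mapsto e^{-i\theta}\xi$ of the Borel variable, a single primitive in $\xi$ produced by the factor of $\hbar$ in \eqref{200522163534}, the addition of the constant $f_1^\alpha$, and a pull-back along the biholomorphism $x = x(z)$; none of these destroys uniform convergence of a power series in $\xi$, so the uniform convergence of $\hat{\phi}_+$ on $H_0^+$ from (P1) gives that of $\hat{\phi}_\alpha$ on the preimage $U_0 \Subset W$ of $H_0^+$ under $\Phi_\alpha^\theta$.

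For parts (3) and (4) I would run the same substitution chain in the analytic category. Property (P2) says $\phi_+ = \Borel_+[\FF_+]$ is holomorphic on $H_0^+ \times \Xi_+$ with uniform at most exponential growth in $\xi$, and Nevanlinna's Theorem (\autoref{210617120300}) identifies its Taylor expansion at $\xi = 0$ with $\hat{\phi}_+$, so $\phi_+$ is the analytic continuation of $\hat{\phi}_+$ along $\Real_+$ and has no singularity on that ray. Transporting: the rotation turns $\Borel_+$ into $\Borel_\theta$ and carries the tube $\Xi_+$ about $\Real_+$ onto the tube $\Xi_\theta$ about $e^{i\theta}\Real_+$ of the same radius $\epsilon$; the change of unknown \eqref{200522163534} turns $\phi_+$ back into $\phi_\alpha^\theta = \Borel_\theta[f_\alpha^\theta]$ by a single primitive in $\xi$ together with the addition of $f_1^\alpha$ (here $\Borel_\theta$ annihilates the $\hbar$-independent $f_0^\alpha$, while the constant of integration equals the limit $\tilde{f}|_{\hbar = 0} = \FF_0^+ = 0$ and so drops out); and the pull-back along $x = x(z)$ relocates everything over $U_0$, giving statement (3) on $U_0 \times \Xi_\theta$. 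Because these moves intertwine the formal and analytic computations, $\phi_\alpha^\theta$ restricts at $\xi = 0$ to $\hat{\phi}_\alpha$, which is the analytic-continuation claim. For statement (4), (P3) gives $\FF_+ = \Laplace_+[\phi_+]$ with uniform convergence on $H_0^+ \times S_0^+$; undoing the rotation sends $\Laplace_+$ to $\Laplace_\theta$ and $S_0^+$ to $S_0$, so that $f_\alpha^\theta - f_0^\alpha = \hbar(f_1^\alpha + \tilde{f}) = \Laplace_\theta[\phi_\alpha^\theta]$, which is \eqref{211027181734}, follows because $\Laplace_\theta \circ \Borel_\theta$ equals the identity minus evaluation at $\hbar = 0$ --- its hypotheses (exponential growth of $\phi_\alpha^\theta$, convergence of the Laplace integral) being exactly the transported (P2) and (P3). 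Finally, \autoref{211027140644} and formula \eqref{211027165527} are a restatement of \eqref{211027181734} once one writes $\cal{S}_\theta = \Laplace_\theta \circ \Borel_\theta$.

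The one genuine obstacle is the bookkeeping: one must pin down exactly how $\Borel_\theta$ and $\Laplace_\theta$ interact with multiplication by $\hbar$ and with the $\hbar$-independent and $\hbar$-linear terms $f_0^\alpha$ and $\hbar f_1^\alpha$ in \eqref{200522163534}; verify that $\hbar \mapsto e^{i\theta}\hbar$ acts on the Borel plane by $\xi \mapsto e^{i\theta}\xi$ so that $\Xi_+$ is carried onto $\Xi_\theta$ with the same $\epsilon$; and note that the Liouville change $x \mapsto z$ commutes with everything in the $\hbar$-variable and only relocates the $x$-domains. All of this is the elementary Borel--Laplace calculus recalled in \autoref{210616130753}; it only needs to be assembled so that the domains $U_0$, $S_0$, and $\Xi_\theta$ in the conclusion match those produced by the Main Technical Lemma (\autoref{210714172405}).
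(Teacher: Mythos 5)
Your proposal is correct and follows essentially the same route as the paper: the lemma is stated there as an immediate consequence of the proof of \autoref{211026151811}, obtained precisely by transporting properties (P1)--(P4) of the \hyperref[210714172405]{Main Technical Lemma} back through the rotation, the affine change of unknown \eqref{200522163534}, and the Liouville transformation (the paper's \autoref{200623141629} carries out exactly the bookkeeping you describe, confirming in particular your formula $\phi_\alpha^\theta = f_1^\alpha + \int_0^\xi(\cdots)$). Your alternative derivation of parts (1)--(2) via \autoref{200118123730} and \autoref{211028172730} is also valid.
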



\paragraph{}
The fact that identity \eqref{211027165527} holds uniformly for all $x \in U_0$ means in particular that operations such as differentiation and integration with respect to $x$ can be exchanged with the operation of Borel resummation.
Thus, we have the following corollary.

\begin{cor}{211103142954}
Assume all the hypotheses of \autoref{211026151811} and let $f_\alpha^\theta$ be the canonical exact solution defined on $U_0 \times S_0$.
Then the formal power series on $U_0$ given by the derivative $\del_x \hat{f}_\alpha$ and the integral $\int_{x'_0}^x \hat{f}_\alpha$ from any basepoint $x'_0 \in U_0$ are uniformly Borel summable on $U_0$, and the following identities hold uniformly for all $x \in U_0$:
\eqnstag{
\label{211115112356}
	\del_x f_\alpha^\theta (x, \hbar)
		&= \del_x f_0^\alpha (x) + \cal{S}_\theta \big[ \: \del_x \hat{f}_\alpha \: \big] (x, \hbar)
	\fullstop{,}
\\
\label{211115112403}
	\int\nolimits_{x'_0}^x f_\alpha^\theta (t, \hbar) \dd{t}
		&= \int\nolimits_{x'_0}^x f^\alpha_0 (t) \dd{t}
			+ \cal{S}_\theta 
				\left[ \: \int\nolimits_{x'_0}^x \hat{f}_\alpha \dd{t} \: \right] (x, \hbar)
\fullstop
}
\end{cor}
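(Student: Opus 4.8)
The plan is to exploit the strong, \emph{uniform} form of Borel summability already established rather than to argue order by order. By \autoref{211027165927}, identity \eqref{211027165527} is witnessed by an analytic Borel transform $\phi_\alpha^\theta(x,\xi)$ that is holomorphic \emph{jointly} in $(x,\xi)$ on $U_0\times\Xi_\theta$, obeys a uniform exponential bound $\big|\phi_\alpha^\theta(x,\xi)\big|\le\AA e^{\KK|\xi|}$ there, and satisfies \eqref{211027181734}. The whole argument is then the observation that the space of functions holomorphic in $(x,\xi)$ on a neighbourhood of $U_0\times e^{i\theta}\Real_+$ with $x$-locally-uniform exponential growth in $\xi$ is stable under the operations $\del_x$ and $\int_{x'_0}^x(\cdot)\,\dd{t}$, and that both of these operations commute with the formal Borel transform $\hat{\Borel}$ (which acts term by term in $\xi$) and with the directional Laplace transform $\Laplace_\theta$.

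\textbf{Differentiation.} First I would treat \eqref{211115112356}. By \autoref{211118140510} the exact solution $f_\alpha^\theta$, hence its Borel transform $\phi_\alpha^\theta$, is defined and bounded by $\AA e^{\KK|\xi|}$ on a domain containing $U_1\times\Xi_\theta$ for some $U_0\Subset U_1\Subset W$ (shrinking $\epsilon$ if necessary). A Cauchy estimate in the $x$-variable then produces a constant with $\big|\del_x\phi_\alpha^\theta(x,\xi)\big|\le\AA' e^{\KK|\xi|}$ for all $(x,\xi)\in U_0\times\Xi_\theta$, so $\del_x\phi_\alpha^\theta\in\cal{O}(U_0)\set{\xi}$ has the exponential growth needed for its Laplace transform to converge uniformly on $U_0\times S_0$. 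Since $\phi_\alpha^\theta$ is the analytic continuation along $e^{i\theta}\Real_+$ of $\hat{\phi}_\alpha=\hat{\Borel}[\hat{f}_\alpha]$, and $\hat{\Borel}$ intertwines $\del_x$ with term-by-term $x$-differentiation (the constant terms $f_0^\alpha$ and $\del_x f_0^\alpha$ being handled separately), $\del_x\phi_\alpha^\theta$ is the analytic continuation of $\hat{\Borel}[\del_x\hat{f}_\alpha]$; hence $\del_x\hat{f}_\alpha$ is uniformly Borel summable on $U_0$ in the direction $\theta$ with sum $\Laplace_\theta[\del_x\phi_\alpha^\theta]$. Finally, differentiation under the integral sign, legitimate because $\del_x\phi_\alpha^\theta$ is jointly continuous and, on compact $\hbar$-subsets of $S_0$, dominated along $e^{i\theta}\Real_+$ by the integrable function $\AA' e^{\KK|\xi|}\,e^{-\Re(\xi/\hbar)}$, gives $\del_x\Laplace_\theta[\phi_\alpha^\theta]=\Laplace_\theta[\del_x\phi_\alpha^\theta]$, and differentiating \eqref{211027181734} yields \eqref{211115112356}.

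\textbf{Integration.} The statement \eqref{211115112403} is entirely analogous. Fix $x'_0\in U_0$ and a path in $U_0$ from $x'_0$ to $x$, and set $\psi(x,\xi)\coleq\int_{x'_0}^x\phi_\alpha^\theta(t,\xi)\,\dd{t}$. Then $\psi$ is holomorphic in $(x,\xi)$ on $U_0\times\Xi_\theta$ and, the path having bounded length, inherits a uniform bound $\big|\psi(x,\xi)\big|\le\AA'' e^{\KK|\xi|}$. As before, $\psi$ is the analytic continuation along $e^{i\theta}\Real_+$ of $\int_{x'_0}^x\hat{\phi}_\alpha\,\dd{t}=\hat{\Borel}\big[\int_{x'_0}^x\hat{f}_\alpha\,\dd{t}\big]$ (the constant term $\int_{x'_0}^x f_0^\alpha\,\dd{t}$ accounted for separately), so the series $\int_{x'_0}^x\hat{f}_\alpha\,\dd{t}$ is uniformly Borel summable on $U_0$. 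Fubini's theorem, applicable because the integrand $\phi_\alpha^\theta(t,\xi)e^{-\xi/\hbar}$ is dominated by an integrable function on the product of the compact path with $e^{i\theta}\Real_+$, allows the interchange $\Laplace_\theta\big[\int_{x'_0}^x\phi_\alpha^\theta\,\dd{t}\big]=\int_{x'_0}^x\Laplace_\theta[\phi_\alpha^\theta]\,\dd{t}$, and integrating \eqref{211027181734} against $\dd{t}$ then produces \eqref{211115112403}.

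\textbf{Main obstacle.} The only genuinely non-formal point is the interchange of $\del_x$ (respectively $\int\dd{t}$) with the Laplace integral; this is not automatic and uses crucially that the \emph{uniform} version of Borel summability was established in \autoref{211026151811}, \autoref{211027165927} and \autoref{210714172405}, namely that $\phi_\alpha^\theta$ is holomorphic jointly in $(x,\xi)$ with $x$-locally-uniform exponential bounds. Everything else, i.e.\ the Cauchy estimates, the dominated-convergence and Fubini justifications, and the identification of analytic continuations of term-by-term Borel transforms, is routine complex analysis.
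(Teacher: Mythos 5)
Your proposal is correct and follows essentially the same route as the paper: the paper derives the corollary directly from the uniformity in \autoref{211027140644} together with the compatibility of Borel resummation with $\del_x$ and $\int_{x'_0}^{x}(\cdot)\dd{t}$ recorded in \autoref{210616130753}, and your argument is exactly the standard verification of those compatibilities (Cauchy estimates preserving the exponential rate in $\xi$, commutation of $\hat{\Borel}$ with termwise differentiation/integration, and dominated convergence/Fubini to move $\del_x$ and $\int\dd{t}$ past $\Laplace_\theta$). No gaps.
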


\paragraph{}
Thanks to the tighter control on the asymptotics of canonical exact solutions on WKB halfstrips, all of the above statements extend uniformly over strictly smaller WKB halfstrips.
Explicitly, we have the following corollary.

\begin{prop}[\textbf{uniform Borel summability on WKB halfstrips}]{211115112318}
Assume all the hypotheses of \autoref{211026151811}.
Let $r > 0$ be such that $W = W_\theta^\alpha (x_0, r)$.
For any $r_0 \in (0, r)$, let $W_0 \coleq W_\theta^\alpha (x_0, r_0)$.
Then the formal solution $\hat{f}_\alpha$ is Borel summable in the direction $\theta$ uniformly for all $x \in W_0$.
Also, the formal power series given by the derivative $\del_x \hat{f}_\alpha$ and the integral $\int_{x'_0}^x \hat{f}_\alpha$ from any basepoint $x'_0 \in U_0$ are uniformly Borel summable on $W_0$, and identities \eqref{211115112356} and \eqref{211115112403} hold uniformly for all $x \in W_0$.
\end{prop}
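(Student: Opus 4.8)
The plan is to observe that the full statement is already contained in the proof of \autoref{211026151811} together with \autoref{211118140510}, so that essentially nothing new needs to be proved: the task is simply to transport the \emph{uniform} (not merely locally uniform) estimates obtained over a horizontal halfstrip through the Liouville transformation onto a strictly smaller WKB halfstrip.

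First I would recall that \autoref{211118140510} already provides, for every $r_0 \in (0,r)$ with $W_0 = W_\theta^\alpha (x_0, r_0)$, a sectorial domain $S_0 \subset S$ with opening $A_\theta$ on which $f_\alpha^\theta$ is a holomorphic solution enjoying the uniform Gevrey asymptotics $f_\alpha^\theta \simeq \hat{f}_\alpha$ as $\hbar \to 0$ along $\bar{A}_\theta$. Next I would retrace the reduction in the proof of \autoref{211026151811}: the substitution $f = f_0^\alpha + \hbar (f_1^\alpha + \tilde{f})$, the Liouville transformation $\Phi_\alpha^\theta$, and the rotation $\hbar \mapsto e^{-i\theta}\hbar$ turn the Riccati equation on $W \times S$ into the standard form \eqref{210714172412} on $H_+ \times S_+$, with $W = (\Phi_\alpha^\theta)^{-1}(H_+)$ and, by construction, $W_0 = (\Phi_\alpha^\theta)^{-1}(H^+_0)$ where $H^+_0 = \set{z ~\big|~ \op{dist}(z, \Real_+) < r_0}$. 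Crucially, properties \textup{(P1)}--\textup{(P4)} of the \hyperref[210714172405]{Main Technical Lemma} were established uniformly over the strictly smaller halfstrip $H^+_0$, so they pull back uniformly over all of $W_0$. Since the substitution and the coordinate change are independent of $\hbar$ and polynomial in the unknown, they commute with the formal and analytic Borel transforms and with the Laplace transform; hence the formal Borel transform $\hat{\phi}_\alpha$ of $\hat{f}_\alpha$ converges uniformly on $W_0$, extends analytically along $e^{i\theta}\Real_+$ to a function $\phi_\alpha^\theta$ holomorphic and of uniformly at most exponential growth on $W_0 \times \Xi_\theta$, and its Laplace transform recovers $f_\alpha^\theta - f_0^\alpha$ uniformly on $W_0 \times S_0$. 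This is exactly the assertion that $\hat{f}_\alpha$ is Borel summable in the direction $\theta$ uniformly for all $x \in W_0$, i.e.\ the $W_0$-analogue of \autoref{211027165927} and \autoref{211027140644}.

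Finally, for the statements about $\del_x \hat{f}_\alpha$ and $\int_{x'_0}^x \hat{f}_\alpha$, I would run verbatim the argument of \autoref{211103142954}: uniform-in-$x$ Borel summability on $W_0$ lets one differentiate the identity $f_\alpha^\theta = f_0^\alpha + \Laplace_\theta [\phi_\alpha^\theta]$ under the Laplace integral and, for the primitive, interchange the $x$- and $\xi$-integrations by Fubini, identifying $\Borel_\theta$ of the derivative (resp.\ integral) with $\del_x \hat{\phi}_\alpha$ (resp.\ $\int_{x'_0}^x \hat{\phi}_\alpha \dd{t}$) and yielding \eqref{211115112356}--\eqref{211115112403} on $W_0$. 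The only point requiring a word of care --- and the one I would flag as the main, essentially bookkeeping, obstacle --- is that differentiating and Cauchy-estimating in $x$ needs a little transverse room inside $W$; this is supplied by choosing an intermediate radius $r_0 < r'_0 < r$ and working first on $W_\theta^\alpha(x_0, r'_0)$, exactly as the slack $r - r_0$ is used in the Main Technical Lemma. Because $W_\theta^\alpha(x_0, r_0) = (\Phi_\alpha^\theta)^{-1}(H^+_0)$ by definition, the matching between the uniform horizontal-halfstrip estimates and the WKB halfstrip $W_0$ (including the preservation of the exponential bounds in $\xi$ under the $\hbar$-independent coordinate change) is automatic, so the proof is short, in the same spirit as that of \autoref{211118140510}.
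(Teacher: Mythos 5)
Your proposal is correct and follows exactly the route the paper intends: the paper states this proposition without a separate proof, treating it as an immediate consequence of the fact that properties (P1)--(P4) of the Main Technical Lemma are established uniformly on the strictly smaller halfstrip $H_0^+$, which pulls back to $W_0$ through the $\hbar$-independent Liouville transformation, combined with the arguments of \autoref{211118140510} and \autoref{211103142954}. Your additional remark about reserving transverse room $r_0 < r'_0 < r$ for the $x$-derivative estimates is a sensible piece of bookkeeping that the paper leaves implicit.
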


\paragraph{Explicit recursion for the Borel transform.}
\label{200812193708}
The analytic Borel transform $\phi_\alpha^\theta$ can be given a reasonably explicit presentation as follows.
Define an integral operator $\II$ acting on holomorphic functions $\phi = \phi (x, \xi)$ by the following formula, wherever it makes sense:
\eqntag{\label{200623145936}
	\II \big[ \, \phi \, \big] (x, \xi)
		\coleq \int_0^\xi \phi ( x_t, \xi - t ) \dd{t}
\qqtext{where}
	x_t \coleq \Phi^{-1} \big( \Phi (x) + t \big)
\fullstop{,}
}
and the integration contour is the straight line segment from $0$ to $\xi \in \Complex$.
In particular, for any $x \in U_0$ and any sufficiently small $\xi \in e^{i\theta} \Real_\alpha$, the path $\set{x_t}_{t=0}^\xi$ is a segment of the WKB $(\theta, \alpha)$-ray emanating from $x$.

Recall functions $\tilde{a}, \tilde{b}, \tilde{c}$ defined by the identities \eqref{200529110135}.
Their leading-order parts in $\hbar$ are respectively
\eqntag{\label{211104194016}
	\tilde{a}_0 = \frac{\varepsilon_\alpha}{\sqrt{\DD_0}} a_0
\fullstop{,}
\qqquad
	\tilde{b}_0 = \frac{\varepsilon_\alpha}{\sqrt{\DD_0}} 
	\Big( b_1 + 2a_0 f_1^\alpha + 2 a_1 f_0^\alpha \Big)
\fullstop{,}
\qqquad
	\tilde{c}_0 = f_2^\alpha
\fullstop{,}
}
where last identity was obtained by comparing with \eqref{191207212042}.
Finally, denote their analytic Borel transforms in direction $\theta$ as follows:
\eqn{
\begin{aligned}
	\beta_0 &\coleq \Borel_\theta [ \, \tilde{c} \, ]
\fullstop{,}
\\
	\beta_1 &\coleq \Borel_\theta [ \, \tilde{b} \, ]
\fullstop{,}
\\
	\beta_2 &\coleq \Borel_\theta [ \, \tilde{a} \, ]
\fullstop{;}
\end{aligned}
\qqqqtext{so that}
\begin{aligned}
	{\tilde{a}} &= \tilde{a}_0 + \Laplace_\theta [\, \beta_2 \,]
\fullstop{,}
\\
	{\tilde{b}} &= \tilde{b}_0 + \Laplace_\theta [\, \beta_1 \,]
\fullstop{,}
\\
	{\tilde{c}} &= \tilde{c}_0 + \Laplace_\theta [\, \beta_0 \,]
\fullstop
\end{aligned}
}

\begin{prop}[\textbf{Recursive Formula for the Borel Transform}]{200623141629}
Assume all the hypotheses of \autoref{211026151811} and let $r > 0$ be such that $W = W_\theta^\alpha (x_0; r)$.
For any $r_0 \in (0, r)$ and any $\epsilon \in (0, r - r_0)$, let $W_0 \coleq W_\theta^\alpha (x_0; r_0)$ and $\Xi_\theta \coleq \set{ \xi ~\big|~ \op{dist} (\xi, e^{i\theta} \Real_+) < \epsilon }$.
Then the analytic Borel transform $\phi_\alpha^\theta$ can be expressed more explicitly as follows: uniformly for all $(x, \xi) \in W_0 \times \Xi_\theta$,
\eqntag{\label{200625202831}
	\phi_\alpha^\theta (x, \xi) = f^\alpha_1 (x) + \int_0^\xi \sigma_\alpha^\theta (x, t) \dd{t}
\qqtext{with}
	\sigma_\alpha^\theta (x, \xi) \coleq \sum_{k=0}^\infty \sigma_k (x, \xi)
\fullstop{,}
}
where $\sigma_0 \coleq \tilde{c}_0 = f_2^\alpha$, ~ $\sigma_1 \coleq \II \big[ \beta_0 + \tilde{b}_0 \sigma_0 \big]$, and for $k \geq 2$,
\eqntag{\label{200704101223}
	\sigma_k
		\coleq \II \left[ 
			\tilde{b}_0 \sigma_{k-1}
			+ \beta_1 \ast \sigma_{k-2}
			+ \sum_{\substack{i,j \geq 0 \\ i + j = k-2}}
				\tilde{a}_0 \sigma_i \ast \sigma_j
			+ \sum_{\substack{i,j \geq 0 \\ i + j = k-3}}
				\beta_2 \ast \sigma_i \ast \sigma_j
			\right]
\fullstop
\rlap{\qqquad\qedhere}
}
\end{prop}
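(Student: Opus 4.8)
The plan is to derive the formula by unwinding the proof of \autoref{211026151811}, carrying the integral equation \eqref{190312202516} and its successive approximations \eqref{180824203055} back through the coordinate transformation $\Phi_\alpha^\theta$ and the rotation $\hbar \mapsto e^{-i\theta}\hbar$ used there. No genuinely new analysis is required; the content is a careful transcription.

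First I would compute how the analytic Borel transform interacts with the substitution \eqref{200522163534}. Since $f_\alpha^\theta = f_0^\alpha + \hbar\big(f_1^\alpha + \tilde f\big)$ whereas \eqref{211027181734} reads $f_\alpha^\theta = f_0^\alpha + \Laplace_\theta\big[\phi_\alpha^\theta\big]$, we have $\Laplace_\theta\big[\phi_\alpha^\theta\big] = \hbar\big(f_1^\alpha + \tilde f\big)$. Applying $\Borel_\theta$, using the elementary rule $\Borel_\theta[\hbar g](x,\xi) = g_0(x) + \int_0^\xi \Borel_\theta[g](x,t)\dd{t}$ valid for any $g$ with leading order $g_0$, and noting that $f_1^\alpha$ is $\hbar$-independent while $\tilde f$ has vanishing leading order (indeed $\hbar\tilde f = \hat{f}_\alpha - f_0^\alpha - \hbar f_1^\alpha$ starts at order $\hbar^2$), one obtains
\[
	\phi_\alpha^\theta(x,\xi) = f_1^\alpha(x) + \int_0^\xi \sigma_\alpha^\theta(x,t)\dd{t},
	\qquad
	\sigma_\alpha^\theta \coleq \Borel_\theta[\tilde f].
\]
This is already the outer shape of \eqref{200625202831}, so it remains to identify $\sigma_\alpha^\theta$ with $\sum_{k\geq 0}\sigma_k$.

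Next I would match $\sigma_\alpha^\theta$ with the successive-approximation series $\varphi_+ = \sum_n \varphi_n$ of the \hyperref[210714172405]{Main Technical Lemma}. Recall that in the proof of \autoref{211026151811} the transformed unknown $\FF(z,\hbar) = \tilde f\big(x(z), e^{i\theta}\hbar\big)$ solves the standard-form equation \eqref{210714172412}, whose coefficients $\AA_0,\AA_1,\AA_2$ are the pullbacks of $\tilde c,\tilde b,\tilde a$ and whose Borel transforms $\alpha_0,\alpha_1,\alpha_2$ in \eqref{200117154820} are the pullbacks of $\beta_0,\beta_1,\beta_2$. The analytic Borel transform commutes with the biholomorphic change of the base variable (the convolution products being taken along the $\xi$-direction, which that change fixes) and with the rotation (which carries the Borel ray $e^{i\theta}\Real_\alpha$ to $\Real_+$ and, because $\Phi_\alpha^\theta = \varepsilon_\alpha e^{-i\theta}\Phi$, carries the operator $\II$ of \eqref{200623145936} into $\II_+$ of \eqref{200223171526}). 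Hence $\sigma_\alpha^\theta$ is the pullback of $\varphi_+$, term by term $\sigma_k$ is the pullback of $\varphi_k$, and in particular $\sum_k \sigma_k$ converges uniformly on $W_0\times\Xi_\theta$ because the Main Technical Claim supplies uniform convergence of $\varphi_+$ on the corresponding domain $H^+_0\times\Xi_+$.

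Finally I would substitute the pulled-back coefficients — whose leading orders $\tilde a_0,\tilde b_0,\tilde c_0$ are as recorded in \eqref{211104194016}, with $\tilde c_0 = f_2^\alpha$ by comparison with the order-$\hbar^2$ relation \eqref{191207212042} — into the recursion \eqref{180824203055}. This turns $\varphi_0 = a_0$ into $\sigma_0 = \tilde c_0 = f_2^\alpha$, turns $\varphi_1 = \II_+[\alpha_0 + a_1\varphi_0]$ into $\sigma_1 = \II[\beta_0 + \tilde b_0\sigma_0]$, and turns the $n\geq 2$ step into \eqref{200704101223}, which completes the proof. The step needing the most care is the second one: verifying that the substitution $z = \Phi_\alpha^\theta(x)$ together with the induced rescaling of the Borel variable really does transport $\II_+$ into $\II$, and that the convolution products and the Borel-Laplace correspondence of the \hyperref[210714172405]{Main Technical Lemma} survive this substitution with the correct normalisation, so that the recursion carries over verbatim. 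Everything else is routine bookkeeping.
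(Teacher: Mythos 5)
Your proposal is correct and follows essentially the same route as the paper's proof: transport the integral operator $\II_+$, the coefficients $\alpha_i$, and the successive-approximation sequence $\set{\varphi_k}$ from the \hyperref[210714172405]{Main Technical Lemma} back to the $x$-variable via the (inverse) Liouville transformation and the rotation $\hbar \mapsto e^{-i\theta}\hbar$, so that $\sigma_k$ is the pullback of $\varphi_k$ and the recursion carries over. Your step (1), deriving the outer shape $f_1^\alpha + \int_0^\xi(\cdots)$ from $\Laplace_\theta[\phi_\alpha^\theta] = \hbar(f_1^\alpha + \tilde f)$, makes explicit a point the paper leaves implicit, and your flagged normalisation checks are exactly the bookkeeping the paper's terse proof elides.
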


\begin{proof}
The proof is straightforward and amounts to transforming some of the main constructions in the proof of the \hyperref[210714172405]{Main Technical Lemma \ref*{210714172405}} using the inverse Liouville transformation $\Phi$ back to the $x$-variable.
Let $\Phi_\alpha^\theta \coleq \varepsilon_\alpha e^{-i \theta} \Phi$, and let $H_+$ be the image of $W$ under $\Phi_\alpha^\theta$.
Since $\Phi^{-1}$ is a local biholomorphism $H_+ \to W$, for a fixed $x \in W$ and every $t \in [0, \xi]$, there is a unique point $x_t \coleq \Phi^{-1} \big( \Phi (x) + t \big) \in W$ such that $\Phi (x) + t = \Phi (x_t)$.
Note in particular that, since $\Phi$ may be multivalued on $W$, the point $x_t$ does not depend on the choice of branch of $\Phi (x)$.
Thus, the integral operator $\II$ from \eqref{200623145936} is defined by using $\Phi$ to transform the integral operator $\II_+$ from \eqref{200223171526} defined in the proof of \autoref{210714172405}.
%
%
Likewise, the sequence $\set{\varphi_k (z, \xi)}_{k=0}^\infty$ defined in the proof of \autoref{210714172405} by the recursive formula \eqref{180824203055} transforms under $\Phi$ to give the sequence $\sigma_k (\Phi^{-1} (z), \xi) \coleq \varphi_k \big(z, e^{-i\theta}\xi\big)$.
\end{proof}

\subsection{Monic Riccati Equations}
\label{211103172432}

In many situations, such as those arising in the context of the exact WKB analysis of second-order ODEs, the coefficients of the Riccati equation satisfy hypothesis (2) in \autoref{211026151811} only after an additional transformation.

\begin{example}{211106162450}
For example, consider the Riccati equation $\hbar \del_x f = f^2 - x$, which is encountered in the WKB analysis of the deformed Airy differential equation.
The coefficients are $a = 1, b = 0, c = -x$ and the leading-order discriminant $\DD_0$ is $4x$.
In this case, a WKB halfstrip $W$ is necessarily an unbounded domain, and the asymptotic condition \eqref{211028195400} reduces to requiring that $c = -x$ is bounded on $W$ by $\sqrt{\DD_0} = 2\sqrt{x}$, which is not the case.
Therefore, \autoref{211026151811} cannot be applied to this Riccati equation directly.

However, this can be remedied by making a change of the unknown variable $f \mapsto g$ given by $f = \sqrt{\DD_0} g$ for $x \in W$.
It transforms the Riccati equation $\hbar \del_x f = f^2 - x$ into
\eqntag{\label{211112191634}
	\hbar \del_x g = 2\sqrt{x} g^2 - \tfrac{1}{2}\hbar x^{-1} g + \tfrac{1}{2} \sqrt{x}
\fullstop
}
Notice that the leading-order discriminant of this Riccati equation remains $\DD_0 = 4x$ and that its are now bounded at infinity by $\sqrt{\DD_0} = 2 \sqrt{x}$.
Therefore \autoref{211026151811} can be applied to \eqref{211112191634}.
\end{example}

\paragraph{}
More generally, this transformation is necessary when dealing with \textit{monic} Riccati equations; i.e., whenever the coefficient $a$ of the Riccati equation is identically $1$.
This is always the case in the exact WKB analysis of second-order ODEs \cite{MY210623112236}.
Spelled out, we have the following version of our results.

\begin{thm}[\textbf{Exact Existence and Uniqueness for Monic Equations}]{211112183728}
\mbox{}\newline
Consider the following monic Riccati equation
\eqntag{\label{211112182040}
	\hbar \del_x f = f^2 + pf + q
}
where $p, q$ are holomorphic functions of $(x, \hbar) \in X \times S$ which admit locally uniform asymptotic expansions $\hat{p}, \hat{q} \in \cal{O} (X) \bbrac{\hbar}$ as $\hbar \to 0$ along $A$.
Assume that the leading-order discriminant $\DD_0 = p_0^2 - 4q_0$ is not identically zero.
Fix a regular point $x_0 \in X$, a square-root branch $\sqrt{\DD_0}$ near $x_0$, and a sign $\alpha \in \set{+, -}$.
In addition, we assume the following hypotheses:
\begin{enumerate}
\item there is a WKB $(\theta, \alpha)$-halfstrip domain $W = W_\theta^\alpha \subset X$ containing $x_0$;
\item the asymptotic expansions of the coefficients $p,q$ are valid with Gevrey bounds as $\hbar \to 0$ along the closed arc $\bar{A}_\theta = [\theta - \tfrac{\pi}{2}, \theta + \tfrac{\pi}{2}]$, with respect to the asymptotic scales $\sqrt{\DD_0}$ and $\DD_0$ respectively, uniformly for all $x \in W$:
\eqnstag{\label{211112183652}
	p &\simeq \hat{p}
\qquad
	\text{as $\hbar \to 0$ along $\bar{A}_\theta$, wrt $\sqrt{\DD_0}$, unif. $\forall x \in W$\fullstop{,}}
\\\label{211112183655}
	q &\simeq \hat{q}
\qquad
	\text{as $\hbar \to 0$ along $\bar{A}_\theta$, wrt $\DD_0$, unif. $\forall x \in W$\fullstop{;}}
}
\item the logarithmic derivative $\del_x \log \DD_0$ is bounded by $\DD_0$ on $W$.
\end{enumerate}
Then all the conclusions of \autoref{211026151811}, as well as \autoref{211027140644}, \autoref{211027165927}, and \autoref{211103142954} hold verbatim.
Also, all the conclusions of \autoref{211118140510} hold verbatim with the only exception that the asymptotic statement \eqref{211113110026} must be replaced with the following asymptotic statement with respect to the asymptotic scale $\sqrt{\DD_0}$:
\eqntag{\label{211118140804}
	f^\theta_\alpha \simeq \hat{f}_\alpha
\qqqquad
	\text{as $\hbar \to 0$ along $\bar{A}_\theta$, wrt $\sqrt{\DD_0}$, unif. $\forall x \in W_0$\fullstop}
}
\end{thm}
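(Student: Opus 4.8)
The plan is to deduce \autoref{211112183728} from \autoref{211026151811} by the substitution $f = \sqrt{\DD_0}\, g$ already used in \autoref{211106162450}. On the WKB halfstrip $W$ the chosen branch $\sqrt{\DD_0}$ is a well-defined nonvanishing holomorphic function of $x$ (this is the same fact underlying the proof of \autoref{211026151811}, where $1/\sqrt{\DD_0}$ is used on $W$). Substituting $f = \sqrt{\DD_0}\, g$ into \eqref{211112182040}, using $\hbar \del_x f = \hbar\big(\del_x \sqrt{\DD_0}\big) g + \hbar \sqrt{\DD_0}\, \del_x g$, and dividing through by $\sqrt{\DD_0}$ turns \eqref{211112182040} into the non-monic Riccati equation
\eqn{
	\hbar \del_x g
	= \sqrt{\DD_0}\, g^2 + \Big( p - \tfrac{\hbar}{2}\del_x \log \DD_0 \Big) g + \frac{q}{\sqrt{\DD_0}}
\fullstop
}
with coefficients $a' \coleq \sqrt{\DD_0}$, $b' \coleq p - \tfrac{\hbar}{2}\del_x \log \DD_0$, $c' \coleq q/\sqrt{\DD_0}$. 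The first thing to record is that its leading-order discriminant is $(b'_0)^2 - 4 a'_0 c'_0 = p_0^2 - 4 q_0 = \DD_0$, so the transformed equation has the same turning points, the same square-root branch near $x_0$, the same Liouville transformation and hence the same WKB geometry. In particular hypothesis (1) of \autoref{211026151811} holds for the transformed equation, and the extra clause there demanding $a'_0$ nonvanishing on $W$ when $\alpha = -$ is now automatic since $a'_0 = \sqrt{\DD_0}$ never vanishes on the turning-point-free $W$; this is precisely why hypothesis (1) of \autoref{211112183728} carries no such clause. By \autoref{200118111737} part (2) the transformed equation has a pair of formal solutions $\hat g_\pm$, and comparing leading terms ($\sqrt{\DD_0}\, g_0^\pm = f_0^\pm$) together with uniqueness of formal solutions gives $\hat f_\pm = \sqrt{\DD_0}\, \hat g_\pm$.

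The next step is to check hypothesis (2) of \autoref{211026151811} for $a', b', c'$ with respect to the asymptotic scale $\sqrt{\DD_0}$. Nothing is needed for $a' = \sqrt{\DD_0}$, which equals its own (constant) expansion. For $c' = q/\sqrt{\DD_0}$, dividing the Gevrey bounds in \eqref{211112183655} --- which hold with respect to the scale $\DD_0$ --- by $|\sqrt{\DD_0}|$ yields exactly $c' \simeq \hat q/\sqrt{\DD_0}$ with respect to the scale $\sqrt{\DD_0}$, uniformly on $W$. For $b'$, the contribution of $p$ is handled directly by \eqref{211112183652}, and the only genuinely new term is the $\hbar$-linear term $-\tfrac12 \del_x \log \DD_0$; controlling it against the scale $\sqrt{\DD_0}$ uniformly on $W$ is exactly what hypothesis (3) provides. \autoref{211026151811} then gives a canonical exact solution $g^\theta_\alpha$ of the transformed equation on $U_0 \times S_0$ with $g^\theta_\alpha \simeq \hat g_\alpha$ uniformly on $U_0$.

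It remains to transport everything back through $f = \sqrt{\DD_0}\, g$. Setting $f^\theta_\alpha \coleq \sqrt{\DD_0}\, g^\theta_\alpha$ produces a holomorphic solution of \eqref{211112182040} on $U_0 \times S_0$; multiplying the Gevrey estimates for $g^\theta_\alpha - \sum_{k < n} g^\alpha_k \hbar^k$ by $|\sqrt{\DD_0}|$ gives $f^\theta_\alpha \simeq \hat f_\alpha$ with respect to the scale $\sqrt{\DD_0}$. On a compactly contained $U_0 \Subset W$ the factor $|\sqrt{\DD_0}|$ is bounded above and below, so on $U_0$ this is ordinary uniform Gevrey asymptotics and all conclusions of \autoref{211026151811} hold verbatim; it is only on the possibly unbounded shrunk halfstrips $W_0$ appearing in \autoref{211118140510} that the scale cannot be removed, which is exactly the single modification \eqref{211118140804} recorded in the statement. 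Uniqueness transfers immediately: two exact solutions of \eqref{211112182040} with the stated $\sqrt{\DD_0}$-asymptotics divide by $\sqrt{\DD_0}$ to two exact solutions of the transformed equation with ordinary Gevrey asymptotics, which coincide by the uniqueness part of \autoref{211026151811}. Finally, since $\sqrt{\DD_0}$ is independent of $\hbar$, multiplication by it commutes with the formal and analytic Borel and Laplace transforms and with the integral operators $\II_+, \II$; using the identity $\sqrt{\DD_0}\big(\hat g_\alpha - g^\alpha_0\big) = \hat f_\alpha - f^\alpha_0$ to match subtracted leading terms, the Borel-summability identity of \autoref{211027140644}, the explicit statements of \autoref{211027165927}, the exchange with $\del_x$ and $\int$ of \autoref{211103142954}, and the halfstrip statements of \autoref{211118140510} all pull back verbatim to $f^\theta_\alpha$. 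The only delicate point in the whole argument is the scale bookkeeping of the middle step --- in particular, recognising that hypothesis (3) is exactly the condition needed to make $b'$ admissible --- everything else being the change of variables.
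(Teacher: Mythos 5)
Your proposal is correct and follows essentially the same route as the paper: the substitution $f = \sqrt{\DD_0}\,g$ turns the monic equation into the regularised equation with coefficients $a' = \sqrt{\DD_0}$, $b' = p - \hbar\,\del_x\log\sqrt{\DD_0}$, $c' = q/\sqrt{\DD_0}$, whose hypotheses are checked against \autoref{211026151811} exactly as you do (with hypothesis (3) supplying the bound on the new $\hbar$-linear term and the nonvanishing of $a'_0 = \sqrt{\DD_0}$ making the $\alpha=-$ clause automatic). Your verification of the individual hypotheses and of the transfer of the Borel-summability conclusions is in fact more detailed than the paper's, which leaves these checks as ``obvious''.
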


\begin{proof}
The change of the unknown variable $f \mapsto g$ given by $f = \sqrt{\DD_0} g$ transforms \eqref{211112182040} into
\eqntag{\label{211112192221}
	\hbar \del_x g = \sqrt{\DD_0} g^2 + \big( p - \hbar \del_x \log \sqrt{\DD_0} \big) g + \tfrac{1}{\sqrt{\DD_0}} q
\fullstop
}
This transformation is well-defined for all $x \in W$ because $W$ necessarily supports the univalued square-root branch $\sqrt{\DD_0}$.
Notice that the leading-order discriminant of this Riccati equation remains $\DD_0$.
Note that $\del_x \log \sqrt{\DD_0}$ is bounded by $\sqrt{\DD_0}$ if and only if $\del_x \log \DD_0$ is bounded by $\DD_0$, as provided by hypothesis (3).
Now it is obvious that the hypotheses of \autoref{211112183728} imply that the Riccati equation \eqref{211112192221} satisfies all the hypotheses of \autoref{211026151811}.
It yields the canonical local exact solution $g_\alpha^\theta$ near $x_0$, and therefore the canonical local exact solution $f_\alpha^\theta = \sqrt{\DD_0} g_\alpha^\theta$ near $x_0$.
\end{proof}

Of course, a general Riccati equation \eqref{211118123850} can always be put into the monic form \eqref{211112183728} via the change of the unknown variable $f \mapsto g = af$ which yields
\eqntag{
	\hbar \del_x g = g^2 + (b + \hbar \del_x \log a) g + ac
\fullstop
}
If $a$ is nowhere-vanishing, then this transformation makes sense globally on $X$.

Likewise, \autoref{211102175415} is also true for the monic Riccati equation \eqref{211112183728}, though with slightly simplified hypotheses as follows.

\begin{cor}[\textbf{extension to larger domains}]{211118133610}
Consider the Riccati equation \eqref{211112182040} with $\DD_0 \not\equiv 0$.
Fix a sign $\alpha \in \set{+, -}$ and let $U \subset X$ be a domain free of turning points that supports a univalued square-root branch $\sqrt{\DD_0}$.
In addition, assume that hypotheses (1-3) in \autoref{211112183728} are satisfied for every point $x_0 \in U$.
Then the conclusions of \autoref{211102175415} hold verbatim.
\end{cor}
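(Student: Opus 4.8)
The plan is to deduce this from the non-monic statement \autoref{211102175415} by means of exactly the substitution already used in the proof of \autoref{211112183728}. Since $U$ is free of turning points and supports a univalued branch $\sqrt{\DD_0}$, the change of the unknown variable $f = \sqrt{\DD_0}\, g$ is a globally defined, fibrewise biholomorphic transformation over $U$, and it carries the monic equation \eqref{211112182040} into the Riccati equation \eqref{211112192221}, whose coefficients are holomorphic on $U \times S$. Its leading-order discriminant is again $\DD_0$, and its leading coefficient $\sqrt{\DD_0}$ is nonvanishing on $U$; hence by \autoref{200614161900} (and the labelling discussion following it) both leading-order solutions of \eqref{211112192221} are holomorphic on $U$, so the standing requirement in \autoref{211102175415} that the leading-order solution labelled $\alpha$ be holomorphic on $U$ holds automatically here.

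Next I would check that the hypotheses transfer pointwise. This is precisely the verification already carried out in the proof of \autoref{211112183728}: at each $x_0 \in U$, hypotheses (1)--(3) of \autoref{211112183728} imply all the hypotheses of \autoref{211026151811} for \eqref{211112192221} at $x_0$. Hypothesis (1) transfers immediately, since the WKB halfstrip is unchanged and $\sqrt{\DD_0}$ is nonvanishing on it, so the extra clause for the sign $\alpha = -$ is automatically met. The Gevrey hypothesis for \eqref{211112192221} is the statement that $p - \hbar \del_x \log \sqrt{\DD_0}$ and $q/\sqrt{\DD_0}$ are Gevrey asymptotic to their expansions with respect to the scale $\sqrt{\DD_0}$, which follows from \eqref{211112183652}, \eqref{211112183655}, together with the elementary observation (already used in the proof of \autoref{211112183728}) that $\del_x \log \DD_0$ is bounded by $\DD_0$ on $W$ if and only if $\del_x \log \sqrt{\DD_0}$ is bounded by $\sqrt{\DD_0}$ on $W$, which is hypothesis (3). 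Since this holds for every $x_0 \in U$, \autoref{211102175415} applies to \eqref{211112192221} on $U$ and produces a canonical exact solution $g_\alpha^\theta$, holomorphic on some domain in $U \times S$, with the stated local uniqueness property and the locally uniform Gevrey asymptotics $g_\alpha^\theta \simeq \hat{g}_\alpha$ as $\hbar \to 0$ along $\bar{A}_\theta$.

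Finally I would transform back. Setting $f_\alpha^\theta \coleq \sqrt{\DD_0}\, g_\alpha^\theta$ on the same domain yields a holomorphic solution of \eqref{211112182040}. Because $\sqrt{\DD_0}$ is a fixed, $\hbar$-independent holomorphic function which is bounded above and below (away from zero) on every $U_0 \Subset U$, multiplication by it sends holomorphic solutions to holomorphic solutions, respects the labelling sign $\alpha$ (indeed $\sqrt{\DD_0}\, g_0^\alpha = f_0^\alpha$), and preserves locally uniform Gevrey asymptotics, the Gevrey constants merely being rescaled by $\sup_{U_0}|\sqrt{\DD_0}|$; moreover $\sqrt{\DD_0}\, \hat{g}_\alpha = \hat{f}_\alpha$ by the uniqueness of formal solutions in \autoref{200118111737}, so $f_\alpha^\theta \simeq \hat{f}_\alpha$ locally uniformly on $U$. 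Local uniqueness transfers by the same device: any holomorphic solution of \eqref{211112182040} on some $U_0 \times S_0$ with the prescribed asymptotics, divided by $\sqrt{\DD_0}$, is a holomorphic solution of \eqref{211112192221} with the prescribed asymptotics, hence coincides with $g_\alpha^\theta$ there, so the original solution coincides with $f_\alpha^\theta$. This establishes every clause of the conclusion of \autoref{211102175415} for \eqref{211112182040}. I do not expect a genuine obstacle: the whole argument is an application of \autoref{211102175415} combined with the computation already performed in the proof of \autoref{211112183728}, and the only step demanding any care is the bookkeeping of asymptotic scales under the substitution --- exactly where hypothesis (3) enters --- which need not be repeated since it is identical to that earlier computation.
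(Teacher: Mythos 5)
Your proposal is correct and is essentially the paper's intended argument: the paper states this corollary without a separate proof precisely because it follows by performing the substitution $f = \sqrt{\DD_0}\,g$ globally on $U$ (legitimate since $U$ supports the univalued branch), verifying the hypothesis transfer exactly as in the proof of \autoref{211112183728}, and then invoking \autoref{211102175415} for the regularised equation \eqref{211112192221}. Your added bookkeeping --- that both leading-order solutions of \eqref{211112192221} are holomorphic on $U$ so the standing hypothesis of \autoref{211102175415} is automatic, and that multiplication/division by the $\hbar$-independent, locally bounded-above-and-below function $\sqrt{\DD_0}$ preserves both the locally uniform Gevrey asymptotics and the uniqueness clause --- is exactly what is needed.
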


\paragraph{}
However, \autoref{211115112318} and \autoref{200623141629} are no longer true for the canonical exact solutions of the monic Riccati equation \eqref{211112183728}.
In this case, one needs either to factorise $\sqrt{\DD_0}$ out of $f_\alpha^\theta$ and apply these propositions to the regularised Riccati equation \eqref{211112192221}, or identify and remove the `principal part' of $f_\alpha^\theta$ in the limit along the WKB rays.
The latter procedure can be explicitly formalised when the WKB rays limit to a pole of $\DD_0$; this will be explained in detail elsewhere.

\begin{rem}{211121133355}
The same trick as above can help us tackle more general situations as follows.
If $\chi = \chi (x)$ is any holomorphic function, then the change of the unknown variable $f \mapsto g$ given by $f = \chi g$ transforms the Riccati equation \eqref{211118123850} into
\eqntag{\label{200711153318}
	\hbar \del_x g = a' g^2 + b' g + c'
\fullstop{,}
}
where
\eqntag{\label{200711153341}
	a' \coleq \chi a
\qquad
	b' \coleq b - \hbar \del_x \log \chi
\qquad
	c' \coleq \chi^{-1} c
\fullstop
}
Note that since $\chi$ is independent of $\hbar$, the leading-order discriminant remains unchanged: $\DD'_0 = (b'_0)^2 - 4a'_0 c'_0 = b_0^2 - 4a_0c_0 = \DD_0$.
In summary, we have the following proposition which in particular recovers \autoref{211026151811} by taking $\chi = 1$ and \autoref{211112183728} by taking $\chi = \sqrt{\DD_0}$.
\end{rem}

\begin{prop}{211106163055}
Assume all the hypotheses of \autoref{211026151811}, except hypothesis (2) is replaced with the following:
\begin{enumerate}
\item [\textup{(2${}^\prime$)}]
There is a nonvanishing holomorphic function $\chi = \chi (x)$ on $W$ such that $a',b',c'$ defined by \eqref{200711153341} admit Gevrey asymptotics as $\hbar \to 0$ along the closed arc $\bar{A}_\theta = [\theta - \tfrac{\pi}{2}, \theta + \tfrac{\pi}{2}]$, with respect to the asymptotic scale $\sqrt{\DD_0}$, uniformly for all $x \in W$:
\end{enumerate}
\begin{equation}
\label{211106163648}
	a' \simeq \hat{a}',\quad
	b' \simeq \hat{b}',\quad
	c' \simeq \hat{c}'
\qquad
	\text{as $\hbar \to 0$ along $\bar{A}_\theta$, wrt $\chi$, unif. $\forall x \in W$\fullstop}
\end{equation}

Then all the conclusions of \autoref{211026151811}, as well as \autoref{211027140644}, \autoref{211027165927}, and \autoref{211103142954} hold verbatim.
Also, all the conclusions of \autoref{211118140510} hold verbatim with the only exception that the asymptotic statement \eqref{211113110026} must be replaced with the following asymptotic statement with respect to the asymptotic scale $\chi$:
\eqntag{\label{211121134253}
	f^\theta_\alpha \simeq \hat{f}_\alpha
\qqqquad
	\text{as $\hbar \to 0$ along $\bar{A}_\theta$, wrt $\chi$, unif. $\forall x \in W_0$\fullstop}
}
\end{prop}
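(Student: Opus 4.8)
The plan is to reduce \autoref{211106163055} to \autoref{211026151811} by means of the substitution recorded in \autoref{211121133355}. First I would apply the change of the unknown variable $f = \chi g$, which by \eqref{200711153318}--\eqref{200711153341} turns the Riccati equation \eqref{211118123850} into $\hbar \del_x g = a' g^2 + b' g + c'$ with $a' = \chi a$, $b' = b - \hbar \del_x \log \chi$, and $c' = \chi^{-1} c$. Since $\chi$ carries no $\hbar$ and is nowhere vanishing on $W$, this new equation is of exactly the same shape: its leading-order discriminant is unchanged, $\DD'_0 = (b'_0)^2 - 4 a'_0 c'_0 = \DD_0 \not\equiv 0$, so it has the same turning points and hence the same WKB $(\theta,\alpha)$-geometry; in particular the halfstrip $W$ and the regular point $x_0$ remain admissible for it, and the subsidiary condition in hypothesis (1) for $\alpha = -$ survives because $a'_0 = \chi a_0$ vanishes precisely where $a_0$ does. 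Hypothesis $(2')$ is, by construction, nothing but hypothesis (2) of \autoref{211026151811} for the $g$-equation, read with the asymptotic scale $\sqrt{\DD_0} = \sqrt{\DD'_0}$. Thus \autoref{211026151811} --- together with \autoref{211027140644}, \autoref{211027165927}, \autoref{211103142954}, and \autoref{211118140510} --- applies verbatim to the $g$-equation and produces a canonical local exact solution $g_\alpha^\theta$ on some $U_0 \times S_0$ enjoying all the stated Borel--Laplace properties.

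Next I would set up the elementary dictionary between the two equations. Because $\chi$ is independent of $\hbar$, the formal Riccati equations correspond under $\hat f \leftrightarrow \chi \hat g$: comparing leading orders, $\chi g_0^\alpha$ solves \eqref{200521115322} for the $f$-equation and carries the correct label since $\varepsilon_\alpha \sqrt{\DD_0} = 2 a'_0 g_0^\alpha + b'_0 = 2 a_0 (\chi g_0^\alpha) + b_0$, so $f_0^\alpha = \chi g_0^\alpha$; substituting this into the recursion \eqref{191207215212} and multiplying through by $\chi$ propagates $f_k^\alpha = \chi g_k^\alpha$ to every order, that is, $\hat f_\alpha = \chi \hat g_\alpha$. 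Hence $f_\alpha^\theta \coleq \chi g_\alpha^\theta$ is a holomorphic solution of \eqref{211118123850} on $U_0 \times S_0$, and multiplying the relation $g_\alpha^\theta \simeq \hat g_\alpha$ (ordinary scale) by the holomorphic factor $\chi(x)$ --- which is bounded on $U_0$ because $U_0 \Subset W$ --- yields $f_\alpha^\theta \simeq \hat f_\alpha$ in the ordinary sense uniformly on $U_0$, which is precisely the asymptotic conclusion of \autoref{211026151811}; over the possibly unbounded halfstrip $W_0$ the same multiplication only delivers the asymptotics with respect to the scale $\chi$, which is exactly why \eqref{211113110026} must be weakened to \eqref{211121134253}. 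Uniqueness of $f_\alpha^\theta$ among solutions with the stated asymptotics reduces, after dividing by the nowhere-vanishing $\chi$, to the uniqueness of $g_\alpha^\theta$, i.e.\ to the Nevanlinna uniqueness argument already used in the proof of \autoref{211026151811}.

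Finally, I would transport each remaining assertion through the same multiplication by $\chi$, the crucial point being that $\chi = \chi(x)$ commutes with the operators $\hat\Borel$, $\Borel_\theta$, $\Laplace_\theta$, and $\cal{S}_\theta$, which act only in the $\hbar$- (or $\xi$-) variable. Consequently the formal and analytic Borel transforms of $f_\alpha^\theta$ are $\chi$ times those of $g_\alpha^\theta$, with the same discs and tubular neighbourhoods of convergence, the same analytic continuation along $e^{i\theta}\Real_+$, and the same absence of singularities there; the resummation identities $f_\alpha^\theta = f_0^\alpha + \cal{S}_\theta[\hat f_\alpha]$ and $f_\alpha^\theta = f_0^\alpha + \Laplace_\theta[\phi_\alpha^\theta]$ then follow by multiplying the corresponding $g$-identities by $\chi$ and using $f_0^\alpha = \chi g_0^\alpha$, and the derivative and integral identities of \autoref{211103142954} follow likewise, the only extra contribution being an $\hbar$-independent Leibniz correction in $\del_x \chi$ which again commutes with $\cal{S}_\theta$. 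I do not anticipate any genuine obstacle here: the whole argument rests on the three structural facts that the substitution $f = \chi g$ is $\hbar$-independent, nowhere degenerate on $W$, and preserves $\DD_0$, so that every ingredient used in \autoref{211026151811} is preserved. The one point demanding (routine) attention is the bookkeeping of asymptotic scales: hypothesis $(2')$ must be read with the scale $\sqrt{\DD_0}$ so that \autoref{211026151811} applies to the $g$-equation on the nose, after which the factor $\chi$ changes the asymptotic scale in the conclusion from $1$ to $\chi$.
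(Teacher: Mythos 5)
Your proposal is correct and is essentially the paper's own argument: the paper proves this proposition implicitly via \autoref{211121133355}, with the explicit model being the proof of \autoref{211112183728}, which performs exactly the substitution $f = \chi g$ (there with $\chi = \sqrt{\DD_0}$), observes that $\DD_0$ and hence the WKB geometry are unchanged, applies \autoref{211026151811} to the $g$-equation, and sets $f_\alpha^\theta = \chi\, g_\alpha^\theta$. Your additional bookkeeping --- that $\hat f_\alpha = \chi \hat g_\alpha$ order by order, that $\chi$ commutes with $\hat\Borel$, $\Borel_\theta$, $\Laplace_\theta$, $\cal{S}_\theta$, and that boundedness of $\chi$ on $U_0 \Subset W$ versus its possible unboundedness on $W_0$ is precisely what forces the scale change from \eqref{211113110026} to \eqref{211121134253} --- is accurate and fills in details the paper leaves tacit.
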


\subsection{Exact Solutions in Wider Sectors}

The last general result we prove is about extending canonical exact solutions to sectors with wider openings.
However, we not address here the question of extending canonical exact solutions to radially larger sectorial domains in $\Complex_\hbar$ or discussing the relationship between unequal canonical exact solutions for different values of $\theta$.
These questions will be examined in detail elsewhere.

\paragraph{}
Thus, suppose that $\pi \leq |A| \leq 2\pi$.
Let $\Theta \coleq [\theta_-, \theta_+]$ be the closed arc such that $A = (\theta_- - \tfrac{\pi}{2}, \theta_+ + \tfrac{\pi}{2})$; i.e., $\theta_\pm \coleq \vartheta_\pm \mp \tfrac{\pi}{2}$.
See \autoref{210618101426}.
This arc $\Theta$ is sometimes called the arc of \textit{copolar directions} of $A$.
For every $\theta \in \Theta$, let $A_\theta \coleq (\theta -\tfrac{\pi}{2}, \theta +\tfrac{\pi}{2}) \subset A$ be the semicircular arc bisected by $\theta$.
See \autoref{210618105212}.
Note that $A = \Cup_{\theta \in \Theta} A_\theta$.

\begin{figure}[t]
\centering
\begin{subfigure}[b]{0.47\textwidth}
    \centering
    \includegraphics{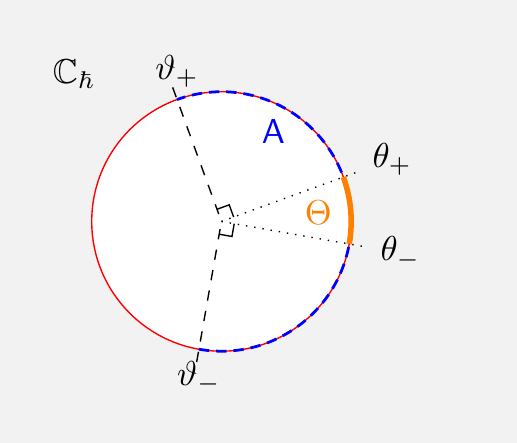}
    \caption{The arc $\Theta$ of copolar directions of $A$.}
    \label{210618101426}
\end{subfigure}
\quad
\begin{subfigure}[b]{0.47\textwidth}
    \centering
    \includegraphics{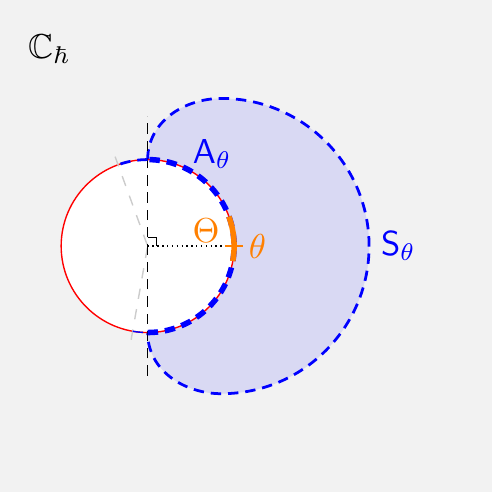}
    \caption{A Borel disc $S_\theta$ with opening $A_\theta$.}
    \label{210618105212}
\end{subfigure}
\caption{}
\end{figure}

\begin{prop}{211103152513}
Consider the Riccati equation \eqref{211118123850} whose coefficients $a,b,c$ are holomorphic functions of $(x,\hbar) \in X \times S$ admitting locally uniform asymptotic expansions $\hat{a}, \hat{b}, \hat{c} \in \cal{O} (X) \bbrac{\hbar}$ as $\hbar \to 0$ along $A$.
Assume that the leading-order discriminant $\DD_0 = b_0^2 - 4 a_0 c_0$ is not identically zero.
Fix a regular point $x_0 \in X$, a square-root branch $\sqrt{\DD_0}$ near $x_0$, and a sign $\alpha \in \set{+, -}$.
Fix a regular point $x_0 \in X$, a square-root branch $\sqrt{\DD_0}$ near $x_0$, and a sign $\alpha \in \set{+, -}$.
In addition, assume that hypotheses (1) and (2) in \autoref{211026151811} are satisfied for every $\theta \in \Theta$.

Then the Riccati equation \eqref{211118123850} has a canonical local exact solution $f^\Theta_\alpha$ near $x_0$ which is asymptotic to the formal solution $\hat{f}_\alpha$ as $\hbar \to 0$ in every direction $\theta \in \Theta$.
Namely, there is a neighbourhood $U_0 \subset X$ of $x_0$ and a sectorial domain $S_0 \subset S$ with the same opening $A$ such that the Riccati equation \eqref{211118123850} has a unique holomorphic solution $f^\Theta_\alpha$ on $U_0 \times S_0$ which is Gevrey asymptotic to $\hat{f}_\alpha$ as $\hbar \to 0$ along the closed arc $\bar{A}$ uniformly for all $x \in U_0$:\begin{equation}
\label{211118134012}
	f^\Theta_\alpha \simeq \hat{f}_\alpha
\qqqquad
	\text{as $\hbar \to 0$ along $\bar{A}$, unif. $\forall x \in U_0$\fullstop}
\end{equation}
\end{prop}

\begin{proof}
By \autoref{211027140644} (or more specifically by part (4) of \autoref{211027165927}), for every $\theta \in \Theta$, the canonical exact solution in the direction $\theta$ exists and can be written as
\eqn{
		f_\alpha^\theta (x, \hbar)
		= f_0^\alpha (x) + \Laplace_\theta \big[\: \phi_\alpha^\theta \:\big] (x, \hbar)
		= f_0^\alpha (x) + \int\nolimits_{e^{i\theta} \Real_+} e^{-\xi/\hbar} \phi_\alpha^\theta (x, \xi) \dd{\xi}
\fullstop{,}
}
where $\phi_\alpha^\theta$ is the analytic Borel transform of $f_\alpha^\theta$ in the direction $\theta$.
The explicit formula from \autoref{200623141629} reveals that these analytic Borel transforms $\phi_\alpha^\theta$ for each $\theta \in \Theta$ together define a holomorphic function $\phi_\alpha^\Theta$ on an $\epsilon$-neighbourhood $\Xi_\Theta = \set{\xi ~\big|~ \op{dist} (\xi, \Sigma_\Theta) < \epsilon}$ of the sector $\Sigma_\Theta \coleq \set{\xi ~\big|~ \arg (\xi) \in \Theta}$ for a sufficiently small $\epsilon > 0$.
Furthermore, $\phi_\alpha^\Theta$ has at most exponential growth at infinity in $\Xi_\Theta$, which means that Cauchy-Goursat's theorem yields the identity $\mathfrak{L}_{\theta_+} \big[ \phi_\alpha^{\theta_+} \big] = \mathfrak{L}_{\theta_+} \big[ \phi_\alpha^\Theta \big] = \mathfrak{L}_{\theta_-} \big[ \phi_\alpha^\Theta \big] = \mathfrak{L}_{\theta_-} \big[ \phi_\alpha^{\theta_-} \big]$.
\end{proof}

\section{Examples and Applications}
\label{211107122319}

The somewhat obscure technical hypotheses in \autoref{211026151811} and \autoref{211112183728} can be made more transparent in a number of special situations which we describe in this section.
We also present the simplest explicit example in \autoref{211027171318} where we construct a pair of canonical exact solutions by following all the steps in the proof of the main theorem.
Finally, in \autoref{211119204333}, we give a very important application of our result in the context of the exact WKB analysis of Schrödinger equations.

\subsection{Equations with Mildly Deformed Coefficients}
\label{211121154157}

\subsubsection{Undeformed Coefficients}
\label{211119201901}

The simplest yet ubiquitous situation is when the coefficients of the Riccati equation \eqref{211118123850} are independent of $\hbar$, in which case the asymptotic hypotheses in \autoref{211026151811} dramatically simplify.

\paragraph{}
Thus, let us consider both the general Riccati equation \eqref{211118123850} as well as a monic Riccati equation \eqref{211112182040} on $X \times \Complex_\hbar$ with coefficients $a,b,c,p,q \in \cal{O} (X)$.
Their leading-order discriminants $\DD_0$ are simply $b^2 - 4ac$ and $p^2 - 4q$.
The sectorial domain $S$ can be taken to be any halfplane bisected by some direction in $\Complex_\hbar$.

\paragraph{Polynomial coefficients.}
The simplest case is when the coefficients are polynomials in $x$; i.e., $a,b,c,p,q \in \Complex [x]$.
Then $X = \Complex_x$ and there are only finitely many turning points and singular WKB trajectories.
All singular WKB trajectories either connect a turning point to infinity, or two turning points together.
All WKB trajectories can be easily plotted using a computer, or even by hand in simple examples.
See \autoref{211027171318} where we examine the simplest example in detail.

The biggest advantage of this simple situation is that in order to check hypothesis (1) in either \autoref{211026151811} or \autoref{211112183728} that a given regular point $x_0$ is contained in a WKB halfstrip, it is sufficient to only examine the WKB ray emanating from $x_0$ and check that it does not hit a turning point.
If so, this WKB ray is necessarily either a closed WKB trajectory (i.e., a simple closed curve in the complement of the turning points) or it escapes to infinity.
In either case, there is necessarily a WKB halfstrip $W$ containing $x_0$.
For instance, we can take a small disc $\Disc$ centred at $x_0$ which is compactly contained in the complement of all singular WKB rays (for the same phase and sign), and let $W$ be the union of all WKB rays emanating from $\Disc$.
This disc $\Disc$ should be chosen small enough that $W$ is compactly contained in the complement of the turning points; it ensures in particular that hypothesis (3) in \autoref{211112183728} is satisfied.

If the WKB ray emanating from $x_0$ is a closed WKB trajectory, then hypothesis (2) in both theorems is automatic.
On the other hand, if this WKB ray escapes to infinity, then hypothesis (2) in \autoref{211026151811} is equivalent to saying that the polynomials $a,b,c$ are all bounded at infinity by $\sqrt{\DD_0}$, and hypothesis (2) in \autoref{211112183728} is equivalent to saying that $p$ is bounded at infinity by $\sqrt{\DD_0}$ and $q$ is bounded at infinity by $\DD_0$.
As these are all polynomials in $x$, hypothesis (2) in both theorems therefore boils down to a condition on their degrees.
In summary, we have the following.

\begin{prop}{211121120847}
Consider either the general Riccati equation \eqref{211118123850} or a monic Riccati equation \eqref{211112182040} on $\Complex_x \times \Complex_\hbar$ with polynomial $\hbar$-independent coefficients $a,b,c$ or $p,q \in \Complex [x]$.
Fix a regular point $x_0 \in \Complex_x$ and a square-root branch $\sqrt{\DD_0}$ near $x_0$.
Fix a sign $\alpha \in \set{+, -}$, a phase $\theta \in \Real$, and let $S \coleq \set{\hbar ~\big|~ \Re (e^{-i\theta} \hbar) > 0}$.
Assume that
\textit{
\begin{itemise}
\item[\textup{(1)}] the WKB $(\theta, \alpha)$-ray $\Gamma_\theta^\alpha = \Gamma_\theta^\alpha (x_0)$ emanating from $x_0$ does not hit a turning point; assume in addition that $a_0$ is nonvanishing on $\Gamma_\theta^\alpha$ if $\alpha = -$.
\end{itemise}
}
If $\Gamma_\theta^\alpha$ is a closed trajectory, then all the hypotheses of \autoref{211026151811} or \autoref{211112183728} are satisfied, and therefore their conclusions hold verbatim.
If, on the other hand, $\Gamma_\theta^\alpha$ escapes to infinity, then in addition we assume that either
\textit{
\begin{itemise}
\item[\textup{(2)}] $\deg (a), \deg (b), \deg (c) \leq \tfrac{1}{2} \deg (\DD_0)$ \qquad or\\
$\deg (p) \leq \tfrac{1}{2} \deg (\DD_0)$ \quad and \quad $\deg (q) \leq \deg (\DD_0)$.
\end{itemise}
}
Then all the hypotheses of \autoref{211026151811} or \autoref{211112183728} are satisfied.
\end{prop}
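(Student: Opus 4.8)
The plan is to verify, directly from the stated assumptions, every hypothesis of \autoref{211026151811} (in the general case) and of \autoref{211112183728} (in the monic case); the whole argument reduces to exhibiting a suitable WKB $(\theta,\alpha)$-halfstrip $W$ around $x_0$ and then exploiting that the coefficients are $\hbar$-independent polynomials. The background assumptions of both theorems are automatic here: a polynomial in $x$ alone is holomorphic on $\Complex_x \times \Complex_\hbar$ and coincides with its own constant asymptotic series (with trivial Gevrey bounds), so $\hat a = a$, $\hat b = b$, $\hat c = c$ (and likewise $\hat p = p$, $\hat q = q$); and $S = \set{\hbar ~\big|~ \Re(e^{-i\theta}\hbar) > 0}$ is a sectorial domain whose opening arc is $A_\theta = (\theta - \tfrac{1}{2}\pi, \theta + \tfrac{1}{2}\pi)$, so $|A_\theta| = \pi$ as required. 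Throughout, $\DD_0$ is a nonzero polynomial, since $x_0$ is a regular point.

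For hypothesis (1), note first that since $\DD_0$ has no poles in $\Complex_x$ and the integral $\int^x\sqrt{\DD_0(t)}\,\dd{t}$ diverges as $x \to \infty$, the classification of WKB rays recalled in \autoref{211028202110} shows that a WKB ray in $\Complex_x$ either is infinite or tends to a turning point. By assumption (1) the ray $\Gamma \coleq \Gamma_\theta^\alpha(x_0)$ hits no turning point, so it is infinite. The set of points whose forward WKB $(\theta,\alpha)$-ray hits a turning point is precisely the union of the finitely many singular WKB $(\theta,\alpha)$-rays, a closed set not containing $x_0$; pick a disc $\Disc \ni x_0$ compactly contained in its complement. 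Using the persistence facts recalled in \autoref{211028202110} --- trajectories near a closed trajectory are again closed with the same period, and a ray tending to an infinite critical point (here the pole of the quadratic differential $\DD_0\,\dd{x}^2$ at $x = \infty$) forces all nearby rays to do likewise --- after shrinking $\Disc$ one obtains a WKB $(\theta,\alpha)$-halfstrip $W \coleq \bigcup_{x\in\Disc}\Gamma_\theta^\alpha(x)$ with $\op{dist}\big(W, \{\text{turning points}\}\big) > 0$; in the general case with $\alpha = -$, since the polynomial $a_0$ does not vanish on the closed set $\Gamma$, a further shrinking makes $a_0$ nonvanishing on $W$. This establishes hypothesis (1).

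For hypothesis (2), the key point is that $a = a_0$, $b = b_0$, $c = c_0$ (resp. $p = p_0$, $q = q_0$) are $\hbar$-independent, so the truncated remainders in the explicit Gevrey condition \eqref{210731112258} vanish identically for $n \geq 1$; hypothesis (2) therefore collapses to the single $n = 0$ bound, i.e. to requiring that $a_0, b_0, c_0$ be bounded by $\sqrt{\DD_0}$ on $W$ (resp., matching the asymptotic scales in hypothesis (2) of \autoref{211112183728}, that $p_0$ be bounded by $\sqrt{\DD_0}$ and $q_0$ by $\DD_0$ on $W$). If $\Gamma$ is a closed trajectory, then $W$ is a bounded tubular neighbourhood of the compact curve $\Gamma$ whose closure misses the turning points, so on $W$ the polynomials are bounded while $|\sqrt{\DD_0}|$ is bounded below by a positive constant, and these bounds are trivial. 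If $\Gamma$ escapes to $\infty$, then $W$ is unbounded: on the part of $W$ outside a large disc one has $|a_0(x)|/|\sqrt{\DD_0(x)}| \sim |x|^{\deg a_0 - \frac{1}{2}\deg\DD_0}$, which is bounded precisely when $\deg a_0 \leq \tfrac{1}{2}\deg\DD_0$, and likewise for $b_0, c_0$ (resp. $\deg p_0 \leq \tfrac{1}{2}\deg\DD_0$ and $\deg q_0 \leq \deg\DD_0$), while on the bounded part of $W$ these ratios are continuous with $\DD_0$ nowhere zero, hence bounded there as well. So under the degree assumption (2) the relevant ratios are globally bounded on $W$, which is hypothesis (2).

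In the monic case it remains to verify hypothesis (3) of \autoref{211112183728}: $\del_x\log\DD_0 = \DD_0'/\DD_0$ is a rational function whose only poles are turning points, so it is bounded on the bounded part of $W$ (where $|\DD_0|$ is moreover bounded below), while for $|x|$ large one has $|\DD_0'/\DD_0| \sim (\deg\DD_0)/|x| \to 0$ and $|\DD_0| \to \infty$; hence $|\del_x\log\DD_0| \leq C|\DD_0|$ on all of $W$, i.e. $\del_x\log\DD_0$ is bounded by $\DD_0$ on $W$ (the statement being vacuous when $\DD_0$ is constant). With hypotheses (1)--(2), and (3) in the monic case, verified, \autoref{211026151811} and \autoref{211112183728} apply and their conclusions hold verbatim. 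I expect the one genuinely delicate point to be the construction of $W$ --- ensuring that, as $\Disc$ shrinks, $W$ stays a positive distance from the turning points and, in the escaping case, remains a genuine halfstrip rather than degenerating --- which is exactly where the trajectory-persistence statements from \autoref{211028202110}, ultimately resting on the structure theory of trajectories of quadratic differentials \cite{MR743423}, are used.
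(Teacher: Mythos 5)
Your proposal is correct and follows essentially the same route as the paper: the paper justifies this proposition by the discussion immediately preceding it, which likewise builds $W$ as the union of WKB rays emanating from a small disc around $x_0$ chosen compactly inside the complement of the singular rays and turning points, observes that for $\hbar$-independent coefficients the Gevrey hypothesis collapses to boundedness of the coefficients by the appropriate power of $\sqrt{\DD_0}$ (automatic on a closed trajectory, a degree condition at infinity), and notes that compact containment away from the turning points yields hypothesis (3) of the monic theorem. Your write-up simply spells out these steps in more detail.
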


\paragraph{Rational coefficients.}
More generally, the coefficients $a,b,c$ and $p,q$ may be arbitrary rational functions of $x$.
Then $X = \Complex_x \setminus \set{\text{poles}}$ and again there are only finitely many turning points and singular WKB trajectories, all of which can be easily plotted using a computer.
All singular WKB trajectories end either on a turning point or a simple pole of $\DD_0$.

As in the polynomial scenario, checking hypothesis (1) in both theorems boils down to examining a single WKB ray emanating from a regular point $x_0$.
If this ray does not hit a turning point, it must be either a closed trajectory or limit to one of the poles $x_\infty$ of $\DD_0$ including possibly the one at infinity.
This WKB ray is infinite if and only if the pole order of $\DD_0$ at $x_\infty$ is $2$ or greater.
Note that if $\op{ord} (\DD_0) \geq 2$ at $x_\infty$, then hypothesis (3) in \autoref{211112183728} is automatic.
Finally, similar to the polynomial scenario, the asymptotic hypothesis (2) in both theorems boils down to a boundedness condition near the pole $x_\infty$ on the coefficients by an appropriate power of $\DD_0$.
In summary, we have the following.

\enlargethispage{1cm}
\begin{prop}{211121122547}
Consider either the general Riccati equation \eqref{211118123850} or a monic Riccati equation \eqref{211112182040} on $\Complex_x \times \Complex_\hbar$ with rational $\hbar$-independent coefficients $a,b,c$ or $p,q \in \Complex (x)$.
Fix a sign $\alpha \in \set{+, -}$, a phase $\theta \in \Real$, and let $S \coleq \set{\hbar ~\big|~ \Re (e^{-i\theta} \hbar) > 0}$.
Fix a regular point $x_0 \in X$ and a square-root branch $\sqrt{\DD_0}$ near $x_0$.
Assume that
\textit{
\begin{itemise}
\item[\textup{(1)}] the WKB $(\theta, \alpha)$-ray $\Gamma_\theta^\alpha = \Gamma_\theta^\alpha (x_0)$ emanating from $x_0$ does not hit a turning point; assume in addition that $a_0$ is nonvanishing on $\Gamma_\theta^\alpha$ if $\alpha = -$.
\end{itemise}
}
If $\Gamma_\theta^\alpha$ is a closed trajectory, then all the hypothesis of \autoref{211026151811} or \autoref{211112183728} are satisfied, and therefore their conclusions hold verbatim.
If, on the other hand, $\Gamma_\theta^\alpha$ tends a pole $x_\infty \in \Complex_x \cup \set{\infty}$ of $\DD_0$ of order $2$ or higher, then we also assume that either
\textit{
\begin{itemise}
\item[\textup{(2)}] $\op{ord} (a), \op{ord} (b), \op{ord} (c) \leq \tfrac{1}{2} \op{ord} (\DD_0)$ at $x_\infty$ \qquad or\\
$\op{ord} (p) \leq \tfrac{1}{2} \op{ord} (\DD_0)$ and $\op{ord} (q) \leq \op{ord} (\DD_0)$ at $x_\infty$.
\end{itemise}
}
Then all the hypotheses of \autoref{211026151811} or \autoref{211112183728} are satisfied.
\end{prop}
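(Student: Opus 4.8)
The plan is to reduce the claim to \autoref{211026151811} in the general case and to \autoref{211112183728} in the monic case by verifying their hypotheses; the asserted conclusions then follow verbatim. Since $a,b,c$ (resp.\ $p,q$) are independent of $\hbar$, their asymptotic expansions are the constant series $\hat a = a$, and so on, so the Gevrey asymptotic requirement in hypothesis (2) of either theorem degenerates to the purely pointwise statement that $|a|, |b|, |c|$ are bounded by a constant multiple of $|\sqrt{\DD_0}|$ (resp.\ that $|p| \leq \mathrm{const}\cdot|\sqrt{\DD_0}|$ and $|q| \leq \mathrm{const}\cdot|\DD_0|$), uniformly on a WKB halfstrip $W$.

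Next I construct the halfstrip $W$ that realizes hypothesis (1), exactly as in the discussion preceding \autoref{211121120847}. By assumption the WKB $(\theta,\alpha)$-ray $\Gamma_\theta^\alpha(x_0)$ misses every turning point, so by the trajectory structure theory recalled in \autoref{211028202110} (see \cite[\S9--10]{MR743423}) it is either a closed WKB trajectory or it limits to a pole $x_\infty \in \Complex_x \cup \set{\infty}$ of $\DD_0$, and it is infinite precisely when $\op{ord}(\DD_0) \geq 2$ at $x_\infty$. If it is closed, I let $W$ be the WKB ring domain swept out by the closed trajectories through a sufficiently small disc $\Disc$ centred at $x_0$, chosen so that $\bar W$ is a compact subset of $X$ disjoint from the turning points; then hypotheses (2) and (3) hold trivially, since $a,b,c$ (resp.\ $p,q$ and $\del_x\log\DD_0$) are holomorphic on $\bar W$ while $\sqrt{\DD_0}$ is bounded away from $0$ there. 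If $\Gamma_\theta^\alpha(x_0)$ is infinite, I let $W = W_\theta^\alpha$ be the union of the infinite WKB $(\theta,\alpha)$-rays emanating from a small disc $\Disc \ni x_0$ chosen so that $W$ meets neither a turning point nor a singular WKB ray; the structure theory guarantees that all these rays again limit to $x_\infty$, so $W$ is a genuine halfstrip limiting to $x_\infty$, and $W \setminus V$ is relatively compact in $X$ for every neighbourhood $V$ of $x_\infty$.

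It then remains to check hypotheses (2) and (3) near $x_\infty$, as the estimates on $W \setminus V$ are again immediate by compactness. Assume first that $x_\infty \in \Complex_x$, and write $\DD_0(x) = (x-x_\infty)^{-m} u(x)$ with $m = \op{ord}(\DD_0) \geq 2$ at $x_\infty$ and $u$ holomorphic and nonvanishing near $x_\infty$; then, near $x_\infty$, one has $|\sqrt{\DD_0(x)}| \asymp |x-x_\infty|^{-m/2}$ and $|\DD_0(x)| \asymp |x-x_\infty|^{-m}$. The hypothesis $\op{ord}(a), \op{ord}(b), \op{ord}(c) \leq \tfrac12 m$ at $x_\infty$ says precisely that each of $a,b,c$ has a pole of order at most $m/2$ there, hence is bounded by a constant multiple of $|x-x_\infty|^{-m/2} \asymp |\sqrt{\DD_0}|$; this is hypothesis (2) of \autoref{211026151811}. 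Likewise $\op{ord}(p) \leq \tfrac12 m$ and $\op{ord}(q) \leq m$ give $|p| \leq \mathrm{const}\cdot|\sqrt{\DD_0}|$ and $|q| \leq \mathrm{const}\cdot|\DD_0|$, which is hypothesis (2) of \autoref{211112183728}. Finally $\del_x\log\DD_0 = -m(x-x_\infty)^{-1} + \del_x\log u$ has at worst a simple pole at $x_\infty$, so since $m \geq 2$ it is bounded by a constant multiple of $|x-x_\infty|^{-m} \asymp |\DD_0|$, which verifies hypothesis (3) of \autoref{211112183728}. The case $x_\infty = \infty$ is handled identically after passing to the chart $x \mapsto 1/x$. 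With all hypotheses verified, \autoref{211026151811} (resp.\ \autoref{211112183728}) applies and yields the conclusion.

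The main obstacle is really located in the second step: one must be convinced that the WKB halfstrip $W$ can genuinely be chosen to limit to $x_\infty$ while being otherwise relatively compact in $X$, and one must carry out the bookkeeping that turns the order inequalities at $x_\infty$ into the uniform boundedness estimates on $W$ that the theorems demand. Both points rest on the trajectory structure results of \cite[\S9--10]{MR743423} already used in \autoref{211028202110}; once these are granted, the remainder of the proof is routine, and it is a verbatim rerun of the polynomial case \autoref{211121120847} with the pole at infinity replaced by an arbitrary pole $x_\infty$ of order $\geq 2$.
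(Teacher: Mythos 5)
Your proposal is correct and follows essentially the same route as the paper, which justifies this proposition only by the discursive discussion preceding it (reduce hypothesis (2) to pointwise boundedness since the coefficients are $\hbar$-independent, build the halfstrip as a union of WKB rays emanating from a small disc compactly contained in the complement of the singular rays, and convert the order conditions at $x_\infty$ into the required bounds by $\sqrt{\DD_0}$ and $\DD_0$, with hypothesis (3) automatic because $\del_x\log\DD_0$ has at most a simple pole while $\op{ord}(\DD_0)\geq 2$). Your write-up is in fact more explicit than the paper's on the local analysis at $x_\infty$ and on the compactness argument away from it.
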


\paragraph{General meromorphic coefficients.}
When $a,b,c$ or $p,q$ are more general not necessarily rational meromorphic functions, the WKB geometry is far more complicated to describe in general.
However, if the WKB ray $\Gamma^\alpha_\theta (x_0)$ is closed or limits to a second- or higher-order pole of $\DD_0$, then a general but simplified version of both theorems can be stated as follows.

\begin{prop}{211121155120}
Consider either the general Riccati equation \eqref{211118123850} or a monic Riccati equation \eqref{211112182040} on $X \times \Complex_\hbar$ with  coefficients $a,b,c$ or $p,q \in \cal{O} (X)$.
Fix a sign $\alpha \in \set{+, -}$, a phase $\theta \in \Real$, and let $S \coleq \set{\hbar ~\big|~ \Re (e^{-i\theta} \hbar) > 0}$.
Fix a regular point $x_0 \in X$ and a square-root branch $\sqrt{\DD_0}$ near $x_0$.
Then we make the following hypotheses:
\begin{enumerate}
\item the WKB $(\theta, \alpha)$-ray $\Gamma_\theta^\alpha = \Gamma_\theta^\alpha (x_0)$ emanating from $x_0$ does not hit a turning point but instead limits to a pole $x_\infty \in \del X$ of $\DD_0$ of order $2$ or greater.
If $\alpha = -$, we also assume that $a_0$ is nonvanishing on $\Gamma_\theta^\alpha$ and $x_\infty$ is not an accumulation point of zeroes of $a_0$.
\item $\op{ord} (a), \op{ord} (b), \op{ord} (c) \leq \tfrac{1}{2} \op{ord} (\DD_0)$ at $x_\infty$ \qquad or\\
$\op{ord} (p) \leq \tfrac{1}{2} \op{ord} (\DD_0)$ and $\op{ord} (q) \leq \op{ord} (\DD_0)$ at $x_\infty$.
\end{enumerate}
Then all the hypotheses of \autoref{211026151811} or \autoref{211112183728} are satisfied.
\end{prop}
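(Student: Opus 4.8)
The plan is to verify directly that hypotheses (1)--(2) here force all the hypotheses of \autoref{211026151811} (respectively \autoref{211112183728}) to hold on a suitably chosen WKB $(\theta,\alpha)$-halfstrip $W$. The main simplification I would exploit is that the coefficients are $\hbar$-independent: the Gevrey asymptotic hypothesis (2) in those theorems then carries no information at orders $n \ge 1$, so it collapses to the single requirement that the coefficients be bounded on $W$ by the relevant power of $\DD_0$ --- namely $a,b,c$ (resp.\ $p$) by $\sqrt{\DD_0}$, and in the monic case $q$ by $\DD_0$. So there are three things to establish: the existence of a WKB halfstrip $W \subset X$ containing $x_0$ (hypothesis (1) of those theorems); the nonvanishing of $a_0$ on $W$ when $\alpha = -$; and the boundedness bounds, plus hypothesis (3) of \autoref{211112183728} in the monic case.

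First I would construct $W$. By hypothesis (1) the WKB $(\theta,\alpha)$-ray $\Gamma_\theta^\alpha(x_0)$ avoids turning points and limits to the pole $x_\infty$, whose order is $\ge 2$, so it is an infinite WKB ray tending to an infinite critical point. The general facts about WKB trajectories recalled in \autoref{211028202110} then provide a WKB disc neighbourhood of $x_0$ all of whose WKB $(\theta,\alpha)$-rays tend to $x_\infty$; their union is a WKB $(\theta,\alpha)$-halfstrip. Since $\Gamma_\theta^\alpha(x_0)$ meets no turning point and no boundary point of $X$ other than $x_\infty$, its closure $\Gamma_\theta^\alpha(x_0) \cup \set{x_\infty}$ is compact in $X \cup \set{x_\infty}$ and admits a neighbourhood inside $X \cup \set{x_\infty}$; shrinking the WKB disc, I obtain $W \subset X$ containing $x_0$, which is hypothesis (1) of both target theorems.

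Next, if $\alpha = -$, I would shrink $W$ once more: the zero set of $a_0$ is discrete in $X$, is disjoint from the ray, and by hypothesis does not accumulate at $x_\infty$, so its closure is separated from $\Gamma_\theta^\alpha(x_0) \cup \set{x_\infty}$ by a positive distance (measured near $x_\infty$ in a local coordinate there, or after composing with $\Phi$), and a thin enough $W$ avoids it. For the boundedness I would split $W$ into a relatively compact part --- where the holomorphic coefficients are bounded and $\sqrt{\DD_0}$, $\DD_0$ are bounded below, so every desired inequality holds with a large constant --- and a punctured neighbourhood of $x_\infty$, where $\sqrt{\DD_0}$ has a pole of order $m/2$ with $m = \op{ord}(\DD_0) \ge 2$ (or the analogous expression when $x_\infty = \infty$) and the order conditions in hypothesis (2) say precisely that near $x_\infty$ each coefficient has a pole of order no larger than that of the matching power of $\DD_0$, hence grows no faster. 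Combining the two regions gives the bounds on all of $W$, i.e.\ hypothesis (2) of the target theorems. In the monic case hypothesis (3) is then immediate: $\del_x \log \DD_0$ has at worst a simple pole at $x_\infty$, and since $m \ge 2 \ge 1$ it is bounded by $\DD_0$ near $x_\infty$ and trivially on the compact part. At that point all hypotheses of \autoref{211026151811} (resp.\ \autoref{211112183728}) are in place, and the conclusion follows.

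The routine calculus --- translating the $\op{ord}$-conditions at $x_\infty$ into growth rates and handling the case $x_\infty = \infty$ by passing to the coordinate $1/x$ --- is standard, so the only genuinely delicate step, and the part I would write out most carefully, is the geometric one: choosing the WKB disc small enough that the resulting halfstrip simultaneously stays inside $X$, limits to $x_\infty$ along every ray, and (for $\alpha = -$) stays clear of the zeros of $a_0$. This rests on the compactness of the closed ray $\Gamma_\theta^\alpha(x_0) \cup \set{x_\infty}$ in $X \cup \set{x_\infty}$ together with the structural facts about WKB rays tending to higher-order poles from \autoref{211028202110}; once that neighbourhood is fixed, the boundedness estimates are purely local near $x_\infty$ and extend uniformly to $W$ via the compact/non-compact decomposition.
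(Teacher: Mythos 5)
Your proposal is correct and follows essentially the same route as the paper, which in fact leaves this proposition without a formal proof: the justification is embedded in the surrounding discussion of \autoref{211121154157} (existence of the halfstrip from the general facts about infinite WKB rays tending to a pole of order $\geq 2$ recalled in \autoref{211028202110}; collapse of the Gevrey hypothesis to a boundedness condition because the coefficients are $\hbar$-independent; the order conditions at $x_\infty$ giving that bound; and hypothesis (3) of \autoref{211112183728} being automatic since $\del_x \log \DD_0$ has at worst a simple pole there). Your write-up merely makes explicit the compactness and shrinking arguments that the paper takes for granted.
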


\subsubsection{Polynomially Deformed Coefficients}

The next simplest situation is when the equations coefficients depend on $\hbar$ at most polynomially.

\paragraph{}
Thus, let us consider both the general Riccati equation \eqref{211118123850} as well as a monic Riccati equation \eqref{211112182040} on $X \times \Complex_\hbar$ where functions $a,b,c$ in \eqref{211118123850} or $p,q$ in \eqref{211112182040} are at most polynomials in $\hbar$ with holomorphic coefficients.
The sectorial domain $S$ can still be taken to be a halfplane in $\Complex_\hbar$.

The WKB geometry is fully determined by the leading-order part of the equation, so all the same considerations apply as explained in \autoref{211119201901}.
The advantage of being given the coefficients $a,b,c$ or $p,q$ as polynomials in $\hbar$ rather than more general functions of $\hbar$ is that the assumptions on the $\hbar$-asymptotics reduce to simple bounds on the $\hbar$-polynomial coefficients of $a,b,c$ or $p,q$ of the kind we have already seen.
In summary, we have the following.

\begin{prop}{211121161720}
Consider either the general Riccati equation \eqref{211118123850} or a monic Riccati equation \eqref{211112182040} on $X \times \Complex_\hbar$ with  coefficients $a,b,c$ or $p,q \in \cal{O} (X) [\hbar]$.
Fix a sign $\alpha \in \set{+, -}$, a phase $\theta \in \Real$, and let $S \coleq \set{\hbar ~\big|~ \Re (e^{-i\theta} \hbar) > 0}$.
Fix a regular point $x_0 \in X$ and a square-root branch $\sqrt{\DD_0}$ near $x_0$.
Assume hypothesis (1) from \autoref{211121155120} and instead of hypothesis (2), assume that
\begin{enumerate}
\setcounter{enumi}{1}
\item $\op{ord} (a_k), \op{ord} (b_k), \op{ord} (c_k) \leq \tfrac{1}{2} \op{ord} (\DD_0)$ at $x_\infty$ for every $k$ \qquad or\\
$\op{ord} (p_k) \leq \tfrac{1}{2} \op{ord} (\DD_0)$ and $\op{ord} (q_k) \leq \op{ord} (\DD_0)$ at $x_\infty$ for every $k$.
\end{enumerate}
Then all the hypotheses of \autoref{211026151811} or \autoref{211112183728} are satisfied.
\end{prop}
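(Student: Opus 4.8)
The plan is to verify hypotheses (1), (2) and (3) of \autoref{211026151811} (respectively of \autoref{211112183728}) directly, following the same route as in the proof of \autoref{211121155120}. The only new ingredient here is that, because the coefficients depend on $\hbar$ only polynomially, the asymptotic expansions $\hat a,\hat b,\hat c$ (resp.\ $\hat p,\hat q$) are exact, and the Gevrey condition (2) of the main theorems collapses to finitely many elementary boundedness estimates for the individual $\hbar$-coefficients $a_k,b_k,c_k$ (resp.\ $p_k,q_k$); each of these is then dispatched by the same order count at $x_\infty$ that was used in the $\hbar$-independent case. Hypotheses (1) and (3) are unchanged from \autoref{211121155120} and are handled identically.

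First I would produce the WKB halfstrip. This is hypothesis (1), and it is obtained exactly as for \autoref{211121155120}: by the structure theory of WKB trajectories recalled in \autoref{211028202110}, since $\Gamma_\theta^\alpha(x_0)$ is an infinite WKB ray limiting to the pole $x_\infty$ of $\DD_0$ of order $m \coleq \op{ord}_{x_\infty}(\DD_0) \geq 2$, the point $x_0$ has a disc neighbourhood all of whose WKB $(\theta,\alpha)$-rays limit to $x_\infty$; let $W = W_\theta^\alpha$ be the halfstrip swept out by these rays. Shrinking the disc if necessary, I may assume that $\overline{W}$ meets $\del X$ only at $x_\infty$ --- so that $\overline{W}\setminus V$ is a compact turning-point-free subset of $X$ for every neighbourhood $V$ of $x_\infty$ --- and, when $\alpha=-$, that $a_0$ is nonvanishing on $\overline{W}$; both are possible by hypothesis (1) of \autoref{211121155120}, which in particular excludes $x_\infty$ as an accumulation point of zeros of $a_0$. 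In the monic case, hypothesis (3) of \autoref{211112183728} is then automatic: $\del_x\log\DD_0$ has at worst a simple pole at $x_\infty$ whereas $\DD_0$ has a pole of order $m\geq 2$, so $\del_x\log\DD_0/\DD_0$ vanishes at $x_\infty$ and is bounded near it, while on the remaining compact part of $\overline{W}$ it is holomorphic with $\DD_0$ bounded away from zero; hence $\del_x\log\DD_0$ is bounded by $\DD_0$ on $W$.

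The heart of the matter is hypothesis (2). Writing $a = \sum_{k=0}^{d} a_k\hbar^k$ with $d = \deg_\hbar a$, one has $\hat a = a$, and the asymptotic remainder $a - \sum_{k=0}^{n-1} a_k\hbar^k$ equals $\sum_{k=n}^{d} a_k\hbar^k$ for $n\leq d$ and vanishes identically for $n > d$. Thus the Gevrey estimate $\bigl|a - \sum_{k<n} a_k\hbar^k\bigr| \leq \bigl|\sqrt{\DD_0}\bigr|\,\AA\BB^n n!\,|\hbar|^n$ holds vacuously for $n > d$, and for $n\leq d$ it follows, for all $|\hbar|\leq 1$, from the single bound $|a_k| \leq \bigl|\sqrt{\DD_0}\bigr|\,M$ on $W$ for $k=0,\dots,d$, i.e.\ from the boundedness of $a_k/\sqrt{\DD_0}$ on $W$. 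This is precisely where hypothesis (2) of the proposition enters: $\op{ord}_{x_\infty}(a_k) \leq \tfrac12\op{ord}_{x_\infty}(\DD_0)$ forces $a_k/\sqrt{\DD_0}$ to stay bounded in a punctured neighbourhood of $x_\infty$, while on the complementary compact part of $\overline{W}$, which is free of turning points, $\sqrt{\DD_0}$ is bounded below, so $a_k/\sqrt{\DD_0}$ is bounded there too. The same argument applies to $b_k$ and $c_k$, giving hypothesis (2) of \autoref{211026151811}; in the monic case it applies to $p_k/\sqrt{\DD_0}$ and, via $\op{ord}_{x_\infty}(q_k) \leq \op{ord}_{x_\infty}(\DD_0)$, to $q_k/\DD_0$, giving hypothesis (2) of \autoref{211112183728}. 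With all hypotheses verified, the stated conclusions follow.

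The only genuinely delicate step is arranging the halfstrip $W$ so that $\overline{W}$ approaches $\del X$ solely at $x_\infty$ (and avoids zeros of $a_0$ when $\alpha=-$), since this is what lets ``bounded near $x_\infty$'' together with ``bounded on a compact remainder'' add up to ``bounded on all of $W$''. That geometric input, however, is exactly the one already secured in the proof of \autoref{211121155120} through the trajectory structure theory of \autoref{211028202110}; once it is in place, everything else here is finite bookkeeping made possible by the polynomial dependence on $\hbar$.
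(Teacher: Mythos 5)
Your proposal is correct and follows exactly the route the paper intends: the paper gives only the one-paragraph sketch preceding the proposition (WKB geometry is governed by the leading order, so hypotheses (1) and (3) are as in the $\hbar$-independent case, while the polynomial $\hbar$-dependence collapses the Gevrey condition (2) to finitely many boundedness estimates on the individual coefficients $a_k, b_k, c_k$ or $p_k, q_k$, settled by the order count at $x_\infty$ plus compactness away from it). Your writeup simply fills in the details of that sketch, including the correct observations that the asymptotic remainders vanish for $n > \deg_\hbar$ and that $\del_x\log\DD_0$ has at worst a simple pole at $x_\infty$ while $\DD_0$ has a pole of order at least $2$.
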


\subsection{The Simplest Explicit Example: Deformed Airy}
\label{211027171318}

In this subsection, we illustrate the main constructions in this paper in the following explicit example.
Consider the following Riccati equation on the domain $\Complex_x \times \Complex_\hbar$:
\eqntag{\label{200701111940}
	\hbar \del_x f = f^2 - x
\fullstop
}
Thus, in this example, $a = 1, b = 0, c = -x$ and $X = \Complex_x$.
Let us fix $\theta = 0$, so we will search for canonical exact solutions of \eqref{200701111940} with prescribed asymptotics as $\hbar \to 0$ along the positive real axis $\Real_+ \subset \Complex_\hbar$.
Then the sectorial domain $S$ can be taken as the complement of the negative real axis $\Real_- \subset \Complex_\hbar$.

This Riccati equation arises in the WKB analysis of the Airy differential equation $\hbar^2 \del^2_x \psi (x, \hbar) = x \psi (x, \hbar)$ upon considering the WKB ansatz $\psi = \exp \big( - \int f / \hbar \big)$, see \cite{MY210623112236} for more details.
For this Riccati equation, it is known that exact solutions exist; see, e.g., \cite[\S2.2]{MR2182990}.
There, instead of solving the Riccati equation directly, the Borel-Laplace method is applied to the Airy differential equation.
Here, we take a different approach by solving the Riccati equation directly.
We have included this example because it is the simplest and most explicit standard example that nicely illustrates most constructions encountered in our paper.

\paragraph{Leading-order analysis.}
Following \autoref{200601224547}, the leading-order equation \eqref{200521115322} for the Riccati equation \eqref{200701111940} is simply $f_0^2 - x = 0$.
The leading-order discriminant given by formula \eqref{200219161938} is $\DD_0 (x) = 4x$.
There is a single turning point at $x = 0$.
Let $\sqrt{x}$ be the principal square-root branch (i.e., positive on the positive real axis) in the complement of a branch cut along, say, the negative real axis.
Label the two leading-order solutions as 
\eqntag{
	f^\pm_0 (x) \coleq \pm \sqrt{x}
\qqtext{so that}
	\sqrt{\DD_0} = 2 \sqrt{x}
\fullstop
}
The leading-order solutions $f^\pm_0$ are holomorphic on any simply connected domain $U \subset \Complex_x^\ast$.
However, $f^\pm_0$ are unbounded if $U$ is unbounded: the coefficient $c = -x$ of \eqref{200701111940} is not bounded by $\sqrt{\DD} = 2 \sqrt{x}$, so not all the hypotheses of \autoref{200614161900} are satisfied.
This will be rectified later by regularising the coefficients following \autoref{211103172432}.

\paragraph{Formal perturbation theory.}
Now we study the formal aspects of this Riccati equation following \autoref{200521121035}.
By \autoref{200118111737}, the Riccati equation \eqref{200701111940} has a pair of formal solutions $\hat{f}_\pm \in \cal{O} (U) \bbrac{\hbar}$ with leading-order terms $f_0^\pm$.
Their coefficients $f_k^\pm$ for $k \geq 1$ are given by the recursive formula \eqref{191207215212}, which in this example reduces to
\eqntag{
	f_k^\pm = \pm \tfrac{1}{2 \sqrt{x}} \del_x f_{k-1}^\pm \mp \tfrac{1}{2 \sqrt{x}} \sum_{i+j = k}^{i,j\neq k} f_i^\pm f_j^\pm
\fullstop
}
The first few coefficients are 
\eqntag{
	f_0^\pm = \pm \sqrt{x},
\qquad
	f_1^\pm = +\tfrac{1}{4} x^{-1},
\qquad
	f_2^\pm = \mp\tfrac{5}{32} x^{\!\!\nicefrac{-5}{2}},
\qquad
	f_3^\pm = + \tfrac{15}{64} x^{-4},
\quad
	\ldots
\fullstop
}
In fact, if we set $d_0^\pm = \pm 1$, it is easy to show by induction that for all $k \geq 1$,
\eqntag{
	f_k^\pm (x) = d_k^\pm x^{\!\nicefrac{-3k}{2}} \sqrt{x}
\qtext{where}
	d_k^\pm \coleq \frac{1}{2} \left( (2 - 3k/2) d_{k-1}^\pm - \sum_{i + j = k}^{i,j \neq k} d_i^\pm d_j^\pm \right)
\fullstop
}
Note that all $d^\pm_k$ are rational numbers.

\paragraph{The Liouville transformation and WKB trajectories.}
\label{200814090913}
Following \autoref{211028202110}, let us describe the geometry of WKB trajectories on $\Complex_x$ emerging from this Riccati equation.
For any basepoint $x_0 \in \Complex_x$, the Liouville transformation is given, on the complement of the branch cut, by the simple formula 
\eqntag{
	z = \Phi (x) 
		= \int_{x_0}^x 2\sqrt{t} \dd{t} 
		= \tfrac{4}{3} (x^{\!\nicefrac{3}{2}}_\phantomindex - x_0^{\!\nicefrac{3}{2}})
\fullstop
}
It follows that, for example, the WKB $(0,+)$-ray emanating from any point $x_0$ with $\arg (x_0) \neq \pm 3\pi /2$ is complete.
If, on the other hand, $\arg (x_0) = \pm 3\pi/2$, then the WKB $(0,+)$-ray emanating from $x_0$ hits the turning point in finite time.
Likewise, the WKB $(0,-)$-ray of every point $x_0$ with $\arg (x_0) \neq 0$ is complete.
See \autoref{200814122327}.
\begin{figure}[t]
\centering
\includegraphics[scale=1.25]{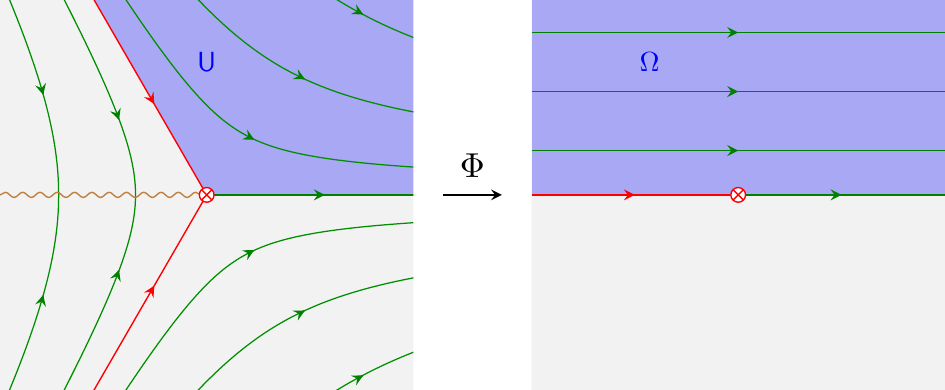}
\caption{
Pictured are the complex planes $\Complex_x$ (left) and $\Complex_z$ (right) with $\Phi$ being the Liouville transformation with basepoint $x_0 = 0$.
In $\Complex_x$, there is a turning point at the origin, indicated by a red circled cross.
A few complete WKB trajectories on $\Complex_x$ are drawn in green, with arrows indicating the orientation with respect to the chosen square root branch $\sqrt{\DD_0} = 2 \sqrt{x}$, for which the branch cut is taken along the negative real axis.
There are two special trajectories, indicated in red, which are not complete: they flow into the turning point in finite time.
The domain $U$ from \eqref{200814122922} is shaded in blue.
}
\label{200814122327}
\end{figure}
We focus our attention now on the domain 
\eqntag{\label{200814122922}
	U \coleq \set{ x ~\big|~ 0 < \arg (x) < +3\pi/2}
\fullstop
}
Its image under the Liouville transformation $\Phi_0 (x) \coleq \tfrac{4}{3} x^{\nicefrac{3}{2}}$ is the upper halfplane $H = \set{z ~\big|~ \Im (z) > 0}$.
Clearly, the domain $U$ is swept out by complete WKB trajectories and every point is contained in a WKB strip.
Thus, for example, let us take $x_0 \coleq \tfrac{3}{4} e^{i\pi/3}$, so $\Phi_0 (x_0) = i$.
Let $U_0$ be the preimage under $\Phi_0$ of the horizontal strip $H_0 \coleq \set{z ~\big|~ \tfrac{1}{2} <  \Im (z) < \tfrac{3}{2}}$.
The Liouville transformation based at $x_0$ is simply $\Phi = \Phi_0 - i$, and the image of $U_0$ is the WKB strip $\set{z ~\big|~ \op{dist} (z, \Real) < \tfrac{1}{2}}$.
However, in this example, it is more convenient to work with the Liouville transformation $\Phi_0$.

\paragraph{Regularising the coefficients.}
\label{200814065244}
The first observation is that the Riccati equation \eqref{200701111940} does not satisfy hypothesis (3) of \autoref{211026151811}.
This is because the rescaled coefficients $a/\sqrt{\DD_0}$, $b/\sqrt{\DD_0}$, $c/\sqrt{\DD_0}$ are respectively $\tfrac{1}{2} x^{\!\nicefrac{-1}{2}}, 0, - \tfrac{1}{2} x^{\!\nicefrac{+1}{2}}$ which are unbounded on $U$.
This unboundedness is caused by two separate problems: one is that $x^{\!\nicefrac{+1}{2}}$ is unbounded at infinity, and the other is that $x^{\!\nicefrac{-1}{2}}$ is unbounded near the turning point at the origin.
The latter problem is remedied by restricting to $U_0$.
In order to remedy the first problem and proceed according to our method, it is necessary to regularise the coefficients of this Riccati equation as in \autoref{211103172432}.

If we make a change of the unknown variable $f \mapsto g = x^{\!\!\nicefrac{-1}{2}} f$ over $U$, then the Riccati equation \eqref{200701111940} gets transformed into
\eqntag{\label{200709152427}
	\hbar \del_x g = \sqrt{x} g^2 - \tfrac{\hbar}{2x} g - \sqrt{x}
\fullstop
}
This is equation \eqref{200711153318} from \autoref{211103172432} with \eqn{
	a' = \sqrt{x},
\qquad
	b' = \tfrac{1}{2} \hbar x^{-1},
\qquad
	c' = - \sqrt{x},
\qquad
	\chi = \sqrt{x} = \tfrac{1}{2} \sqrt{\DD}
\fullstop
}
The Riccati equation \eqref{200709152427} now satisfies hypothesis (3') from \autoref{211106163055} with regularising factor $\chi = \sqrt{x}$.

The coefficients of the formal solutions $\hat{g}_\pm$ of \eqref{200709152427} are given by $g^\pm_k (x) = d_k^\pm x^{\!\!\nicefrac{-3k}{2}}$.
Explicitly, the first few coefficients are
\eqn{
	g_0^\pm = \pm 1,
\qquad
	g_1^\pm = +\tfrac{1}{4} x^{\!\!\nicefrac{-3}{2}},
\qquad
	g_2^\pm = \mp\tfrac{5}{32} x^{-3},
\qquad
	g_3^\pm = + \tfrac{15}{64} x^{\!\!\nicefrac{-9}{2}},
\quad
	\ldots
\fullstop
}
We now follow the step-by-step procedure in the proof of \autoref{211026151811} in \autoref{211107195800}.

\paragraph{Step 0: Preliminary transformation.}
We begin by performing the preliminary transformations (one for each of $\pm$) of the unknown variable $g \mapsto \tilde{g}$ given by
\eqntag{\label{200814082235}
	g 
		= g_0^\pm + \hbar \big( g_1^\pm + \tilde{g} \big)
		= d_0^\pm + \hbar d_1^\pm x^{\!\!\nicefrac{-3}{2}} + \hbar \tilde{g}
		= \pm 1 + \tfrac{1}{4} \hbar x^{\!\!\nicefrac{-3}{2}} + \hbar \tilde{g}
\fullstop
}
They transform the regularised Riccati equation \eqref{200709152427} into a pair of Riccati equations
\eqntag{\label{200711161709}
	\tfrac{\hbar}{2\sqrt{x}} \del_x \tilde{g} - \tilde{g}
		= \hbar \big( \tfrac{1}{2} \tilde{g}^2 - d_2^\pm x^{-3} \big)
		= \hbar \big( \tfrac{1}{2} \tilde{g}^2 \pm \tfrac{5}{32} x^{-3} \big)
\fullstop
}
This is equation \eqref{200526092203} from \autoref{211107195800} with
\eqn{
	\tilde{a} = \tfrac{1}{2},
\qquad
	\tilde{b} = 0,
\qquad
	\tilde{c} = \pm \tfrac{5}{32} x^{-3}
\fullstop
}
Applying the Liouville transformation $\Phi_0$ to the Riccati equations \eqref{200711161709}, we get
\eqntag{\label{200711161928}
	\hbar \del_z \FF - \FF
		= \hbar \Big( \tfrac{1}{2} \FF^2 - d_2^\pm \big( 3 z /4 \big)^{-2} \Big)
		= \hbar \Big( \tfrac{1}{2} \FF^2 \pm \tfrac{5}{18} z^{-2} \Big)
\fullstop
}
The unknown variables $\tilde{g}$ and $\FF$ are related by $\tilde{g} (x, \hbar) = \FF \big( \tfrac{4}{3} x^{\!\nicefrac{4}{3}}, \hbar \big)$.
Equation \eqref{200711161928} is equation \eqref{210714172412} from \autoref{211107195800} with 
\eqn{
	\AA_0 = \pm \tfrac{5}{18} z^{-2},
\qquad
	\AA_1 = 0,
\qquad
	\AA_2 = \tfrac{1}{2}
\fullstop
}

\paragraph{Step 1: The analytic Borel transform.}
Since $\AA_i$ are independent of $\hbar$, it follows that their Borel transforms $\alpha_i$ are zero, and so the Borel transform of \eqref{200711161928} is the following PDE:
\eqntag{\label{200814080723}
	\del_z \varphi - \del_\xi \varphi = \tfrac{1}{2} \varphi \ast \varphi
\fullstop
}
This is equation \eqref{200117154820} from \autoref{211107195800} with
\eqn{
	\alpha_0 = \alpha_1 = \alpha_2 = 0,
\qquad
	a_0 = \pm \tfrac{5}{18} z^{-2},
\qquad
	a_1 = 0,
\qquad
	a_2 = \tfrac{1}{2}
\fullstop
}
In this case, the tubular neighbourhood $\Xi_+$ can be taken arbitrarily large.

\paragraph{Step 2: The integral equation.}
The PDE \eqref{200814080723} is easy to transform into an integral equation:
\eqntag{\label{200711164712}
	\varphi (z, \xi) = \varphi_0^\pm (z) + \frac{1}{2} \int_0^\xi \varphi \ast \varphi (z + t, \xi - t) \dd{t}
\fullstop{,}
}
where $\varphi (x, 0) = \varphi_0^\pm (z) \coleq a_0 (z) = \pm \tfrac{5}{18} z^{-2}$.
This is equation \eqref{190312202445} from \autoref{211107195800}.

\paragraph{Step 3: Method of successive approximations.}
The integral equation \eqref{200711164712} is solved by the method of successive approximations.
This method yields a sequence $\set{\varphi_n^\pm}_{n=0}^\infty$ of holomorphic functions given by $\varphi_0^\pm = a_0 = \pm \tfrac{5}{18} z^{-2}$, $\varphi_1^\pm = 0$, and for $n \geq 2$,
\eqns{
	\varphi_n^\pm 
		&= \frac{1}{2} \sum_{i + j = n-2} 
			\int_0^\xi \varphi_i^\pm \ast \varphi_j^\pm (z + t, \xi - t) \dd{t}
\\		&= \frac{1}{2} \sum_{i + j = n-2} 
			\int_0^\xi \int_0^{\xi - t} 
				\varphi_i^\pm (z + t, \xi - t - y)
				\varphi_j^\pm (z + t, y) \dd{y} \dd{t}
\fullstop
}
This is equation \eqref{180824203055} from \autoref{211107195800}.
It is easy to see that $\varphi_n = 0$ for all $n$ odd because $\varphi_1 = 0$.
The first few even terms of this sequence are:
{\small
\eqns{
	\varphi_0^\pm &= \pm \tfrac{5}{18} z^{-2}
\fullstop{;}
\\
	\varphi_2^\pm &= \tfrac{1}{12} \big(\pm \! \tfrac{5}{18}\big)^2 \tfrac{3z + 2\xi}{z^3 (z + \xi)^2} \xi^2
	\sim \tfrac{1}{6} \big(\pm \! \tfrac{5}{18}\big)^2 z^{-3} \xi
\rlap{\qquad\text{as $\xi \to + \infty$\fullstop{;}}}
\\
	\varphi_4^\pm &= \tfrac{1}{48} \big(\pm \! \tfrac{5}{18}\big)^3 \tfrac{\xi^4}{z^4 (z + \xi)^2}
	\sim \tfrac{1}{48} \big(\pm \! \tfrac{5}{18}\big)^3 z^{-4} \xi^2
\rlap{\qquad\text{as $\xi \to + \infty$\fullstop{;}}}
\\
	\varphi_6^\pm
	&= \tfrac{1}{2} \big(\pm \! \tfrac{5}{18}\big)^4 \left( \tfrac{\xi  \left(16 \xi^5+810 z^5+1650 \xi  z^4+915 \xi ^2 z^3+70 \xi ^3 z^2-8 \xi ^4 z\right)}{4320 z^5 (z + \xi)^2 (2z + \xi)}
+
\log \left(\tfrac{z}{z + \xi}\right)
\tfrac{7 \xi^2 + 27 z^2 + 28 \xi  z }{72 z^2 (2 z + \xi)^2}
\right)
\\
	&\sim \tfrac{1}{270} \big(\pm \! \tfrac{5}{18}\big)^4
		z^{-5} \xi^3
\rlap{\qquad\text{as $\xi \to + \infty$\fullstop{;}}}
\\
	\varphi_8^\pm
	&\sim \tfrac{7}{35 \, 640} \big(\pm \! \tfrac{5}{18}\big)^5
		z^{-6} \xi^4
\rlap{\qquad\text{as $\xi \to + \infty$\fullstop}}
}
}%
An exact expression for $\varphi_8^\pm$ involves logarithms and dilogarithms; it is very long and not very useful, occupying almost half of this page.
But the pattern is clear:
\eqn{
	\varphi_{2n}^\pm \in \OO \left( \tfrac{\LL^n}{n!} z^{-2} \big( \xi / z^2 \big)^n \right)
\rlap{\qquad\text{as $\xi \to + \infty$\fullstop{,}}}
}
for some constant $\LL > 0$ independent of $n$ and $z$.
It follows that the solution to the integral equation \eqref{200711164712} satisfies
\eqn{
	\varphi_\pm (z, \xi)
		= \sum_{n=0}^\infty \varphi_n (z, \xi)
		\quad\subdomeq\quad
			\sum_{n=0}^\infty \tfrac{1}{n!} z^{-2} \big( \LL \xi / z^2 \big)^n
		= z^{-2} e^{\LL \xi / z^2}
		\qqquad
{\text{as $\xi \to + \infty$\fullstop}}
}
This asymptotic inequality yields the exponential estimate \eqref{200119155725} from \autoref{211107195800} with $\AA = 4$ and $\KK = 0$.

\paragraph{Step 4: Laplace transform.}
Applying the Laplace transform to $\varphi_\pm$, we obtain exact solutions $\FF_\pm$ of the two Riccati equations \eqref{200711161928}:
\eqn{
	\FF_\pm (z, \xi) \coleq \int_0^{+\infty} e^{-\xi/\hbar} \varphi_\pm (z, \xi) \dd{\xi}
\fullstop
}
It is evident from the asymptotic behaviour of $\varphi_\pm$ as $\xi \to +\infty$ that this Laplace integral is uniformly convergent for all $z \in H_0$ and all $\hbar \in S_0 \coleq \set{ \Re (1/\hbar) > \LL }$.
Note that it is not uniformly convergent for $z \in H$, because the constant $\AA$ in the estimate for $\varphi_\pm$ grows like $|z|^{-2}$.

Finally, using the inverse Liouville transformation $\Phi^{-1}_0: z \mapsto \big( \frac{3}{4} z \big)^{2/3}$ to go back to the $x$-variable, we obtain two exact solutions of the Riccati equation \eqref{200709152427}:
\eqn{
	g_\pm (x, \hbar)
		\coleq \pm 1 + \tfrac{1}{4} \hbar x^{\!\!-\nicefrac{3}{2}}
			+ \hbar \int_0^{+\infty} e^{-\xi/\hbar}
				\varphi_\pm \big( \tfrac{4}{3} x^{\!\nicefrac{3}{2}}, \xi \big) 
				\dd{\xi}
\fullstop
}
Transforming back to the original Riccati equation \eqref{200701111940} via the identities \eqref{200814082235} and $f = x^{-\nicefrac{1}{2}} g$, we obtain two exact solutions of the original Riccati equation \eqref{200701111940}:
\eqn{
	f_\pm (x, \hbar) = \pm \sqrt{x} + \tfrac{1}{4x} \hbar + \hbar \sqrt{x} \int_0^{+\infty} e^{-\xi/\hbar} \varphi_\pm \big( \tfrac{4}{3} x^{\!\nicefrac{3}{2}}, \xi \big) \dd{\xi}
\fullstop
}
These are the two canonical exact solutions on $U_0$.

\subsection{Exact WKB Solutions of Schrödinger Equations}
\label{211119204333}

In this final subsection, we give an application of our existence and uniqueness result to deduce existence and uniqueness of the so-called \textit{exact WKB solutions} of the complex one-dimensional stationary Schrödinger equation
\eqntag{
\label{200228140445}
	\Big( \hbar^2 \del_x^2 - q (x, \hbar) \Big) \psi (x, \hbar) = 0
\fullstop
}
We keep the discussion here very brief; the details can be found in \cite{MY210623112236}.
The \textit{potential} function $q (x, \hbar)$ is defined on a domain in $\Complex_x \times \Complex_\hbar$ and usually assumed to have polynomial or even constant dependence on $\hbar$.
In view of the work done in this article, we can assume a much more general $\hbar$-dependence, but for simplicity of presentation let us suppose that $q$ is a polynomial in $\hbar$:
\eqn{
	q (x, \hbar) = q_0 (x) + q_1 (x) \hbar + \cdots + q_n (x) \hbar^n
\fullstop
}
The WKB method begins by searching for a solution in the form of the \textit{WKB ansatz}:
\eqntag{
\label{200228142121}
	\psi (x, \hbar) = \exp \left( - \frac{1}{\hbar} \int_{x_\ast}^x f (t, \hbar) \dd{t} \right)
\fullstop{,}
}
where $x_0$ is a chosen basepoint, and $f = f(x, \hbar)$ is the unknown function to be solved for.
Substituting this expression back into the Schrödinger equation, we find that the WKB ansatz \eqref{200228142121} is a solution if the function $f$ satisfies the singularly perturbed Riccati equation
\eqntag{
\label{200228143858}
	\hbar \del_x f = f^2 - q
\fullstop
}
Locally in $x$, this Riccati equation has two formal solutions $\hat{f}_\pm$ with locally holomorphic leading-orders $f^\pm_0 = \pm \sqrt{q_0}$.
They give rise to a pair of \dfn{formal WKB solutions}, which by definition are the following formal expressions:
\eqntag{
	\hat{\psi}_\pm (x, \hbar) \coleq \exp \left( - \frac{1}{\hbar} \int_{x_0}^x \hat{f}_\pm (t, \hbar) \dd{t} \right)
\fullstop
}
An \dfn{exact WKB solution} is any analytic solution $\psi (x, \hbar)$ to the \Schrodinger equation \eqref{200228140445} which is asymptotic as $\hbar \to 0$ in the right halfplane to a formal WKB solution.

\begin{thm}[\textbf{Local Existence of Exact WKB Solutions}]{200309160221}
\mbox{}\\
Consider a \Schrodinger equation \eqref{200228140445} with potential $q = q (x, \hbar)$ which is a polynomial in $\hbar$ whose coefficients are rational functions on $\Complex_x$.
We make the following two assumptions:
\begin{enumerate}
\item Suppose that the poles of $q$ have order at least $2$ and that they are completely specified in the leading-order term $q_0$.
More precisely, if $D \subset \Complex_x \cup \set{\infty}$ is the set of poles of $q_0$, we assume that every pole $x_\infty \in D$ has order $\op{ord} (q_0) \geq 2$; we assume furthermore that every $q_k$ has no poles other than $D$ and that for every $x_\infty \in D$ we have $\op{ord} (q_i) \leq \op{ord} (q_0)$ .
\item Fix a basepoint point $x_0 \in \Complex_x$ which is neither a pole nor a zero of $q_0$, and assume that the real one-dimensional curve 
\eqntag{
	\hspace{-1cm}
	\Gamma (x_0) \coleq \set{ x \in \Complex_x ~\Big|~ \Im \left( ~\int\nolimits_{x_0}^x \sqrt{q_0 (t)} \dd{t} \right) = 0 }
\fullstop{,}
}
limits at both ends into points of $D$ (not necessarily distinct).
\end{enumerate}

Then the Schrödinger equation \eqref{200228140445} has a canonical local basis of exact WKB solutions $\psi_\pm$ normalised at $x_0$:
\eqntag{\label{200229142913}
	\psi_\pm (x_0, \hbar) = 1
\qtext{and}
	\psi_\pm (x, \hbar)
		\sim
		\hat{\psi}_\pm (x, \hbar)
\text{\quad as $\hbar \to 0$ in the right halfplane\fullstop}
}
\end{thm}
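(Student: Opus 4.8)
The plan is to reduce the theorem to the singularly perturbed Riccati equation \eqref{200228143858} and then invoke the exact existence and uniqueness theory for monic Riccati equations (\autoref{211112183728}) together with the Borel-summability-of-primitives corollary (\autoref{211103142954}). First I would record that substituting the WKB ansatz \eqref{200228142121} into \eqref{200228140445} shows that $\psi = \exp\big(-\tfrac1\hbar\int_{x_0}^x f\big)$ is a solution of the Schr\"odinger equation exactly when $f$ solves \eqref{200228143858}, which is the monic Riccati equation \eqref{211112182040} with $p\equiv 0$ and with $q$ replaced by $-q$; its leading-order discriminant is $\DD_0 = 4q_0$. Since by assumption (2) the basepoint $x_0$ is neither a pole nor a zero of $q_0$, it is a regular point of this Riccati equation, $\DD_0\not\equiv 0$, and a univalued branch $\sqrt{\DD_0}=2\sqrt{q_0}$ exists near $x_0$. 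I fix the phase $\theta=0$, so that $\bar A_0=[-\tfrac\pi2,\tfrac\pi2]$ is the closed arc of the right halfplane, and a sign $\alpha\in\set{+,-}$.

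Then I would verify that assumptions (1)--(2) imply all the hypotheses of \autoref{211112183728}, which is the main work. The curve $\Gamma(x_0)$ in assumption (2) is exactly the WKB $0$-trajectory through $x_0$ in the sense of \autoref{211028202110}, the factor of $2$ between $\sqrt{\DD_0}$ and $\sqrt{q_0}$ being immaterial for its defining equation; assumption (2) says that both of its rays are infinite and limit to poles $x_\infty^\pm\in D$, which by assumption (1) have order at least $2$ as poles of $q_0$, hence of $\DD_0$. Thus, for each sign $\alpha$, the WKB $(0,\alpha)$-ray emanating from $x_0$ tends to an infinite critical point of order $\geq 2$; since $a_0\equiv 1$ is nowhere vanishing, hypothesis (1) of \autoref{211121155120} holds, so $x_0$ lies in a WKB $(0,\alpha)$-halfstrip. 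Moreover, assumption (1) gives $\op{ord}(q_k)\leq\op{ord}(q_0)=\op{ord}(\DD_0)$ at every pole in $D$, while $\op{ord}(p_k)=+\infty$, which is precisely hypothesis (2) of \autoref{211121161720}; and since $\op{ord}(\DD_0)\geq 2$ at each such pole, hypothesis (3) of \autoref{211112183728} is automatic. Hence \autoref{211121161720} shows that all hypotheses of \autoref{211112183728} are met, so for each $\alpha$ there is a canonical exact solution $f_\alpha$ of \eqref{200228143858} on a neighbourhood $U_0\times S_0$ of $x_0$ --- with $U_0$ a small disc and $S_0$ a Borel disc bisected by $\Real_+$ --- which is Gevrey asymptotic to $\hat f_\alpha$ as $\hbar\to 0$ along $\bar A_0$, uniformly for $x\in U_0$, as in \eqref{211102180713}.

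It then remains to assemble the WKB solutions. I would set $\psi_\pm(x,\hbar)\coleq\exp\big(-\tfrac1\hbar\int_{x_0}^x f_\pm(t,\hbar)\dd{t}\big)$ on $U_0\times S_0$. By construction $\psi_\pm(x_0,\hbar)=1$, and reversing the substitution above shows each $\psi_\pm$ solves \eqref{200228140445}. For the asymptotics, write $\psi_\pm = \exp\big(-\tfrac1\hbar\int_{x_0}^x f_0^\pm\big)\,\Psi_\pm$ with $\Psi_\pm\coleq\exp\big(-\tfrac1\hbar\int_{x_0}^x(f_\pm-f_0^\pm)\big)$; the asymptotics of $f_\pm$ together with \autoref{211103142954} imply that $\tfrac1\hbar\int_{x_0}^x(f_\pm-f_0^\pm)$ is Gevrey asymptotic, uniformly for $x\in U_0$, to the formal series $\int_{x_0}^x f_1^\pm + \hbar\int_{x_0}^x f_2^\pm+\cdots$, and composing with the entire exponential function shows that $\Psi_\pm$ is Gevrey asymptotic to the formal power-series part of $\hat\psi_\pm$; that is, $\psi_\pm\sim\hat\psi_\pm$ as $\hbar\to0$ in the right halfplane. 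In particular $\Psi_\pm\to\exp\big(-\int_{x_0}^x f_1^\pm\big)\neq 0$, so $\psi_\pm$ is nonvanishing for small $\hbar$, and the Wronskian $\psi_+\,\del_x\psi_- - \psi_-\,\del_x\psi_+ = -\tfrac1\hbar(f_- - f_+)\,\psi_+\psi_-$ is nonzero because $f_- - f_+ \to -2\sqrt{q_0}\neq 0$ near $x_0$; hence $\set{\psi_+,\psi_-}$ is a basis. For canonicity and uniqueness: any solution $\psi$ of \eqref{200228140445} with $\psi(x_0,\hbar)=1$ and $\psi\sim\hat\psi_\alpha$ is nonvanishing near $x_0$ for small $\hbar$, so $f\coleq-\hbar\,\del_x\log\psi$ solves \eqref{200228143858} with $f\sim\hat f_\alpha$; by the uniqueness part of \autoref{211112183728} we get $f=f_\alpha$, hence $\psi=\psi_\alpha$.

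The only genuinely substantive step is the translation carried out in the second paragraph: one must verify that the two concrete hypotheses of the theorem --- poles of order $\geq 2$ carried entirely by the leading order with $\op{ord}(q_k)\leq\op{ord}(q_0)$, and $\Gamma(x_0)$ limiting into $D$ at both ends --- deliver, for each sign $\alpha$, a WKB $(0,\alpha)$-halfstrip through $x_0$ together with exactly the order conditions needed to apply \autoref{211121161720} and hence \autoref{211112183728}. This includes the bookkeeping for a possible pole $x_\infty=\infty$ and the point that the order conditions of assumption (1) are precisely what make the $\sqrt{\DD_0}$- and $\DD_0$-scaled Gevrey hypotheses of \autoref{211112183728} hold out to the poles, while near the regular point $x_0$ these scales are bounded so that there the scaled and ordinary asymptotics agree. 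Everything afterwards --- constructing $\psi_\pm$, checking the ODE and normalisation, the Wronskian, and uniqueness --- is essentially formal.
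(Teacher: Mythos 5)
Your proposal is correct and follows essentially the same route as the paper: reduce to the monic Riccati equation $\hbar\del_x f = f^2 - q$ with $\DD_0 = 4q_0$, check that assumptions (1)--(2) deliver the hypotheses of \autoref{211112183728} via \autoref{211121161720} (the curve $\Gamma(x_0)$ being a generic WKB trajectory), exponentiate the canonical exact solutions $f_\pm$, and verify the basis property through the Wronskian $f_+ - f_-$ at $x_0$. You simply supply more detail than the paper's very terse proof, in particular on the asymptotics of $\psi_\pm$ via \autoref{211103142954} and on uniqueness.
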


\begin{proof}
We consider the corresponding Riccati equation \eqref{200228143858}.
Its leading-order discriminant is simply $\DD_0 = 4q_0$.
The assumptions on the pole orders of $q$ and the fact that $\Gamma (x_0)$ flows into $D$ at both ends imply that $\Gamma (x_0)$ is a generic WKB trajectory.
Thus, all the hypotheses of \autoref{211112183728} (or more specifically of \autoref{211121161720}) are satisfied, so \eqref{200228143858} has a canonical pair of exact solutions $f_\pm$ defined for $x$ near $x_0$ and asymptotic to $\hat{f}_\pm$ as $\hbar \to 0$ in the right halfplane.
The exact WKB solutions $\psi_\pm$ are then defined as $\psi_\pm (x, \hbar) \coleq \exp \left( - \hbar^{-1} \int_{x_0}^x f_\pm (t, \hbar) \dd{t} \right)$.
They form a basis of solutions near $x_0$ because the Wronskian of $\psi_+$ and $\psi_-$ evaluated at $x = x_0$ is $f_+ (x_0, \hbar) - f_- (x_0, \hbar) \neq 0$.
\end{proof}

\begin{appendices}
\appendixsectionformat
\section{Basic Notions from Asymptotic Analysis}
\setcounter{section}{1}
\setcounter{paragraph}{0}
\label{210714114437}

\paragraph{Sectorial domains.}
\label{210217114252}
Fix a circle $\Sphere^1 \coleq \Real / 2\pi \Integer$ once and for all.
We refer to its points as \dfn{directions}, and we think of it as the set of directions at the origin in $\Complex_\hbar$ when $\hbar$ is written in polar coordinates.
More precisely, we consider the \textit{real-oriented blowup} of the complex plane $\Complex_\hbar$ at the origin, which by definition is the bordered Riemann surface $[\Complex_\hbar : 0] \coleq \Real_+ \times \Sphere^1$ with coordinates $(r, \theta)$, where $\Real_+$ is the nonnegative reals.
The projection $[\Complex_\hbar : 0] \to \Complex_\hbar$ sends $(r, \theta) \mapsto r e^{i\theta}$, which is a biholomorphism away from the circle of directions.
See \autoref{210618080005} for an illustration.
\begin{figure}[t]
\centering
\includegraphics{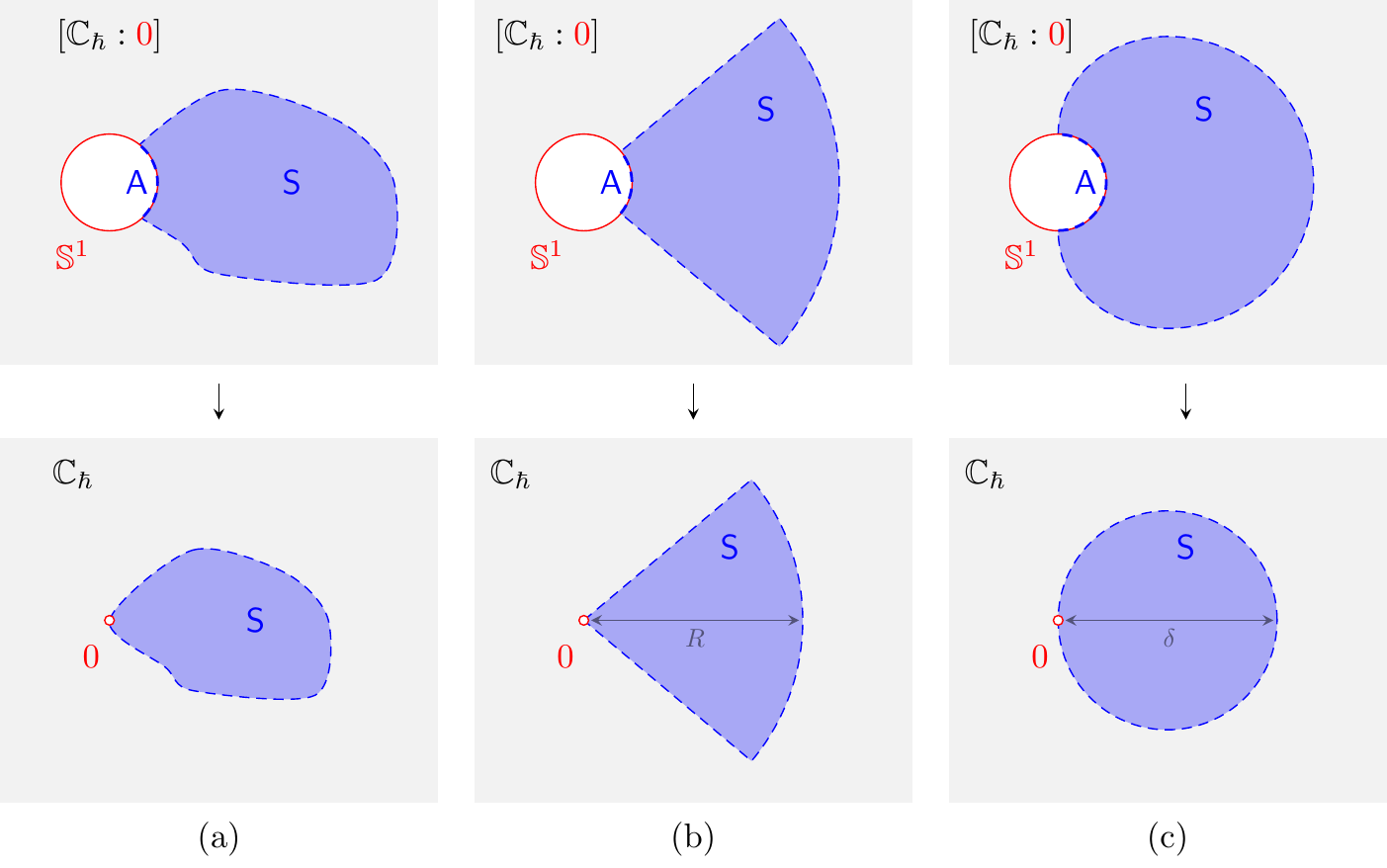}
\caption{Examples of sectorial domains.
(a): an arbitrary sectorial domain with opening $A$.
(b): straight sector with opening $A$ and some radius $\RR > 0$.
(c): a Borel disc with diameter $\delta > 0$.}
\label{210618080005}
\end{figure}

A \dfn{sectorial domain} near the origin in $\Complex_\hbar$ is a simply connected domain $S \subset \Complex_\hbar^\ast = \Complex_\hbar \setminus \set{0}$ whose closure $\bar{S}$ in $[\Complex_\hbar : 0]$ intersects the boundary circle $\Sphere^1$ in a closed arc $\bar{A} \subset \Sphere^1$ with nonzero length.
In this case, the open arc $A$ is called the \dfn{opening} of $S$, and its length $|A|$ is called the \dfn{opening angle} of $S$.
A \dfn{proper subsectorial domain} $S_0 \subset S$ is one whose closure $\bar{S}_0$ in $[\Complex_\hbar : 0]$ is contained in $S$.
This means in particular that the opening $A_0$ of $S_0$ is compactly contained in $A$; i.e., $\bar{A}_0 \subset A$.

The simplest example of a sectorial domain $S$ is of course a \textit{straight sector} of radius $\RR > 0$ and opening $A$, which is the set of points $\hbar \in \Complex_\hbar$ satisfying $\arg (\hbar) \in A$ and $0 < |\hbar| < \RR $.
The most typical example of a sectorial domain encountered in this paper is a \dfn{Borel disc} of \dfn{diameter} $\delta >0$:
\eqntag{
	S (\delta) \coleq \set{ \hbar \in \Complex_\hbar ~\big|~ \Re (1/\hbar) > 1/\delta }
\fullstop
}
Its opening is $A = (-\pi/2, +\pi/2)$.
Notice that any straight sector with opening $A$ contains a Borel disc, but a Borel disc contains no straight sectors with opening $A$.
It is also not difficult to see that any sectorial domain with opening $A$ contains a Borel disc.
More generally, we will consider Borel discs bisected by some direction $\theta \in \Sphere^1$:
\eqntag{
	S_\theta (\delta) \coleq \set{ \hbar \in \Complex_\hbar ~\big|~ \Re (e^{i\theta}/\hbar) > 1/\delta }
\fullstop
}

\subsection{Poincaré Asymptotics in One Dimension}
\label{210220155700}

First, for the benefit of the reader and to fix some notation, let us briefly recall some basic notions from asymptotic analysis in one complex variable.
We denote by $\Complex \bbrac{\hbar}$ the ring of formal power series in $\hbar$, and by $\Complex \set{\hbar}$ the ring of convergent power series in $\hbar$.
Fix an arc of directions $A \subset \Sphere^1$.
We denote by $\cal{O} (S)$ the ring of holomorphic functions on a sectorial domain $S \subset \Complex_\hbar$ with opening $A$.

\paragraph{Sectorial germs.}
\label{210225113610}
For the purpose of asymptotic behaviour as $\hbar \to 0$, the actual nonzero radial size of $S$ is irrelevant.
So it is better to consider \textit{germs} of holomorphic functions defined on sectorial domains with opening $A$, formally defined next.

\begin{defn}{210616134514}
A \dfn{sectorial germ} on $A$ is an equivalence class of pairs $(f,S)$ where $S \subset \Complex_\hbar$ is a sectorial domain with opening $A$ and $f$ is a holomorphic function on $S$.
Two such pairs $(f,S)$ and $(f',S')$ are considered equivalent if the intersection ${S \cap S'}$ contains a sectorial domain $S''$ with opening $A$ on which $f$ and $f'$ are equal.
\end{defn}

Sectorial germs on $A$ form a ring which we denote by $\cal{O} (A)$.
For any sectorial domain $S \subset \Complex_\hbar$ with opening $A$, there is a map $\cal{O} (S) \to \cal{O} (A)$ that sends a holomorphic function $f$ to the corresponding sectorial germ.

\emph{Terminology:} For the benefit of the reader who is uncomfortable with the language of germs, we stress that every sectorial germ on $A$ can be represented by an actual holomorphic function $f \in \cal{O} (S)$ on some sectorial domain $S$.
In fact, we normally denote the equivalence class of any pair $(f, S)$ simply by ``$f$'' and we often even refer to it as a \textit{holomorphic function on $A$}.

\paragraph{Poincaré asymptotics.}
\label{210225114212}
Recall that a holomorphic function $f \in \cal{O} (S)$ defined on a sectorial domain $S$ is said to admit (\dfn{Poincaré}) \dfn{asymptotics as $\hbar \to 0$ along $A$} if there is a formal power series $\hat{f} (\hbar) \in \Complex \bbrac{\hbar}$ such that the order-$n$ remainder
\eqntag{\label{200720152534}
	\RR_n (\hbar) \coleq f(\hbar) - \sum_{k=0}^{n-1} f_k \hbar^k
}
is bounded by $\hbar^n$ for all sufficiently small $\hbar \in S$.
That is, for every ${n \geq 0}$, and every compactly contained subarc $A_0 \Subset A$, there is a sectorial subdomain $S_0 \subset S$ with opening $A_0$ and a real constant $\CC_{n,0} > 0$ such that
\eqntag{\label{200720153758}
	\big| \RR_n (\hbar) \big| \leq \CC_{n,0} |\hbar|^n
}
for all $\hbar \in S_0$.
The constants $\CC_{n,0}$ may depend on $n$ and the opening $A_0$.
If this is the case, we write
\eqntag{\label{200720175735}
	f (\hbar) \sim \hat{f} (\hbar)
\qqqquad
	\text{as $\hbar \to 0$ along $A$\fullstop}
}
Sectorial germs with this property form a subring $\cal{A} (A) \subset \cal{O}(A)$, and the asymptotic expansion map defines a ring homomorphism $\ae : \cal{A}(A) \to \Complex \bbrac{\hbar}$.

\paragraph{Asymptotics along a closed arc.}
\label{210225134124}
Furthermore, we will write
\eqntag{\label{210220160756}
	f (\hbar) \sim \hat{f} (\hbar)
\qqqquad
	\text{as $\hbar \to 0$ along $\bar{A}$\fullstop{,}}
}
if the constants $\CC_{n,0}$ in \eqref{200720153758} can be chosen uniformly for all compactly contained subarcs $A_0 \Subset A$ (i.e., independent of $A_0$ so that $\CC_{n,0} = \CC_n$ for all $n$).
Obviously, if a function $f$ admits asymptotics along $A$, then it admits asymptotics along $\bar{A}_0$ for any $A_0 \Subset A$.
Sectorial germs with this property form a subring $\cal{A} (\bar{A}) \subset \cal{A}(A)$.

For example, the function $e^{-1/\hbar}$ admits asymptotics as $\hbar \to 0$ along the open arc $A = (-\pi/2, +\pi/2)$ (where it is asymptotic to $0$), but \textit{not} along the closed arc $\bar{A} = [-\pi/2, +\pi/2]$, because the constants $\CC_{n,0}$ in the asymptotic estimates \eqref{200720153758} blow up as $A_0$ approaches $A$.
Thus, $e^{-1/\hbar}$ is an element of $\cal{A} (A)$ but not of $\cal{A} (\bar{A})$.

\subsection{Uniform Poincaré Asymptotics}
\label{210225141606}

Now, suppose we also have a domain $U \subset \Complex_x$.

\paragraph{Power series with holomorphic coefficients.}
\label{200721171954}
We denote by $\cal{O} (U) \bbrac{\hbar}$ the set of formal power series in $\hbar$ with holomorphic coefficients on $U$:
\eqntag{\label{200624150851}
	\hat{f} (x, \hbar) = \sum_{k=0}^\infty f_k (x) \hbar^k 
	\quad \in \quad
	\cal{O} (U) \bbrac{\hbar}
\fullstop
}
Let us also introduce the subsets $\cal{O} (U) \set{\hbar}$ and $\cal{O}_\text{loc} (U) \set{\hbar}$ of $\cal{O} (U) \bbrac{\hbar}$ consisting of, respectively, uniformly and locally-uniformly convergent power series on $U$.
We will not have any use for pointwise convergence (or other pointwise regularity statements), so we do not introduce any special notation for those.

\paragraph{Semisectorial germs.}
\label{210226112200}
Since we are only interested in keep track of the asymptotic behaviour as $\hbar \to 0$ along $A$, we focus our attention on holomorphic functions of $(x, \hbar)$ which behave like sectorial germs in $\hbar$.
More precisely, we introduce the following definition.


\begin{defn}{210226161539}
A \dfn{semisectorial germ} on $(U; A)$ is an equivalence class of pairs $(f,\mathbb{U})$ where $f$ is a holomorphic function on the domain $\mathbb{U} \subset U \times \Complex_\hbar^\ast$ with the following property: for every point $x_0 \in U$, there is a neighbourhood $U_0 \subset U$ of $x_0$ and a sectorial domain $S_0$ with opening $A$ such that $U_0 \times S_0 \subset \mathbb{U}$.
Any two such pairs $(f,\mathbb{U})$ and $(f',\mathbb{U}')$ are considered equivalent if, for every $x_0 \in U$, the intersection $S_0 \cap S'_0$ contains a sectorial domain $S''_0$ with opening $A$ such that the restrictions of $f$ and $f'$ to $U_0 \times S''_0$ are equal.
\end{defn}

\emph{Terminology:} We will often abuse terminology and refer to semisectorial germs $f \in \cal{O} (U;A)$ as \textit{holomorphic functions on $(U;A)$}.

Typically, $\mathbb{U}$ is a product domain $U \times S$ for some $S$ or a (possibly countable) union of product domains.
Notice that in particular the projection of $\mathbb{U}$ onto the first component is necessarily $U$.
Semisectorial germs form a ring which we denote by $\cal{O} (U; A)$.
For any domain $\UUU \subset \Complex_{x\hbar}^2$ as above, there is a map $\cal{O} (\UUU) \to \cal{O} (U;A)$ sending a holomorphic function $f$ on $\UUU$ to the corresponding semisectorial germ; i.e., the equivalence class of $f$ in $\cal{O} (U;A)$.
There is also a map $\cal{O}_\text{loc}  (U) \set{\hbar} \to \cal{O} (U;A)$ for any $A$ given by restriction.

\paragraph{Uniform Poincaré asymptotics.}
\label{200721171635}
Consider the product space $\UUU = U \times S$ where $S$ is a sectorial domain with opening $A$, or more generally let $\UUU$ be a domain of the form described in \autoref{210226161539}.
Recall that a holomorphic function $f \in \cal{O} (\UUU)$ is said to admit (pointwise Poincaré) \dfn{asymptotics as $\hbar \to 0$ along $A$} if there is a formal power series $\hat{f} (x, \hbar) \in \cal{O} (U) \bbrac{\hbar}$ such that for every ${n \geq 0}$, every $x \in U$, and every compactly contained subarc $A_0 \Subset A$, there is a sectorial domain $S_0$ with opening $A_0$ and a real constant $\CC_{n,x,0} > 0$ which satisfies the following inequality:
\eqntag{\label{200305151735}
	\Big| \RR_n (x, \hbar) \Big|
		= \left| f(x, \hbar) - \sum_{k=0}^{n-1} f_k (x) \hbar^k \right|
		\leq \CC_{n,x,0} |\hbar|^n
}
for all $\hbar \in S_0$.
The constant $\CC_{n,x,0}$ may depend on $n,x$, and the opening $A_0$.
We say that $f$ admits \dfn{uniform asymptotics} \textit{on $U$ as $\hbar \to 0$ along $A$} if $\CC_{n,x,0}$ can be chosen to be independent of $x$ (i.e., so that $\CC_{n,x,0} = \CC_{n,0}$).
We also say that $f$ admits \dfn{locally uniform asymptotics} \textit{on $U$ as $\hbar \to 0$ along $A$} if every point in $U$ has a neighbourhood on which $f$ admits uniform asymptotics.
In these cases, we write, respectively,
\eqnstag{\label{210225121434}
	f (x,\hbar) &\sim \hat{f} (x,\hbar)
\qqqquad
	\text{as $\hbar \to 0$ along $A$, unif. $\forall x \in U$\fullstop{;}}
\\
\label{210225121453}
	f (x,\hbar) &\sim \hat{f} (x,\hbar)
\qqqquad
	\text{as $\hbar \to 0$ along $A$, loc.unif. $\forall x \in U$\fullstop}
}
We denote the subrings consisting of semisectorial germs satisfying \eqref{210225121434} or \eqref{210225121453} respectively by $\cal{A} (U; A)$ and $\cal{A}_\text{loc} (U; A)$.
The asymptotic expansion map defines a ring homomorphism $\ae : \cal{A}_\text{loc}  (U;A) \to \cal{O} (U) \bbrac{\hbar}$.
An elementary application of the Cauchy integral formula shows that the ring $\cal{A}_\text{loc}  (U; A)$ (but not $\cal{A} (U; A)$) is preserved under differentiation with respect to $x$; that is, $\del_x \big( \cal{A}_\text{loc}  (U; A) \big) \subset \cal{A}_\text{loc}  (U; A)$.

\paragraph{Uniform Poincaré asymptotics along a closed arc.}
\label{210225142705}
If in addition to \eqref{210225121434} or \eqref{210225121453}, the constants $\CC_{n,x,0}$ can be chosen uniformly for all $A_0 \Subset A$ (i.e., so that $\CC_{n,x,0} = \CC_{n,x}$ for all $n$ and $x$), we will write, respectively,
\eqnstag{\label{210225123426}
	f (x,\hbar) &\sim \hat{f} (x,\hbar)
\qqqquad
	\text{as $\hbar \to 0$ along $\bar{A}$, unif. $\forall x \in U$\fullstop{;}}
\\
\label{210225123428}
	f (x,\hbar) &\sim \hat{f} (x,\hbar)
\qqqquad
	\text{as $\hbar \to 0$ along $\bar{A}$, loc.unif. $\forall x \in U$\fullstop}
}
Holomorphic functions satisfying these conditions form subrings which we denote respectively by $\cal{A} (U; \bar{A}) \subset \cal{A} (U; A) $ and $\cal{A}_\text{loc} (U; \bar{A}) \subset \cal{A}_\text{loc} (U; A)$.
Again, the ring $\cal{A}_\text{loc} (U; \bar{A})$ (but not the ring $\cal{A} (U; \bar{A})$) is preserved under differentiation by $x$.

\subsection{Gevrey Asymptotics}
\label{210224181219}
\enlargethispage{1cm}

For the purposes of the main construction in this paper, the notion of Poincaré asymptotics is too weak.
A powerful and systematic way to refine Poincaré asymptotics is known as \textit{Gevrey asymptotics}.
The basic principle behind it is to strengthen the asymptotic requirements by specifying the dependence on $n$ of the constants $\CC_{n,0}$ in \eqref{200720153758} and $\CC_{n,x,0}$ in \eqref{200305151735}.
In this paper, we use only the simplest Gevrey regularity class (more properly known as \textit{1-Gevrey asymptotics}) which requires the asymptotic bounds to grow essentially like $n!$.
See, for example, \cite[\S1.2]{MR3495546} for a more general introduction to Gevrey asymptotics.
As before, for the benefit of the reader we first recall Gevrey asymptotics in one complex dimension.

\paragraph{Gevrey asymptotics in one dimension.}
\label{200722160857}
A holomorphic function $f \in \cal{O} (S)$ defined on a sectorial domain $S$ with opening $A$ is said to admit \dfn{Gevrey asymptotics as $\hbar \to 0$ along $A$} if the constants $\CC_{n,0}$ in \eqref{200720153758} depend on $n$ like $\CC_0 \MM_0^n n!$.
More explicitly, there is a formal power series $\hat{f} (\hbar) \in \Complex \bbrac{\hbar}$ such that for every compactly contained subarc $A_0 \Subset A$, there is a sectorial domain $S_0 \subset S$ with opening $A_0 \Subset A$ and real constants $\CC_0, \MM_0 > 0$ which for all $n \geq 0$ give the bounds
\eqntag{\label{200722160158}
	\big| \RR_n (\hbar) \big| \leq \CC_0 \MM_0^n n! |\hbar|^n
}
for all $\hbar \in S_0$.
We will use the symbol ``$\simeq$'' to distinguish Gevrey asymptotics from Poincaré asymptotics.
Thus, if $f$ admits Gevrey asymptotics as $\hbar \to 0$ along $A$, we will write
\eqntag{\label{210225131044}
	f (\hbar) \simeq \hat{f} (\hbar)
\qqqquad
	\text{as $\hbar \to 0$ along $A$\fullstop}
}
We denote the subring of sectorial germs satisfying this property by $\cal{G} (A) \subset \cal{A} (A)$.
If in addition to \eqref{200722160158}, the constants $\CC_0, \MM_0$ can be chosen uniformly for all $A_0 \Subset A$, then we will write
\eqntag{\label{210225134416}
	f (\hbar) \simeq \hat{f} (\hbar)
\qqqquad
	\text{as $\hbar \to 0$ along $\bar{A}$\fullstop}
}
We denote the subring of sectorial germs satisfying this property by $\cal{G} (\bar{A}) \subset \cal{G} (A)$.
Explicitly, $f \in \cal{O} (\bar{A})$ if and only if there is a sectorial domain $S_0$ with opening $A$ and real constants $\CC, \MM > 0$ which give the following bounds for all $n \geq 0$ and $\hbar \in S_0$:
\eqntag{\label{210225134829}
	\big| \RR_n (\hbar) \big| \leq \CC \MM^n n! |\hbar|^n
\fullstop
}

\paragraph{Gevrey series in one dimension.}
\label{200722151439}
It is easy to see that if $f \in \cal{G} (A)$ then the coefficients of its asymptotic expansion also grow essentially like $n!$.
By definition, a formal power series $\hat{f} (\hbar) = \sum f_n \hbar^n \in \Complex \bbrac{\hbar}$ is a \dfn{Gevrey power series} if there are constants $\CC, \MM > 0$ such that for all $n \geq 0$,
\eqntag{\label{200723182724}
	| f_n | \leq \CC \MM^n n!
\fullstop
}
Gevrey series form a subring $\Gevrey \bbrac{\hbar} \subset \Complex \bbrac{\hbar}$, and the asymptotic expansion map restricts to a ring homomorphism $\ae : \cal{G}(A) \to \Gevrey \bbrac{\hbar}$.
The ring $\cal{G} (\bar{A})$ for an arc with opening $|A| = \pi$ plays a central role in Gevrey asymptotics because it is possible to identify and describe a subclass $\bar{\Gevrey} \bbrac{\hbar} \subset \Gevrey \bbrac{\hbar}$ such that the asymptotic expansion map restricts to a bijection $\ae : \cal{G} (\bar{A}) \iso \bar{\Gevrey} \bbrac{\hbar}$.
This identification is done using a theorem of Nevanlinna and requires techniques from the Borel-Laplace theory, see \autoref{210616130753}.

\paragraph{Examples in one dimension.}
\label{210617074256}
Any function which is holomorphic at $\hbar = 0$ automatically admits Gevrey asymptotics as $\hbar \to 0$ along any arc: its asymptotic expansion is nothing but its convergent Taylor series at $\hbar =  0$ whose coefficients necessarily grow at most exponentially.
Also, if a function admits Gevrey asymptotics as $\hbar \to 0$ along a strictly larger arc $A'$ which contains the closed arc $\bar{A}$, then it automatically admits Gevrey asymptotics along $\bar{A}$.

The function $e^{-1/\hbar}$ admits Gevrey asymptotics as $\hbar \to 0$ in the right halfplane where it is asymptotic to $0$.
So if $A = (-\pi/2, +\pi/2)$, then $e^{-1/\hbar} \in \cal{G} (A)$, but $e^{-1/\hbar} \not\in \cal{G} (\bar{A})$ as discussed before.
On the other hand, the function $e^{-1/\hbar^\alpha}$ for any real number $0 < \alpha < 1$ does not admit Gevrey asymptotics as $\hbar \to 0$ along any arc.

\paragraph{Gevrey series with holomorphic coefficients.}
\label{210225135829}
Now, suppose again that in addition we have a domain $U \subset \Complex_x$.
A formal power series $\hat{f} (x, \hbar) \in \cal{O} (U) \bbrac{\hbar}$ on $U$ is called a \dfn{Gevrey series} if, for every $x \in U$, there are constants $\CC_x, \MM_x > 0$ such that, for all $n \geq 0$,
\eqntag{\label{190303174313}
	\big| f_n (x) \big| \leq \CC_x \MM_x^n n!
\fullstop
}
Furthermore, $\hat{f}$ is a \dfn{uniformly Gevrey series} on $U$ if the constants $\CC_x, \MM_x$ can be chosen to be independent of $x \in U$.
$\hat{f}$ is a \dfn{locally uniformly Gevrey series} on $U$ if every $x_0 \in U$ has a neighbourhood $U_0 \subset U$ where $\hat{f}$ is uniformly Gevrey.
Such power series form subrings $\cal{G} (U) \bbrac{\hbar}$ and $\cal{G}_\text{loc}  (U) \bbrac{\hbar}$ of $\cal{O} (U) \bbrac{\hbar}$ respectively.
The ring $\cal{G}_\text{loc} (U) \bbrac{\hbar}$ is preserved by $\del_x$.

\paragraph{Uniform Gevrey asymptotics.}
\label{210225135947}
Again, consider the product space $\UUU = U \times S$ where $S$ is a sectorial domain with opening $A$, or more generally let $\UUU$ be a domain of the form described in \autoref{210226161539}.
A holomorphic function $f \in \cal{O} (\UUU)$ is said to admit (pointwise) \dfn{Gevrey asymptotics on $U$ as $\hbar \to 0$ along $A$} if for every $x \in U$ and every compactly contained subarc $A_0 \Subset A$, there is a sectorial domain $S_0$ with opening $A_0$ and constants $\CC_{x,0}, \MM_{x,0} > 0$ such that for all $n \geq 0$ and all $\hbar \in S_0$,
\eqntag{\label{200624162243}
	\big| \RR_n (x, \hbar) \big|
		= \left| f(x, \hbar) - \sum_{k=0}^{n-1} f_k (x) \hbar^k \right|
		\leq \CC_{x,0} \MM^n_{x,0} n! |\hbar|^n
\fullstop
}
We say that $f$ admits \dfn{uniform Gevrey asymptotics} on $U$ as $\hbar \to 0$ along $A$ if the constants $\CC_{x,0}, \MM_{x,0}$ can be chosen to be independent of $x \in U$ (i.e., so that $\CC_{x,0} = \CC_0, \MM_{x,0} = \MM_0$).
We also say $f$ admits \dfn{locally uniform Gevrey asymptotics} on $U$ if every point $x_0 \in U$ has a neighbourhood $U_0 \subset U$ on which $f$ has uniform Gevrey asymptotics.
In these cases, we write, respectively,
\eqnstag{\label{210225142615}
	f (x,\hbar) &\simeq \hat{f} (x,\hbar)
\qqqquad
	\text{as $\hbar \to 0$ along $A$, unif. $\forall x \in U$\fullstop{;}}
\\
\label{210225142617}
	f (x,\hbar) &\simeq \hat{f} (x,\hbar)
\qqqquad
	\text{as $\hbar \to 0$ along $A$, loc.unif. $\forall x \in U$\fullstop}
}
Such functions form subrings $\cal{G} (U; A) \subset \cal{A} (U; A)$ and $\cal{G}_\text{loc} (U; A) \subset \cal{A}_\text{loc} (U; A)$ respectively.
The asymptotic expansion map $\ae$ restricts to ring homomorphisms $\cal{G} (U;A) \to \cal{G} (U) \bbrac{\hbar}$ and $\cal{G}_\text{loc} (U;A) \to \cal{G}_\text{loc} (U) \bbrac{\hbar}$.
As in the case of uniform Poincaré asymptotics, an application of the Cauchy integral formula shows that the ring $\cal{G}_\text{loc} (U; A)$ (but not the ring $\cal{G} (U; A)$) is preserved under differentiation by $x$.

\paragraph{Uniform Gevrey asymptotics along a closed arc.}
\label{210225142900}
If in addition to \eqref{210225142615} or \eqref{210225142617}, the constants $\CC_{x,0}, \MM_{x,0}$ can be chosen uniformly for all $A_0 \Subset A$ (i.e., so that $\CC_{x,0} = \CC_x$ and $\MM_{x,0} = \MM_x$), we will write, respectively,
\eqnstag{\label{210226091301}
	f (x,\hbar) &\simeq \hat{f} (x,\hbar)
\qqqquad
	\text{as $\hbar \to 0$ along $\bar{A}$, unif. $\forall x \in U$\fullstop{;}}
\\
\label{210226091307}
	f (x,\hbar) &\simeq \hat{f} (x,\hbar)
\qqqquad
	\text{as $\hbar \to 0$ along $\bar{A}$, loc.unif. $\forall x \in U$\fullstop}
}
Such functions form subrings $\cal{G} (U; \bar{A}) \subset \cal{G} (U; \bar{A})$ and $\cal{G}_\text{loc} (U; \bar{A}) \subset \cal{G}_\text{loc} (U; A)$ respectively.
Again, we have $\del_x \big( \cal{G}_\text{loc} (U; \bar{A}) \big) \subset \cal{G}_\text{loc} (U; \bar{A})$.

\paragraph{Example.}
\label{200708191952}
The following example is related to what is sometimes called the \textit{Euler series} \cite[Example 1.1.4]{MR3495546}.
Consider the following formal series on $U = \Complex_x^\ast$:
\eqntag{\label{200722153024}
	\hat{\EE} (x, \hbar) 
		\coleq - \sum_{k=1}^\infty (-x)^{-k} (k-1)! \hbar^k
		= \frac{\hbar}{x} \sum_{k=0}^\infty \frac{k!}{(-x)^k} \hbar^k
	~\in~ \cal{O} (U) \bbrac{\hbar}
\fullstop
}
Incidentally, $\hat{\EE}$ is a formal solution of the differential equation $\hbar^2 \del_\hbar \EE + x \EE = \hbar$, but this fact is not important for the discussion in this example.

It is easy to see that the power series $\hat{\EE}$ has zero radius of convergence for any fixed nonzero $x$, but it is a Gevrey series for which the bounds \eqref{190303174313} can be satisfied by taking $\CC_x = 1$ and $\MM_x = |x|^{-k}$.
This demonstrates that $\hat{\EE}$ is a locally uniform Gevrey series on $\Complex_x^\ast$.
One can show that $\hat{\EE}$ is not a uniform Gevrey series.
Thus, $\hat{\EE} \in \cal{G}_\text{loc} (U) \bbrac{\hbar}$ but $\not\in \cal{G} (U) \bbrac{\hbar}$.

Consider the function
\eqn{
	\EE (x, \hbar)
		\coleq \int_0^{+\infty} \frac{e^{- \xi/\hbar}}{x + \xi} \dd{\xi}
\fullstop
}
It is well-defined and holomorphic for all $x$ in the cut plane $U' \coleq \Complex_x \setminus \Real_-$ and all $\hbar$ with ${\Re (\hbar) > 0}$.
It is not holomorphic at $\hbar = 0$ for any $x \in U$, but it is bounded as $\hbar \to 0$ in the right halfplane.
In fact, $\EE$ admits the power series $\hat{\EE}$ as its locally uniform Gevrey asymptotics:
\eqntag{\label{210225150455}
	\EE (x, \hbar) \simeq \hat{\EE} (x, \hbar)
\qquad
	\text{as $\hbar \to 0$ along $[-\nicefrac{\pi}{2}, +\nicefrac{\pi}{2}]$, loc.unif. $\forall x \in U'$\fullstop}
}
In symbols, $\EE \in \cal{G}_\text{loc} (U'; A)$.
To see this, we can write:
\eqn{
	\frac{1}{x + \xi}
	= \frac{1}{x} \frac{1}{1 + \xi/x}
	= \frac{1}{x} \sum_{k=0}^{n-2} 
		\frac{1}{(-x)^k} \xi^k
		+ \frac{1}{(-x)^{n-1}} \frac{\xi^{n-1}}{x + \xi}
\fullstop
}
Therefore, we obtain the relation
\eqn{
	\EE (x, \hbar) 
		= \frac{\hbar}{x} \sum_{k=0}^{n-2} \frac{k!}{(-x)^k} \hbar^k
			+ \frac{1}{(-x)^{n-1}} \int_0^{+\infty} 
				\frac{\xi^{n-1} e^{- \xi / \hbar}}{x + \xi} \dd{\xi}
\fullstop
}
So to demonstrate \eqref{210225150455}, one can find a locally uniform bound on the integral which is valid uniformly for all directions in the halfplane arc $(-\pi/2, +\pi/2)$.

\newpage
\section{Basics of the Borel-Laplace Theory}
\label{210616130753}

In this appendix section, we recall some basic definitions from the theory of Borel-Laplace transforms.

\paragraph{}
Let $U \subset \Complex_x$ be a domain.
Fix a direction $\theta \in \Sphere^1$, let $A_\theta$ be the halfplane arc bisected by $\theta$, and let $S_\theta$ be the Borel disc bisected by $\theta$ of some diameter $\delta > 0$:
\eqntag{\label{210616181227}
	A_\theta \coleq (\theta -\tfrac{\pi}{2}, \theta + \tfrac{\pi}{2})
\qtext{and}
	S_\theta \coleq \set{ \Re \big( \smash{e^{i\theta}} / \hbar \big) > 1/\delta}
\fullstop
}
Introduce another complex plane $\Complex_\xi$, sometimes called the \dfn{Borel plane}.
In the same vein, the complex plane $\Complex_\hbar$ is sometimes called the \dfn{Laplace plane}.
Let $e^{i\theta} \Real_+ \subset \Complex_\xi$ be the nonnegative real ray in the direction $\theta$.
By a \dfn{tubular neighbourhood} of $e^{i\theta} \Real_+$ of some \textit{thickness} $\epsilon > 0$ we mean a domain of the form 
\eqntag{
	\Xi_\theta \coleq \set{ \xi \in \Complex_\xi ~\big|~ \op{dist} (\xi, e^{i\theta}\Real_+) < \epsilon}
\fullstop
}

\subsection{The Laplace Transform}

\paragraph{}
Let us first recall some well-known properties of the Laplace transform.
Suppose $\Xi_\theta \subset \Complex_\xi$ is a tubular neighbourhood of $e^{i \theta} \Real_+$, and $\phi = \phi (x, \xi)$ is a holomorphic function on $U \times \Xi_\theta$.
Its \dfn{Laplace transform} in the direction $\theta$ is defined by the formula:
\eqntag{\label{200624181217}
	\Laplace_\theta [\, \phi \,] (x, \hbar)
		\coleq \int\nolimits_{e^{i\theta} \Real_+} \phi (x, \xi) e^{-\xi/\hbar} \dd{\xi}
\fullstop
}
When $\theta = 0$, we often write $\Laplace_+$ instead.
The function $\phi$ is called \dfn{uniformly} or \dfn{locally uniformly Laplace transformable} in the direction $\theta$ if this integral is respectively uniformly or locally uniformly convergent.
Clearly, $\phi$ is uniformly Laplace transformable in the direction $\theta$ if $\phi$ has uniform \dfn{at-most-exponential growth} as $|\xi| \to + \infty$ along the ray $e^{i\theta} \Real_+$.
Explicitly, this means there are constants $\AA, \LL > 0$ such that for all $(x,\xi) \in U \times \Xi_\theta$,
\eqntag{
	\big| \phi (x, \xi) \big| \leq \AA e^{\LL |\xi|}
\fullstop
}

\paragraph{Properties of the Laplace transform.}
Recall that the Laplace transform converts (a) the convolution product of functions into the pointwise multiplication of their Laplace transforms, and (b) differentiation by $\xi$ into multiplication by $\hbar^{-1}$.
Thus, if $\phi, \varphi$ are two uniformly Laplace transformable holomorphic functions on $U \times \Xi_\theta$, then:
\eqn{
	\Laplace_\theta [\, \phi \ast \varphi \,] (x, \hbar)
		= \Laplace_\theta [\, \phi \, ] (x, \hbar) \cdot \Laplace_\theta [\, \varphi \, ] (x, \hbar)
\qtext{and}
	\Laplace_\theta [\, \del_\xi \phi \, ] (x, \hbar)
		= \hbar^{-1} \Laplace_\theta [\, \phi \,] (x, \hbar)
\fullstop
}
Let us also note that the convolution product is taken with respect to the variable $\hbar$ and recall that it is defined by the following formula:
\eqntag{
	\phi \ast \varphi (x, \xi)
	\coleq 
	\int\nolimits_0^\xi \phi (x, \xi - y) \varphi (x, y) \dd{y}
\fullstop{,}
}
where the path of integration is a straight line segment from $0$ to $\xi$.
Finally, if $\phi$ is a uniformly Laplace transformable holomorphic function on $U \times \Xi_\theta$, then for any $x_0 \in U$, and all $(x, \xi) \in U \times \Xi_\theta$,
\eqntag{
	\Laplace_\theta \left[ \, \int\nolimits_{x_0}^x \phi (t, \xi) \dd{t} \, \right]
		= \int\nolimits_{x_0}^x \Laplace_\theta \big[ \, \phi \, \big] (t, \xi) \dd{t}
\fullstop{,}
}
where the path of integration is assumed to lie entirely in $U$.

\subsection{The Borel Transform}

\paragraph{}
Let $f = f(x, \hbar)$ be a holomorphic function on a product domain $U \times S_\theta$, or more generally a holomorphic function on the pair $(U, A_\theta)$ as described in \autoref{210226161539}.
In the latter situation, given $x \in U$, take a sufficiently small Borel disc $S_\theta$ such that $U_0 \times S_\theta$ is contained in $\UUU$ for some neighbourhood $U_0$ of $x$.

The \dfn{analytic Borel transform} (a.k.a., the \dfn{inverse Laplace transform}) of $f$ in the direction $\theta$ is defined by the following formula:
\eqntag{\label{210617101748}
	\Borel_\theta [\, f \,] (x, \xi)
		\coleq \frac{1}{2\pi i} \oint\nolimits_\theta f(x, \hbar) e^{\xi / \hbar} \frac{\dd{\hbar}}{\hbar^2}
\fullstop
}
When $\theta = 0$, we often write $\Borel_+$ instead.
Here, the notation ``$\oint_\theta$'' means that the integration is done along the boundary $\wp_\theta \coleq \set{ \Re (e^{i\theta}/\hbar) = 1 / \delta'}$ of a Borel disc $S'_\theta \subsetneq S_\theta$ of strictly smaller diameter $\delta' < \delta$, traversed anticlockwise (i.e., emanating from the singular point $\hbar = 0$ in the direction $\theta - \pi/2$ and reentering in the direction $\theta + \pi/2$).
Observe that the integral kernel $e^{\xi / \hbar} \hbar^{-2}$ has an essential singularity at $\hbar = 0$ for every nonzero $\xi$.
We therefore interpret this improper integral as the Cauchy principal value, which means it is defined as the limit of an integral over a sequence of path segments on the boundary $\del S'_\theta$ approaching the singular point $\hbar = 0$ in both directions at the same rate.
Explicitly, parameterise the boundary path $\wp_\theta$ by $t \in \Real$ as $\hbar (t) = e^{i \theta} ( \delta' + it )^{-1}$.
Then for every $\TT > 0$, we take the path segment $\wp_\theta (\TT)$ for $t \in [-\TT,+\TT]$ and define
\eqntag{\label{210617080526}
	\oint\nolimits_\theta \coleq \lim_{\TT \to +\infty} \int\nolimits_{\wp_\theta (\TT)}
}
Finally, since the integrand $f(x, \hbar) e^{\xi / \hbar} \hbar^{-2}$ is holomorphic in the interior of the Borel disc $S_\theta$, it is not difficult to see that the integral in \eqref{210617101748} is independent of $\delta'$, provided that it exists.
If the integral \eqref{210617101748} converges for all $\xi \in e^{i\theta} \Real_+$ and uniformly (resp. locally uniformly) for all $x \in U$, then we say $f$ is \dfn{uniformly} (resp. \dfn{locally uniformly}) \dfn{Borel transformable} in the direction $\theta$.
The following lemma gives a criterion for Borel transformability.

\begin{lem}{210617101734}
If $f \in \cal{O} (U; A_\theta)$ admits uniform (resp. locally uniform) Gevrey asymptotics as $\hbar \to 0$ along the closed arc $\bar{A}_\theta$ (in symbols, $f \in \cal{G}_\textup{(loc)} (U; \bar{A}_\theta)$), then it is uniformly (resp. locally uniformly) Borel transformable in the direction $\theta$.
\end{lem}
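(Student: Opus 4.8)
The plan is to first reduce to the direction $\theta = 0$. The rotation $\hbar \mapsto e^{-i\theta}\hbar$ carries $\bar A_\theta$ onto the closed semicircular arc $\bar A_+ = [-\tfrac{\pi}{2},+\tfrac{\pi}{2}]$ and intertwines $\Borel_\theta$ with $\Borel_+$ up to the corresponding rotation of the Borel plane, so it suffices to treat $f \in \cal{G}_\textup{(loc)}(U;\bar A_+)$. Unwinding the definition of uniform (resp.\ locally uniform) Gevrey asymptotics along the closed arc $\bar A_+$, the hypothesis furnishes a Borel disc $S = \set{\hbar ~\big|~ \Re(1/\hbar) > 1/\delta}$ on which $f$ is holomorphic and, for every $n \geq 0$, satisfies $\big| f(x,\hbar) - \sum_{k=0}^{n-1} f_k(x)\hbar^k \big| \leq \CC\MM^n n!|\hbar|^n$ with constants $\CC,\MM$ independent of $x$ in the uniform case and locally bounded in $x$ in the locally uniform case. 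Since the analytic Borel transform sends $\hbar^{k+1}\mapsto \xi^k/k!$ and annihilates the constant $\hbar^0$ — just as the formal transform $\hat\Borel$ does — I would subtract off $f_0$ at the start and assume $f_0 \equiv 0$. For the contour in \eqref{210617101748} I would use $\wp = \set{\hbar ~\big|~ \Re(1/\hbar) = 1/\delta'}$ for any fixed $\delta' \in (0,\delta)$, the integral being independent of this choice; the virtue of working with a \emph{closed} arc of opening $\pi$ is precisely that such a Borel disc lies, all the way down to $\hbar = 0$, inside the domain where $f$ is controlled, and that along $\wp$ the modulus $|e^{\xi/\hbar}| = e^{\xi/\delta'}$ is \emph{constant} for every $\xi \in \Real_+$.

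The heart of the argument is convergence as the contour runs into the singular point $\hbar = 0$. On the compact sub-arc $\wp \cap \set{|\hbar| \geq \epsilon}$ the integrand is continuous and $f$ is bounded there uniformly (resp.\ locally uniformly) in $x$, so that portion converges with no difficulty, uniformly (resp.\ locally uniformly) in $x$. On the remaining piece I would parametrise $\wp$ by $t \in \Real$ via $1/\hbar = 1/\delta' + it$, so that, as $|t|\to\infty$, $|\hbar|$ is comparable to $1/|t|$, $|\dd{\hbar}|$ to $\dd{t}/t^2$, and $e^{\xi/\hbar} = e^{\xi/\delta'}e^{i\xi t}$. Splitting $f = f_1\hbar + f_2\hbar^2 + \RR_3$ with $|\RR_3| \leq 6\CC\MM^3|\hbar|^3$: the $\RR_3$-term has integrand of size at most a constant times $|\hbar|$, hence at most a constant times $|t|^{-3}$ against $\dd{t}$, and converges \emph{absolutely}, with a bound uniform (resp.\ locally uniform) in $x$; the $f_2\hbar^2$-term has bounded integrand against $\dd{t}/t^2$ and is again absolutely convergent; and the $f_1\hbar$-term contributes $\tfrac{f_1(x)}{2\pi i}\oint_+ e^{\xi/\hbar}\tfrac{\dd{\hbar}}{\hbar} = f_1(x)$, by the standard evaluation $\Borel_+[\hbar] = 1$ — here the integral converges only conditionally, but the oscillation of $e^{i\xi t}$ secures convergence (Dirichlet's test, or one integration by parts in $t$) at a rate independent of $x$ and of $\xi$ on compact subsets of $\Real_+ \setminus \set{0}$.

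Putting the pieces together shows that $\oint_\theta f(x,\hbar)e^{\xi/\hbar}\tfrac{\dd{\hbar}}{\hbar^2}$ converges for every $\xi \in e^{i\theta}\Real_+$, uniformly in $x$ under the uniform-Gevrey hypothesis and locally uniformly in $x$ under the locally uniform one, which is the claim. \textbf{The hard part} is exactly this tension: the kernel $e^{\xi/\hbar}\hbar^{-2}$ is least integrable precisely where the contour meets $\hbar = 0$, and one must coax convergence out of it by peeling off enough of the asymptotic expansion that the remainder becomes absolutely integrable there ($N = 3$ terms suffice, and only the qualitative size of the remainder is used), evaluating the finitely many leftover monomials explicitly, and observing that the one borderline monomial $\hbar^1$ is rescued by the oscillatory exponential. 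I would also note that the full Gevrey strength is not actually needed for this lemma — uniform (resp.\ locally uniform) Poincaré asymptotics along the closed arc $\bar A_\theta$ would already do — but Gevrey regularity together with the closedness of the arc becomes essential for the sharper conclusions (holomorphy of $\Borel_\theta[f]$ on a whole tubular neighbourhood of the ray, and the identification $\Borel_\theta[f] = \hat\Borel[\hat f]$) that underlie Nevanlinna's theorem.
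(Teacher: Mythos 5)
Your proof is correct and follows essentially the same route as the paper's: reduce to $\theta = 0$, peel off the first few terms of the asymptotic expansion, and note that the remainder integrand is bounded along the Borel circle (where $|e^{\xi/\hbar}|$ is constant and $\int_\wp |\dd{\hbar}|$ is finite), so that piece converges absolutely and uniformly (resp.\ locally uniformly) in $x$, while the leftover low-order monomials are handled via the principal-value convention. The paper stops at the second remainder $\RR_2$ and simply appeals to the principal-value definition for $\Borel_+[\,f_0\,]$ and $\Borel_+[\,\hbar f_1\,]$, whereas you peel off one extra term and spell out the oscillatory (Dirichlet-test) argument for the conditionally convergent $\hbar^1$ monomial --- a harmless, indeed slightly more careful, variation; your closing remark that only finitely many Poincar\'e-type bounds along the closed arc are actually used is also accurate.
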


\begin{proof}
It is enough to consider the situation $\theta = 0$.
Let $f_0, f_1$ be the leading and the next-to-leading order terms in the asymptotic expansion of $f$.
We look at the second remainder term 
\eqn{
	\RR_2 (z) = f (z) - \big( f_0 + z f_1 \big)
\fullstop
}
If the Borel transforms $\Borel_+ [ \, f_0 \, ], \Borel_+ [ \, z f_1 \, ], \Borel_+ [ \, \RR_2 \, ]$ are well-defined, it will follow that $\phi (\xi)$ given by \eqref{200617120419} is a well-defined function on $\Real_+$.
The first two are well-defined and independent of the choice of a Borel path $\wp$, because by definition the integral is interpreted as the Cauchy principal value.
So we just need to examine $\Borel_+ [ \, \RR_2 \, ]$.
The asymptotic condition \eqref{200305191443} with $n = 2$ reads $\big| \RR_2 (z) \big| \leq 2! \CC \MM^2 |z|^2$.
So if $\wp$ is the Borel circle with any radius $\RR$ such that $\RR > \smash{\hat{\RR}}$, then the integral over $\wp$ is well-defined because
\eqn{
	\frac{1}{2\pi} \int_\wp \big| \RR_2 (z) \big| e^{\xi \Re (z^{-1})} \left| \frac{\dd{z}}{z^2} \right|
	\leq \frac{1}{\pi} \CC \MM^2 e^{\RR \xi}
		\int_\wp |\dd{z}|
	\leq \CC \MM^2 e^{\RR\xi} \RR^{-1}
\fullstop
}
Moreover, the integral is independent of the particular choice of the Borel path essentially because the integrand is holomorphic in the sectorial domain and the integral over any Borel circle $\wp$ decays as the radius $\RR$ increases.
\end{proof}

A proof of this proposition (which is a simple complex analytic argument using the Gevrey bounds \eqref{210225134829}) follows from the much stronger result known as \hyperref[200711095033]{Nevanlinna's Theorem \ref*{200711095033}}.
We stress the importance of having Gevrey asymptotics along the \textit{closed} arc $\bar{A}_\theta$ and not just $A_\theta$.
For example, recall from \autoref{210617074256} that the function $f(\hbar) = e^{-1/\hbar}$ is in $\cal{G} (A)$, where $A = (-\pi/2, +\pi/2)$, but not in $\cal{G} (\bar{A})$, and we can see that its Borel transform in the direction $\theta = 0$ is not well-defined when $\xi = 1$.

\paragraph{}
If $f$ is holomorphic at $\hbar = 0$ (i.e., if $f \in \cal{O}_\textup{loc} (U) \set{\hbar}$), then it is necessarily Borel transformable in every direction, and all these Borel transforms agree and define a holomorphic function $\Borel [\, f \,] (x, \xi)$ of $(x, \xi) \in U \times \Complex_\xi$.
In fact, in this case the integration contour in \eqref{210617101748} can be deformed to a circle around the origin, so that the Borel transform of $f$ for any $\theta$ is nothing but the residue integral:
\eqntag{\label{210617101920}
	\Borel [\, f \,] (x, \xi)
		= \underset{\hbar = 0}{\Res}\: \frac{f(x, \hbar) e^{\xi / \hbar}}{\hbar^2}
\fullstop
}

\paragraph{}
Using the residue calculus expression \eqref{210617101920}, it is easy to deduce the following helpful formulas:
\eqntag{\label{200704112520}
	\Borel [\, 1 \,] = 0
\qqqtext{and}
	\Borel [\, \hbar^{k+1} \,] = \frac{\xi^k}{k!}
\qquad \text{for all $k \geq 0$.}
}
They can be used to extend the Borel transform to formal power series in $\hbar$ by defining the \dfn{formal Borel transform}:
for any $\hat{f} (x, \hbar) \in \cal{O} (U) \bbrac{\hbar}$,
\eqntag{
\mbox{}\hspace{-10pt}
	\hat{\phi} (x, \xi) =
	\hat{\Borel} [ \, \hat{f} \, ] (x, \xi)
		\coleq \sum_{n=0}^\infty \phi_n (x) \xi^n
		\in \cal{O} (U) \bbrac{\xi}
\qtext{where}
	\phi_k (x) \coleq \tfrac{1}{k!} f_{k+1} (x)
\GREY{.}
}
Thus, the formal Borel transform essentially `divides' the coefficients of the power series by $n!$.
The following lemma follows immediately from this formula and the Gevrey power series bounds \eqref{190303174313}.

\begin{lem}{210617102534}
If $\hat{f}$ is a uniformly or locally-uniformly Gevrey series on $U$ (in symbols, $\hat{f} \in \cal{G} (U) \bbrac{\hbar}$ or $\hat{f} \in \cal{G}_\textup{loc} (U) \bbrac{\hbar}$, respectively), then its formal Borel transform $\hat{\phi}$ is respectively a uniformly or locally uniformly convergent series in $\xi$.
In symbols, $\hat{\phi} \in \cal{O} (U) \set{\xi}$ or $\hat{\phi} \in \cal{O}_\textup{loc} (U) \set{\xi}$, respectively.
\end{lem}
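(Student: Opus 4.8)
The plan is to reduce the statement to an elementary estimate on the coefficients of $\hat\phi$, using nothing more than the defining formula for the formal Borel transform together with the Gevrey bound \eqref{190303174313} on the coefficients of $\hat f$. Recall that the formal Borel transform acts coefficient-wise by $\phi_k(x) = \tfrac{1}{k!} f_{k+1}(x)$, so controlling the radius of convergence of $\hat\phi(x,\xi) = \sum_{k\ge 0}\phi_k(x)\xi^k$ amounts to bounding each $\phi_k$.

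First I would treat the uniform case. Suppose $\hat f \in \cal{G}(U)\bbrac{\hbar}$, so there are constants $\CC,\MM>0$ with $|f_k(x)| \le \CC\MM^k k!$ for all $k\ge 0$ and all $x\in U$. Then for every such $k$ and $x$,
\[
	|\phi_k(x)| = \frac{1}{k!}\,|f_{k+1}(x)| \le \frac{1}{k!}\,\CC\MM^{k+1}(k+1)! = \CC\MM\,(k+1)\,\MM^k \le \CC\MM\,2^k\MM^k,
\]
using $(k+1)\le 2^k$. Hence $|\phi_k(x)| \le \CC'\MM'^k$ with $\CC'\coleq\CC\MM$ and $\MM'\coleq 2\MM$, uniformly in $x\in U$. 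By comparison with the geometric series, the power series $\hat\phi(x,\xi)$ therefore converges absolutely and uniformly on $U\times\Disc_0$, where $\Disc_0\subset\Complex_\xi$ is any disc centred at the origin of radius strictly smaller than $1/\MM'$; this is precisely the assertion $\hat\phi\in\cal{O}(U)\set{\xi}$.

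For the locally uniform case I would run the identical argument near an arbitrary point $x_0\in U$: by definition of $\cal{G}_{\textup{loc}}(U)\bbrac{\hbar}$ there is a neighbourhood $U_0\subset U$ of $x_0$ on which $\hat f$ is uniformly Gevrey, and the computation above then produces a disc $\Disc_0$ (of radius depending on the constants attached to $U_0$) such that $\hat\phi$ is holomorphic on $U_0\times\Disc_0$. Since $x_0$ was arbitrary, $\hat\phi\in\cal{O}_{\textup{loc}}(U)\set{\xi}$. There is no real obstacle in this proof; the only point that deserves a word is the bookkeeping of constants --- one must check that the extra linear factor $k+1$ coming from $(k+1)!/k!$ can be absorbed into an exponential, so that the Gevrey bound on $\hat f$ genuinely yields a positive (respectively locally positive) lower bound on the radius of convergence of $\hat\phi$ that is uniform in $x$. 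Everything else is a direct unwinding of the definitions from \autoref{210224181219}.
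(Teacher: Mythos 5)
Your proof is correct and is exactly the argument the paper has in mind: the paper states the lemma "follows immediately" from the coefficient formula $\phi_k = \tfrac{1}{k!}f_{k+1}$ together with the Gevrey bounds \eqref{190303174313}, which is precisely the computation you carry out. The only cosmetic remark is that absorbing $k+1$ into $2^k$ is cruder than necessary (since $(k+1)^{1/k}\to 1$, the radius of convergence is actually at least $1/\MM$), but this does not affect the conclusion.
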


\paragraph{Properties of the Borel transform.}
Finally, we mention a few important properties of the Borel transform.
Given two uniformly Borel $\theta$-transformable holomorphic functions $f,g$ on $(U, A_\theta)$, the following identities hold for all $(x, \xi) \in U \times \Xi_\theta$:
\begin{gather}
	\Borel_\theta [ \, fg \, ]
		= \big( \Borel_\theta [ \, f \, ] \ast \Borel_\theta [ \, g \, ] \big)
\GREY{;}
\\
	\Borel_\theta [ \, \hbar^{-1} f \, ] = \del_\xi \Borel_\theta [ \, f \, ]
\qtext{and}
	\Borel_\theta [ \, \del_x f \, ] = \del_x \Borel_\theta [ \, f \, ]
\fullstop{;}
\\
	\Borel_\theta \left[ \, \int\nolimits_{x_0}^x f (t, \hbar) \dd{t} \, \right] (x, \xi)
		= \int\nolimits_{x_0}^x \Borel_\theta \big[ \, f \, \big] (t, \xi) \dd{t}
\fullstop{,}
\end{gather}
for any $x_0 \in U$ where the path of integration is assumed to lie entirely in $U$.

\subsection{Borel Resummation}

One of the most fundamental theorems in Gevrey asymptotics is a theorem of Nevanlinna \cite[pp.44-45]{nevanlinna1918theorie} which was rediscovered and clarified decades later by Sokal \cite{MR558468}; see also \cite[p.182]{zbMATH00797135}, and \cite[Theorem 5.3.9]{MR3495546}.
It identifies a subclass of Gevrey formal power series with a class of sectorial germs admitting Gevrey asymptotics in a halfplane.
This identification is often called \textit{Borel resummation} and it should be thought of in complete analogy with the familiar identification of convergent power series with germs of holomorphic functions.
The exceptions are that the identification for Gevrey series depends on a direction $\theta$ and the operation of converting a formal power serious into a holomorphic function is much more involved.
For clarity, we also provide definitions and statements in the single-variable case.

\begin{defn}{210617122738}
A Gevrey power series $\hat{f} (\hbar) \in \Gevrey \bbrac{\hbar}$ is a \dfn{Borel summable series} in the direction $\theta$ if its formal Borel transform $\hat{\phi} (\xi) \in \Complex \set{\xi}$ admits an analytic continuation $\phi (\xi) \coleq \rm{AnCont}_\theta [\, \hat{\phi} \,] (\xi)$ to a tubular neighbourhood $\Xi_\theta$ of the ray $e^{i\theta} \Real_+$ with at-most-exponential growth as $|\xi| \to + \infty$ in $\Xi_\theta$.
The subring of Borel summable series in the direction $\theta$ will be denoted by $\bar{\Gevrey}_\theta \bbrac{\hbar} \subset \Gevrey \bbrac{\hbar}$.

More generally in the parametric situation, a uniformly Gevrey power series $\hat{f} (x, \hbar) \in \cal{G} (U) \bbrac{\hbar}$ is called a \dfn{uniformly Borel summable series} in the direction $\theta$ if its (necessarily uniformly convergent) formal Borel transform $\hat{\phi} (x, \xi) = \hat{\Borel}_\theta [\, \hat{f} \,] \in \cal{O} (U) \set{\xi}$ admits an analytic continuation $	\phi (x, \xi) \coleq \rm{AnCont}_\theta [\, \hat{\phi} \,] (x, \xi)$ to a domain $U \times \Xi_\theta$ for some tubular neighbourhood $\Xi_\theta$ of the ray $e^{i\theta} \Real_+$ with uniformly at-most-exponential growth as $|\xi| \to + \infty$ in $\Xi_\theta$.
\dfn{Locally uniformly Borel summable series} on $U$ are defined the same way by allowing the thickness of the tubular neighbourhood $\Xi_\theta$ to have a locally constant dependence on $x$.
We denote the subalgebras of uniformly and locally uniformly Borel summable series in the direction $\theta$ by $\bar{\cal{G}}_{\theta} (U) \bbrac{\hbar} \subset \cal{G} (U) \bbrac{\hbar}$ and $\bar{\cal{G}}_{\theta, \textup{loc}} (U) \bbrac{\hbar} \subset \cal{G}_\textup{loc} (U) \bbrac{\hbar}$, respectively.
\end{defn}

\begin{thm}[\textbf{Nevanlinna's Theorem}]{210617120300}
In both the one-dimensional and the parametric cases, for any direction $\theta$, the asymptotic expansion map $\op{\ae}$ along the halfplane arc $A_\theta = (\theta - \pi/2, \theta + \pi/2)$ bisected by $\theta$ restricts to an algebra isomorphism
\eqntag{
	\op{\ae}: \cal{G} (\bar{A}_\theta) \iso \bar{\Gevrey}_{\theta} \bbrac{\hbar}
\qtext{or}
	\op{\ae}: \cal{G}_\textup{loc} (U; \bar{A}_\theta) \iso \bar{\cal{G}}_{\theta, \textup{loc}} (U) \bbrac{\hbar}
\fullstop
}
\end{thm}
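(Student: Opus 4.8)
The plan is to reduce everything to the classical one-variable Borel--Laplace dictionary; the parametric statement then follows by checking that every constant that appears can be kept uniform (resp. locally uniform) in $x$, so I describe the scalar case. After the rotation $\hbar \mapsto e^{-i\theta}\hbar$ we may assume $\theta = 0$, so that $\bar{A}_0 = [-\tfrac{\pi}{2},+\tfrac{\pi}{2}]$ and a Borel disc $S_0 = \set{\Re(1/\hbar) > 1/\delta}$ represents the relevant sectorial germs. Since $\ae : \cal{A}(A_0) \to \Complex\bbrac{\hbar}$ is already a ring homomorphism, it is enough to prove: \textbf{(i)} that $\ae$ maps $\cal{G}(\bar{A}_0)$ into $\bar{\Gevrey}_0\bbrac{\hbar}$; \textbf{(ii)} that this restriction is injective; and \textbf{(iii)} that it is surjective, with inverse the Borel resummation $\hat{f} \mapsto f_0 + \Laplace_0\big[\,\op{AnCont}_0[\hat{\Borel}[\hat{f}]]\,\big]$. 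That $\bar{\Gevrey}_0\bbrac{\hbar}$ is a subring — so that ``algebra isomorphism'' is meaningful — is a small separate point: the Borel transform turns products into convolutions, convolution preserves at-most-exponential growth, and analytic continuation along $\Real_+$ commutes with the convolution.

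\emph{Well-definedness and injectivity.} For (i), let $f \in \cal{G}(\bar{A}_0)$ and $\hat{f} = \ae(f)$. By \autoref{210617102534} the formal Borel transform $\hat{\phi} = \hat{\Borel}[\hat{f}]$ has positive radius of convergence; by \autoref{210617101734} the analytic Borel transform $\phi \coleq \Borel_0[f]$ is holomorphic on some tubular neighbourhood $\Xi_0$ of $\Real_+$, and pushing the Borel contour out to a large Borel circle and invoking the Gevrey remainder bounds (as in the proof of that lemma) shows $\phi$ has at-most-exponential growth on $\Xi_0$. Writing $f = \sum_{k<n} f_k\hbar^k + \RR_n$, applying $\Borel_0$, and using $\Borel_0[\hbar^{k+1}] = \xi^k/k!$ together with the Gevrey bound on $\RR_n$ shows $\phi = \hat{\phi}$ in a neighbourhood of $\xi=0$; hence $\phi = \op{AnCont}_0[\hat{\phi}]$ and $\hat{f} \in \bar{\Gevrey}_0\bbrac{\hbar}$. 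For (ii), suppose $\ae(f) = 0$; then $|f(\hbar)| \leq \CC\MM^n n!\,|\hbar|^n$ for every $n$ on $S_0$, and minimising the right-hand side over $n$ (Stirling) gives $|f(\hbar)| \leq \CC' e^{-a/|\hbar|} \leq \CC' e^{-a\Re(1/\hbar)}$ on $S_0$ for some $a>0$. Then $G(w) \coleq e^{aw} f(1/w)$ is a bounded holomorphic function on the halfplane $\set{\Re w > 1/\delta}$, while on its boundary line the bounds $|G(w)| \leq e^{a/\delta}\CC\MM^n n!\,|w|^{-n}$ ($\forall n$) force the boundary trace of $G$ to decay faster than $e^{-|w|/\MM}$; a bounded holomorphic function on a halfplane whose boundary trace decays so fast that $\log|G|$ is not integrable against the Poisson kernel must vanish identically, so $G \equiv 0$ and $f \equiv 0$. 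This is the uniqueness half of Watson's lemma, and it is exactly here that $|A_0| = \pi$ — equivalently, that $w = 1/\hbar$ carries $S_0$ onto a genuine halfplane — is indispensable (recall that $e^{-1/\hbar}$ lies in $\cal{G}(\bar{A})$ for every closed arc of opening strictly less than $\pi$, with vanishing asymptotic expansion).

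\emph{Surjectivity.} Given $\hat{f} \in \bar{\Gevrey}_0\bbrac{\hbar}$, let $\hat{\phi} = \hat{\Borel}[\hat{f}]$ (convergent, by \autoref{210617102534}), let $\phi \coleq \op{AnCont}_0[\hat{\phi}]$ be its continuation to a tube $\Xi_0$ with $|\phi(\xi)| \leq \AA e^{\LL|\xi|}$, and set $f \coleq f_0 + \Laplace_0[\phi]$, holomorphic on the Borel disc $\set{\Re(1/\hbar) > \LL + 1/\delta}$. From $\Laplace_0[\xi^{k-1}/(k-1)!] = \hbar^k$ and $\phi_{k-1} = f_k/(k-1)!$ one gets $\sum_{1\leq k<n} f_k\hbar^k = \Laplace_0[T_{n-2}\phi]$, where $T_m\phi$ denotes the degree-$m$ Taylor polynomial of $\phi$, so the $n$-th remainder of $f$ is $\RR_n(\hbar) = \Laplace_0\big[\phi - T_{n-2}\phi\big](\hbar)$. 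Splitting this Laplace integral at a cut-off depending on both $n$ and $\hbar$ and estimating $\phi - T_{n-2}\phi$ on the near part by Cauchy bounds on the coefficients $\phi_j$ over the disc of convergence and on the far part by $\AA e^{\LL|\xi|}$, one obtains $|\RR_n(\hbar)| \leq \tilde{\CC}\,\tilde{\MM}^n n!\,|\hbar|^n$ uniformly on the Borel disc; that is, $f \simeq \hat{f}$ along $\bar{A}_0$, so $f \in \cal{G}(\bar{A}_0)$ and $\ae(f) = \hat{f}$. Together with (i), (ii) and the ring-homomorphism property of $\ae$, this yields the asserted algebra isomorphism $\ae : \cal{G}(\bar{A}_0) \iso \bar{\Gevrey}_0\bbrac{\hbar}$; the parametric case repeats all of this verbatim, keeping the constants uniform (resp. locally uniform) in $x$.

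\emph{Main obstacle.} The crux is the Laplace estimate just used: obtaining Gevrey remainder bounds for $\RR_n(\hbar)$ that stay uniform all the way up to the closed halfplane arc — not merely on proper subsectors — when $\phi$ genuinely grows exponentially, which is precisely what forces the $n$- and $\hbar$-dependent splitting of the contour. The injectivity step is the other delicate point, though its Phragmén--Lindelöf argument for the halfplane is classical. Both are carried out in full in the work of Nevanlinna, Sokal and Balser cited in \autoref{210616130753}, and the argument above only reorganises theirs.
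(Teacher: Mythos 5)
Your overall architecture is right, and your injectivity step is a legitimate alternative to the paper's: the paper never argues via Phragm\'en--Lindel\"of, but instead gets injectivity essentially for free from the reconstruction identity $f = f_0 + \Laplace_+ [\, \Borel_+ [f] \,]$ together with the fact (Claim 2 of its proof) that $\Borel_+[f]$ coincides with $\hat{\Borel}[\hat{f}]$ near $\xi = 0$ and is therefore determined by $\hat{f}$. Your Watson-type uniqueness argument is sound and correctly isolates why the opening must be exactly $\pi$.

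The genuine gap is in the surjectivity step. Splitting the Laplace integral along $\Real_+$ at a cut-off depending on $n$ and $\hbar$ cannot produce remainder bounds that hold uniformly up to the closed arc $\bar{A}_0$, and the failure is already visible at $n = 1$, where no Taylor subtraction is involved: writing $u \coleq \Re(1/\hbar)$, the tail of $\int_0^{\infty} e^{-\xi/\hbar} \phi(\xi) \dd{\xi}$ is only of order $1/(u - \LL)$, and on the Borel disc $1/u \approx |\hbar| / \cos(\arg \hbar)$, which is not $O(|\hbar|)$ as $\arg \hbar \to \pm\tfrac{\pi}{2}$. No choice of cut-off on the real ray repairs this, because the obstruction is the direction of decay of the kernel $e^{-\xi/\hbar}$, not the size of $\phi - T_{n-2}\phi$. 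The missing idea --- and the actual content of the paper's proof of (1)$\Rightarrow$(2) --- is to deform the contour into the tube $\Xi$: cover the Borel disc by two subsectors $S_\pm$ centred on the directions $\pm\tfrac{\pi}{4}$, replace the initial segment of $\Real_+$ by the rotated segment $[0, \delta e^{\pm i \pi/4}]$ and the tail by the shifted horizontal ray $\delta e^{\pm i\pi/4} + \Real_+$, and use the resulting lower bound $\Re(e^{\pm i\pi/4}/z) > c/|z|$ (the paper's \eqref{200616105512}) to convert every occurrence of $1/u$ into $|z|/c$. Once the contour is tilted, the near/far estimates close exactly as you describe. A second, smaller point: you cite \autoref{210617101734} for holomorphy of $\Borel_+[f]$ on a tubular neighbourhood with at-most-exponential growth, but that lemma only gives convergence of the Borel integral on the ray itself; extending $\phi$ off the ray is the substance of Claims 3--5 in the paper's proof of (2)$\Rightarrow$(1) (Gevrey bounds on all $\xi$-derivatives of $\phi$ along $\Real_+$, then Taylor expansion into discs of fixed radius), and this needs to be argued rather than quoted.
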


See \autoref{200624185107} for a detailed proof.
Note that $\op{\ae}$ restricts further to an isomorphism $\cal{G} (U; \bar{A}_\theta) \iso \bar{\cal{G}}_{\theta} (U) \bbrac{\hbar}$.

\begin{defn}{210617132122}
The inverse algebra isomorphism
\eqntag{
	\cal{S}_\theta \coleq \op{\ae}^{-1} : \bar{\Gevrey}_{\theta} \bbrac{\hbar} \iso \cal{G} (\bar{A}_\theta)
\qtext{or}
	\cal{S}_\theta \coleq \op{\ae}^{-1} : \bar{\cal{G}}_{\theta} (U) \bbrac{\hbar} \iso \cal{G}_\textup{loc} (U; \bar{A}_\theta)
}
is called the \dfn{Borel resummation} in the direction $\theta$.
In particular, $\cal{S}_\theta$ restricts to be the identity map on the subalgebra of convergent power series: i.e., if $\hat{f} (\hbar) \in \Complex \set{\hbar}$ or $\hat{f} (x, \hbar) \in \cal{O}_\textup{loc} (U) \set{\hbar}$, then $\cal{S}_\theta (\hat{f}) = \hat{f}$.
\end{defn}

\paragraph{Properties of Borel resummation.}
As an algebra isomorphism, Borel resummation respects sum, products, and scalar multiplication: if $\hat{f}, \hat{g} \in \bar{\Gevrey}_{\theta} \bbrac{\hbar}$ (or $\hat{f}, \hat{g} \in \bar{\cal{G}}_{\theta, \textup{loc}} \bbrac{\hbar}$) are any two (respectively locally uniformly) Borel summable series in the direction $\theta$, and $c \in \Complex$ (or respectively $c = c(x) \in \cal{O} (U)$), then
\eqntag{
	\cal{S}_\theta [\, \hat{f} + c\hat{g} \,]
		= \cal{S}_\theta [\, \hat{f} \,] + c\cal{S}_\theta [\, \hat{g} \,]
\qtext{and}
	\cal{S}_\theta [\, \hat{f} \cdot \hat{g} \,]
		= \cal{S}_\theta [\, \hat{f} \,] \cdot \cal{S}_\theta [\, \hat{g} \,]
\fullstop
}
It is also compatible with composition with convergent power series.
We also note two more properties with respect to calculus in the $x$-variable.
If $\hat{f} (x, \hbar) \in \bar{\cal{G}}_{\theta} (U) \bbrac{\hbar}$ is a locally uniformly Borel summable series in the direction $\theta$, then
\eqntag{
	\del_x \cal{S}_\theta [\, \hat{f} \,]
		= \cal{S}_\theta [\, \del_x \hat{f} \,]
\qtext{and}
	\int\nolimits_{x_0}^x \cal{S}_\theta [\, \hat{f} \,]
		= \cal{S}_\theta \left[\, \int\nolimits_{x_0}^x \hat{f} \,\right]
\fullstop{,}
}
for any $x_0 \in U$ where the path of integration is assumed to lie entirely in $U$.

From the proof of Nevanlinna's Theorem \ref*{210617120300} we can extract the following more explicit statement that yields a formula for the Borel resummation which is actually often taken as the definition of Borel resummation.

\begin{lem}[\textbf{Borel-Laplace identity}]{210617095806}
Let $f \in \cal{G} (\bar{A}_\theta)$ and $\hat{f} \in \bar{\Gevrey}_{\theta} \bbrac{\hbar}$ be such that $\op{\ae} (f) = \hat{f}$ and $\cal{S}_\theta [\, \hat{f} \,] = f$.
Let $\hat{\phi} (\xi) \coleq \hat{\Borel} [\, \hat{f} \,] (\xi) \in \Complex \set{\xi}$ be the (necessarily convergent) formal Borel transform of $\hat{f}$, and let $\phi (\xi) \coleq \rm{AnCont}_\theta [\, \hat{\phi} \,] (\xi)$ be its analytic continuation to a tubular neighbourhood $\Xi_\theta$ of the ray $e^{i\theta} \Real_+$ with at-most-exponential growth as $|\xi| \to + \infty$ in $\Xi_\theta$.
Then we have the following two identities:
\eqntag{
	f (\hbar) = f_0 + \Laplace_\theta [\, \phi \,] (\hbar)
\qtext{and}
	\phi (\xi) = \Borel_\theta [\, f \,] (\xi)
\fullstop
}
More generally in the parametric situation, let $f \in \cal{G} (U; \bar{A}_\theta)$ and $\hat{f} \in \bar{\cal{G}}_{\theta} (U; \bar{A}_\theta)$ be such that $\op{\ae} (f) = \hat{f}$ and $\cal{S}_\theta [\, \hat{f} \,] = f$.
Let $\hat{\phi} (x, \xi) \coleq \hat{\Borel} [\, \hat{f} \,] (\xi) \in \cal{O} (U) \set{\xi}$ be the (necessarily uniformly convergent) formal Borel transform of $\hat{f}$, and let $\phi (x, \xi) \coleq \rm{AnCont}_\theta [\, \hat{\phi} \,] (x,\xi)$ be its analytic continuation to the domain $U \times \Xi_\theta$, where $\Xi_\theta$ in a tubular neighbourhood of the ray $e^{i\theta} \Real_+$, with uniformly at-most-exponential growth as $|\xi| \to + \infty$ in $\Xi_\theta$.
Then we have the following two identities:
\eqntag{\label{210617162329}
	f (x, \hbar) = f_0 (x) + \Laplace_\theta [\, \phi \,] (x, \hbar)
\qtext{and}
	\phi (x, \xi) = \Borel_\theta [\, f \,] (x, \xi)
\fullstop
}
In other words, in both situations we have the following formula for the Borel resummation:
\eqntag{\label{210617162325}
	f
		= \cal{S}_\theta [\, \hat{f} \,]
		= f_0 + \Laplace_\theta \Big[ \, \op{AnCont}_\theta \big[ \, \hat{\Borel}_\theta [\, \hat{f} \,] \, \big] \, \Big]
\fullstop
}
\end{lem}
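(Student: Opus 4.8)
The plan is to reduce everything to \hyperref[210617120300]{Nevanlinna's Theorem \ref*{210617120300}}. After the harmless rotation $\hbar \mapsto e^{i\theta}\hbar$, $\xi \mapsto e^{i\theta}\xi$ I may assume $\theta = 0$, so that $A_+ = (-\tfrac{\pi}{2}, +\tfrac{\pi}{2})$. Set $g \coleq f_0 + \Laplace_+[\phi]$; the goal is to show $g = f$ and $\Borel_+[f] = \phi$. Since the hypothesis supplies at-most-exponential growth of $\phi$ along $\Real_+$, the Laplace integral defining $g$ converges on some Borel disc $S_+$, so $g \in \cal{O}(S_+)$. The crux is to check that $g \in \cal{G}(\bar{A}_+)$ with asymptotic expansion $\op{\ae}(g) = \hat{f}$. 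Granting this, $\op{\ae}(g) = \hat{f} = \op{\ae}(f)$ with both $f, g \in \cal{G}(\bar{A}_+)$; since $A_+$ has opening $\pi$, the uniqueness half of Nevanlinna's Theorem (the only element of $\cal{G}(\bar{A}_+)$ with vanishing asymptotic expansion is $0$) forces $f - g = 0$, which is exactly $f = f_0 + \Laplace_+[\phi]$. Equivalently, one can simply observe that the proof of \autoref{210617120300} in \autoref{200624185107} constructs the inverse map $\cal{S}_0$ as $\hat{g} \mapsto g_0 + \Laplace_+[\op{AnCont}_+[\hat{\Borel}[\hat{g}]]]$, so the first identity is this construction applied to $\hat{f}$.

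To establish $g \in \cal{G}(\bar{A}_+)$ with $\op{\ae}(g) = \hat{f}$ I would run the classical Watson/Nevanlinna estimate. For each $n$ write $\phi(\xi) = \sum_{k=0}^{n-1}\phi_k \xi^k + \psi_n(\xi)$, where $\phi_k = f_{k+1}/k!$ are the Taylor coefficients of the convergent series $\hat{\phi} = \hat{\Borel}[\hat{f}]$ and $\psi_n$ is the remainder. Applying $\Laplace_+$ termwise and using the elementary identity $\Laplace_+[\xi^k/k!] = \hbar^{k+1}$ from \eqref{200704112520}, Cauchy estimates for $\phi_k$ on the disc of convergence of $\hat{\phi}$, and the exponential bound on $\psi_n$ on the ray, one obtains $\big| g(\hbar) - \sum_{k=0}^{n-1} f_k \hbar^k \big| \leq \CC \MM^n n!\, |\hbar|^n$ for all $\hbar \in S_+$, with $\CC, \MM$ independent of $n$. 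This is precisely $g \in \cal{G}(\bar{A}_+)$ and $\op{\ae}(g) = \hat{f}$; the details are the content of the proof of Nevanlinna's Theorem and need not be repeated here.

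For the second identity I would apply the analytic Borel transform $\Borel_+$ to $f = f_0 + \Laplace_+[\phi]$, which is legitimate because $f \in \cal{G}(\bar{A}_+)$ is Borel transformable in the direction $0$ by \autoref{210617101734}. By linearity $\Borel_+[f] = \Borel_+[f_0] + \Borel_+[\Laplace_+[\phi]]$; the constant $f_0$ has $\Borel_+[f_0] = 0$ by \eqref{200704112520}, while $\Borel_+[\Laplace_+[\phi]] = \phi$ is the Borel--Laplace inversion implicit in calling $\Borel_+$ the inverse Laplace transform (concretely, interchange the order of integration in the iterated integral and evaluate the residue at $\hbar = 0$). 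Hence $\Borel_+[f] = \phi$ on $\Real_+$, and the two sides agree on all of $\Xi_+$ by uniqueness of analytic continuation. The parametric statement is proved in exactly the same way, all estimates being carried uniformly (resp.\ locally uniformly) in $x \in U$: uniform exponential growth of $\phi$ gives uniform convergence of the Laplace integral on $U \times S_+$, the Watson estimate yields uniform Gevrey bounds, injectivity of $\op{\ae}$ on $\cal{G}_{\textup{loc}}(U; \bar{A}_\theta)$ (and its uniform version) applies, and one uses that all of these transforms commute with evaluation at a fixed $x$. The one genuinely nontrivial ingredient is the Watson/Nevanlinna estimate of the second paragraph, which is already available from the proof of \autoref{210617120300}; everything else is bookkeeping.
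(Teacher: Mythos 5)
Your proposal is correct and follows essentially the same route as the paper, which offers no standalone proof but explicitly extracts the lemma from the proof of Nevanlinna's Theorem in \autoref{200624185107}: the first identity is the (1)$\Rightarrow$(2) construction of $\cal{S}_\theta$ plus injectivity of $\op{\ae}$ on $\cal{G}(\bar{A}_\theta)$, exactly as you argue. One small remark on the second identity: rather than justifying the inversion $\Borel_+[\Laplace_+[\phi]]=\phi$ by an interchange of integrals (which is only a principal-value computation and needs some care), it is cleaner to read it off from the (2)$\Rightarrow$(1) half of Nevanlinna's Theorem, which shows that $\Borel_\theta[f]$ is itself an analytic continuation of $\hat{\phi}$ with at-most-exponential growth, whence $\Borel_\theta[f]=\phi$ by uniqueness of analytic continuation.
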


\subsection{Proof of Nevanlinna's Theorem}
\label{200624185107}

This subsection contains a detailed proof of \hyperref[210617120300]{Nevanlinna's Theorem \ref*{210617120300}}.
Thanks to the assumptions of uniformity in $x$, it is very easy to see how all the limiting and convergence arguments in the proof can be made uniform in $x$.
Therefore, in order to avoid obscuring the discussion with unnecessary verbiage, we omit the dependence on $x$ altogether and present the proof of the following version of Nevanlinna's theorem.
For the benefit of the reader, we have replaced the variable ``$\hbar$'' with ``$z$'' to give the statement a slightly more unadulterated appearance.

\begin{thm}[\textbf{Nevanlinna's Theorem~\cite{nevanlinna1918theorie}}]{200711095033}
Let $\hat{f} (z) \in \Complex \bbrac{z}$ be a formal power series, and let $\hat{\phi} (\xi) \in \Complex \bbrac{\xi}$ be its formal Borel transform:
\eqn{
	\hat{f} (z) = \sum_{k=0}^\infty f_k z^k
\qtext{and}
	\hat{\phi} (\xi)
		\coleq \hat{\Borel} [ \hat{f} ] (\xi)
		= \sum_{k=0}^\infty \phi_k \xi^k
\qtext{where}
	\phi_k \coleq \tfrac{1}{k!} f_{k+1}
\fullstop
}
Then the following two statements are equivalent.
\begin{enumerate}
\item $\hat{\phi} (\xi)$ is a convergent power series which admits an analytic continuation $\phi (\xi)$ to a tubular neighbourhood $\Xi \subset \Complex_\xi$ of the positive real axis $\Real_+ \subset \Complex_\xi$ with at most exponential growth at infinity: i.e., there are constants $\AA, \LL > 0$ such that
\eqntag{\label{200615204240}
	\big| \phi (\xi) \big| \leq \AA e^{\LL |\xi|}
\rlap{\qquad$(\xi \in \Xi)$\fullstop}
}
\item There is a holomorphic function $f = f(z)$ defined on a sectorial domain $S \subset \Complex_z$ of the origin with halfplane opening $(-\tfrac{\pi}{2}, +\tfrac{\pi}{2})$ which admits $\hat{f}$ as its Gevrey asymptotic expansion as $z \to 0$ along the closed right halfplane arc: i.e., there are constants ${\CC, \MM > 0}$ such that for all $n \geq 0$,
\eqntag{\label{200305191443}
	\RR_n (z) \coleq f(z) - \sum_{k=0}^{n-1} f_k z^k
		~\subdomeq~ \CC \MM^n n! z^n
\quad\text{as $z \to 0$ in the right halfplane\fullstop}
}
\end{enumerate}
If these conditions are satisfied, $f$ and $\phi$ are unique and given as the Laplace and Borel transforms of each other:
\eqnstag{
\label{200615203126}
\llap{$(\forall z \in S)$\qqquad}
	f(z) &= f_0 + \Laplace_+ [ \, \phi \, ] (z) 
		= f_0 + \int_0^{+ \infty} e^{- \xi / z} \phi (\xi) \dd{\xi}
\fullstop{,}
\\
\label{200617120419}
\llap{$(\forall \xi \in \Real_+)$\qqquad}
	\phi (\xi) &= \Borel_+ [\, f \,] (\xi) 
		= \frac{1}{2\pi i} \int_{\wp} e^{\xi / z} f(z) \frac{\dd{z}}{z^2}
\fullstop
\rlap{\qqqqqqqqqquad\quad~\qedhere}
}
\end{thm}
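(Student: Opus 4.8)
The plan is to establish the two implications $(1)\Rightarrow(2)$ and $(2)\Rightarrow(1)$ separately, then extract the inversion formulas \eqref{200615203126} and \eqref{200617120419}, and finally deduce uniqueness of $f$ and of $\phi$. (The parametric statement in \hyperref[210617120300]{Nevanlinna's Theorem~\ref*{210617120300}} then follows at once, since every estimate below is visibly uniform in an auxiliary parameter.)

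For $(1)\Rightarrow(2)$: assume $\hat\phi$ is convergent with analytic continuation $\phi$ to a tubular neighbourhood $\Xi$ of $\Real_+$ obeying $|\phi(\xi)|\leq\AA e^{\LL|\xi|}$. I would \emph{define} $f$ by the Laplace integral
\eqn{
	f(z)\coleq f_0+\Laplace_+[\,\phi\,](z)
		= f_0+\int_0^{+\infty}e^{-\xi/z}\phi(\xi)\dd{\xi}
\fullstop{,}
}
check that it converges uniformly on a Borel disc $S=\set{\Re(1/z)>\LL'}$ with $\LL'>\LL$, so $f\in\cal{O}(S)$, and then estimate $\RR_n(z)=f(z)-\sum_{k=0}^{n-1}f_kz^k$. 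For the latter: fix $n$, split the integral at a radius $\rho$ below the radius of convergence of $\hat\phi$, replace $\phi$ on $[0,\rho]$ by its Taylor polynomial plus integral remainder, use $\Laplace_+[\,\xi^k\,](z)=k!\,z^{k+1}$ to reconstitute $\sum f_kz^k$ up to a small error, and dominate the tail $\int_\rho^{+\infty}$ with the exponential bound; the standard computation delivers $\RR_n(z)\subdomeq\CC\MM^n n!\,z^n$. The genuinely delicate point is that the constants must not deteriorate as $\arg z\to\pm\tfrac{\pi}{2}$; this is exactly where one uses that $\phi$ is holomorphic and exponentially bounded on a whole \emph{tube} around $\Real_+$ and not merely along the single ray, since this permits a slight rotation of the Laplace contour and hence uniform control all the way to the closed endpoints of the halfplane arc.

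For $(2)\Rightarrow(1)$: assume $f\in\cal{O}(S)$ with $\RR_n(z)\subdomeq\CC\MM^n n!\,z^n$ uniformly along $\bar{A}_+$, and \emph{define}
\eqn{
	\phi(\xi)\coleq\Borel_+[\,f\,](\xi)
		=\frac{1}{2\pi i}\oint_\wp e^{\xi/z}f(z)\frac{\dd{z}}{z^2}
\fullstop
}
Well-definedness for $\xi\in\Real_+$ is checked as in \autoref{210617101734}: write $f=f_0+f_1z+\RR_2(z)$, note $\Borel_+[\,1\,]=0$ and that $\Borel_+[\,z\,]$ is constant, and bound the contribution of $\RR_2$ via the $n=2$ Gevrey estimate. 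Deforming $\wp$ to a small circle about the origin and invoking the residue formulas \eqref{200704112520} shows that near $\xi=0$ the function $\phi$ coincides with $\hat\phi$, so in particular $\hat\phi$ converges. To continue $\phi$ analytically, observe that for each phase $\psi$ with $|\psi|<\tfrac{\pi}{2}$ the rotated Borel transform $\Borel_\psi[\,f\,]$ is defined by the same closed-arc estimate, is holomorphic near the ray $e^{i\psi}\Real_+$, and agrees with $\phi$ on overlaps by Cauchy's theorem; glueing all of these with the disc of convergence of $\hat\phi$ about the origin yields a holomorphic function on a region containing a tubular neighbourhood $\Xi$ of $\Real_+$. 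Finally the bound $|\phi(\xi)|\leq\AA e^{\LL|\xi|}$ on $\Xi$ comes from optimal truncation: estimate $\phi(\xi)-\sum_{k=0}^{n-1}\phi_k\xi^k$ from the factorial bounds and take $n$ of order $|\xi|$.

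For the inversion formulas and uniqueness: the computation in $(2)\Rightarrow(1)$ already identifies $\phi$ as $\Borel_+[\,f\,]$; to obtain $f=f_0+\Laplace_+[\,\phi\,]$ I would note that $g\coleq f_0+\Laplace_+[\,\phi\,]$ satisfies $(2)$ with the \emph{same} asymptotic series $\hat f$ (by the $(1)\Rightarrow(2)$ step), so $f-g$ is Gevrey asymptotic to $0$ along $\bar{A}_+$; hence $\Borel_+[\,f-g\,]$ is holomorphic on a tube, vanishes near the origin, and therefore vanishes identically, whence $f-g=\Laplace_+[\,\Borel_+[\,f-g\,]\,]=0$. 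The same argument with $g$ replaced by an arbitrary second function satisfying $(2)$ with series $\hat f$ gives uniqueness of $f$, while uniqueness of $\phi$ is immediate from the identity theorem, since any $\phi$ as in $(1)$ is an analytic continuation of the convergent series $\hat\phi$. The main obstacle throughout is the borderline character of the statement: the arc has opening exactly $\pi$, placing us at the critical Gevrey order $1$, where Poincaré/Watson-type asymptotics fail on open arcs and one must work with the closed arc $\bar{A}_+$ at every stage. Concretely, the hard points are (a) keeping the remainder bound $\RR_n(z)\subdomeq\CC\MM^n n!\,z^n$ uniform as $\arg z\to\pm\tfrac{\pi}{2}$ in the direction $(1)\Rightarrow(2)$, and (b) upgrading the analytic continuation of $\phi$ from a sector to a genuine tubular neighbourhood of $\Real_+$ carrying an exponential bound in the direction $(2)\Rightarrow(1)$; both rest on Sokal's sharpening of Watson's theorem, namely that closed-arc Gevrey-$1$ bounds correspond exactly, with no loss of opening angle, to the ``tube plus exponential growth'' side of the Borel transform.
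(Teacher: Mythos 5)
Your direction $(1)\Rightarrow(2)$ is essentially the paper's argument: define $f$ as the Laplace transform of $\phi$, split the contour at a small radius inside the disc of convergence of $\hat\phi$, and exploit the freedom to tilt the initial segment of the $\xi$-contour into the tube $\Xi$ (the paper tilts by $e^{\pm i\pi/4}$ and covers $S$ by two subsectors $S_\pm$) so that the Gevrey constants stay uniform as $\arg z \to \pm\tfrac{\pi}{2}$. The uniqueness argument via $\Laplace_+\circ\Borel_+=\mathrm{id}$ is also fine once the inversion formula \eqref{200615203126} is available.

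The gap is in $(2)\Rightarrow(1)$, at exactly the point you flag as hard point (b). You propose to continue $\phi$ off $\Real_+$ by glueing the rotated Borel transforms $\Borel_\psi[\,f\,]$ for $|\psi|<\tfrac{\pi}{2}$, asserting that these ``are defined by the same closed-arc estimate''. They are not: the contour for $\Borel_\psi$ is the boundary of a Borel disc bisected by $\psi$, i.e.\ $\set{\Re(e^{i\psi}/z)=1/\delta'}$, and for $\psi\neq 0$ this contour leaves the half-plane $\set{\Re(1/z)>\hat{\RR}}$ on which $f$ is given (in the $w=1/z$ coordinate, two half-planes with non-parallel boundary lines are never nested). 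Equivalently, $\Borel_\psi$ needs Gevrey bounds along $[\psi-\tfrac{\pi}{2},\psi+\tfrac{\pi}{2}]$, which is not contained in the hypothesised arc $[-\tfrac{\pi}{2},+\tfrac{\pi}{2}]$. Nor can you keep the $\theta=0$ contour and let $\xi$ go complex: on $1/z=\delta'+it$ one has $|e^{\xi/z}|=e^{\Re(\xi)\delta'-\Im(\xi)t}$, which blows up as $t\to\mp\infty$ whenever $\Im\xi\neq 0$, so the integral diverges off the real axis. Obtaining the tube from half-plane data alone is precisely the nontrivial content of the theorem, and the paper's mechanism is different: from $\del_\xi^n\phi(\xi)=n!\,\phi_n+\tfrac{1}{2\pi i}\oint \RR_{n+2}(z)\,e^{\xi/z}\,z^{-n-2}\dd{z}$ one derives the uniform derivative bounds $|\del_\xi^n\phi(\xi)|\leq\CC\MM^n n!\,e^{\RR\xi}$ for all real $\xi>0$; hence the Taylor series of $\phi$ at every point of $\Real_+$ converges on a disc of the fixed radius $r<1/\MM$, and these discs assemble into the tubular neighbourhood, with the same derivative bounds yielding the exponential growth on all of $\Xi$. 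Your ``optimal truncation'' step for the exponential bound is likewise not the right tool: on $\Real_+$ the bound already follows from the $n=2$ remainder estimate, and off $\Real_+$ it presupposes the continuation you have not constructed.
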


\subsubsection[Proof of (1)$\Rightarrow$(2)]{Proof of \hyperref[200711095033]{(1)$\bm{\Rightarrow}$(2)}}

Fix any constant $\RR > \LL$.
Then the exponential estimate \eqref{200615204240} implies that the integral in \eqref{200615203126} is uniformly convergent for all $z$ in the Borel disc
\eqn{
	S \coleq \set{z \in \Complex_z ~\Big|~ \Re (z^{-1}) > \RR}
\fullstop
}
Therefore formula \eqref{200615203126} defines a holomorphic function $f$ on $S$.
It remains to show that this function satisfies the bounds \eqref{200305191443}.

To do so, we cover the sectorial domain $S$ with two smaller sectorial domains $S_\pm$ whose openings $A_\pm$ are strictly less than $\pi$; for example, we can take:
\eqn{
	S_\pm \coleq S \cap \hat{S}_\pm
\qtext{where}
	\hat{S}_\pm 
		\coleq \set{ \Big. |z| < \tfrac{2}{\RR}, \quad \big| \arg(z) - \theta_\pm \big| < \tfrac{\pi}{3} }
\qtext{and}
	\theta_\pm \coleq \pm \tfrac{\pi}{4}
\fullstop
}
The standard sectors $\hat{S}_\pm$ have openings $\hat{A}_\pm \coleq (\theta_\pm - \tfrac{\pi}{3}, \theta_\pm + \tfrac{\pi}{3})$ and the sectorial domain $S_\pm$ have openings $A_+ \coleq (\theta_+ - \tfrac{\pi}{3}, + \tfrac{\pi}{2})$ and $A_- \coleq (- \tfrac{\pi}{2}, \theta_- + \tfrac{\pi}{3})$.
The advantage of restricting to these subsectorial domain $S_\pm$ is that we get the following useful lower bound: if $c \coleq \sin (\tfrac{\pi}{3}) > 0$, then 
\eqntag{\label{200616105512}
	z \in S_\pm
\qtext{$\implies$}
	\Re (\omega_\pm / z) > c / |z|
\qqtext{where}
	\omega_\pm \coleq e^{i \theta_\pm} = e^{\pm i \pi/4}
\fullstop
}
At the same time, the tubular neighbourhood $\Xi$ contains a standard tubular neighbourhood $W_{2\delta} \coleq \set{ \xi ~\big|~ \op{dist} (\xi, \Real_+) < 2\delta}$ for some thickness $2\delta > 0$.
Let $\delta$ be so small that the power series $\hat{\phi} (\xi)$ is absolutely convergent in the disc around $\xi = 0$ of radius $2\delta$.
Mark two points
\eqn{
	\xi_\pm \coleq \delta \omega_\pm \in W_\delta
\fullstop{,}
}
and consider the following paths contained in $W_{2\delta}$:
\eqn{
	\gamma_\pm \coleq [0, \xi_\pm]
\qqtext{and}
	\ell_\pm \coleq \xi_\pm + \Real_+
\fullstop
}
Since $\phi$ is holomorphic on $W_\delta$, we can decompose $f$ on each subdomain $S_\pm$ as 
\eqn{
	f (z) = g_\pm (z) + h_\pm (z)
}
where
\eqn{
	g_\pm (z) \coleq f_0 + \int_{\gamma_\pm} e^{- \xi / z} \phi (\xi) \dd{\xi}
\qqtext{and}
	h_\pm (z) \coleq \int_{\ell_\pm} e^{- \xi / z} \phi (\xi) \dd{\xi}
\fullstop
}

\setcounter{claimx}{0}
\begin{claim}{200615210903}
Each function $g_\pm$ admits $\hat{f}$ as its asymptotic expansion as $z \to 0$ along $\bar{A}_\pm$ which is valid with Gevrey regularity: i.e., there is a constant $\MM_1 > 0$ (independent of the choice of $\pm$) such that, for all $n \geq 0$,
\eqn{
	g_\pm (z) - \sum_{k=0}^{n-1} f_k z^k
		\in \OO \big( \MM_1^n n! z^n \big)
\rlap{\qquad as $z \to 0$ along $A_\pm$\fullstop}
}
\end{claim}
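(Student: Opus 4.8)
The plan is to prove Claim~\ref{200615210903} by a direct estimate on the finite-contour Laplace integral defining $g_\pm$. First I would Taylor-expand $\phi(\xi)$ along the segment $\gamma_\pm = [0,\xi_\pm]$, writing
\eqn{
	\phi(\xi) = \sum_{k=0}^{n-2} \phi_k \xi^k + \rho_{n-1}(\xi)
\fullstop{,}
}
where $\rho_{n-1}$ is the order-$(n-1)$ Taylor remainder of $\phi$. Since $\phi$ is holomorphic on the disc of radius $2\delta$ and $\gamma_\pm$ lies well inside it, Cauchy estimates give $|\rho_{n-1}(\xi)| \leq C_1 (2/\delta)^{n-1} |\xi|^{n-1}$ on $\gamma_\pm$, and the key point is that the geometric factor $(2/\delta)^{n-1}$ is \emph{not} accompanied by a factorial — it is the absence of $n!$ here that I will have to compensate for using the $\int_0^\infty e^{-t}t^k\,dt = k!$ identity when integrating the exponential kernel.

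The main computation then splits into two parts. For the polynomial part, I would substitute $\phi_k = f_{k+1}/k!$ and extend the integration from $\gamma_\pm$ to the full ray $\omega_\pm\Real_+$; each term $\int_{\omega_\pm\Real_+} e^{-\xi/z}\xi^k\,\dd\xi = k!\, z^{k+1}$ reproduces $\sum_{k=0}^{n-2} f_{k+1} z^{k+1}$, i.e.\ exactly $\sum_{k=1}^{n-1} f_k z^k$, so together with $f_0$ we recover $\sum_{k=0}^{n-1} f_k z^k$. The correction from truncating the ray back to $\gamma_\pm$ contributes a term bounded by $\int_{\delta}^{\infty} e^{-cs/|z|} s^k\,\dd s$, which for $z \in S_\pm$ (using the lower bound \eqref{200616105512}, $\Re(\omega_\pm/z) > c/|z|$) is exponentially small in $1/|z|$ and hence swallowed by any $O(\MM_1^n n!\, z^n)$ bound. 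For the remainder part, I would estimate
\eqn{
	\left| \int_{\gamma_\pm} e^{-\xi/z}\rho_{n-1}(\xi)\,\dd\xi \right|
		\leq C_1 (2/\delta)^{n-1} \int_{0}^{\infty} e^{-cs/|z|} s^{n-1}\,\dd s
		= C_1 (2/\delta)^{n-1} (n-1)!\, (|z|/c)^{n}
\fullstop{,}
}
which is of the desired form $O(\MM_1^n n!\, |z|^n)$ with $\MM_1 \coleq 2/(c\delta)$ (adjusting constants so the bound absorbs the polynomial-truncation error as well).

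I would then note that the constant $\MM_1$ is manifestly independent of the choice of sign $\pm$ and of $n$, which gives the uniformity asserted in the claim, and that the estimates hold uniformly as $z\to 0$ within the sector $A_\pm$ because the lower bound $\Re(\omega_\pm/z) > c/|z|$ is uniform there. The genuinely delicate point — and the one I expect to be the main obstacle — is bookkeeping the interplay between the geometric constant $2/\delta$ coming from Cauchy estimates on $\phi$ and the factorial $(n-1)!$ produced by the Laplace kernel: one must check that their product is controlled by a \emph{single} geometric-times-factorial expression $\MM_1^n n!$ rather than something worse, and that the exponentially small truncation errors really are negligible against this (rather than against a fixed power of $z$). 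This is exactly where the restriction to the subsectors $S_\pm$ with opening strictly less than $\pi$ is essential, since it is only there that \eqref{200616105512} furnishes the needed uniform exponential decay of the kernel along $\gamma_\pm$ and $\ell_\pm$.
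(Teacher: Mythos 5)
Your decomposition is exactly the paper's: writing $\phi = \sum_{k\le n-2}\phi_k\xi^k + \rho_{n-1}$ on $\gamma_\pm$ and extending the polynomial part to the full ray is the same as the computation \eqref{200616110355}, since $\rho_{n-1}(\xi)=\sum_{k\ge n-1}\phi_k\xi^k$ is precisely the tail of the series that the paper keeps on $[0,\xi_\pm]$. Your treatment of the remainder piece is correct and complete: a geometric (non-factorial) bound on $\rho_{n-1}$ of the form $\mathrm{const}\cdot(\mathrm{geom})^{n}|\xi|^{n-1}$, fed into $\int_0^{\infty}s^{n-1}e^{-cs/|z|}\dd{s}=(n-1)!\,(|z|/c)^{n}$, is exactly how the paper bounds that term.

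The gap is the ray-truncation term $\sum_{k=0}^{n-2}|\phi_k|\int_{\delta}^{\infty}e^{-cs/|z|}s^{k}\dd{s}$, which you dismiss as ``exponentially small, hence swallowed by any $O(\MM_1^{n}n!\,z^{n})$ bound.'' For each fixed $k$ this is true, but the swallowing constants are not uniform in $k$ and $n$: the per-$k$ exponential bound has the shape $|\phi_k|\,k!\,(2|z|/c)^{k+1}e^{-c\delta/(2|z|)}$, and converting the factor $e^{-c\delta/(2|z|)}$ into the Gevrey form costs a further $n!\,(\mathrm{geom})^{n}|z|^{n}$; summing the $n-1$ terms then yields something of order $(n-1)!\cdot n!\cdot(\mathrm{geom})^{n}|z|^{n}$, which is \emph{not} $O(\MM_1^{n}n!\,|z|^{n})$. (One cannot instead bound by the full sum over all $k$, since $\sum_k|\phi_k|s^{k}$ diverges for $s$ beyond the radius of convergence.) The fix is short and is the same move you already made for $\rho_{n-1}$: for $s\ge\delta$ and $k\le n-2$ one has $s^{k}\le s^{n-1}\delta^{k-n+1}$, so each truncation integral is bounded by $\delta^{k-n+1}(n-1)!\,(|z|/c)^{n}$ with the summable weights $|\phi_k|\delta^{k}$. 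This pointwise inequality is the paper's key observation; it treats the truncation tail and the series tail symmetrically and produces the single constant $\MM_1=1/(c\delta)$.
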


\begin{claim}{200615211357}
Each function $h_\pm (z)$ is asymptotic to $0$ as $z \to 0$ along $\bar{A}_\pm$ with Gevrey regularity: i.e., there is a constant $\MM_2 > 0$ (independent of the choice of $\pm$) such that, for all $n \geq 0$,
\eqn{
	h_\pm (z) \in \OO \big( \MM_2^n n! z^n \big)
\rlap{\qquad as $z \to 0$ along $A_\pm$\fullstop}
}
\end{claim}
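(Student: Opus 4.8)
The plan is to reduce the estimate to the exponential smallness of the prefactor $e^{-\xi_\pm/z}$, which is exactly what the lower bound \eqref{200616105512} was arranged to provide. First I would parametrise the ray $\ell_\pm = \xi_\pm + \Real_+$ by $t \in [0,+\infty)$ and factor the integrand, obtaining $h_\pm (z) = e^{-\xi_\pm/z} \int_0^{+\infty} e^{-t/z} \phi (\xi_\pm + t) \dd{t}$. Since $\xi_\pm = \delta\omega_\pm = \delta e^{\pm i\pi/4}$, the ray $\ell_\pm$ has constant imaginary part $\pm\delta/\sqrt{2}$ and therefore lies inside the standard tubular neighbourhood $W_{2\delta}$, on which $\phi$ is holomorphic and satisfies $|\phi(\xi)| \le \AA e^{\LL|\xi|}$.

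Next I would bound the remaining integral by a constant independent of $z$ and of the choice of sign. On the Borel disc $S$ one has $\Re(z^{-1}) > \RR > \LL$, so $|e^{-t/z}| \le e^{-\RR t}$, while $|\phi(\xi_\pm + t)| \le \AA e^{\LL(\delta + t)}$; hence the integral is dominated by $\AA e^{\LL\delta} \int_0^{+\infty} e^{-(\RR-\LL)t}\dd{t} = \AA e^{\LL\delta}/(\RR-\LL)$, a finite quantity which I denote $\CC_1$.

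The key step is then the bound on the prefactor: $|e^{-\xi_\pm/z}| = e^{-\delta\,\Re(\omega_\pm/z)}$, and \eqref{200616105512} — valid precisely because the subsector $S_\pm$ has opening strictly less than $\pi$ — gives $\Re(\omega_\pm/z) > c/|z|$ with $c = \sin(\pi/3)$. Thus $|h_\pm(z)| \le \CC_1 e^{-\delta c/|z|}$ for all $z \in S_\pm$. Finally I would convert this exponential decay into the claimed Gevrey-flatness via the elementary inequality $e^{-a/r} \le n!\,(r/a)^n$ (valid for all $a,r>0$ and $n \ge 0$, since $e^{a/r} \ge (a/r)^n/n!$): taking $a = \delta c$ and $r = |z|$ yields $|h_\pm(z)| \le \CC_1 n!\,(\delta c)^{-n}|z|^n$, that is, $h_\pm(z) \in \OO(\MM_2^n n! z^n)$ with $\MM_2 \coleq (\delta c)^{-1}$, a constant independent of $n$ and of $\pm$.

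I do not expect a genuine obstacle here. The only substantive ingredient is \eqref{200616105512}, which is already established, and the remainder is the routine passage from exponential smallness to Gevrey asymptotics to zero. The minor points needing attention are confirming that $\ell_\pm \subset W_{2\delta}$ so that the growth hypothesis on $\phi$ is in force along the contour, and checking that $\CC_1$ and $\MM_2$ can be taken uniformly in the sign $\pm$, which is immediate since $|\omega_\pm| = 1$ and $|\xi_\pm| = \delta$.
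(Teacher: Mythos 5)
Your proof is correct and follows essentially the same route as the paper: parametrise $\ell_\pm$, use the exponential bound on $\phi$ together with $\Re(z^{-1})>\RR>\LL$ to control the tail integral, invoke \eqref{200616105512} to get the exponential smallness $e^{-c\delta/|z|}$, and then convert exponential decay into Gevrey flatness. The only (cosmetic) difference is at the last step, where you use the inequality $e^{-a/r}\le n!\,(r/a)^n$ directly while the paper maximises $r\mapsto r^{-n}e^{-c\delta/r}$ and applies Stirling's bound; your version also makes explicit that the resulting constant is $(c\delta)^{-1}$, which is the correct value.
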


The conclusion of this part of the theorem now follows from these two claims by taking $\MM \coleq \max \set{\MM_1, \MM_2}$.
Now we prove these claims.

\begin{subproof}[Proof of \autoref{200615210903}.]
Since each interval $\gamma_\pm$ is contained in the disc of absolute convergence of the power series $\hat{\phi}$, we can make the following computation:
\eqnstag{\label{200616110355}
	g_\pm - \sum_{k=0}^{n-1} f_k z^k
	&= \int_{\gamma_\pm} e^{- \xi / z} \GREEN{\hat{\phi}} \dd{\xi}
		- \sum_{k=1}^{n-1} f_k \BLUE{z^k}
\\	\nonumber
	&= \int_0^{\xi_\pm} \GREEN{\sum_{k = 0}^\infty \phi_k \xi^k} e^{-\xi / z} \dd{\xi}
		- \sum_{k=1}^{n-1} \frac{f_{k}}{\BLUE{(k-1)!}} \BLUE{\int_0^{\xi_\pm \cdot \infty}
			\xi^{k-1} e^{- \xi/z} \dd{\xi}}
\\	\nonumber
	&= \sum_{k = 0}^\infty \phi_k \int_0^{\xi_\pm} \xi^k e^{-\xi / z} \dd{\xi}
		- \sum_{k=0}^{n-2} \phi_{k} \int_0^{\xi_\pm \cdot \infty}
			\xi^k e^{- \xi/z} \dd{\xi}
\\	\nonumber
	&= \sum_{k = 0}^\infty \phi_k \omega_\pm^{k+1} \int_0^{\delta} t^k e^{-t \omega_\pm / z} \dd{t}
		- \sum_{k=0}^{n-2} \phi_{k} \omega_\pm^{k+1} \int_0^{+\infty}
			t^k e^{-t \omega_\pm / z} \dd{t}
\\	\nonumber
	&= \sum_{k = n-1}^\infty \phi_k \omega_\pm^{k+1} \int_0^{\delta} t^k e^{-t \omega_\pm / z} \dd{t}
		- \sum_{k=0}^{n-2} \phi_{k} \omega_\pm^{k+1} \int_{\delta}^{+\infty}
			t^k e^{-t \omega_\pm / z} \dd{t}
\fullstop
}
In the third line we were allowed to interchange integration and summation because the series $\sum \phi_k \xi^k e^{-\xi / z}$ is absolutely convergent for all $\xi$ in the interval $[0, \xi_\pm]$.
(Indeed, from the inequality \eqref{200616105512}, $\Re (\xi / z) = |\xi| \Re (\omega_\pm / z) > 0$ because $z \in S_\pm$, and so $\big| \xi^k e^{- \xi / z} \big| \leq |\xi|^k \leq | \xi_\pm |^k$ for all $\xi \in [0, \xi_\pm]$.)
In the fourth line, we made the substitution $\xi = t \omega_\pm$ in both integrals.

The point of this computation is that the constraints on $t$ and $k$ in both the first and the second summation terms lead to the same bound on $t^k$:
\eqn{
\begin{rcases*}
	t \leq \delta \qtext{and} k \geq n-1
\\	t \geq \delta \qtext{and} k < n-1
\end{rcases*}
~\implies~
	\left(~ \frac{t}{\delta} ~\right)^{k - (n-1)} \leq 1
~\iff~
	t^k \leq t^{n-1} \delta^{k - n + 1}
\fullstop
}
So both integrals in the last line of \eqref{200616110355} can be bounded above by the same quantity:
\eqn{
\begin{rcases*}
	\int_0^{\delta} t^k e^{-t \omega_\pm / z} \dd{t}
\\	\int_{\delta}^{\infty}
			t^k e^{-t \omega_\pm / z} \dd{t}
\end{rcases*}
	\leq
	\delta^{k - n + 1} \int_0^{+\infty} t^{n-1} e^{-t c / |z|} \dd{t}
	\leq \delta^{k - n + 1} n! \frac{|z|^n}{c^n}
\fullstop{,}
}
where we again used the inequality \eqref{200616105512}.
As a result, for all $z \in S$, we obtain the following bound, valid for every $n \geq 1$:
\eqns{
	\left| g_\pm (z) - \sum_{k=0}^{n-1} f_k z^k \right|
	&\leq \sum_{k=0}^\infty 
		|\phi_k| \delta^{k - n + 1}
		n! \frac{|z|^n}{c^n}
	= |z|^n \CC_1 \MM_1^n n!
\fullstop{,}
}
where $\CC_1 \coleq \delta \sum_{k=0}^\infty |\phi_k| \delta^k$ and $\MM_1 \coleq 1/c\delta$.
Note that $\CC_1$ is a finite number because $\delta = |\xi_\pm|$ and $\xi_\pm$ is contained in the disc of uniform convergence of $\hat{\phi}$.
\end{subproof}

\begin{subproof}[Proof of \autoref{200615211357}.]
Parameterise the path $\ell_\pm$ as $\xi(t) = \xi_\pm + t$ for $t \in \Real_+$.
Then using the exponential estimate \eqref{200615204240} it is easy to show that for all $z \in S_\pm$, the function $h_\pm (z)$ is exponentially decaying:
{\small
\eqn{
	\big| h_\pm \big|
		\leq \int_0^{+ \infty} e^{-\Re (\xi_\pm / z)} e^{- t \Re (z^{-1})}
			\Big| \phi \big(\xi_\pm + t \big) \Big| \dd{t}
		\leq \AA e^{\delta \LL} e^{- c\delta /|z|}
			\int_0^{+\infty} e^{-t (\RR - \LL)}
		\leq \CC_2 e^{- \MM_2 /|z|}
}}%
where $\CC_2 \coleq \AA e^{\delta \LL}(\RR - \LL)^{-1}$ and $\MM_2 \coleq c \delta$.
In particular, it follows that for every $n \in \Integer_+$ we have the bound $\big| h_\pm (z) z^{-n} \big| \leq \CC_2 e^{- \MM_2 /|z|} |z|^{-n}$.
For $n = 0$, the claim is obviously true, so let us assume that $n \geq 1$ and analyse the real-valued function $r \mapsto r^{-n} e^{-\MM_2/r}$ on $\Real_{>0}$.
It achieves its maximum value at $r = \MM_2 / n$, so upon using Stirling's bounds, we obtain the desired estimate:
\eqntag{
	\big| h_\pm (z) z^{-n} \big| 
		\leq \CC_2 \KK^{-n} n^n e^{-n}
		\leq \CC_2 \MM_2^n n!
\fullstop
\tag*{\qquad\rlap{$\square\blacksquare$}}
}\noqed
\end{subproof}
\vspace{-\baselineskip}

\subsubsection[Proof of (2)$\Rightarrow$(1)]{Proof of \hyperlink{200711095033}{(2)$\bm{\Rightarrow}$(1)}}

Although the proof of this part of the theorem is long, the strategy is straightforward: we define $\phi$ by the formula \eqref{200617120419} and verify that it is analytic in a tubular neighbourhood of $\Real_+$ with at most exponential growth at infinity in $\xi$.
This verification is a combination of techniques from standard real and complex analysis which crucially rely on the fact that $f$ admits uniform Gevrey asymptotics as $z \to 0$ along the closed right halfplane arc; i.e., on the estimate \eqref{200305191443}.

First, we assume without loss of generality that the sectorial domain $S$ is the Borel disc $\set{\Re (z^{-1}) > \smash{\hat{\RR}}}$ with radius $\smash{\hat{\RR}} > 1$ so large that
\eqntag{\label{200619171900}
\llap{$(\forall x \in S)$\qqqqqqquad}
	\big| \RR_n (z) \big|
		~\leq~ \CC \MM^n n! |z|^n
}
By \autoref{200619145717}, the constants $\CC, \MM$ can be chosen such that the coefficients $f_k$ of the asymptotic expansion $\hat{f}$ (and therefore the coefficients $\phi_k$ of the formal Borel transform $\hat{\phi} = \hat{\Borel} [ \, \hat{f} \, ]$) satisfy the following bounds:
\eqntag{\label{200617121503}
\llap{$(\forall k \in \Integer_{\geq 0})$\qqqquad}
	| f_k | \leq \CC \MM^k k!
\qqtext{and}
	| \phi_k | \leq \CC \MM^k
\fullstop
}
Then the power series $\hat{\phi}$ is absolutely convergent on the disc $\smash{\hat{\Disc}} \coleq \set{ \xi \in \Complex ~\big|~ |\xi| < \hat{r}}$ of radius $\hat{r} \coleq 1/\MM$.
Fix any $0 < r < \hat{r}$.
For every point $\xi_0$ on the real line $\Real_\xi \subset \Complex_\xi$, consider the following interval and disc centred at $\xi_0$:
\eqnstag{\label{200619173137}
	\mathbb{I} (\xi_0) 
		&\coleq \set{ \xi \in \Real ~\big|~ |\xi - \xi_0| < r }
			\subset \Real_\xi
\fullstop{,}
\\
	\Disc (\xi_0) 
		&\coleq \set{ \xi \in \Complex ~\big|~ |\xi - \xi_0| < r }
			\subset \Complex_\xi
\fullstop
}
Obviously, $\mathbb{I} (\xi_0) = \Disc (\xi_0) \cap \Real_\xi$.
We also define $\smash{\hat{\mathbb{I}}} \coleq \smash{\hat{\mathbb{D}}} \cap \Real_\xi$.
Let $\Xi$ be the tubular neighbourhood of the positive real axis $\Real_+$ of thickness $r$:
\eqntag{\label{200619173916}
	\Xi \coleq \set{ \xi \in \Complex_\xi ~\big|~ \op{dist} (\xi, \Real_+) < r }
	~\subset~ \smash{\hat{\Disc}} \cup \Cup_{\xi_0 > \hat{r}} \Disc (\xi_0)
\fullstop
}
Now the proof of this part of the theorem can be broken down into a series of claims whose proofs are presented after this discussion.
The first task is to check that the formula \eqref{200617120419} actually makes sense and defines a holomorphic function near $\xi = 0$ which coincides with the power series $\hat{\phi}$.

\setcounter{claimx}{0}
\begin{claim}{200617120803}
The function $\phi$ is well-defined for $\xi \in \Real_+$ and in particular independent of the choice of a Borel path $\wp$.
\end{claim}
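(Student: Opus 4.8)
The plan is to tame the essential singularity of the integrand $e^{\xi/z}f(z)z^{-2}$ at $z=0$ — the point at which every Borel path meets the boundary of the Borel disc — by subtracting off the first two terms of the asymptotic expansion of $f$. Concretely, I would write $f(z) = \big(f_0 + f_1 z\big) + \RR_2(z)$, with $\RR_2$ the order-$2$ remainder controlled by \eqref{200619171900}, and use linearity of \eqref{200617120419} to treat the two summands separately. For the polynomial part, the analytic Borel transform in the direction $0$ is given by the explicit formulas \eqref{200704112520}: $\Borel_+[f_0 + f_1 z](\xi) = f_1$, a constant function of $\xi$. This is manifestly well-defined and independent of the choice of Borel path $\wp$.

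For the remainder part, the Gevrey estimate \eqref{200619171900} with $n = 2$ gives $\big|\RR_2(z)\big| \leq 2\CC\MM^2 |z|^2$ for all $z$ in the Borel disc $S = \set{ z ~\big|~ \Re(z^{-1}) > \hat{\RR} }$; hence $g(z) \coleq \RR_2(z)z^{-2}$ is a holomorphic function on $S$ with $\big|g(z)\big| \leq 2\CC\MM^2$. Fix a Borel path $\wp = \set{ z ~\big|~ \Re(z^{-1}) = 1/\delta' }$ with $1/\delta' > \hat{\RR}$, so that $\wp \subset S$. For $\xi \in \Real_+$ one has $\Re(\xi/z) = \xi\,\Re(z^{-1}) = \xi/\delta'$ identically on $\wp$, whence $\big| e^{\xi/z} g(z) \big| \leq 2\CC\MM^2 e^{\xi/\delta'}$ on $\wp$; since $\wp$ is a curve of finite length, the integral $\tfrac{1}{2\pi i}\oint_\wp e^{\xi/z}\RR_2(z)\,\dd{z}/z^2$ converges absolutely, with no principal value needed for this piece. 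Path-independence I would get from Cauchy's theorem: given two Borel paths $\wp'$ and $\wp''$, the integrand is holomorphic on the crescent region between them except at the cusp $z=0$, and upon excising a small disc $\set{ |z| < \rho }$ around that cusp the two connecting arcs contribute $O(\rho) \to 0$ as $\rho \to 0^{+}$, precisely because the integrand is bounded near the cusp. Combining the two contributions gives $\phi(\xi) = f_1 + \Borel_+[\RR_2](\xi)$ for $\xi \in \Real_+$, well-defined and independent of $\wp$.

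The only delicate point — and the reason for the preliminary subtraction — is the cusp at $z = 0$ shared by all Borel paths: for $f$ itself, or even for $f - f_0$, the integrand is of size $O(|z|^{-2})$, respectively $O(|z|^{-1})$, there, and is at best conditionally convergent as a Cauchy principal value, so path-independence would require a more delicate symmetric-cancellation argument. After peeling off the two Taylor coefficients, the remaining integrand is bounded at the cusp and the whole argument reduces to an elementary application of Cauchy's theorem together with the estimate \eqref{200619171900} — the same mechanism underlying the Gevrey Borel-transformability lemma proved earlier in this appendix.
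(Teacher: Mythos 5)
Your proposal is correct and follows essentially the same route as the paper: subtract $f_0 + f_1 z$, handle the polynomial part by the explicit residue formulas, and use the $n=2$ Gevrey bound $|\RR_2(z)| \leq 2\CC\MM^2|z|^2$ to make the remainder integrand bounded at the cusp, giving absolute convergence on the (finite-length) Borel circle and path-independence via Cauchy's theorem. Your excision argument at $z=0$ is a slightly more explicit justification of the path-independence than the paper's one-line remark, but it is the same mechanism.
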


\begin{claim}{200617150331}
The series $\hat{\phi} (\xi)$ converges uniformly to $\phi (\xi)$ for all $\xi \in \smash{\hat{\mathbb{I}}} = \smash{\hat{\mathbb{D}}} \cap \Real_\xi$.
\end{claim}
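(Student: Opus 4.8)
The plan is to substitute the truncated Gevrey expansion of $f$ into the contour-integral definition of $\phi$, to peel off the corresponding partial sum of $\hat{\phi}$ by the elementary residue formulas, and to reduce the claim to an estimate on the Borel transform of the remainder; the mechanism that makes the estimate work is to let the radius of the Borel path grow with the order of truncation, exactly as in Watson's lemma. Fix $\xi \in \hat{\mathbb{I}}$; it suffices to treat $\xi \geq 0$, the case $\xi \leq 0$ near the origin being entirely analogous (and only easier, since then $|e^{\xi/z}| \leq 1$ on every Borel circle). By \autoref{200617120803} the value $\phi(\xi)$ is independent of the Borel path, so I am free to take $\wp = \wp_\rho \coleq \set{z ~\big|~ \Re(z^{-1}) = \rho}$ with $\rho > \hat{\RR}$ to be chosen below as a function of $n$.

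For each $n \geq 1$, write $f(z) = \sum_{k=0}^{n-1} f_k z^k + \RR_n(z)$ and apply $\Borel_+$ term by term. By the residue formulas \eqref{200704112520}, $\Borel_+[1] = 0$ and $\Borel_+[z^{k+1}] = \xi^k/k!$, so the polynomial part contributes precisely $\sum_{k=0}^{n-2} \phi_k \xi^k$, and hence
\eqn{
	\phi(\xi) - \sum_{k=0}^{n-2} \phi_k \xi^k = \Borel_+[\RR_n](\xi)
\fullstop
}
Thus the claim amounts to showing that the right-hand side tends to $0$ as $n \to \infty$, uniformly for $\xi$ in compact subintervals of $[0, \hat{r})$. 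To estimate it, parameterise $\wp_\rho$ by $z^{-1} = \rho + it$, so that $|e^{\xi/z}| = e^{\xi\rho}$, $|\dd{z}/z^2| = |\dd{t}|$, and $|z|^n = (\rho^2 + t^2)^{-n/2}$; feeding in the Gevrey bound $|\RR_n(z)| \leq \CC\MM^n n!\,|z|^n$ from \eqref{200619171900} gives
\eqn{
	\big| \Borel_+[\RR_n](\xi) \big|
	\leq \frac{\CC\MM^n n!}{2\pi}\, e^{\xi\rho} \int_{-\infty}^{+\infty} (\rho^2 + t^2)^{-n/2}\dd{t}
	= \frac{\CC\MM^n n!}{2\pi}\, e^{\xi\rho}\, \rho^{1-n}\, J_n
\fullstop
}
where $J_n \coleq \int_{-\infty}^{+\infty}(1+s^2)^{-n/2}\dd{s} \leq \pi$ for $n \geq 2$.

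The crux is then the choice of the free radius $\rho$. Taking $\rho = \rho_n(\xi) \coleq (n-1)/\xi$ — which exceeds $\hat{\RR}$ once $n$ is large, uniformly in $\xi$ over any fixed compact subinterval of $(0, \hat{r})$, the value $\xi = 0$ being trivial since there $\phi(0) = \phi_0$ directly — turns $e^{\xi\rho}\rho^{1-n}$ into $e^{n-1}(n-1)^{1-n}\xi^{n-1}$, and combining this with Stirling's estimate for $n!$ collapses the bound to $\CC_3\, n^{3/2}(\MM\xi)^{n-1}$ for a constant $\CC_3$ independent of $n$ and $\xi$. Since $\hat{r} = 1/\MM$, on any $[0, r']$ with $r' < \hat{r}$ we have $\MM\xi \leq \MM r' < 1$, so $n^{3/2}(\MM\xi)^{n-1} \to 0$ uniformly; hence $\sum_{k=0}^{n-2}\phi_k\xi^k \to \phi(\xi)$ uniformly on $[0, r']$, which is precisely the asserted convergence of $\hat{\phi}$ to $\phi$ on $\hat{\mathbb{I}}$ (uniform on compact subsets, and compatible with the already-known uniform convergence of the power series $\hat{\phi}$ on compact subdiscs of $\hat{\mathbb{D}}$). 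The only real obstacle here is the $n$-dependent choice of Borel radius in the last step: it is exactly what converts the divergent factor $\MM^n n!$ into the convergent geometric factor $(\MM\xi)^n$ times a harmless power of $n$, and the residue identities and integral bounds surrounding it are routine.
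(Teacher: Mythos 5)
Your proposal is correct and follows essentially the same route as the paper: decompose $\phi(\xi)-\sum_{k=0}^{n-2}\phi_k\xi^k$ as the Borel transform of the remainder $\RR_n$, use path-independence to take a Borel circle of $n$-dependent radius $\rho\approx n/\xi$, and apply the Gevrey bound plus Stirling to turn $\MM^n n!$ into a decaying geometric factor $(\MM\xi)^n$. The only differences are cosmetic (optimising at $(n-1)/\xi$ rather than $n/\xi$, and being slightly more explicit about uniformity on compact subintervals).
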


Therefore, the convergent power series $\hat{\phi}$ is the analytic continuation of $\phi$ from the open interval $\smash{\hat{\mathbb{I}}}$ to the open disc $\hat{\Disc}$.
Next, we verify that $\phi$ is itself the analytic continuation of $\hat{\phi}$ along $\Real_+$.

\begin{claim}{200617133542}
The function $\phi$ is infinitely-differentiable at every point $\xi \in \Real_+$.
Furthermore, the constants $\CC, \MM > 0$ can be taken so large that all derivatives of $\phi$ satisfy the following exponential bound: for all $n \in \Integer_{\geq 0}$, all $\RR > \smash{\hat{\RR}}$, and all $\xi \in \Real_+$,
\eqntag{\label{200617192014}
	\big| \del^n_\xi \phi (\xi) \big|
		\leq \CC \MM^n n! e^{\RR \xi}
\fullstop
}
\end{claim}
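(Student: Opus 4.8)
The plan is to build every $\xi$-derivative of $\phi$ out of finitely many ``polynomial'' terms plus a remainder integral that is honestly differentiable under the integral sign, the number of subtracted Taylor coefficients being governed by the Gevrey remainder estimate \eqref{200619171900}. Throughout, write $f(z) = P_N(z) + \RR_N(z)$ with $P_N(z) \coleq \sum_{k=0}^{N-1} f_k z^k$, and for a fixed $\RR > \hat{\RR}$ let $\wp = \wp_\RR$ be the Borel circle $\set{\Re(z^{-1}) = \RR}$, noting that $\big|e^{\xi/z}\big| = e^{\RR\xi}$ identically on $\wp$ when $\xi \in \Real_+$, and that $\wp_\RR$ has length $\pi/\RR$.

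First I would record the decomposition $\phi = \Borel_+[P_N] + \Borel_+[\RR_N]$ on $\Real_+$. Since $\phi$ is well-defined there by \autoref{200617120803} and the Borel transform (read as a principal value) is linear, this reduces to noting that the polynomial piece is evaluated termwise by the residue formulas \eqref{200704112520}, giving the polynomial $\Borel_+[P_N](\xi) = \sum_{k=1}^{N-1} f_k \xi^{k-1}/(k-1)!$, whereas $\Borel_+[\RR_N](\xi) = \frac{1}{2\pi i}\int_\wp e^{\xi/z}\RR_N(z) z^{-2}\,\dd{z}$ is an absolutely convergent ordinary contour integral as soon as $N \geq 2$, because $\big|\RR_N(z) z^{-2}\big| \leq \CC\MM^N N! |z|^{N-2}$ stays bounded on $\wp$.

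Next, to estimate $\del^n_\xi\phi$ I would take $N \coleq n+2$. The polynomial part contributes $\del^n_\xi \Borel_+[P_{n+2}](\xi) = f_{n+1}$, a constant bounded by $\CC\MM^{n+1}(n+1)!$ via \eqref{200617121503}. For the remainder part, each application of $\del_\xi$ inside the integral produces an extra factor $z^{-1}$; since $\big|\RR_{n+2}(z) z^{-2-j}\big| \leq \CC\MM^{n+2}(n+2)! |z|^{n-j}$ remains bounded on the compact curve $\wp$ for $0 \leq j \leq n$, dominated convergence licenses differentiating exactly $n$ times, yielding $\del^n_\xi \Borel_+[\RR_{n+2}](\xi) = \frac{1}{2\pi i}\int_\wp e^{\xi/z}\RR_{n+2}(z) z^{-n-2}\,\dd{z}$, hence $\big|\del^n_\xi \Borel_+[\RR_{n+2}](\xi)\big| \leq \frac{1}{2\pi}\cdot\frac{\pi}{\RR}\cdot e^{\RR\xi}\cdot\CC\MM^{n+2}(n+2)!$. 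Adding the two contributions, using $(n+1)! \leq 2^n n!$ and $(n+2)! \leq 16\cdot 4^n n!$, and absorbing the fixed powers of $\MM$ and the factor $\RR^{-1} < 1$ (recall $\hat{\RR}>1$) into enlarged constants, gives $\big|\del^n_\xi\phi(\xi)\big| \leq \CC'(\MM')^{n} n!\,e^{\RR\xi}$ uniformly in $\xi \in \Real_+$ and in $\RR > \hat{\RR}$; after relabelling $\CC, \MM$ this is precisely \eqref{200617192014}, and since $n$ was arbitrary $\phi$ is $C^\infty$ on $\Real_+$ (near $\xi = 0$ one may instead invoke \autoref{200617150331}, $\phi$ being a convergent power series there).

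I expect the only real point of care to be the differentiation under the integral sign: verifying that $N = n+2$ is exactly the number of subtracted Taylor coefficients needed to make the $n$-th $\xi$-derivative of the remainder integral both meaningful and estimable on the Borel circle, and that the resulting constants can be chosen independently of both $n$ and $\RR$. Everything else --- the residue evaluation of the polynomial part, the length of the Borel circle, and the combinatorial clean-up of the factorials --- is routine once the uniform Gevrey bound \eqref{200619171900} is in hand.
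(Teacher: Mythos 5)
Your proposal is correct and follows essentially the same route as the paper: subtract the first $n+2$ Taylor coefficients (equivalently, write $\phi = \sum_{k=0}^{n}\phi_k\xi^k + \Borel_+[\RR_{n+2}]$), justify $n$-fold differentiation under the integral sign on the Borel circle via the uniform bound $\big|\RR_{n+2}(z)\,e^{\xi/z}\,z^{-n-2}\big|\leq \CC\MM^{n+2}(n+2)!\,e^{\RR\xi}$, and then absorb the polynomial factors $(n+1)(n+2)$ into an enlarged geometric constant. The only cosmetic differences are that the paper bounds the polynomial contribution as $n!\,|\phi_n|\leq\CC\MM^n n!$ rather than via $|f_{n+1}|$, and performs the factorial clean-up with the generic $c>1$ rescaling trick instead of your explicit inequalities.
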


\begin{claim}{200617152302}
For every $\xi_0 \in \Real_+$ with $\xi_0 > \hat{r}$, the Taylor series $\TT_{\xi_0} \phi$ of $\phi$ centred at the point $\xi_0$ is absolutely convergent on the interval $\mathbb{I} (\xi_0)$ (and hence on the disc $\Disc (\xi_0)$) where it converges to $\phi$.
\end{claim}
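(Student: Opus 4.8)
The plan is to read this claim off directly from the $C^\infty$-estimate established in \autoref{200617133542}, namely that for every $n \geq 0$, every $\RR > \hat{\RR}$, and every $\xi \in \Real_+$ one has $\big| \del^n_\xi \phi (\xi) \big| \leq \CC \MM^n n! e^{\RR \xi}$. Fix $\xi_0 \in \Real_+$ with $\xi_0 > \hat{r}$, and recall that the radius was chosen so that $r < \hat{r} = 1/\MM$, hence $\MM r < 1$. The Taylor series of $\phi$ centred at $\xi_0$ is $\TT_{\xi_0} \phi (\xi) = \sum_{n=0}^\infty \tfrac{1}{n!} \del^n_\xi \phi (\xi_0) (\xi - \xi_0)^n$; substituting the estimate at $\xi_0$ bounds its $n$-th term by $\CC e^{\RR \xi_0} \MM^n |\xi - \xi_0|^n$, so for $\xi \in \Disc (\xi_0)$ this is at most $\CC e^{\RR \xi_0} (\MM r)^n$, the general term of a convergent geometric series. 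By the Weierstrass $M$-test, $\TT_{\xi_0} \phi$ then converges absolutely and uniformly on $\Disc (\xi_0)$, so its sum is holomorphic there; this is the claimed analytic continuation.

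It remains to verify that on the real interval $\mathbb{I} (\xi_0) = \Disc (\xi_0) \cap \Real_\xi$ this series sums back to $\phi$. For that I would invoke Taylor's formula with the integral form of the remainder, which is legitimate since \autoref{200617133542} furnishes $\phi \in C^\infty (\Real_+)$: for $\xi \in \mathbb{I} (\xi_0)$ and $N \geq 1$,
\[
	\phi (\xi) = \sum_{n=0}^{N-1} \tfrac{1}{n!} \del^n_\xi \phi (\xi_0) (\xi - \xi_0)^n
		+ \RR_N (\xi),
	\qquad
	\RR_N (\xi) = \tfrac{1}{(N-1)!} \int_{\xi_0}^\xi (\xi - t)^{N-1} \del^N_\xi \phi (t) \dd{t}.
\]
Bounding the remainder with the same estimate, $\big| \del^N_\xi \phi (t) \big| \leq \CC \MM^N N! e^{\RR (\xi_0 + r)}$ for $t$ between $\xi_0$ and $\xi$, gives $\big| \RR_N (\xi) \big| \leq \CC e^{\RR (\xi_0 + r)} (\MM r)^N$, which tends to $0$ as $N \to \infty$ because $\MM r < 1$. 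Hence the partial sums of $\TT_{\xi_0} \phi$ converge to $\phi (\xi)$ at every $\xi \in \mathbb{I} (\xi_0)$, as asserted.

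There is no substantial obstacle here; the argument is a routine application of the derivative bounds from \autoref{200617133542}, and the crux is simply to feed those bounds into the Taylor expansion. The only points deserving a moment of attention are that $r$ was deliberately chosen strictly below $1/\MM$, so that the controlling ratio $\MM r$ is genuinely $< 1$; that the factor $e^{\RR \xi_0}$ (and $e^{\RR (\xi_0 + r)}$) is harmless because $\xi_0$ is held fixed and $\RR$ may be taken to be any single value above $\hat{\RR}$; and that, $\phi$ being complex-valued on $\Real_+$, one should use the integral form of the Taylor remainder rather than the Lagrange form in order to sidestep the issue of distinct intermediate points for the real and imaginary parts.
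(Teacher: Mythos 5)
Your proof is correct and follows essentially the same route as the paper: absolute convergence of $\TT_{\xi_0}\phi$ on $\Disc(\xi_0)$ is read off from the derivative bounds of \autoref{200617133542} together with $\MM r < 1$, and convergence to $\phi$ on $\mathbb{I}(\xi_0)$ is obtained by showing the Taylor remainder tends to zero under the same bounds. The only divergence is that the paper uses the Lagrange (mean-value) form of the remainder while you use the integral form; your choice is in fact the more careful one, since $\phi$ is complex-valued on $\Real_+$ and the mean-value form strictly requires splitting into real and imaginary parts, so this is a harmless refinement rather than a gap.
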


Therefore, the function $\phi$, defined by the formula \eqref{200617120419}, admits a unique analytic continuation to a holomorphic function (which we continue to denote by $\phi$) on the tubular neighbourhood $\Xi$.
All that remains is to show that the Laplace transform of $\phi$ is well-defined and satisfies the equality \eqref{200615203126}.

\begin{claim}{200617153903}
The holomorphic function $\phi$ on $\Xi$ has at most exponential growth at infinity (i.e., $\phi$ satisfies the bound \eqref{200615204240}), so its Laplace transform is well-defined and satisfies the equality \eqref{200615203126} for all $\xi \in \Real_+$.
\end{claim}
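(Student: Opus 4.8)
There are two assertions to verify: (i) that $\phi$, now known to be holomorphic on the tube $\Xi$, satisfies the at-most-exponential bound \eqref{200615204240}; and (ii) that the Laplace transform of $\phi$ recovers $f$, i.e.\ the Borel--Laplace identity \eqref{200615203126}.

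For (i) I would use the derivative bounds \eqref{200617192014} from \autoref{200617133542} together with the local Taylor representation of $\phi$ established in \autoref{200617152302}. Let $\RR > \hat{\RR}$ be arbitrary. For $\xi \in \Xi$ with $\Re \xi \geq \hat{r}$, put $\xi_0 \coleq \Re\xi$; then $\xi_0 > \hat{r}$ and $|\xi - \xi_0| = |\Im \xi| < r$, so expanding $\phi$ about $\xi_0$ and inserting \eqref{200617192014} at $\xi_0$ gives $|\phi(\xi)| \leq \CC e^{\RR \xi_0}\sum_{n\geq 0}(\MM r)^n = \CC(1-\MM r)^{-1}e^{\RR\xi_0}$, the geometric series converging because $r < \hat{r} = 1/\MM$; since $\xi_0 = \Re\xi \leq |\xi|$, this is at most $\CC(1-\MM r)^{-1}e^{\RR|\xi|}$. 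On the bounded set $\set{\xi \in \Xi : \Re\xi \leq \hat{r}}$ the holomorphic function $\phi$ is dominated by a constant. Hence \eqref{200615204240} holds with $\LL = \RR$; since the prefactor does not depend on $\RR$, letting $\RR \downarrow \hat{\RR}$ lets us take $\LL = \hat{\RR}$, which is precisely the exponent for which $\Laplace_+[\phi]$ converges on the Borel disc $S = \set{\Re(z^{-1}) > \hat{\RR}}$.

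For (ii), fix $z \in S$ and choose a radius $\RR'$ with $\hat{\RR} < \RR' < \Re(z^{-1})$, so that $f$ is holomorphic on the Borel disc $D_\wp \coleq \set{\Re((z')^{-1}) > \RR'} \subset S$ and continuous on $\bar{D}_\wp$, with boundary value $f_0$ at the corner $z' = 0$, its boundary $\wp$ being the anticlockwise circle specified in the orientation convention around \eqref{210617080526}, and with $\int_0^{+\infty}e^{-\xi(1/z - 1/z')}\dd{\xi}$ converging uniformly for $z' \in \wp$. Write $f(z') = f_0 + f_1 z' + \RR_2(z')$, where $|\RR_2(z')| \leq 2\CC\MM^2|z'|^2$ by \eqref{200619171900}. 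Since $f_0$ and $f_1 z'$ are holomorphic at the origin, \eqref{200704112520} gives $\Borel_+[f_0] = 0$ and $\Borel_+[f_1 z'] = f_1$, so by linearity $\phi(\xi) = f_1 + \Borel_+[\RR_2](\xi)$ on $\Real_+$, where now $\Borel_+[\RR_2](\xi) = \frac{1}{2\pi i}\int_\wp e^{\xi/z'}\RR_2(z')\frac{\dd{z'}}{(z')^2}$ is \emph{absolutely} convergent, the integrand being bounded by $2\CC\MM^2 e^{\RR'\xi}$ on the circle $\wp$ of finite length. Applying $\Laplace_+$ termwise, with $\Laplace_+[f_1](z) = f_1 z$, and interchanging the $\xi$- and $z'$-integrations in $\Laplace_+[\Borel_+[\RR_2]](z)$, the Laplace integral collapses to $(1/z - 1/z')^{-1} = zz'/(z'-z)$ inside the contour integral, and a partial-fraction split yields
\[
	\Laplace_+[\phi](z) = f_1 z + \frac{1}{2\pi i}\int_\wp \frac{\RR_2(z')}{z'-z}\dd{z'} - \frac{1}{2\pi i}\int_\wp \frac{\RR_2(z')}{z'}\dd{z'}.
\]
Cauchy's integral formula evaluates the first contour integral as $\RR_2(z)$, while Cauchy's theorem evaluates the second as $0$, since $\RR_2(z')/z' = O(|z'|)$ is holomorphic in $D_\wp$ and continuous on $\bar{D}_\wp$, vanishing at $0$. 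Therefore $\Laplace_+[\phi](z) = f_1 z + \RR_2(z) = f(z) - f_0$, which is \eqref{200615203126}; as $z \in S$ was arbitrary, this proves the claim, and the mutual Borel/Laplace formulas in \eqref{200615203126}--\eqref{200617120419} follow.

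\textbf{Main obstacle.} The delicate point is the interchange of integrals in step (ii). A direct Fubini on $\int_0^{+\infty}e^{-\xi/z}\Big(\frac{1}{2\pi i}\int_\wp e^{\xi/z'}f(z')\frac{\dd{z'}}{(z')^2}\Big)\dd{\xi}$ is not licit: on $\wp$ the factor $f(z')/(z')^2$ decays only like $|z'|^{-1}$ near the origin, so the Borel integral converges merely as a principal value and the iterated integral is not absolutely convergent. Peeling off the two leading Taylor coefficients $f_0$ and $f_1 z'$ --- whose Borel transforms are $0$ and $f_1$ by \eqref{200704112520} --- is exactly the manoeuvre that regularizes the principal value and renders the remaining Borel integral absolutely convergent, after which Fubini and the Cauchy computations go through cleanly. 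A secondary point is the behaviour of $f$ (hence of $\RR_2$ and of $\RR_2(z')/z'$) at the corner $z'=0$ of $\wp$: this is controlled because $f$ admits an asymptotic expansion and so extends continuously to $\bar{S}$, which legitimizes the use of Cauchy's integral formula and Cauchy's theorem on the Borel disc.
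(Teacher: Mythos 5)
Your proof is correct and follows essentially the same route as the paper: the exponential bound comes from Taylor-expanding $\phi$ about a nearby point of $\Real_+$ and inserting the derivative bounds \eqref{200617192014}, and the Borel--Laplace identity is obtained by peeling off $f_0 + f_1 z$, using the quadratic bound on $\RR_2$ to make the relevant double integral absolutely convergent, and running a Cauchy/Fubini argument. The only cosmetic differences are that you run the Fubini computation in the opposite direction (starting from $\Laplace_+\big[\Borel_+[\RR_2]\big]$ and collapsing the $\xi$-integral, rather than starting from the Cauchy representation of $\RR_2$ and introducing the Laplace kernel), and that you handle the origin on the Borel circle via continuity of $\RR_2(z')/z'$ up to the closed disc, where the paper instead indents the contour by $\epsilon$ and lets $\epsilon \to 0$.
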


Now we prove these five claims.

\begin{subproof}[Proof of \autoref{200617120803}.]
We look at the second remainder term 
\eqn{
	\RR_2 (z) = f (z) - \big( f_0 + z f_1 \big)
\fullstop
}
If the Borel transforms $\Borel [ \, f_0 \, ], \Borel [ \, z f_1 \, ], \Borel [ \, \RR_2 \, ]$ are well-defined, it will follow that $\phi (\xi)$ given by \eqref{200617120419} is a well-defined function on $\Real_+$.
The first two are obviously well-defined and independent of the choice of a Borel path $\wp$ (see \autoref{200617155708}), so we just need to examine $\Borel [ \, \RR_2 \, ]$.
The asymptotic condition \eqref{200305191443} with $n = 2$ reads $\big| \RR_2 (z) \big| \leq 2! \CC \MM^2 |z|^2$.
So if $\wp$ is the Borel circle with any radius $\RR$ such that $\RR > \smash{\hat{\RR}}$, then the integral over $\wp$ is well-defined because
\eqn{
	\frac{1}{2\pi} \int_\wp \big| \RR_2 (z) \big| e^{\xi \Re (z^{-1})} \left| \frac{\dd{z}}{z^2} \right|
	\leq \frac{1}{\pi} \CC \MM^2 e^{\RR \xi}
		\int_\wp |\dd{z}|
	\leq \CC \MM^2 e^{\RR\xi} \RR^{-1}
\fullstop
}
Moreover, the integral is independent of the particular choice of the Borel path essentially because the integrand is holomorphic in the sectorial domain and the integral over any Borel circle $\wp$ decays as the radius $\RR$ increases.
\end{subproof}

\begin{subproof}[Proof of \autoref{200617150331}.]
For any $n \geq 2$, take the identity
\eqn{
	f(z) = \sum_{k=0}^{n-1} f_k z^k + \RR_{n} (z)
\fullstop{,}
}
and plug it into formula \eqref{200617120419} for $\phi$ to get
\eqntag{\label{200617172250}
	\phi (\xi)
		= \sum_{k = 0}^{n-2} \phi_k \xi^k + \EE_n (\xi)
\qtext{where}
	\EE_n (\xi) \coleq \frac{1}{2\pi i} \int_\wp \RR_{n} (z) e^{\xi/z} \frac{\dd{z}}{x^2}
\fullstop
}
Thus, to prove this claim, we have to show that, for all $\xi \in \smash{\hat{\mathbb{I}}}$, the error term $\EE_n (\xi)$ goes to zero as $n \to \infty$.
For any $\RR > \smash{\hat{\RR}}$, let $\wp (\RR)$ be the Borel circle with radius $\RR$.
Note that, because of \autoref{200617120803}, $\EE_n$ is independent of the choice of a Borel path $\wp$, and in particular of $\RR$.
So for all $\xi > 0$, we find:
\eqns{
	\big| \EE_n (\xi) \big|
		&\leq \tfrac{1}{2\pi} \int_{\wp (\RR)} \big| \RR_{n} (z) \big| e^{\RR \xi} \left| \frac{\dd{z}}{z^2} \right|
		\leq \tfrac{ 1 }{2\pi} \CC\MM^{n} n! e^{\RR \xi}
			\int_{\wp (\RR)} \Big| z^{n} \frac{\dd{z}}{z^2} \Big|
		\leq \CC \MM^{n} n! e^{\RR \xi} \RR^{-n}
\fullstop
}
Now, for any fixed $\xi$, look at the real-valued function $\RR \mapsto e^{\RR \xi} \RR^{-n}$ defined for all $\RR > 0$.
It achieves its minimum at $\RR = n / \xi$ with value $e^n n^{-n} \xi^n$.
Fix a sufficiently large integer $\NN > 0$ such that $\NN / \xi_0 > \RR$.
Then it follows that for all $n \geq \NN$, 
\eqn{
	\big| \EE_n (\xi) \big|
		\leq \CC \MM^n n! e^n n^{-n} \xi^n
		\leq \CC \MM^n e n^{1/2} \, \xi^n
\fullstop{,}
}
where we used one of Stirling's bounds.
Since $\xi < \hat{r} = 1/\MM$, it follows that $|\EE_n (\xi)|$ goes to $0$ as $n \to \infty$.
\end{subproof}

\begin{subproof}[Proof of \autoref{200617133542}.]
Let $n \geq 0$ be any integer.
We claim that the $n$-th derivative $\del^n_\xi \phi$ exists at every $\xi \in \Real_+$.
To do this, just like in \eqref{200617172250} we look at the expression
\eqn{
	\phi (\xi)
		= \sum_{k = 0}^{n} \phi_k \xi^k + \EE_{n+2} (\xi)
\qtext{where}
	\EE_{n+2} (\xi) = \frac{1}{2\pi i} \int_\wp \RR_{n+2} (z) e^{\xi/z} \frac{\dd{z}}{z^2}
\fullstop
}
Then we just need to verify that the $n$-th derivative $\del_\xi^n \EE_{n+2}$ exists at every $\xi \in \Real_+$.
In order to be able to swap differentiation and integration, we must first show that the $n$-th derivative of the integrand (which of course exists at every $\xi \in \Real_+$) is bounded by an integrable function independent of $\xi$.
For any $\RR > \smash{\hat{\RR}}$, restrict to the Borel circle $\wp$ with radius $\RR$.
For all $\xi \in \Real_+$, we find:
\eqntag{\label{200617185506}
	\left| \del^n_\xi \left( \RR_{n+2} (z) \frac{e^{\xi/z}}{z^2} \right) \right|
	= \left| \RR_{n+2} (z) \frac{e^{\xi /z}}{z^{n+2}} \right|
	\leq \CC \MM^{n+2} (n+2)! e^{\RR\xi}
\fullstop
}
On any bounded interval in $\Real_+$, the righthand side can be bounded by a constant independent of $\xi$ (although it depends on the interval) which is of course integrable.
Therefore, the derivative $\del^n_\xi \phi$ exists at every $\xi \in \Real_+$ and equals
\eqntag{\label{200617194643}
	\del^n_\xi \phi (\xi)
		= n! \phi_n + \del^n_\xi \EE_{n+2} (\xi)
		= n! \phi_n + \frac{1}{2\pi i} \int_\wp \RR_{n+2} (z) e^{\xi/z} \frac{\dd{z}}{z^{n+2}}
\fullstop
}
It remains to demonstrate the bounds \eqref{200617192014}.
First, note that the integral $\int_\wp \big| z^{n} \frac{\dd{z}}{z^2} \big|$ is convergent since $n \geq 2$.
So we can combine \eqref{200617121503}, \eqref{200617185506}, and \eqref{200617194643} to deduce:
\eqn{
	\big| \del^n_\xi \phi (\xi) \big|
		\leq n! \CC \MM^n + \CC \MM^{n+2} (n+2)! e^{\RR \xi}
		= \CC \MM^n n! \Big( e^{-\RR \xi} + \MM^2 (n+2)(n+1) \Big) e^{\RR \xi}
\fullstop
}
Now, choose any number $c > 1$ and let $\tilde{\MM} \coleq c \MM$.
Then the above expression equals
\eqn{
	\CC \tilde{\MM}^n n!
	\Big( c^{-n} e^{-\RR \xi} + \MM^2 \tfrac{(n+2)(n+1)}{c^n} \Big) e^{\RR \xi}
}
Since $e^{-\RR \xi} < e^{-\xi \smash{\hat{\RR}}}$ and $c > 1$, the expression in the brackets is bounded by a constant $\tilde{c} > 1$ which is independent of $n$ and $\RR$.
Thus, if we let $\tilde{\CC} \coleq \tilde{c} \CC$, we find that $\big| \del^n_\xi \phi (\xi) \big| \leq \tilde{\CC} \tilde{\MM} n!$.
But since $\CC \leq \tilde{\CC}$ and $\MM \leq \tilde{\MM}$, we can redefine $\CC, \MM$ to be the larger constants $\tilde{\CC}, \tilde{\MM}$.
\end{subproof}

\begin{subproof}[Proof of \autoref{200617152302}.]
Fix any $\RR > \smash{\hat{\RR}}$ and any point $\xi_0 > \hat{r}$ on the real line $\Real_+$.
We test the Taylor series 
\eqn{
	\TT_{\xi_0} \phi (\xi)
		= \sum_{n=0}^\infty \frac{1}{n!} \del^n_\xi \phi (\xi_0) \big( \xi - \xi_0 \big)^n
}
for absolute convergence on the interval $\mathbb{I} (\xi_0)$.
Using the bound \eqref{200617192014}, we find:
\eqn{
	\sum_{n=0}^\infty \frac{1}{n!} \big| \del^n_\xi \phi (\xi_0) \big| \cdot \big| \xi - \xi_0 \big|^n
	\leq \CC e^{\RR \xi} \sum_{n=0}^\infty \tfrac{(n+2)!}{n!} (\MM r)^n
	\leq \CC_0 \sum_{n=0}^\infty \tfrac{n^2 + 3n + 2}{2^n}
	< \infty
\fullstop{,}
}
where $\CC_0 \coleq \CC e^{\RR(\xi_0 + r)}$.
To see that the Taylor series $\TT_{\xi_0} \phi$ converges to $\phi$ on the interval $\mathbb{I} (\xi_0)$, we write the remainder in its mean-value form (a.k.a. Lagrange form):
\eqn{
	\GG_{m} (\xi) 
		\coleq \phi (\xi) - \sum_{n=0}^{m} \frac{1}{n!} \del^n_\xi \phi (\xi_0) (\xi - \xi_0)^n
		= \frac{1}{(m+1)!} \del^{m+1}_\xi \phi ( \xi_\ast ) \big( \xi - \xi_0)^{m+1}
}
for some point $\xi_\ast \in \mathbb{I} (\xi_0)$ that lies between $\xi_0$ and $\xi$.
Using \eqref{200617192014} again, and the fact that $r < 1/\MM$, this yields a bound that goes to $0$ as $m \to \infty$:
\eqntag{
	\big| \GG_m (\xi) \big|
	\leq \frac{(m+3)!}{(m+1)!} \CC_0 (\MM r)^m
	= \CC_0 (m+3)(m+2) (\MM r)^m
\fullstop
\tag*{\qedhere}
}
\end{subproof}

\begin{subproof}[Proof of \autoref{200617153903}.]
Fix any $\xi \in \Xi$ and assume without loss of generality that $\Re (\xi) > \hat{r}$.
Then $\xi \in \Xi$ is necessarily contained in a disc $\Disc (\xi_0)$ for some point $\xi_0 > \hat{r}$ on the real line.
On $\Disc (\xi_0)$, the holomorphic function $\phi$ is represented by its Taylor series centred at $\xi_0$.
Then for any $\RR > \smash{\hat{\RR}}$, using \eqref{200617192014}, we get:
\eqn{
	\big| \phi (\xi) \big|
	\leq \sum_{n=0}^\infty \frac{1}{n!} \big| \del^n_\xi \phi (\xi_0) \big| \cdot \big| \xi - \xi_0 \big|^n
	\leq \CC e^{\RR \xi_0} \sum_{n=0}^\infty \tfrac{(n+2)!}{n!} (\MM r)^n
\fullstop
}
The infinite sum in this expression converges to a number $\tilde{\CC}$ independent of $\xi$.
Furthermore, $\xi_0 < |\xi| + r$ because $\xi \in \Disc (\xi_0)$.
So setting $\AA \coleq \CC e^{\RR r} \tilde{\CC}$ yields an exponential bound $\big| \phi (\xi) \big| \leq \AA e^{\RR |\xi|}$ which is valid for all $\xi \in \Xi$.

To demonstrate equality \eqref{200615203126}, consider again the second remainder term $\RR_2 (z) = f(z) - \big( f_0 + z f_1 \big)$.
For any $z \in S$, let $\wp$ be a Borel circle of some radius $\rho$ such that $\Re (z^{-1}) > \rho > \RR$.
In addition, take any $0 < \epsilon < |z|$ and consider the circle in $\Complex_z$ centred at $0$ with radius $\epsilon$, oriented clockwise.
It intersects $\wp$ in exactly two points; let $C (\epsilon)$ be the arc between them that lies in $S$.
Correspondingly, denote by $\wp (\epsilon)$ the part of $\wp$ that lies outside the disc of radius $\epsilon$.
Let the combined oriented contour be denoted by $\Gamma (\epsilon)$.
Then we apply the Cauchy Integral Formula to $z^{-1} \RR_2 (z)$:
\eqn{
	z^{-1} \RR_2 (z) 
		= \frac{1}{2 \pi i} \int_{\Gamma (\epsilon)} \zeta^{-1} \RR_2 (\zeta) \frac{\dd{\zeta}}{z-\zeta}
		= \frac{1}{2 \pi i} \int_{C (\epsilon) + \wp (\epsilon) } \zeta^{-1} \RR_2 (\zeta) \frac{\dd{\zeta}}{x-\zeta}
}
Thanks to the estimate \eqref{200305191443}, we get a bound $\big| \zeta^{-1} \RR_2 (\zeta) \big| \leq 2 \CC \MM^2 |\zeta|$, so the integral along the contour $C (\epsilon)$ goes to $0$ as $\epsilon \to 0$.
It follows that
\eqn{
	z^{-1} \RR_2 (z) = \frac{1}{2 \pi i} \int_{\wp} \zeta^{-1} \RR_2 (\zeta) \frac{\dd{\zeta}}{z-\zeta}
\fullstop
}
Now, we write
~
$\displaystyle
	\frac{1}{z - \zeta} 
		= \frac{1}{z\zeta} \cdot \frac{1}{1/\zeta - 1/z}
		= \frac{1}{z\zeta} \int_{0}^{+\infty} e^{\xi/\zeta - \xi/z} \dd{\xi}
$
~
which gives:
\eqn{
	z^{-1} \RR_2 (z)
		= \frac{z^{-1}}{2 \pi i} \int_{\wp} \int_{0}^{+\infty} \RR_2 (\zeta) e^{\xi/ \zeta -\xi/z} \dd{\xi} \frac{\dd{\zeta}}{\zeta^2}
\fullstop
}
Using the estimate \eqref{200305191443} again, we can apply Fubini's Theorem in order to finally obtain
\eqn{
	\RR_2 (z) 
		= \int_0^{+\infty} e^{-\xi/z} 
		\left( \frac{1}{2 \pi i} \int_\wp \RR_2 (\zeta) e^{\xi/\zeta} \frac{\dd{\zeta}}{\zeta^2} \right) \dd{\xi}
\fullstop
\rlap{\qqqqqqqqquad$\square\blacksquare$}
}
\noqed
\end{subproof}%

\section*{Appendix C: Miscellaneous and Supplementary Proofs}
\phantomsection{}
\addcontentsline{toc}{section}{C: Miscellaneous and Supplementary Proofs}
\setcounter{section}{3}
\setcounter{subsection}{0}

\subsection{Miscellaneous Proofs}
\label{200614173443}

\begin{proof}[Proof of \autoref{200614161900}]
Write $\tilde{a}_0 \coleq a_0 / \sqrt{\DD_0}, \, \tilde{b}_0 \coleq b_0 / \sqrt{\DD_0}, \, \tilde{c}_0 \coleq c_0 / \sqrt{\DD_0}$.
Let $\AA > 0$ be such that $\big| \tilde{a}_0 (x) \big|, \big| \tilde{b}_0 (x) \big|, \big| \tilde{c}_0 (x) \big| \leq \AA$ for all $x \in U$.
If $a_0 \equiv 0$, then this is obvious because in this case $\sqrt{\DD_0} = b_0$ and hence $f_0^+ = - \underline{c}_0$.
Otherwise, if $a_0 \not\equiv 0$, then we choose any constant $\AA_1 > \AA$ and write $U = U_1 \sqcup U_2$ where
\eqn{
	U_1 \coleq \set{ x \in U ~\Big|~ \big| \tilde{a}_0 (x) \big| \geq \AA_1^{-1} }
\qtext{and}
	U_2 \coleq \set{ x \in U ~\Big|~ \big| \tilde{a}_0 (x) \big| < \AA_1^{-1} }
\fullstop
}
In fact, it is convenient to choose $\AA_1 \geq 32 \AA$.
For all $x \in U_1$, we have the bound
\eqn{
	\big| f_0^+ \big|
		= \left| \frac{-b_0 + \sqrt{\DD_0}}{2 a_0} \right|
		= \left| \frac{1 -\tilde{b}_0}{2 \tilde{a}_0} \right|
		\leq \tfrac{1}{2} \AA_1 (1 + \AA)
\fullstop
}
On the other hand, we write $U_2 = U_{2.1} \sqcup U_{2.2}$ where
\eqn{
	U_{2.1} \coleq \set{ x \in U_2 ~\Big|~ \big| \tilde{b}_0 (x) \big| < \tfrac{1}{2} }
\qtext{and}
	U_{2.2} \coleq \set{ x \in U_2 ~\Big|~ \big| \tilde{b}_0 (x) \big| \geq \tfrac{1}{2} }
\fullstop
}
For $x \in U_{2.1}$, we write
\eqn{
	f_0^+ 
		= \frac{-b_0 + \sqrt{\DD_0}}{2 a_0}
		= \frac{\DD_0^2 - b_0^2}{2a_0 (b_0 + \sqrt{\DD_0})}
		= -2 \frac{\tilde{c}_0}{\tilde{b}_0 + 1}
\fullstop
}
Then $f_0^+$ is bounded because $\big| \tilde{b}_0 + 1 \big| \geq \tfrac{1}{2}$.
Otherwise, for $x \in U_{2.2}$, we have a bound $\big| a_0 c_0 b_0^{-2} \big| = \big| \tilde{a}_0 \tilde{c}_0 \tilde{b}_0^{-2} \big| \leq \tfrac{1}{2}$, which yields a bound on $|f_0^+|$ because we can use the Taylor expansion for the square-root function to express $f_0^+$ as a product of bounded functions:
\eqntag{
	f_0^+
		= \frac{1}{2a_0} \left( - b_0 + b_0 \sqrt{ 1 - 4 \frac{a_0 c_0}{b_0^2}} \right)
		= - \frac{\tilde{c}_0}{\tilde{b}_0} \Big( 1 + \OO \big(\tfrac{a_0 c_0}{b_0^2}\big) \Big)
\fullstop
\tag*{\qedhere}
}
\end{proof}

\begin{lem}{200619145717}
Suppose $f(\hbar)$ is a holomorphic functions on a sectorial domain $S \subset \Complex_\hbar$ near the origin which admits Gevrey asymptotics along the closure $\bar{A}$ of the opening arc $A$ of $S$.
Then there are constants $\CC, \MM > 0$ such that both of the following statements hold for all $n \geq 0$:
\eqnstag{\label{200619152640}
	\big|f_n| &\leq \CC \MM^n n!
\fullstop{,}
\\
\label{200619152632}
	\RR_n (\hbar)
	&\subdomeq
	\CC \MM^n n! \hbar^n
\qquad\text{as $\hbar \to 0$ along $A$\fullstop}
\rlap{\qqqqqqqquad~~\qedhere}
}
\end{lem}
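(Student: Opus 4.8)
The plan is to read both bounds off the hypothesis, since by the definition of Gevrey asymptotics along a closed arc (the explicit form \eqref{210225134829}) the assumption $f \in \cal{G}(\bar A)$ supplies a single sectorial domain $S_0 \subset S$ with opening $A$ together with constants $\CC, \MM > 0$ such that
\[
	\big| \RR_n (\hbar) \big| \leq \CC \MM^n n! |\hbar|^n
	\qquad (\forall n \geq 0,\ \forall \hbar \in S_0)
\fullstop
\]
Read as a statement about sectorial germs, this is already exactly \eqref{200619152632}, so the whole content of the lemma is the coefficient estimate \eqref{200619152640}, which must be derived with the \emph{same} pair of constants.

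For that I would use the elementary identity $\RR_{n+1}(\hbar) = \RR_n(\hbar) - f_n \hbar^n$, immediate from the definition \eqref{200720152534} of the remainders. Rearranging gives $f_n \hbar^n = \RR_n(\hbar) - \RR_{n+1}(\hbar)$, so by the triangle inequality and the bound above, for every $\hbar \in S_0$,
\[
	|f_n|\,|\hbar|^n \leq \big| \RR_n(\hbar) \big| + \big| \RR_{n+1}(\hbar) \big|
	\leq \CC \MM^n n! |\hbar|^n + \CC \MM^{n+1}(n+1)!|\hbar|^{n+1}
\fullstop
\]
Dividing through by $|\hbar|^n$ and letting $\hbar \to 0$ inside $S_0$ — legitimate because $n$ is fixed and $S_0$ accumulates at the origin — makes the second term vanish and yields $|f_n| \leq \CC \MM^n n!$, with no enlargement of the constants needed.

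I do not expect a serious obstacle: the lemma is a routine unwinding of the definitions. The one point deserving a little care is that one must invoke the \emph{uniform} (closed-arc) form of the Gevrey estimate, so that one pair $\CC,\MM$ serves all $n$ simultaneously and the limiting step for $f_n$ is valid; if $f$ were only known to be Gevrey along the open arc $A$, the constants could depend on the chosen subarc $A_0 \Subset A$ and the argument would break down. It is also worth remarking that \eqref{200619152640} says precisely that the asymptotic expansion $\hat f$ belongs to the ring $\Gevrey \bbrac{\hbar}$ of Gevrey power series, in line with the discussion preceding the lemma.
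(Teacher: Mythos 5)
Your proof is correct and follows essentially the same route as the paper: both rest on the identity $f_n \hbar^n = \RR_n(\hbar) - \RR_{n+1}(\hbar)$ combined with the closed-arc Gevrey bound. The only difference is in the last step: the paper bounds $|f_n| \leq 2\CC\MM^{n+1}(n+1)!$ and then absorbs the extra factor into enlarged constants $\tilde{\CC}, \tilde{\MM}$, whereas your passage to the limit $\hbar \to 0$ kills the second term outright and keeps the original constants — a slightly cleaner finish.
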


\begin{proof}
Condition \eqref{200619152632} is just the definition of $f(\hbar)$ admitting Gevrey asymptotics uniformly as $\hbar \to 0$ along $A$.
Without loss of generality, assume $\MM > 1$.
Consider the equality $f_n \hbar^n = \RR_n (\hbar) - \RR_{n+1} (\hbar)$.
Then, for all $\hbar$ sufficiently small, we have: 
\eqn{
	\big| f_n \big| 
		\leq \CC \MM^n n! + \CC \MM^{n+1} (n+1)! |\hbar| 
		\leq 2 \CC \MM^{n+1} (n+1)!
\fullstop
}
Choose any real number $c > 1$, let $\tilde{\MM} \coleq c \MM$ and write
\eqntag{\label{200619155047}
	2 \CC \MM^{n+1} (n+1)!
		= 2 \CC \MM (n+1) \MM^n n!
		= 2 \CC \MM \tfrac{n+1}{c^{n+1}} \tilde{\MM}^n n!
}
Since $c > 1$, the sequence $\set{\tfrac{n+1}{c^{n+1}}}_n$ is bounded by some constant $\tilde{c} > 1$.
So if we let $\tilde{\CC} \coleq 2 \CC \MM \tilde{c}$, then \eqref{200619155047} is bounded by $\tilde{\CC} \tilde{\MM} n!$.
But since $\CC \leq \tilde{\CC}$ and $\MM \leq \tilde{\MM}$, the asymptotic condition \eqref{200619152632} remains valid with $\CC$ and $\MM$ replaced by $\tilde{\CC}$ and $\tilde{\MM}$.
\end{proof}

\subsection{Supplementary Calculation 1}
\label{200223091621}
In this appendix subsection, we check explicitly that if $\varphi$ satisfies \eqref{200117154820} then $\FF = \Laplace_+ [\, \varphi \,]$ satisfies \eqref{210714172412}, provided that $\varphi$ has at most exponential growth at infinity in $\xi$, uniformly in $z$ (which implies in particular that the Laplace integral $\Laplace_+ [\, \varphi \,]$ is uniformly convergent).
The key is the identity
\eqntag{\label{200624192525}
	\Laplace_+ [\, \varphi \,] = \hbar a_0 + \hbar \Laplace [\, \del_\xi \varphi \,]
\fullstop{,}
}
which follows from integration by parts.
Indeed, $\varphi$ has at most exponential growth at infinity in $\xi$, so for $\hbar$ sufficiently small, the function $\varphi e^{-\xi/\hbar}$ goes to $0$ as $\xi \to +\infty$ uniformly in $z$.
Therefore,
\eqn{
	\Laplace_+ [\, \del_\xi \varphi \,]
		= \int_0^{+\infty} \del_\xi \varphi \, e^{-\xi/\hbar} \dd{\xi}
		= - \varphi (z, 0) - \hbar^{-1} \int_0^{+\infty} \varphi \, e^{-\xi/\hbar} \dd{\xi}
		= - a_0 - \hbar^{-1} \Laplace_+ [\, \varphi \,]
\fullstop
}
Now taking advantage of the uniform convergence of $\Laplace_+ [\varphi]$, we calculate:
\eqns{
	\hbar \del_z \FF - \FF
		&= \hbar \del_z \Laplace_+ [\, \varphi \,] - \Laplace_+ [\, \varphi \,]
\\		&= \hbar \Laplace_+ [\, \del_z \varphi \,] - \hbar a_0 - \hbar \Laplace_+ [\, \del_\xi \varphi \,]
\\		&= \hbar \Laplace_+ [\, \del_z \varphi - \del_x \varphi \,] - \hbar a_0
\\		&= \hbar \Laplace_+ [\, \alpha_0
			+ a_1 \varphi
			+ \alpha_1 \ast \varphi
			+ a_2 \varphi \ast \varphi
			+ \alpha_2 \ast \varphi \ast \varphi \,] - \hbar a_0
\\		&= \hbar \big( \AA_0 + \AA_1 \FF + \AA_2 \FF^2 \big)
\fullstop
}

\subsection{Supplementary Calculation 2}
\label{190907145933}

In this appendix subsection, we check explicitly that the infinite series $\varphi$ defined by \eqref{190306115025} satisfies the integral equation \eqref{190312202516} provided that $\varphi$ is uniformly convergent.
First, note the formula for the convolution product of infinite sums:
\eqn{
	\left( \sum_{n=0}^\infty \varphi_n \right) \ast \left( \sum_{n=0}^\infty \varphi_n \right)
	= \sum_{n=0}^\infty \sum_{\substack{ i,j \geq 0 \\ i + j = n}} \varphi_i \ast \varphi_j
}
We substitute \eqref{190306115025} into the righthand side of \eqref{190312202516}, use linearity of the integral operator $\II_+$, and split and rearrange the sums as follows:
\eqns{
	\varphi_0 
	&+ \II_+ \Big[ \alpha_0
					+ a_1 \varphi
					+ \alpha_1 \ast \varphi
					+ a_2 \varphi \ast \varphi
					+ \alpha_2 \ast \varphi \ast \varphi \Big]
\\
=
	\varphi_0
	&+
	\II_+ \Bigg[
			\alpha_0 + a_1 \Bigg( \varphi_0 + \varphi_1 + \varphi_2 + \varphi_3 + \sum_{n=4}^\infty \varphi_n \Bigg)
			+ \alpha_1 \ast \Bigg( \varphi_0 + \varphi_1 + \varphi_2 + \sum_{n=3}^\infty \varphi_n \Bigg)
\\
	&+ a_2 \Bigg( \varphi_0 \ast \varphi_0 + \cdots
					+ \sum_{n=3}^\infty \sum_{\substack{ i,j \geq 0 \\ i + j = n}} \varphi_i \ast \varphi_j \Bigg)
			+ \alpha_2 \ast \Bigg( \varphi_0 \ast \varphi_0 + \cdots + \sum_{n=2}^\infty \sum_{\substack{ i,j \geq 0 \\ i + j = n}} \varphi_i \ast \varphi_j \Bigg)
	\Bigg]
\\
=
	\varphi_0 
	&+
	\II_+ [ \, \alpha_0 + a_1 \varphi_0 \, ]
		+ \II_+ \big[ \, a_1 \varphi_1 + \alpha_1 \ast \varphi_0 + a_2 \varphi_0 \ast \varphi_0 \, \big]
\\
	&+ \II_+ \big[ a_1 \varphi_2 + \alpha_1 \ast \varphi_1 + a_2 \varphi_1 \ast \varphi_0 + a_2 \varphi_0 \ast \varphi_1 + \alpha_2 \ast \varphi_0 \ast \varphi_0 \big]
\\
	&+ \II_+ \big[ a_1 \varphi_3 + \alpha_1 \ast \varphi_2 + a_2 \varphi_2 \ast \varphi_0 + a_2 \varphi_1 \ast \varphi_1 + a_2 \varphi_0 \ast \varphi_2 + \alpha_2 \ast \varphi_1 \ast \varphi_0 + \alpha_2 \ast \varphi_0 \ast \varphi_1 \big]
\\
	&+ \II_+ \Bigg[ a_1 \sum_{n=4}^\infty \varphi_n 
			+ \alpha_1 \ast \sum_{n=3}^\infty \varphi_n
			+ a_2 \sum_{n=3}^\infty \sum_{\substack{ i,j \geq 0 \\ i + j = n}} \varphi_i \ast \varphi_j
			+ \alpha_2 \ast \sum_{n=2}^\infty \sum_{\substack{ i,j \geq 0 \\ i + j = n}} \varphi_i \ast \varphi_j
		\Bigg]
\\
=
	\varphi_0 
	&+ \ldots + \varphi_4
	+ \sum_{n=5}^\infty \II_+ 
		\Bigg[ a_1 \varphi_{n-1} 
			+ \alpha_1 \ast \varphi_{n-2} 
			+ a_2 \sum_{\substack{i,j \geq 0 \\ i + j = n-2}}
				\varphi_i \ast \varphi_j
			+ \alpha_2 \ast \sum_{\substack{i,j \geq 0 \\ i + j = n-3}}
				\varphi_i \ast \varphi_j
		\Bigg]
\\
= \varphi_+
&
\fullstop
}

\subsection{Some Useful Elementary Estimates}

Here we collect some elementary estimates that are used in the proof of the \hyperlink{210714172405}{Main Technical Lemma}.
Their proofs are straightforward, but for completeness we supply them here anyway.

\begin{lem}{180824194855}
For any $\RR \geq 0$, any $\LL \geq 0$, and any nonnegative integer $n$,
\eqn{
	\int_0^{\RR} \frac{r^n}{n!} e^{\LL r} \dd{r}
		\leq \frac{\RR^{n+1}}{(n+1)!} e^{\LL \RR}
\fullstop
\tag*{\qedhere}
}	
\end{lem}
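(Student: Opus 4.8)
The plan is to exploit the monotonicity of the exponential factor on the interval of integration. Since $\LL \geq 0$, the function $r \mapsto e^{\LL r}$ is nondecreasing on $[0, \RR]$, so $e^{\LL r} \leq e^{\LL \RR}$ for every $r \in [0, \RR]$. First I would use this to pull the exponential out of the integral:
\eqn{
	\int_0^{\RR} \frac{r^n}{n!} e^{\LL r} \dd{r}
		\leq e^{\LL \RR} \int_0^{\RR} \frac{r^n}{n!} \dd{r}
\fullstop
}
Then the remaining integral is elementary: $\int_0^{\RR} r^n \dd{r} = \RR^{n+1}/(n+1)$, so that $\int_0^{\RR} \frac{r^n}{n!} \dd{r} = \frac{\RR^{n+1}}{(n+1)!}$, which gives exactly the claimed bound.

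There is essentially no obstacle here; the only thing to be careful about is the edge cases, which are all harmless. If $\RR = 0$ both sides vanish. If $n = 0$ the inequality reads $\int_0^{\RR} e^{\LL r}\dd{r} \leq \RR e^{\LL \RR}$, which is again the monotonicity bound. The hypothesis $\LL \geq 0$ is used precisely to guarantee monotonicity of $e^{\LL r}$; without it one would need to bound $e^{\LL r}$ by $1$ instead and the stated inequality (with $e^{\LL\RR}$ on the right) would still hold trivially since $e^{\LL\RR} \geq$ the relevant constant, but the clean one-line argument above covers the case actually needed in the paper.

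Alternatively, if one wanted to avoid even invoking monotonicity, one could prove the estimate by induction on $n$ using integration by parts: for $n \geq 1$,
\eqn{
	\int_0^{\RR} \frac{r^n}{n!} e^{\LL r} \dd{r}
		= \frac{\RR^n}{n!} \cdot \frac{e^{\LL\RR} - 1}{\LL}
			- \int_0^{\RR} \frac{r^{n-1}}{(n-1)!} \cdot \frac{e^{\LL r} - 1}{\LL} \dd{r}
}
(when $\LL > 0$), but this is strictly more work than the direct bound and brings in an awkward division by $\LL$, so I would not pursue it. The monotonicity argument is the cleanest route and is what I would write up.
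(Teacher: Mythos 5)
Your argument is correct, and it is a genuinely different (and arguably more direct) route than the paper's. You bound the integrand pointwise, using $e^{\LL r}\leq e^{\LL\RR}$ for $r\in[0,\RR]$ (valid since $\LL\geq 0$), pull the exponential out, and evaluate $\int_0^\RR r^n\,\dd{r}/n! = \RR^{n+1}/(n+1)!$. The paper instead sets $f(\RR)$ equal to the difference of the two sides, computes $f'(\RR)=\frac{\LL\RR^{n+1}}{(n+1)!}e^{\LL\RR}\geq 0$ (the $\frac{\RR^n}{n!}e^{\LL\RR}$ terms cancel), and concludes from $f(0)=0$ that $f\geq 0$. Both are one-line calculus arguments of the same strength; yours isolates exactly where $\LL\geq 0$ enters (monotonicity of the exponential), while the paper's version avoids any sup-norm bound and would adapt slightly more readily to integrands where the non-polynomial factor is not monotone but the derivative of the difference still has a sign. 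One small caution about your aside: for $\LL<0$ the stated inequality does \emph{not} "still hold trivially" --- e.g.\ with $n=0$, $\LL=-10$ and $\RR$ large the left side tends to $1/10$ while the right side tends to $0$ --- but since the lemma assumes $\LL\geq 0$ this does not affect the validity of your proof.
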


\begin{proof}
Define a function
\eqn{
	f (\RR) \coleq \frac{\RR^{n+1}}{(n+1)!} e^{\LL \RR} - \int_0^{\RR} \frac{r^n}{n!} e^{\LL r}
\fullstop{,}
}
which is the difference between the righthand side and the lefthand side of the inequality we want to prove.
We want to show that $f(\RR) \geq 0$ for all $\RR \geq 0$.
Notice that for all $\RR > 0$,
\eqn{
	\frac{\dd{f}}{\dd{\RR}} = \frac{\LL \RR^{n+1}}{(n+1)!} e^{\LL\RR} > 0
\fullstop{,}
}
so $f$ is an increasing function.
The lemma follows from the fact that $f(0) = 0$.
\end{proof}

\begin{lem}{180824192144}
For any $\RR \geq 0$, and any integers $m, n \geq 0$,
\eqn{
	\int_0^\RR (\RR - r)^m r^n \dd{r}
		= \frac{m! n!}{(m+n+1)!} \RR^{m+n+1}
\fullstop
\tag*{\qedhere}
}
\end{lem}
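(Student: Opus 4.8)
The plan is to rescale the variable of integration so as to pull out all the powers of $\RR$, reducing the statement to the standard Beta-function identity, which I would then evaluate by an elementary induction.

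First I would dispose of the trivial case $\RR = 0$, where both sides vanish, and assume $\RR > 0$. Substituting $r = \RR t$ gives $\dd{r} = \RR \dd{t}$ and $\RR - r = \RR(1-t)$, so
\[
	\int_0^\RR (\RR - r)^m r^n \dd{r}
		= \RR^{m+n+1} \int_0^1 (1-t)^m t^n \dd{t}
\fullstop
\]
Thus it suffices to prove that $\int_0^1 (1-t)^m t^n \dd{t} = \tfrac{m!\, n!}{(m+n+1)!}$ for all nonnegative integers $m, n$.

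I would prove this by induction on $m$, allowing $n$ to be arbitrary. The base case $m = 0$ is the computation $\int_0^1 t^n \dd{t} = \tfrac{1}{n+1} = \tfrac{0!\, n!}{(n+1)!}$. For $m \geq 1$, integration by parts with $u = (1-t)^m$ and $\dd{v} = t^n \dd{t}$ gives
\[
	\int_0^1 (1-t)^m t^n \dd{t}
		= \Big[ (1-t)^m \tfrac{t^{n+1}}{n+1} \Big]_0^1
			+ \frac{m}{n+1} \int_0^1 (1-t)^{m-1} t^{n+1} \dd{t}
\fullstop
\]
The boundary term vanishes, since $(1-t)^m$ is zero at $t = 1$ (using $m \geq 1$) and $t^{n+1}$ is zero at $t = 0$. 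Applying the inductive hypothesis to the remaining integral --- with $m-1$ in place of $m$ and $n+1$ in place of $n$ --- evaluates it as $\tfrac{(m-1)!\,(n+1)!}{(m+n+1)!}$, and multiplying by $\tfrac{m}{n+1}$ yields precisely $\tfrac{m!\, n!}{(m+n+1)!}$, closing the induction.

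There is no genuine obstacle in this argument: everything in sight is a polynomial, so the integration by parts is completely elementary. The only point worth a moment's attention is that the boundary term in the inductive step really does vanish, which is exactly where the hypothesis $m \geq 1$ enters. Combining the rescaling with the evaluation of the Beta integral then gives the stated formula.
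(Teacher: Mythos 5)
Your proof is correct, but it takes a different route from the paper's. The paper expands $(\RR - r)^m$ by the binomial theorem, integrates term by term to get $\int_0^\RR (\RR-r)^m r^n \dd{r} = \CC_{m,n}\RR^{m+n+1}$ with $\CC_{m,n} = \sum_{k=0}^m {m \choose k} \tfrac{(-1)^k}{k+n+1}$, and then evaluates this alternating sum by induction on $m$ using the recurrence $\CC_{m+1,n} = \CC_{m,n} - \CC_{m,n+1}$, which it derives from Pascal's rule. You instead rescale to the normalized Beta integral $\int_0^1 (1-t)^m t^n \dd{t}$ and obtain the recurrence in $m$ directly by integration by parts, with the $\RR$-dependence pulled out once and for all at the start. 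Both arguments are elementary inductions on $m$; yours sidesteps the alternating binomial sum entirely and is the standard textbook evaluation of the Beta function at integer arguments, while the paper's version keeps everything as explicit polynomial identities. Your handling of the boundary term (where $m \geq 1$ is used) and the bookkeeping in the inductive step are both correct.
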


This formula is an instance of the relationship between the gamma and the beta functions, but it can be justified in the following more elementary way.

\begin{proof}
Straightforward integration gives the formula
\eqn{
	\int_0^\RR (\RR - r)^m r^n \dd{r}
		= \CC_{m,n} \RR^{m+n+1}
\qqtext{where}
	\CC_{m,n} \coleq \sum_{k=0}^m {m \choose k} \frac{(-1)^k}{k+n+1}
\fullstop
}
To obtain the formula $\CC_{m,n} = m! n! \big/ (m+n+1)!$, first prove the identity 
\eqn{
	\CC_{m+1, n} = \CC_{m,n} - \CC_{m,n+1}
\fullstop{,}
}
which readily follows from Pascal's rule
\eqn{
	{m + 1 \choose k + 1} = {m \choose k} + {m \choose k+1}
\fullstop
}
Then use induction on $m$.
\end{proof}

\begin{lem}{180824195940}
Let $i,j$ be nonnegative integers, and let $f_i(\xi), f_j(\xi)$ be holomorphic functions on a tubular neighbourhood $\Xi_+ \coleq \set{\xi ~\big|~ \op{dist} (\xi, \Real_+) < \epsilon}$ of the positive real axis $\Real_+ \subset \Complex_\xi$ for some $\epsilon > 0$.
If there are constants $\MM_i, \MM_j, \LL \geq 0$ such that
\eqn{
	\big| f_i(\xi) \big| \leq \MM_i \frac{|\xi|^i}{i!} e^{\LL |\xi|}
\qqtext{and}
	\big| f_j(\xi) \big| \leq \MM_j \frac{|\xi|^j}{j!} e^{\LL |\xi|}
\rlap{\qquad $\forall \xi \in \Xi_+$\fullstop{,}}
}
then their convolution product satisfies the following bound:
\eqn{
	\big| f_i \ast f_j (\xi) \big| \leq \MM_i \MM_j \frac{|\xi|^{i+j+1}}{(i+j+1)!} e^{\LL |\xi|}
\rlap{\qqquad $\forall \xi \in \Xi_+$\fullstop}
\tag*{\qedhere}
}
\end{lem}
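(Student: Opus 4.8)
The plan is to reduce the desired estimate to the elementary integral identity of \autoref{180824192144}. Recall that the convolution product is given by $f_i \ast f_j (\xi) = \int_0^\xi f_i (\xi - y) f_j (y) \dd{y}$, where the path of integration is the straight segment from $0$ to $\xi$. First I would parametrise this segment by $y = y(r) \coleq r e^{i \arg \xi}$ for $r \in [0, |\xi|]$; then $|\dd{y}| = \dd{r}$, and since $r \leq |\xi|$ one has $|y| = r$ and $|\xi - y| = \big| (|\xi| - r) e^{i \arg \xi} \big| = |\xi| - r$.

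Before estimating, I would record that the whole segment $[0,\xi]$ --- and therefore also the segment traced out by $\xi - y$ as $y$ runs over it --- lies inside $\Xi_+$, so that the hypothesised bounds on $f_i$ and $f_j$ may be invoked along the path. This holds because $\Xi_+ = \set{\xi ~\big|~ \op{dist}(\xi, \Real_+) < \epsilon}$ is convex, being the $\epsilon$-neighbourhood of the convex set $\Real_+$, and it contains both $0$ and $\xi$.

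With this in hand, the pointwise bound on the integrand is immediate: for $y = y(r)$ on the segment, the hypotheses give $\big| f_i (\xi - y) \big| \cdot \big| f_j (y) \big| \leq \big( \MM_i \tfrac{(|\xi| - r)^i}{i!} e^{\LL(|\xi| - r)} \big)\big( \MM_j \tfrac{r^j}{j!} e^{\LL r} \big) = \tfrac{\MM_i \MM_j}{i! \, j!} (|\xi| - r)^i r^j e^{\LL |\xi|}$. Integrating in $r$ from $0$ to $|\xi|$ and applying \autoref{180824192144} with $\RR = |\xi|$, $m = i$, $n = j$, then yields
\[
	\big| f_i \ast f_j (\xi) \big|
	\leq \frac{\MM_i \MM_j}{i! \, j!} e^{\LL |\xi|} \int_0^{|\xi|} (|\xi| - r)^i r^j \dd{r}
	= \MM_i \MM_j \frac{|\xi|^{i+j+1}}{(i+j+1)!} e^{\LL |\xi|}
\fullstop
\]

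There is no real obstacle here: the computation is entirely routine. The only point meriting a moment's care is the observation that the contour of integration remains inside $\Xi_+$, so that the assumed exponential bounds apply along it --- which is precisely the convexity remark above; everything else is a direct application of \autoref{180824192144}.
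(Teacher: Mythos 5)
Your proposal is correct and follows essentially the same route as the paper: parametrise the straight segment $[0,\xi]$, apply the hypothesised bounds pointwise, and reduce to the beta-type integral identity of \autoref{180824192144}. The extra remark that $\Xi_+$ is convex (so the contour stays where the bounds hold) is a sound piece of care that the paper leaves implicit.
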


\begin{proof}
By definition,
\eqn{
	f_i \ast f_j (\xi) = \int_0^\xi f_i (\xi - u) f_j (u) \dd{u}
\fullstop{,}
}
where the integration contour is chosen to be the straight line segment $[0, \xi]$.
Write $\xi = |\xi| e^{i \theta}$ and parameterise the integration contour by $u (r) \coleq r e^{i \theta}$.
Then
\eqns{
	\big| f_i \ast f_j (\xi) \big|
	&\leq \int_0^{\xi} \big| f_i (\xi - u) \big| \cdot \big| f_j (u) \big| |\dd{u} |
\\	&\leq \frac{\MM_i \MM_j}{i! j!} \int_0^{\xi} \big| \xi - u \big|^i e^{\LL |\xi - u|} \cdot |u|^j e^{\LL |u|} |\dd{u}|
\\	&= \frac{\MM_i \MM_j}{i! j!} \int_0^{|\xi|} \big( |\xi| - r \big)^i e^{\LL (|\xi| - r)} \cdot r^j e^{\LL r} \dd{r}
\\	&= \frac{\MM_i \MM_j}{i! j!} e^{\LL |\xi|} \int_0^{|\xi|} \big( |\xi| - r \big)^i r^j \dd{r}
\fullstop
}
The result follows from \autoref{180824192144}.
\end{proof}

\end{appendices}

\begin{adjustwidth}{-2cm}{-1.5cm}
{\footnotesize
\bibliographystyle{nikolaev}
\bibliography{/Users/Nikita/Documents/Library/References}
}
\end{adjustwidth}
\end{document}